\newtheorem{theorem}{Theorem}
\newtheorem{lemma}{Lemma}
\newtheorem{proposition}[lemma]{Proposition}
\newtheorem{corollary}[lemma]{Corollary}
\newtheorem{definition}[lemma]{Definition}
\newtheorem{remark}[lemma]{Remark}
\newtheorem{conjecture}{Conjecture}
\numberwithin{lemma}{section}
\numberwithin{equation}{section}
\newcommand{\R}{{\mathbb R}}
\newcommand{\C}{{\mathbb C}}
\newcommand{\N}{{\mathbb N}}
\newcommand{\Z}{{\mathbb Z}}
\renewcommand{\R}{\mathbb R}
\newcommand{\bM}{\mathbf M}
\newcommand{\bP}{\mathbf P}
\newcommand{\bR}{\mathbf R}
\newcommand{\bI}{\mathbf I}
\newcommand{\bJ}{\mathbf J}
\newcommand{\bK}{\mathbf K}
\newcommand{\bu}{{\bar u}}
\newcommand{\bv}{{\bar v}}
\newcommand{\bfu}{{\mathbf u}}
\newcommand{\Ns}{N^\sharp}
\newcommand{\la}{\langle}
\newcommand{\ra}{\rangle}
\newcommand{\ol}{\overline}
\newcommand{\ms}{M^\sharp}
\newcommand{\ps}{P^\sharp}
\newcommand{\calR}{\mathcal{R}}
\newcommand{\UH}{{U\!H}}
\begin{document}

\title{Global solutions for cubic quasilinear ultrahyperbolic Schr\"odinger flows}

\author{Mihaela Ifrim}
\address{Department of Mathematics, University of Wisconsin, Madison}
\email{ifrim@wisc.edu}

\author{Ben Pineau}
\address{Courant Institute for Mathematical Sciences\\
New York University
} \email{brp305@nyu.edu}

\author{ Daniel Tataru}
\address{Department of Mathematics, University of California at Berkeley}
\email{tataru@math.berkeley.edu}

\begin{abstract}

In recent work, two of the authors proposed a 
broad global well-posedness conjecture
for cubic quasilinear dispersive equations in two space dimensions, which asserts that global well-posedness and scattering holds for small initial data in Sobolev spaces. As a first validation they  proved the conjecture for  quasilinear Schr\"odinger flows.

 In the present article we expand the reach of these 
ideas and methods to the case of quasilinear ultrahyperbolic Schr\"odinger flows,  which is the first example  with a nonconvex dispersion relation.

The study of local well-posedness for this class of problems, in all dimensions, was initiated  in pioneering work of Kenig-Ponce-Vega for localized initial data, and then continued by Marzuola-Metcalfe-Tataru (MMT) and Pineau-Taylor (PT) for initial data in Sobolev spaces in the elliptic and non-elliptic cases, respectively. 

Our results here mirror the earlier results in the elliptic case: (i) a new, potentially sharp  local well-posedness result
in low regularity Sobolev spaces, one 
derivative below MMT and just one-half derivative above scaling, (ii) a small data global well-posedness and scattering result at the same regularity level. One key novelty in this setting is the introduction of a new family of interaction Morawetz functionals which are suitable for obtaining bilinear estimates in the ultrahyperbolic setting. We remark that this method appears to be robust enough to potentially be of use in a large data regime when the metric is not a small perturbation of a Euclidean one. 

\bigskip

\end{abstract}

\subjclass{Primary:  	35Q55   
Secondary: 35B40   
}
\keywords{NLS problems, quasilinear, defocusing, scattering, interaction Morawetz}

\maketitle

\setcounter{tocdepth}{1}
\tableofcontents


\section{Introduction}
In this article, we consider the problem of local and global well-posedness in the small data, low regularity regime for general quasilinear Schr\"odinger equations of the form
\begin{equation}
\label{qnlsint}
\left\{ \begin{array}{l}
i u_t + g^{jk}(u,\nabla u) \partial_j \partial_k  u = 
N(u,\nabla u) , \quad u:
\R \times \R^n \to \C \\ \\
u(0,x) = u_0 (x).
\end{array} \right. 
\end{equation}
Here, the metric $g$ (assumed to be real-valued and symmetric) and nonlinearity $N$ are assumed to be smooth functions of their arguments, which are at least of cubic order. This means that
\begin{equation}\label{cubiccondition}
g(y,z)=g(0,0)+\mathcal{O}(|y|^2+|z|^2),\hspace{5mm} N(y,z)=\mathcal{O}(|y|^3+|z|^3),\hspace{2mm}\text{near}\hspace{2mm} (y,z)=(0,0).
\end{equation}
The present work serves as a natural continuation of the prior works \cite{IT-qnls} and \cite{IT-qnls2} of the first and third authors, which establish the global well-posedness conjecture (first formulated in \cite{IT-global} in the context of semilinear Schr\"odinger flows) for a large class of quasilinear Schr\"odinger equations. These results consider the one-dimensional case, and higher dimensional cases, respectively. Importantly, these works were devoted exclusively to the case when the principal operator $g^{jk}\partial_j\partial_k$ is elliptic, meaning that  
\begin{equation}\label{elliptic}
c^{-1}|\xi|^2\leq g^{ij}(x)\xi_i\xi_j\leq c|\xi|^2   
\end{equation}
for some uniform (in $x$ and $\xi$) constant $c>0$. In the present work we consider the 
ultra-hyperbolic case in two and higher dimensions, where the metric is not assumed to be positive-definite; more precisely, we have the following definition.
\begin{definition}
We call the problems \eqref{qnlsint}
ultrahyperbolic if $g$ is a symmetric, uniformly non-degenerate matrix of indefinite signature. 
\end{definition}
By uniformly non-degenerate, we mean that there exists a constant $c>0$ such that for all $x,\xi$, there holds
\begin{equation}\label{nondegen}
c^{-1}|\xi|\leq |g^{ij}(x)\xi_j|\leq c|\xi|.
\end{equation}
There are many models of nonlinear Schr\"odinger equations of ultrahyperbolic type that are of physical interest. Some important examples arise in water waves (see, for instance \cite{davey1974three}).
Others arise as certain completely integrable models \cite{ishimori1984multi, zakharov1980degenerative}. We also refer to the discussion in \cite{ghidaglia1993nonelliptic} which surveys several other interesting equations. A recent example coming from fluid mechanics stems from the ideal Hall and
electron MHD equations. Near nondegenerate perturbations of uniform magnetic fields, the equation (for the magnetic field $B$) behaves at leading order like an ultrahyperbolic Schr\"odinger type equation with principal operator of the
form $B\cdot\nabla |\nabla|$. See \cite{jeong2022cauchy, 2024arXiv240206278J} for results pertaining to both well-posedness and ill-posedness for this system (depending on the regime considered).

For the general class of equations we consider here, both the ultrahyperbolic and elliptic variants of this problem have received considerable attention over the past several decades. For the local theory, the pioneering works in this direction are due to Kenig, Ponce, Rolvung and Vega \cite{kenig2005variable, kenig2006general, kenig1998smoothing, kenig2004cauchy}, which (taken together) establish local well-posedness for localized data in high regularity Sobolev spaces (also in the large data regime with a necessary supplementary nontrapping condition on the metric). In the case of positive-definite metrics, these results were extended by Marzuola, Metcalfe and Tataru to allow for data in low regularity, translation invariant Sobolev-type spaces (including the vanilla $H^s$ scale for cubic problems) in \cite{MMT1, MMT2} for small data, and later in \cite{MMT3} in the large data regime. These results were extended to encompass the full class of ultrahyperbolic Schr\"odinger equations in the recent work of Pineau and Taylor in \cite{pineau2024low}. A common theme of these previous results is the reliance on the local smoothing properties of the equation (although with a weaker functional framework in the low regularity regime).
\medskip

In sharp contrast, the approach used in \cite{IT-qnls} and \cite{IT-qnls2} is significantly different from the above mentioned works, and instead establishes bilinear estimates via a robust interaction Morawetz analysis, coupled with suitable normal form corrections, implemented via modified energies. The main aim of this article is to extend the reach of these ideas to the ultrahyperbolic setting, in turn dramatically improving both the known (small data) local and global results for this problem to potentially optimal Sobolev thresholds. More precisely, our goals in this article are to address the following three points:
\begin{enumerate}[label=\roman*)]
    \item To establish the higher dimensional version of the global well-posedness conjecture (proposed in \cite{IT-qnls2}) in the context of cubic, ultrahyperbolic Schr\"odinger flows. 
    \item To sharply improve the local well-posedness theory for cubic quasilinear Schr\"odinger flows
    in higher dimension, one full
derivative below the earlier results of \cite{MMT2} and \cite{pineau2024low} and just one half derivative above scaling, in the small data regime. Depending on the dimension, we conjecture these results are generically optimal, in terms of Sobolev regularity.
    \item To introduce a novel interaction Morawetz analysis which is robust enough to establish bilinear estimates for variable coefficient, non-elliptic Schr\"odinger equations.  
    
\end{enumerate}

\subsection{An overview of the global well-posedness conjectures}
To better motivate the present work, we now give a brief overview of the global well-posedness conjectures formulated by the first and third authors in the recent work \cite{IT-global}. In the one-dimensional setting (for both quasilinear and semilinear equations), the conjecture can be roughly phrased as follows
\begin{conjecture}[Non-localized data defocusing GWP conjecture]\label{c:nld}
One dimensional dispersive flows on the real line with cubic defocusing nonlinearities and small initial data have global in time, scattering solutions.
\end{conjecture}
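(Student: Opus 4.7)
The plan is to follow the blueprint established in \cite{IT-qnls}, which settled this conjecture for quasilinear one-dimensional Schr\"odinger flows, and to extend it to the broader class of cubic one-dimensional dispersive equations. The strategy rests on three pillars: (i) a low regularity local well-posedness theorem at (or just above) the scaling Sobolev exponent; (ii) a small-data bilinear interaction Morawetz estimate tailored to the dispersion relation; and (iii) a modified energy analysis in which the defocusing sign is used to close a cubic/quartic bootstrap.

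First I would fix a representative canonical form, say $iu_t + P(D)u = N(u,\bar u, u_x, \bar u_x)$, with $P$ a smooth real symbol of positive order, elliptic at high frequency, and $N$ a cubic defocusing nonlinearity. The defocusing hypothesis would be encoded as a sign condition on the cubic resonant part of the Hamiltonian, localized to the resonant set $\{P(\xi_1)-P(\xi_2)+P(\xi_3)-P(\xi)=0,\ \xi_1-\xi_2+\xi_3-\xi=0\}$. I would then perform a paradifferential renormalization that absorbs the quasilinear part into the principal symbol, so that the remainder obeys normal-form type estimates, following the paradiagonalization machinery from \cite{IT-qnls}.

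Second, I would run a modified energy argument at the (near-)scaling Sobolev regularity. The modified energies would consist of the naive $H^s$ density plus quadratic normal-form corrections that cancel all non-resonant cubic contributions, together with an explicit cubic term whose sign is dictated by the defocusing hypothesis. This yields an a priori bound of the form $\|u\|_{H^s}^2 \lesssim \|u_0\|_{H^s}^2 + (\text{quartic error})$, where the error admits a bilinear factorization that must be controlled by step (iii).

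Third, and most delicately, I would construct a cubic interaction Morawetz functional adapted to $P$. Differentiating it along the flow and using the defocusing cubic sign would produce a positive spacetime quantity yielding bilinear $L^2_{t,x}$-type control on products $u(t,x)u(t,y)$ at the correct scaling. Feeding this into the quartic error from the previous step closes the bootstrap and gives uniform-in-time control of $\|u\|_{H^s}$, after which scattering follows from the bilinear bound via a high/low paradifferential expansion, exploiting the fact that all resonant cubic interactions have been cancelled by the normal form. This is precisely where the main obstacle lies, and where the present paper's contribution in the ultrahyperbolic setting becomes decisive: for non-convex or variable coefficient dispersion relations the classical convexity arguments of Planchon--Vega fail, so one must build interaction Morawetz functionals whose positivity rests on the algebraic structure of the resonance set rather than on convexity of $P$. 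I would expect the new family of Morawetz functionals introduced in this paper, suitably adapted to one space dimension, to be a key ingredient for a uniform treatment of the general one-dimensional conjecture.
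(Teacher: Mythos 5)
This statement is a \emph{conjecture}, not a theorem, and the paper neither proves it nor claims to. It is recalled (quoting \cite{IT-global} and \cite{IT-qnls}) purely to motivate the two- and higher-dimensional conjectures that this paper actually addresses, and those earlier works only validate particular instances of it — semilinear and quasilinear \emph{Schr\"odinger} flows — not ``one dimensional dispersive flows'' in the generality asserted. So there is no ``paper's own proof'' to compare your attempt against, and a referee would simply note that the conjecture remains open as stated.

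As a roadmap, your three-pillar outline (low regularity local well-posedness, interaction Morawetz bilinear $L^2$, modified energy method with defocusing sign) is a fair summary of the architecture of \cite{IT-global} and \cite{IT-qnls}, and you correctly identify the central obstruction to generalizing it: the Morawetz positivity used in those works leans on the convexity of the Schr\"odinger symbol, and a general real symbol $P$ of positive order need not provide that. Your suggestion that the new Morawetz functionals of this paper might help is plausible, but note they are designed to handle \emph{non-degeneracy} of the metric in $n\geq 2$, not non-convexity of a one-dimensional symbol $P(\xi)$; the resonance geometry is completely different in 1D, where for general $P$ the resonant set $\{P(\xi_1)-P(\xi_2)+P(\xi_3)-P(\xi)=0,\ \xi_1-\xi_2+\xi_3-\xi=0\}$ can be far richer than the $\{\{\xi_1,\xi_3\}=\{\xi_2,\xi\}\}$ structure the cited normal form and modified energy constructions exploit. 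Absent control of that resonant set, the cancellation in your step (ii) and the positivity in your step (iii) are both asserted rather than established. In short: the proposal is a reasonable sketch of the known Schr\"odinger-case argument, but it neither matches anything proved in this paper nor closes the genuine gap between the special cases in the literature and the full generality of the conjecture.
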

The main results of \cite{IT-global} and \cite{IT-qnls} provided the first validation for this conjecture in the semilinear, and quasilinear settings, respectively. These results represented the first global in time well-posedness results of this type at this level of generality. More precisely, in these articles,  it was shown that if the equation has \emph{phase rotation symmetry} and is \emph{conservative}  and \emph{defocusing}, then small data in Sobolev spaces give rise to global, scattering solutions.   Here we remark that scattering should be interpreted in a suitable weak sense, meaning that the solution satisfies a global $L^6$ Strichartz bound as well as bilinear $L^2$ bounds. Classical scattering is precluded due to the strong nonlinear effects for this problem.

We crucially note that the defocusing condition is imperative for the global result in one dimension. In the focusing case, the existence of small solitons serves as an obstruction to obtaining global, scattering solutions. Nevertheless, an analogue of the above conjecture which was proposed by two of the authors suggests a potentially optimal lifespan bound for solutions.

\begin{conjecture}[Non-localized data long time well-posedness conjecture]\label{c:nld-focusing}
One dimensional \\ dispersive flows on the real line with cubic conservative nonlinearities and  initial data of size $\epsilon \ll 1$ have  
long time solutions on the $\epsilon^{-8}$ time scale.
\end{conjecture}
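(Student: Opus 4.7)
The plan is to extend the normal form and modified energy framework of \cite{IT-qnls} to the conservative but not necessarily defocusing setting, trading the global bounds of that work for a polynomial $\epsilon^{-8}$ lifespan. The core observation is that the defocusing hypothesis in \cite{IT-qnls} enters only through the sign of the interaction Morawetz functional: conservation of mass, phase rotation symmetry, and the normal form structure should all survive intact, and what one loses is the monotonicity that enables globally-in-time bilinear $L^2$ bounds.

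First, I would invoke the small-data local well-posedness at the critical Sobolev regularity established in \cite{IT-qnls}. Next, one constructs a cubic modified energy $\tilde{E}_s(u)$ at the $H^s$ level, obtained from $\|u\|_{H^s}^2$ by a cubic normal form correction that removes the non-resonant cubic contributions to the flux, the remaining resonant cubic terms being killed by the phase rotation symmetry. This produces a flux bound of the schematic form
\begin{equation}
\Bigl|\frac{d}{dt} \tilde{E}_s(u)\Bigr| \lesssim \|u\|_X^4 \, \tilde{E}_s(u),
\end{equation}
where $\|u\|_X$ is a scale-invariant dispersive norm whose control is the content of a separate bootstrap.

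The second step is a three-way bootstrap combining (i) the modified energy $\tilde{E}_s$, (ii) a pointwise or averaged dispersive bound for $u$, and (iii) a bilinear $L^2$ bound coming from an interaction Morawetz functional of the type introduced in the present article for the ultrahyperbolic case. In the defocusing setting of \cite{IT-qnls} all three estimates close uniformly on $\R_+$; in the merely conservative case one expects them to close only up to $t \sim \epsilon^{-8}$, which is the time scale at which the $\epsilon^4$-size quintic flux gain just balances the loss that the lack of a favorable Morawetz sign forces upon the bilinear bound.

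The principal obstacle is the third ingredient. Without the defocusing sign the interaction Morawetz functional is not monotone, so any bilinear $L^2$ estimate comes with a time-dependent loss. The hard part will be to design the Morawetz density, exploiting the geometric flexibility developed in the present paper for the ultrahyperbolic setting where a favorable sign is likewise unavailable, so that this loss is polynomially controlled over $[0,\epsilon^{-8}]$ and matches the gain from the normal form. A secondary difficulty, familiar from \cite{IT-qnls}, is that in the quasilinear setting the metric depends on $u$, so propagating both the dispersive and the bilinear norms requires a paradifferential renormalization with sharp $\epsilon$-tracking; this step should be technically involved but conceptually routine.
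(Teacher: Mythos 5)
This statement is labeled as a \emph{conjecture} in the paper, and the paper does not prove it (nor claim to). It is presented as an analogue of Conjecture~\ref{c:nld} for the case where the defocusing hypothesis is dropped, with the authors explicitly flagging the $\epsilon^{-8}$ time scale as a ``potentially optimal lifespan bound.'' So there is no proof in the paper for you to be compared against; it remains open.

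Judged on its own terms, your proposal is a plausible-sounding strategy sketch but not a proof, and you acknowledge this yourself in your third paragraph: the design of an interaction Morawetz functional whose loss over $[0,\epsilon^{-8}]$ can actually be matched against the normal-form gain is precisely the open content of the conjecture, and you leave it as a ``hard part.'' There is also a technical misstatement worth flagging: in the framework of \cite{IT-qnls}, the phase rotation symmetry does \emph{not} ``kill'' resonant cubic terms; rather, the conservative assumption ($c(\xi,\xi,\xi)$ and $\partial_\xi c(\xi,\xi,\xi)$ real) allows the resonant cubic contribution to the density-flux identity to be rewritten as a spatial divergence, so it is incorporated into a flux correction rather than removed by a normal form. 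Your derivation of the $\epsilon^{-8}$ scale (``the time at which the $\epsilon^4$-size quintic flux gain just balances the loss'') is asserted rather than computed, and it is not clear from your setup what the Morawetz loss actually is or why $\epsilon^{-8}$ emerges rather than, say, $\epsilon^{-4}$ or $\epsilon^{-6}$. A genuine proof would need to close the bootstrap with explicit powers of $\epsilon$ and $T$ at every stage, and the one nontrivial step that would determine the exponent is the step you defer.
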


The conservative condition (which will be relevant also in the two-dimensional setting) is a natural assumption. Roughly, one can interpret this as preventing nonlinear ODE blow-up of solutions with wave packet localization.
\medskip

In contrast to the one-dimensional setting, in two and higher dimensions, solutions exhibit stronger dispersive decay. In particular, the $L^4$ space-time norm of the solutions will play the leading role, as opposed to $L^6$ in one-dimension. We remark that in the semilinear case, 
the small data problem may be treated in a relatively standard fashion (even in the non-elliptic case) by Strichartz estimates, which yields global well-posedness and scattering. The focusing/defocusing assumption is not required in this setting, and only plays a role in the large data case, see e.g. Dodson's results in \cite{D,D-focus}.

Therefore, the most interesting case to consider is that of quasilinear flows, where Strichartz estimates are not typically directly applicable. With this in mind, the recent work \cite{IT-qnls2} posed the following higher dimensional analogue of the above conjectures.

\begin{conjecture}[2D non-localized data  GWP conjecture]\label{c:nld2}
Cubic quasilinear dispersive flows with small data give rise to global in time, scattering solutions in two and higher dimension.
\end{conjecture}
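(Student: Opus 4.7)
The plan is to prove the conjecture in the ultrahyperbolic setting by adapting the three-pronged strategy of \cite{IT-qnls2} to the indefinite-metric context. The overall argument is a bootstrap over a carefully chosen solution norm at the regularity $s = n/2 - 1/2$ (one-half derivative above scaling): (i) a paradifferential recasting of \eqref{qnlsint} together with a modified $H^s$ energy replaces the naive energy estimate and avoids derivative loss from the quasilinear coefficients; (ii) a new family of interaction Morawetz functionals, adapted to the indefinite principal symbol, delivers bilinear $L^2$ bounds despite the lack of ellipticity; (iii) these bilinear bounds are upgraded to an $L^4_{t,x}$ Strichartz-type bound, which in turn controls the cubic source terms arising in (i) and closes the bootstrap.

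First I would normalize so that $g^{jk}(0,0)$ is a fixed non-degenerate indefinite matrix $g_0^{jk}$ and view $g^{jk}(u,\nabla u) - g_0^{jk}$ as a cubic-order perturbation. Using a frequency-envelope paradifferential decomposition, recast \eqref{qnlsint} as
\[
i\partial_t u + T_{g^{jk}}\partial_j\partial_k u = \mathcal{R}(u),
\]
where $\mathcal{R}$ collects cubic and higher errors admitting bounds in terms of bilinear $L^2$ and $L^4_{t,x}$ norms. Following the strategy of \cite{IT-qnls2}, I would build a modified energy $E^s(u) = \|u\|_{H^s}^2 + \text{(cubic normal form correction)}$ whose time derivative cancels the leading non-perturbative resonant interactions, leaving a remainder estimable by the bilinear and Strichartz-type norms alone.

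The key novelty, and the central obstacle, is constructing the interaction Morawetz functional for the non-elliptic symbol $g_0^{jk}\xi_j\xi_k$. In the elliptic setting, positivity rests on convexity of the dispersion relation and a corresponding convex weight, neither of which survives in the ultrahyperbolic regime. The plan is to introduce functionals of the schematic form
\[
I(t) = \iint a(x-y)\, |u(t,x)|^2\, b^j(x-y)\, \mathrm{Im}\bigl(\bar u(t,y)\partial_j u(t,y)\bigr)\, dx\, dy,
\]
with vector weight $b$ chosen so that the symmetric derivative $\partial_j b^k + \partial_k b^j$ reproduces $g_0^{jk}$ up to a controllable remainder, and scalar weight $a$ taken along the \emph{indefinite} pseudo-distance induced by $g_0$ rather than the Euclidean one. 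Computing $\tfrac{d}{dt}I(t)$ using the equation then yields, after the quasilinear corrections are absorbed, a quadratic form in $\nabla u$ against $g_0$ that is nonnegative despite the indefinite character of $g_0$ itself. This is the decisive point where genuinely new ideas beyond \cite{IT-qnls2} are required; the output should be a bilinear estimate of the schematic form $\||D|^{1/2}(|u|^2)\|_{L^2_{t,x}} \lesssim \|u\|_{L^\infty_t H^s}^2$ modulo small perturbative contributions.

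With the bilinear bound in hand, the remainder of the argument proceeds along elliptic lines: a Littlewood--Paley and orthogonality argument converts the bilinear $L^2$ estimate into a global $L^4_{t,x}$ bound on the solution, which then controls all the cubic corrections entering $E^s$. A standard continuity argument in the bootstrap quantity yields global existence and the weak scattering statement (uniform $L^4_{t,x}$ and bilinear $L^2$ bounds), and the sharpened local well-posedness result at the same regularity drops out as a byproduct, since only the localized-in-time versions of the same estimates are needed on a finite interval.
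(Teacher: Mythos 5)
The key idea you propose for the central obstacle---the interaction Morawetz weight---is not what the paper does, and as stated it would likely fail. You suggest taking the scalar weight $a$ ``along the indefinite pseudo-distance induced by $g_0$'' and choosing a vector weight $b^j$ whose symmetrized derivative reproduces $g_0^{jk}$. But any weight whose Hessian tracks the indefinite form $g_0$ has an indefinite Hessian, and the Morawetz quadratic form $\bJ^4 \sim \iint a_{jk}(x-y)F^j\bar F^k$ then has no sign. Moreover the pseudo-distance $|g_0^{jk}x_jx_k|^{1/2}$ degenerates on the null cone of $g_0$, precisely where the problematic resonances live. The paper's actual construction is different and, crucially, stays with the \emph{Euclidean} distance: for each dyadic scale $r>0$ it uses the corrected weight $a_r(x) = |x| + \tfrac{r}{2}e^{-r^{-1}|x|}$, whose Hessian is bounded \emph{below} (in the Euclidean sense) by a Gaussian $b_r(x)\gtrsim r^{-1}e^{-r^{-2}|x|^2}$. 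Positivity then holds for the quadratic form in $G_j = g^{jk}F_k$, and \emph{mere non-degeneracy} of $g$ converts $|G|^2$ bounds into $|F|^2$ bounds. The price is that each $r$ only controls one dyadic output shell, so the balanced bilinear estimate comes in a Besov form $\sup_j\|P_j|D|^{(3-n)/2}|u|^2\|_{L^2_{t,x}}$ with an arbitrarily small dyadic loss---not the clean $\||D|^{1/2}|u|^2\|_{L^2_{t,x}}$ bound you write, and this loss must then be recovered by interpolation with Strichartz estimates. This is a genuinely different mechanism from what you propose.

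Two further gaps. First, you do not address the resonance structure that is new in the indefinite case: because the null set $\{\xi : g^{jk}\xi_j\xi_k = 0\}$ is unbounded, the cubic terms \emph{without} phase rotation symmetry acquire additional resonant interactions at high frequency. The paper isolates these as ``weakly transversal'' (all three input frequencies equal to some $\xi\neq 0$, output at $-\xi$ or $\pm 3\xi$, hence separated from the inputs by $O(|\xi|)$) and must extend the unbalanced bilinear $L^2$ estimate to frequency-separated waves living in the \emph{same} dyadic annulus to treat them; your proposal has no mechanism for this. Second, a numerological point: you write $s = n/2 - 1/2$ and call it half a derivative above scaling, but $s_c = n/2$ for \eqref{qnls}, so $n/2-1/2$ is half a derivative \emph{below} scaling; the paper's threshold is $s > (n+1)/2$. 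Finally, the paper obtains classical scattering (convergence of $e^{-it\Delta_{g(0)}}u(t)$ in $H^s$) in dimensions $n\geq 2$, not merely the weak scattering via uniform $L^4$ and bilinear bounds that you state; the latter is the 1D phenomenon.
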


In contrast to 1D, here we interpret scattering in the classical fashion, but with the caveat that, due to the nature of quasilinear  
well-posedness, we only expect convergence to a linear evolution in a weaker topology. In the prior article \cite{IT-qnls2}, this was established for all cubic quasilinear Schr\"odinger flows with positive-definite metrics. In the present paper, we will establish this conjecture for merely non-degenerate metrics, thus establishing the global well-posedness conjecture for the full class of (non-degenerate) quasilinear, cubic Schr\"odinger flows. When precisely outlining our results below, we will impose no additional structural assumptions in dimension three and higher, nor in dimension two at sufficiently high Sobolev regularity. However, in order to obtain the lowest possible global well-posedness Sobolev threshold in dimension two, we will additionally require the flow to be conservative (a condition which we will describe more precisely below).
\\

Although we will study this conjecture in all dimensions, the most interesting case by far is two dimensions, where the dispersive character is weakest. Prior to the work of the first and third author in \cite{IT-qnls2}, no small-data global well-posedness results existed in this setting. The reason for this is morally due to the fact that in two space dimensions $L^4_{t,x}$ is a sharp Strichartz norm, whereas in higher 
dimensions there is room to interpolate. In fact, in dimension three and higher, global well-posedness was known in some specific model cases, though only at higher regulatity; see for instance \cite{HLT}. 

\subsection{ Quasilinear Schr\"odinger flows: local solutions}

The general form for a  quasilinear Schr\"odinger flow  is
\begin{equation}\tag{UDQNLS}
\label{dqnls}
\left\{ \begin{array}{l}
i u_t + g^{jk}(u,\nabla u ) \partial_j \partial_k u = 
N(u,\nabla u) , \quad u:
\R \times \R^n \to \C \\ \\
u(0,x) = u_0 (x),
\end{array} \right. 
\end{equation}
with a metric $g$ which is a real valued, symmetric function and a source term $N$ which is a complex valued
smooth function of its arguments satisfying the vanishing condition \eqref{cubiccondition}. 
Here smoothness is interpreted in the real sense, but if $g$ 
and $N$ are analytic, then they can also be thought of as separately (complex) analytic functions of $u$ and $\bu$, which is the interpretation we use in the present paper.

In a similar vein, we also consider the  problem (written in divergence form for simplicity)
\begin{equation}\tag{UQNLS}
\label{qnls}
\left\{ \begin{array}{l}
i u_t + \partial_jg^{jk}(u) \partial_k  u = 
N(u,\nabla u) , \quad u:
\R \times \R^n \to \C \\ \\
u(0,x) = u_0 (x),
\end{array} \right. 
\end{equation}
where $N$ is  at most quadratic in $\nabla u$. This can be viewed as a differentiated form of \eqref{dqnls}. Therefore, while we will state our results for both
of these flows, we will only give complete proofs for \eqref{qnls} to isolate the key points.

The first objective of this article is to study the question of local well-posedness in Sobolev spaces. Even in the small data regime, this comes with a multitude of difficulties due to the infinite speed of propagation (in sharp contrast to corresponding questions for nonlinear wave equations). For instance, in problems with only quadratic nonlinearity $N$, a well-known generic necessary condition for $L^2$ well-posedness of the corresponding linearized equation (around a solution $u$) is essentially that $u$ be integrable along the Hamilton flow corresponding to the principal differential operator $\partial_jg^{jk}\partial_k$. Even for flat metrics (when the Hamilton curves are straight lines), this condition fails for initial data in $H^s$. See for instance, the classical works of Takeuchi~\cite{MR1191488}, Mizohata~\cite{MR0860041} and Ichinose~\cite{MR0948533}.

Problems of this type were first studied in the semilinear setting in the work of Kenig-Ponce-Vega~\cite{MR1230709} for  small data and later Hayashi-Ozawa~\cite{MR1255899} and Chihara~\cite{MR1344627} for large data, making use of ideas of Doi~\cite{MR1284428}.
Finally, Kenig-Ponce-Vega~\cite{MR1660933} also considered the (semilinear) ultrahyperbolic case. In all of these results, the data is localized. In contrast, the first result in Sobolev-type spaces with a weaker translation invariant decay assumption (imposed in view of the necessary Mizohata-type integrability condition mentioned above) was due to Bejenaru-Tataru~\cite{MR2443925}.

The quasilinear case also has a rich history. The first problems that were considered in this case were either one-dimensional or tailored to specific model problems. See for instance, de Bouard-Hayashi-Naumkin-Saut~\cite{BNPS}, Colin~\cite{Co}, Poppenberg~\cite{Pop} and Lin-Ponce~\cite{Lin-Ponce}.

The pioneering local well-posedness results 
for general quasilinear problems are due to Kenig-Ponce-Vega and Kenig-Ponce-Rolvung-Vega in the series \cite{kenig2005variable, kenig2006general, kenig1998smoothing, kenig2004cauchy} which collectively considered both the elliptic and ultrahyperbolic cases. These results are posed in high regularity, weighted Soboelv spaces (i.e. for smooth, localized data). A natural next step was 
to study the local well-posedness question in low regularity Sobolev spaces with the weaker translation-invariant decay condition mentioned above. This was accomplished 
by Marzuola-Metcalfe-Tataru, first for small data  in \cite{MMT1,MMT2}, and then for large data in \cite{MMT3}; the latter 
article considers only the elliptic case,
while the ultra-hyperbolic case was resolved (in the same scale of spaces) in the recent work of Pineau-Taylor~\cite{pineau2024low}. See also the related to the work of Jeong-Oh~\cite{2024arXiv240206278J}, which considered the well-posedness problem for the electron MHD equations (which can be interpreted as a (possibly degenerate)  nonlocal ultrahyperbolic Schr\"odinger type equation). We also mention another recent result of
Shao-Zhou~\cite{2023arXiv231102556S} which (among other things) recovers the results of \cite{KPV,MMT2} by an alternative approach.
\medskip 

A very important distinction made in these last papers was between quadratic and cubic (or higher order) nonlinearities. Due to the Mizohata-type necessary condition mentioned above, it is only in the cubic case that local well-posedness 
is proved in classical $H^s$ Sobolev spaces. Another important distinction is between small and large data; in the latter case, the Hamilton curves of $\Delta_g$ can be confined (indefinitely) to a compact set, which causes large energy growth to occur on arbitrarily short time scales. To overcome this, one has to impose a nontrapping condition on the (initial data) metric. In the small data regime, the Hamilton trajectories (for sufficiently regular data) are close to straight lines, so that non-trapping is automatic.

In addition to the general setting mentioned above, many other related models have also been studied. For a non-exhaustive sample, we refer the reader to Colin~\cite{MR1886962}, Chemin-Salort~\cite{MR3336355} and Huang-Tataru~\cite{HT1,HT2} and also the 
small data global result of Huang-Li-Tataru~\cite{HLT}; the latter three articles consider the skew mean curvature flow, which can be viewed as a quasilinear Schr\"odinger evolution with respect to a suitable gauge.

In order to work in vanilla $H^s$ Sobolev spaces, we will restrict our attention to the cubic case, meaning that the metric $g$ and nonlinearity $N$ are assumed to satisfy the vanishing condition \eqref{cubiccondition}. For ease of comparison with the results in the present article, we briefly summarize the results of MMT and PT below. In the case of positive-definite metrics we have the following
\begin{theorem}[cubic nonlinearities, positive-definite metric \cite{MMT2,MMT3}]\label{t:regular}
For a positive-definite metric $g$, the nD cubic problem \eqref{qnls} is locally well-posed 
for nontrapping data in $H^s$ for $s > \frac{n+3}2$, and the cubic problem \eqref{dqnls} is locally well-posed 
in $H^s$ for $s > \frac{n+5}2$.
\end{theorem}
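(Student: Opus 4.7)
My plan is to follow the Marzuola--Metcalfe--Tataru strategy: build a functional framework that simultaneously captures $L^\infty_t H^s$ energy and a half-derivative local smoothing gain, then close a nonlinear bootstrap using paradifferential calculus and a Doi-type positive commutator argument.

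First, I would introduce the solution space $X^s$, defined as $L^\infty_t H^s$ supplemented by a dyadic $\ell^1_j L^2_{t,x}$-type Kato smoothing norm on each frequency-localized piece $P_k u$ (summed over dyadic spatial cubes of scale $2^{-k/2}$), along with its natural inhomogeneous counterpart $Y^s$. The first target is the flat Schr\"odinger estimate $\|u\|_{X^s} \leq C(\|u_0\|_{H^s}+\|f\|_{Y^s})$, which follows from Kato--Yajima smoothing. Next I would upgrade this to the variable coefficient linear operator $i\partial_t + \partial_j g^{jk}(u)\partial_k$ (for \eqref{qnls}) with frozen rough coefficients. The nontrapping hypothesis is consumed here, in the Doi construction of a pseudodifferential multiplier whose Poisson bracket with the symbol is uniformly positive; the resulting positive commutator yields the smoothing bound in the $X^s$-$Y^s$ framework.

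Second, I would paralinearize the principal operator via Bony's calculus, isolating a top order piece $\partial_j T_{g^{jk}}\partial_k u$ with smoothed coefficients from a lower order remainder. For the paradifferential symbol to have a well-behaved $C^{1,\varepsilon}$ spatial regularity, one needs $g(u)\in C^{1,\varepsilon}_x$, i.e., $s > n/2 + 1$ for \eqref{qnls} and $s > n/2 + 2$ for \eqref{dqnls} (since $g$ then depends on $\nabla u$). The extra half-derivative loss inherent to the smoothing machinery (the pairing between the $\ell^1$ and $\ell^\infty$ dyadic summations with the resolvent estimate) pushes these thresholds up to $s > (n+3)/2$ and $s > (n+5)/2$ respectively, matching the statement.

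Third, I would close the nonlinear problem. The cubic vanishing condition \eqref{cubiccondition} gives $g^{jk}(u)-g^{jk}(0) = \mathcal{O}(|u|^2)$ and $N = \mathcal{O}(|u|^3 + |\nabla u|^3)$. After paralinearization, the genuinely quasilinear contribution is absorbed into the principal symbol with smallness quadratic in $\|u\|_{X^s}$, while the balanced and high-low remainder terms are placed in $Y^s$ by Sobolev embedding together with the cubic structure; here the nontrapping requirement on the initial metric propagates for short times since trajectories remain close to those of $g(u_0)$. A standard quasilinear iteration (with frequency-envelope bookkeeping to avoid loss of derivatives) produces the solution, and applying the linear estimates to differences delivers uniqueness and continuous dependence.

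The main obstacle I anticipate is the variable-coefficient local smoothing step: one must construct a Doi multiplier adapted to the rough metric $g(u)$, verify its positivity modulo errors controllable in the low-regularity regime, and dovetail it with the paradifferential decomposition so that neither the symbol calculus nor the positive commutator bleeds away more than the advertised half-derivative. Equivalently, one must quantify how nontrapping for a smooth reference metric degrades when the metric is only $C^{1,\varepsilon}$ in $x$, and this is precisely where the Sobolev thresholds $(n+3)/2$ and $(n+5)/2$ are generated.
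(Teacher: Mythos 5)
This theorem is not proved in the paper at all: it is quoted as background and attributed directly to \cite{MMT2,MMT3}, so there is no internal proof to compare yours against. That said, your sketch is a broadly faithful high-level summary of the Marzuola--Metcalfe--Tataru program for those papers: the $l^1$-summed dyadic local-smoothing spaces (their $X^s$ built over spatial cubes at scale $2^{-k/2}$), Doi's escape-function positive commutator to transfer Kato smoothing to a variable nontrapping metric, Bony paralinearization of the principal part, and a frequency-envelope bootstrap/iteration. Two refinements worth flagging. First, the thresholds $s>(n+3)/2$ and $s>(n+5)/2$ are not really produced by demanding $g(u)\in C^{1,\varepsilon}_x$; they come from the bilinear estimates needed to place the paradifferential remainders and the cubic source into the dual smoothing space, and from the commutator errors when the smoothing multiplier meets the frequency-truncated metric --- a genuinely multilinear bookkeeping rather than a Hölder-regularity count. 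Second, the nontrapping hypothesis as stated is for the large-data result (\cite{MMT3}); in the small-data regime of \cite{MMT2} nontrapping is automatic because the bicharacteristics of a small $H^s$ perturbation of the flat metric escape, so the Doi multiplier can be constructed explicitly without a separate dynamical hypothesis. Neither correction changes the architecture you propose, but they matter if you wanted to actually push the argument through.
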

We also have the following generalization to the case of an indefinite metric, 
\begin{theorem}[cubic nonlinearities, indefinite metrics \cite{pineau2024low}]\label{t:regular1}
 For a non-degenerate metric $g$, the nD cubic problem \eqref{qnls} is locally well-posed 
for nontrapping data in $H^s$ for $s > \frac{n+3}2$, and the cubic problem \eqref{dqnls} is locally well-posed 
in $H^s$ for $s > \frac{n+5}2$.
\end{theorem}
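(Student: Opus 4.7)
The plan is to adapt the Marzuola--Metcalfe--Tataru (MMT) strategy from \cite{MMT2,MMT3} to the ultra-hyperbolic setting. First, I would fix a function space framework of the form
\[
X^s \;=\; L^\infty_t H^s_x \,\cap\, \mathrm{LE}^s,
\]
where $\mathrm{LE}^s$ is a local energy (local smoothing) space measuring a $\tfrac12$ gain in angularly localized frequency pieces, together with a dual space $Y^s$ for the nonlinear source. The goal is to prove that the linearization of \eqref{qnls} around a fixed nontrapping reference solution is well-posed from $X^s$ to $Y^s$, and then close a standard quasilinear fixed-point argument using these estimates. In the small data regime the nontrapping condition is automatic, since the Hamilton flow of $\partial_j g^{jk}(u)\partial_k$ is a small perturbation of straight lines once $u$ is small in $H^s$ for $s > \tfrac{n+3}{2}$, which embeds into $C^2$. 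The only structural change from MMT is that the principal symbol $p(x,\xi) = g^{jk}(u(x))\xi_j\xi_k$ is now indefinite but uniformly non-degenerate in the sense of \eqref{nondegen}.

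The core linear step is a Doi-type positive commutator estimate. Classically one chooses an escape function $a(x,\xi)$, homogeneous of degree zero in $\xi$, with $\{p,a\} \gtrsim \langle x\rangle^{-2}|\xi|$ along the Hamilton flow. In the elliptic case, a model choice is $a(x,\xi) = \arctan(x\cdot \xi/|\xi|_g)$. In the ultra-hyperbolic case the level sets of $p$ are non-compact hyperboloids, and the bicharacteristic $\dot x = 2g(x)\xi$ need not point radially outward. The fix is to build an escape function adapted to the generalized radial direction $x \cdot g(x)\xi / |g(x)\xi|$; the non-degeneracy \eqref{nondegen} guarantees that this direction is genuinely transverse to the cosphere bundle, and the smallness of $\nabla g$ (coming from the cubic vanishing \eqref{cubiccondition} together with $u$ small) makes the resulting flow a small perturbation of the flat ultra-hyperbolic one, on which the Doi argument is explicit. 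With $a$ in hand, one conjugates the linearized operator by $e^{A}$ with $A = \mathrm{Op}^W(a)$ to absorb the Mizohata-type first order term, and a sharp G\aa rding argument then produces the local smoothing bound.

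Once the linear estimate
\[
\|v\|_{X^\sigma} \;\lesssim\; \|v(0)\|_{H^\sigma} + \|(i\partial_t + \partial_j g^{jk}\partial_k) v\|_{Y^\sigma}
\]
is established for $\sigma$ in a range around $s$, the nonlinear argument proceeds by paralinearizing \eqref{qnls}: one writes $g^{jk}(u) = T_{g'(u)} u + (\text{smoother})$ and places the paraproduct remainders in $Y^s$ using the local smoothing norm twice (once for each factor of $u$ in the cubic correction). For \eqref{dqnls}, exactly the same scheme runs with one extra derivative of regularity on the coefficients, explaining the shift from $s > \tfrac{n+3}{2}$ to $s > \tfrac{n+5}{2}$. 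Uniqueness and continuous dependence follow from the same estimates applied to the difference of two solutions in a weaker Sobolev index $\sigma = s-1$, as in MMT.

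The main obstacle is precisely the construction of the escape function in the ultra-hyperbolic geometry. In the elliptic setting positivity of the metric allows one to use $|\xi|_g$ as a radial variable, so that the Hamilton flow at infinity is genuinely dispersive; in the indefinite setting one must work fiber-wise on each non-compact level set of $p$ and check that the escape function has the required positive Poisson bracket uniformly. I would handle this by first doing the flat case $g = g(0)$ via an explicit change of variables that block-diagonalizes the signature, then treating the variable-coefficient problem as a small, cubic-order perturbation controlled by the smallness of $u$ in $H^s$.
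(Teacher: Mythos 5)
This statement is a quoted background result from Pineau--Taylor~\cite{pineau2024low}; the present paper does not reprove it, and indeed the whole point of the article is to replace the local-smoothing machinery of \cite{MMT2,MMT3,pineau2024low} with an interaction Morawetz framework. So there is no ``paper's own proof'' here to match against, only the cited reference. With that caveat, your sketch is a reasonable high-level reconstruction of the \cite{MMT2}/\cite{pineau2024low} strategy: local energy (local smoothing) spaces $\mathrm{LE}^s$ with duals, a Doi-type positive commutator argument providing the local smoothing estimate, a paradifferential reformulation, and nontrapping being automatic for small data. You also correctly identify the central new difficulty in the indefinite case, namely that $|\xi|_g$ is no longer a usable radial variable and the escape function has to be built from the group velocity $g\xi$ rather than $\xi$ directly; the smallness of $\nabla g(u)$ then lets one treat the bicharacteristic flow as a perturbation of straight lines.

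Two caveats worth flagging. First, the Sobolev embedding you invoke is off: $s > \tfrac{n+3}{2} = \tfrac{n}{2} + \tfrac32$ gives $H^s \hookrightarrow C^{1,\alpha}$ for $\alpha < \tfrac12$, not $C^2$; the threshold $\tfrac{n+3}{2}$ in \eqref{qnls} is dictated by needing $\nabla g(u) \in L^\infty$ with a residual $\tfrac12$ derivative to spend on local smoothing, not by a $C^2$ embedding. Second, the MMT/PT framework does not literally perform a global conjugation by $e^{\mathrm{Op}^W(a)}$ in the style of Doi; it instead encodes the positive-commutator gain into the definition of the $\mathrm{LE}^s$ norms (via a summable family of cube- and frequency-localized pieces) and proves the linear estimate directly by testing against the paradifferential flow. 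Your outline captures the spirit but collapses several of the space-construction and summation steps that are where most of the technical work in \cite{pineau2024low} actually lives.
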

As mentioned earlier, the nontrapping assumption is only needed for large data, and is redundant for small data. In the case of a positive-definite metric, these results were improved in \cite{IT-qnls} (in 1D) and \cite{IT-qnls2} (in higher dimensions) up to the natural, (conjectured to be optimal) Sobolev threshold in the small data regime:
\begin{theorem}[cubic nonlinearities, positive-definite metrics \cite{IT-qnls}, \cite{IT-qnls2}]\label{t:regularnew}
For a positive-definite metric, the nD cubic problem \eqref{qnls} is locally well-posed 
for small data in $H^s$ for $s > \frac{n+1}2$, and the cubic problem \eqref{dqnls} is locally well-posed 
in $H^s$ for $s > \frac{n+3}2$.
\end{theorem}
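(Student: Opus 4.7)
The plan is to follow the three-pronged a priori estimate strategy introduced in \cite{IT-qnls2}, closing simultaneously an $H^s$ energy bound, an $L^4_{t,x}$ Strichartz bound, and a bilinear $L^2$ interaction-Morawetz bound, all coupled through a frequency-envelope bootstrap. Only the divergence form \eqref{qnls} needs to be treated directly; \eqref{dqnls} is then obtained by differentiating and applying the result for \eqref{qnls} to $\nabla u$, which accounts for the one-derivative shift in the Sobolev threshold.

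First I would paralinearize \eqref{qnls}, writing it in the form
\[
i u_t + \partial_j T_{g^{jk}(u)}\partial_k u = F(u,\nabla u),
\]
where $F$ collects the paraproduct remainders together with the source term $N$. Under the cubic assumption \eqref{cubiccondition}, every remainder contributing to $F$ is at least cubic in $u$, while the quasilinear correction $\partial_j T_{g^{jk}(u)-g^{jk}(0)}\partial_k u$ is quadratic in $u$ at leading order. A naive $H^s$ energy estimate loses one derivative on this latter term, so I would introduce a modified energy
\[
E_s(u) = \|u\|_{H^s}^2 + E_s^{(4)}(u),
\]
with $E_s^{(4)}$ a quartic form in frequency-localized pieces of $u$, chosen so that its time derivative exactly cancels the non-perturbative top-order contributions. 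This is a normal-form transformation performed at the level of the energy, not the equation, so it avoids any small-divisor issue and requires only smoothness of $g$.

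The second and third ingredients control the cubic remainders without loss of derivatives. The bilinear $L^2$ estimate has the schematic shape
\[
\|D_x^{1/2}(u_\lambda \bar u_\mu)\|_{L^2_{t,x}} \lesssim \|u_\lambda(0)\|_{L^2}\|u_\mu(0)\|_{L^2} + \text{perturbative}
\]
for frequency-localized pieces at separated scales $\lambda \gg \mu$, and it would be proved from an interaction Morawetz functional adapted to the metric $g$; positive definiteness of $g$ makes coercivity immediate. The $L^4_{t,x}$ Strichartz bound (and its higher-dimensional analogues) for the paradifferential flow would be established by a standard $TT^*$/wave-packet argument on short time intervals where $g(u)$ is close to the flat metric. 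Combined, these provide control of $\nabla u$ in spaces strong enough that every cubic error in the modified energy estimate can be absorbed; the thresholds $s > \tfrac{n+1}{2}$ for \eqref{qnls} and $s > \tfrac{n+3}{2}$ for \eqref{dqnls} are precisely what is needed for this absorption, via Sobolev embedding into the relevant Strichartz and bilinear norms.

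Given the a priori bounds, existence is obtained by mollifying the data, invoking the high-regularity theory of Theorem \ref{t:regular} for the smooth problem, and passing to the limit using uniform $H^s$ control. Uniqueness and continuous dependence follow from the analogous set of estimates for the difference $u-v$ of two solutions, measured in weaker norms ($L^2$ for the energy and a compatible Strichartz/bilinear scale), combined with a Bona--Smith style frequency-envelope argument. The main obstacle, and the crux of the argument, is the compatibility of the modified energy with the bilinear and Strichartz norms: the quartic correction $E_s^{(4)}$ must be engineered so that its time derivative decomposes into quintic expressions that can be bounded by exactly those $L^4_{t,x}$ and bilinear $L^2$ norms one controls, with no spare derivative. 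Selecting these multipliers and carrying out the paraproduct/commutator bookkeeping at the borderline regularity is what drives the sharp Sobolev threshold.
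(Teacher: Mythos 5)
Your broad outline --- paradifferential reduction, interaction-Morawetz bilinear $L^2$ bounds, Strichartz estimates with loss, a frequency-envelope bootstrap, and a Bona--Smith type limiting argument --- does match the strategy of \cite{IT-qnls2} (mirrored in Sections~\ref{s:lin}--\ref{s:rough} of the present paper). But there is a genuine misconception about where the difficulty lies. After reducing to the frequency-localized paradifferential equation $i\partial_t u_\lambda + \partial_j g^{jk}_{[<\lambda]}\partial_k u_\lambda = N_\lambda(u)$, the principal part is in symmetric divergence form with real, symmetric $g$ at frequencies $<\lambda$, so pairing against $\bar u_\lambda$ kills the top-order term identically: $\frac{d}{dt}\|u_\lambda\|_{L^2}^2 = 2\Im\int N_\lambda(u)\,\bar u_\lambda\,dx$, with \emph{no} derivative loss. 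There is therefore nothing at leading order for a quartic energy correction $E_s^{(4)}$ to cancel, and none is used for the local result. The entire weight of the thresholds $s>\frac{n+1}{2}$ (resp.\ $s>\frac{n+3}{2}$) is carried by showing the paradifferential source is perturbative, $\|N_\lambda(u)\,\bar u_\lambda^{x_0}\|_{L^1_{t,x}}\lesssim \epsilon^4 c_\lambda^2\lambda^{-2s}$ (Proposition~\ref{p:N-lambda}), which is proved with the balanced and unbalanced interaction-Morawetz bilinear $L^2$ bounds and the lossy Strichartz bounds. The quartic density and flux corrections you gesture at do exist in \cite{IT-qnls2} and in Section~10 here, but they serve a different purpose: handling balanced doubly resonant cubic interactions over an \emph{infinite} time horizon in the low-regularity 2D global result (Theorem~\ref{t:global2c}), where the conservative assumption is also invoked. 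They play no role in local well-posedness or in the higher-dimensional results.

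A secondary inaccuracy: the lossy Strichartz estimates for the paradifferential flow are not obtained by short-time $TT^*$/wave-packet arguments on intervals where $g(u)$ is nearly flat. They are proved globally in time by estimating the variable-coefficient perturbation in $DV^2_{UH}$ or $N^\sharp$ and using the unbalanced bilinear $L^2$ bounds against $U^2_{UH}$ test functions (Theorem~\ref{t:para-se}); the paper notes explicitly that these bounds cannot be obtained by summing lossless estimates over short time scales. For $n\geq 4$ the $L^4_{t,x}$ bound is in fact a direct consequence of the balanced interaction-Morawetz estimate, with no Strichartz input required.
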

One of the primary objectives of this paper is to remove the remove the positive-definite requirement on the metric. Our main local result is the following: 
\begin{theorem}\label{t:local3+}
Let $n \geq 2$. For a non-degenerate metric, the nD cubic problem \eqref{qnls} is locally well-posed for small data in $H^s$ for $s >  \frac{n+1}2$, and the nD cubic problem \eqref{dqnls} is locally well-posed for small data in $H^s$ for $s >  \frac{n+3}2$.
\end{theorem}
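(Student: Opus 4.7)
The plan is to follow the template of \cite{IT-qnls2} for the elliptic case, with a single but crucial new ingredient — an interaction Morawetz analysis adapted to indefinite metrics. I focus on \eqref{qnls}; the \eqref{dqnls} statement follows by differentiating the equation and applying the \eqref{qnls} theory to $\nabla u$, with the regularity gap $\frac{n+3}{2} - \frac{n+1}{2} = 1$ exactly accounting for one derivative. For small data in $H^s$ with $s > \frac{n+1}{2}$, I write $g^{jk}(u) = g^{jk}(0) + h^{jk}(u)$ with $h^{jk} = \mathcal{O}(|u|^2)$ by \eqref{cubiccondition}, so that the leading operator is a constant-coefficient, non-degenerate but indefinite Schr\"odinger operator. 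I build three function spaces, all dyadically localized against a frequency envelope $\{c_k\}$ compatible with $H^s$: an $X$-norm capturing $L^4_{t,x}$-type Strichartz control adapted to the non-elliptic signature (available from the parametrix/FBI analysis of \cite{pineau2024low}); a bilinear $Y$-norm controlling dyadic products $\|P_j u \cdot P_k u\|_{L^2_{t,x}}$ with a high-frequency gain of $\min(2^j, 2^k)^{-1/2}$; and the usual $L^\infty_t H^s_x$ energy norm.

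The standard $H^s$ energy identity for \eqref{qnls} produces quartic error terms in which two factors carry top derivatives. These are too rough to close at $s > \frac{n+1}{2}$, but can be removed, modulo genuinely lower-order contributions, by adding a quasilinear paradifferential normal form correction $E^s_{\mathrm{corr}}(u)$ whose time derivative cancels the worst quadratic-in-derivatives contributions using the cubic vanishing \eqref{cubiccondition}. The residual is controlled by $\|u\|_X \|u\|_Y \|u\|_{L^\infty_t H^s_x}^2$, thereby reducing the $H^s$ bound to a bootstrap on $X$ and $Y$. Strichartz control on $X$ is obtained from a parametrix for $S(t)$ adapted to the indefinite symbol, with metric corrections $h^{jk}(u)$ treated perturbatively.

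The central new work is the bilinear $L^2$ bound controlling $Y$ — this is the main obstacle. In the elliptic setting, the interaction Morawetz functional with weight $a(x-y) = |x-y|$ (or $-\log|x-y|$ in $n=2$) is coercive because $g^{jk}\partial_j\partial_k a$ is positive; when $g$ is indefinite this scalar is no longer signed, and no single weight can yield coercivity. My strategy is to introduce a \emph{family} of interaction Morawetz functionals
\begin{equation}
M_a(u) = \int\!\!\int a(x-y)\, |u(t,x)|^2 |u(t,y)|^2 \, dx\, dy,
\end{equation}
parameterized by anisotropic weights $a_\xi$ indexed by directions $\xi \in S^{n-1}$, each tailored so that $g^{jk}\partial_j\partial_k a_\xi$ is positive precisely on a conical frequency sector around $\xi$. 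The non-degeneracy assumption \eqref{nondegen} guarantees that a finite cover (or an average over a compact parameter set) of such weights produces a positive contribution on every frequency direction, so that summing over $\xi$ dominates each dyadic piece of $\|u \cdot \bar u\|_{L^2_{t,x}}$. This yields the dyadic bilinear estimate
\begin{equation}
\| P_j u \cdot P_k u \|_{L^2_{t,x}} \lesssim 2^{-\min(j,k)/2}\, \|u\|_{L^\infty_t H^s_x}^2,
\end{equation}
and then the $Y$-norm bound after dyadic summation against the envelope $\{c_k\}$. Variable-coefficient errors from $h^{jk}(u)$ are perturbative thanks to smallness and are absorbed via the $X,Y$ bootstrap.

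Closing the argument is then standard: the frequency envelope bookkeeping combines the Strichartz, energy, and bilinear estimates into a closed a priori bound for $\|u\|_{L^\infty_t H^s_x} + \|u\|_X + \|u\|_Y$ on a short time interval depending only on $\|u_0\|_{H^s}$. The linearized/paradifferential equation — with coefficients frozen to $u$ — satisfies the same three families of estimates by identical arguments, yielding well-posedness of the linearization in a suitable weaker space; the quasilinear existence, uniqueness and continuous dependence in $H^s$ then follow via a standard contraction/frequency-envelope iteration scheme. Applying this to \eqref{qnls} proves the first half of the theorem, and differentiating a solution of \eqref{dqnls} and invoking the \eqref{qnls} result on $\nabla u$ produces the second half.
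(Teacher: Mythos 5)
Your overall architecture (energy $+$ bilinear $L^2$ via interaction Morawetz $+$ Strichartz, tracked with frequency envelopes, then paradifferential linearization and limits of regular solutions) is the right skeleton and broadly matches the paper's. You also correctly isolate the genuine new difficulty: the standard Morawetz weight $a(x)=|x|$ produces a quantity like $\||D|^{-\frac{n+1}{2}}\Delta_g|u|^2\|_{L^2}$, which loses coercivity when $g$ is indefinite because $\Delta_g$ is not elliptic. The gap lies in your proposed remedy. You claim a family of anisotropic weights $a_\xi$, $\xi\in S^{n-1}$, ``tailored so that $g^{jk}\partial_j\partial_k a_\xi$ is positive precisely on a conical frequency sector around $\xi$,'' followed by a finite cover or an average. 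As written this doesn't type-check: $g^{jk}\partial_j\partial_k a_\xi$ is a scalar function of the spatial variable, not a Fourier symbol, so ``positive on a frequency sector'' needs a substantially more careful formulation. More fundamentally, the interaction Morawetz identity $\frac{d}{dt}\mathbf I_a = \mathbf J^4_a + \mathbf K_a$ is linear in the weight $a$, so averaging $a_\xi$ over $\xi$ just yields a single averaged weight, whose Hessian can again degenerate; and if instead one keeps the $a_\xi$ separate, each $\mathbf J^4_{a_\xi}$ has a sign-indefinite part off the $\xi$-sector for which you offer no estimate. You also give no construction of $a_\xi$. The step that is meant to yield coercivity is therefore unsupported.

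The paper resolves this differently and more simply. The coercive main term of the flux is the pointwise expression $\int a_{jk}(x-y)\,F^{j}\bar F^{k}\,dxdy$ with $F^{j}=g^{jk}F_k$, so what is needed is positivity of the \emph{Euclidean} Hessian $D^2 a$, not any frequency-sector condition on $\Delta_g a$. The indefinite metric then enters only through the comparison $|F^{\cdot}|\approx |F_\cdot|$, which requires nothing more than uniform non-degeneracy of $g$. The obstruction is instead that $D^2|x|$ degenerates in the radial direction, and the paper fixes this with the one-parameter family $a_r(x)=|x|+\tfrac r2 e^{-r^{-1}|x|}$, whose Hessian has Gaussian lower bound $D^2a_r\gtrsim r^{-1}e^{-r^{-2}|x|^2}\,I$. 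Taking the supremum over $r$ gives the Besov-type bound $\sup_j\|P_j|D|^{\frac{3-n}{2}}|u|^2\|_{L^2_{t,x}}$, an arbitrarily small dyadic loss. Notably this argument needs no smallness of $g-g(0)$ at the main-term level (see their Remark on uniform non-degeneracy), which is stronger than what your perturbative treatment would yield. Two further differences worth noting: the paper needs a second, distinct Morawetz functional (a half-space weight along a separating direction) for the unbalanced bilinear estimate, which you do not address but which is required even for the local result; and the paper's Strichartz bounds for the paradifferential flow are derived from the bilinear estimates, not from a parametrix for the linear propagator as you propose.
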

This result represents a significant generalization of the corresponding result in \cite{IT-qnls2} for positive-definite metrics. Moreover, this also yields a major improvement in regularity over the result of \cite{pineau2024low} in the small data regime. 
\medskip 

As in the previous article \cite{IT-qnls2}, we interpret well-posedness in an enhanced Hadamard sense, which in this article includes the following suite of important properties 
:
\begin{itemize}
    \item existence of solutions in  $C([0,T]; H^s)$.
    \item uniqueness of regular solutions.
    \item uniqueness of rough solutions as 
    uniform limits of regular solutions.
    \item continuous dependence on the initial data.
    \item Lipschitz dependence of the solutions on the initial data in a weaker topology ($L^2$)
    \item higher regularity: more regular data yields more regular solutions.
\end{itemize}

We importantly remark that the above results do not require any additional structure for the nonlinearity, other than that it is cubic (in contrast to some of the 2D global results below). 
\medskip

To better understand the numerology of the above result, it is instructive to compare the regularity restriction in Theorem~\ref{t:local3+} with the analogous restriction imposed by the leading order scaling symmetry for these problems. This latter threshold corresponds to the exponents $s_c = \frac{n}2$ 
for \eqref{qnls}, respectively $s_c = \frac{n+2}2$ for \eqref{dqnls}.
In other words, our result is only $1/2$ derivative above scaling, which is similar to the earlier one dimensional result. One might therefore be tempted to suspect that our result could still be improved by $\frac{1}{2}$ derivatives. Nevertheless, we conjecture that our result is sharp (except possibly at the endpoint).
\begin{conjecture}
The range of $s$ in Theorem~\ref{t:local3+}    is
sharp for generic problems, except possibly for the endpoint.
\end{conjecture}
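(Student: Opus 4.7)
The plan is to prove sharpness by constructing explicit counterexamples that rule out well-posedness in $H^s$ for $s$ strictly below the threshold of Theorem~\ref{t:local3+}. Such counterexamples should be modelled on the classical Mizohata-type obstructions for Schr\"odinger equations, suitably adapted to the cubic quasilinear ultrahyperbolic setting. Since sharpness is claimed only for \emph{generic} problems, the strategy is to isolate a specific, representative model that exposes the relevant nonlinear self-interaction mechanism, while avoiding accidental cancellations that might occur in integrable or otherwise special examples. A natural candidate within the class \eqref{qnls} is
\[
i u_t + \partial_j\bigl((g_0^{jk} + c|u|^2\delta^{jk})\partial_k u\bigr) = 0,
\]
where $g_0$ is a constant metric of indefinite signature (e.g., $\mathrm{diag}(1,-1,1,\dots,1)$) and $c\in\R$ is a generic nonzero constant; the analogous construction, with one extra derivative, would address \eqref{dqnls}.

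The next step is to construct a sequence of high-frequency wave packet data $u_0^{(\lambda)}$ at frequency scale $\lambda\to\infty$, with amplitude calibrated so that $\|u_0^{(\lambda)}\|_{H^s}$ is uniformly bounded (and small) for some $s$ slightly below $(n+1)/2$. To exploit the ultrahyperbolic structure, the frequency carrier $\xi_0$ should be chosen close to a null direction of $g_0$, where the Hamilton flow admits only weak transverse dispersion; in some sense this reduces, to leading order, to the most delicate one-dimensional obstruction. A short-time WKB ansatz then reduces the dynamics to a transport equation for the amplitude along the bicharacteristics of $g_0$, driven by a self-induced nonlinear phase of order $c|u|^2$ acting on frequencies of size $\lambda$. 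Careful power counting should show that at the threshold $s=(n+1)/2$ this phase shift becomes marginal, and strictly below it one expects either norm inflation in $H^s$ at a controlled time scale, or an explicit failure of continuous dependence of the solution map from $H^s$ to $C([0,T];H^s)$, along the lines of standard ill-posedness schemes.

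The main obstacle will be to rigorously justify the WKB approximation down to the threshold regularity and to control the remainder terms in the indefinite setting, where the elliptic smoothing and pseudodifferential reductions used in the positive-definite case are unavailable. One must verify that the wave packet geometry is genuinely compatible with the ultrahyperbolic Hamilton flow, so that the null cone structure dictates precisely which packets realize the extremal behavior, and one must then upgrade the counterexample for the specific model above to the claimed \emph{generic} sharpness by a perturbative stability argument in the space of admissible $(g,N)$. I expect that, as with comparable sharpness problems for quasilinear dispersive flows, the endpoint case $s=(n+1)/2$ itself will be strictly more delicate and may require a different analysis, consistent with the conjecture excluding it.
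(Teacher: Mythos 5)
This statement is a \emph{conjecture} in the paper, not a theorem: the authors explicitly do not prove it, and the only thing they offer by way of justification is the one-sentence heuristic that the obstruction ``seems to be related to non-trapping, which should be highly unstable for $s$ below our thresholds, even for small data.'' So there is no proof in the paper to compare your proposal against, and your submission cannot be correct as a proof of the statement since the statement remains open.

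As a research plan, your sketch is plausible but it has several genuine gaps you should be aware of. First, what you describe is at best a norm-inflation or failure-of-uniform-continuity argument; ``sharpness of $s$'' in the strong sense of this conjecture would require showing that the solution map fails to be (Hadamard) well-posed below the threshold for \emph{every} problem in a dense open set of $(g,N)$, and a single model with $g_0 + c|u|^2 \delta$ does not by itself establish the genericity clause -- you gesture at a ``perturbative stability argument'' but this is precisely the hard part. Second, your mechanism (self-induced nonlinear phase along a near-null bicharacteristic) is not obviously the mechanism the authors have in mind; their heuristic points to \emph{instability of non-trapping} for data of regularity below $H^{(n+1)/2}$, i.e., the possibility that the frozen-coefficient Hamilton flow becomes trapped for arbitrarily small $H^s$ data once $s$ drops below the threshold, which is a statement about the geometry of the data-dependent metric rather than about phase accumulation in a WKB ansatz. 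These may ultimately be two faces of the same phenomenon, but your proposal does not connect them, and in particular a WKB/norm-inflation construction that works at one fixed small time scale does not obviously show that the \emph{local} well-posedness threshold (as opposed to, say, the lifespan scaling) is sharp. Third, the power counting you invoke (``at $s=(n+1)/2$ the phase shift becomes marginal'') is asserted rather than carried out; in the ultrahyperbolic case the relevant trace along a near-null bicharacteristic involves a degenerate direction of $g_0$, and it is not clear a priori that the critical exponent produced by this counting coincides with $(n+1)/2$ rather than with the scaling exponent $n/2$ or with the MMT/PT threshold $(n+3)/2$. Until that computation is done, the proposal does not even identify the correct candidate threshold.
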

This obstruction seems to be related to non-trapping, which should be highly unstable for $s$ below our thresholds, even for small data. Correspondingly, we also conjecture that the result of Theorem~\ref{t:local3+} holds for large data under a suitable non-trapping assumption on the initial data.
 \begin{conjecture}
The result in Theorem~\ref{t:local3+} holds for all non-trapping initial data.
\end{conjecture}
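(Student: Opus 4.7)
The plan is to extend the small-data framework behind Theorem~\ref{t:local3+} to the non-trapping large-data regime by fusing the interaction Morawetz approach of \cite{IT-qnls2} with Doi-type non-trapping machinery adapted to indefinite metrics. First I would paralinearize \eqref{qnls} as in the small-data proof, writing it schematically as $i u_t + T_{g(u)} u = N^\sharp(u,\bar u, \nabla u)$ with a controllable semilinear remainder. The essential new feature is that $g(u)$ is no longer a small perturbation of a constant metric, so the Hamilton flow of its principal symbol $g^{jk}(u)\xi_j \xi_k$ is controlled only via the non-trapping hypothesis on the initial data, which must be quantified and then propagated on a short time interval.

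Next I would build an $H^s$-level energy framework combining two ingredients. The first is a pseudodifferential multiplier $Q_u$ built from a Doi-type escape symbol $q(x,\xi)$ for $g^{jk}(u)\xi_j\xi_k$, chosen so that the Poisson bracket $\{q, g(u)\xi\cdot\xi\}$ is non-negative and elliptic of order one outside a compact set in $x$; this is precisely where non-trapping enters, and it supplies a linear local smoothing estimate in the spirit of \cite{pineau2024low}. The second is a modified energy $E_s^\sharp$ built from normal form corrections, as in \cite{IT-qnls2}, designed to cancel the cubic contributions to $\frac{d}{dt}\|u\|_{H^s}^2$ that would otherwise lose one derivative. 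The two ingredients have to be compatible, in the sense that the commutator of $Q_u$ with the paralinearized flow produces only perturbative corrections to $E_s^\sharp$.

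The hard part, and the genuine novelty required, is the analogue of the bilinear interaction Morawetz estimate used in the small-data proof. For small data the weight is essentially $|x-y|$, close to the Euclidean distance, whose bilinear Hessian yields positivity after a $g$-contraction. In the large-data ultrahyperbolic regime one must instead design a weight $a(x,y)$ whose generalized bilinear Hessian with respect to $g(u)$ is non-negative on the phase-space region driving the interaction. My proposal is to seek $a$ as a geometrically adapted sum of directional convex/concave pieces along the positive and negative eigendistributions of $g$, plus a further correction proportional to $q$ to control contributions along the null directions of $g$. Maintaining positivity of this bilinear Morawetz density, uniformly in $u$ satisfying the quantitative non-trapping bounds, is where I expect the main obstruction to lie, since the standard convexity of the weight fails outright when $g$ is indefinite; one likely needs to split into frequency regimes adapted to the eigendistributions of $g(u)$ and close by a bootstrap on the signatured components separately.

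Finally, with a priori bilinear $L^2$ control in hand, well-posedness is closed along the same lines as in \cite{IT-qnls2}: a bootstrap in $H^s$ using the combined modified energy, regularization to obtain existence in $C([0,T];H^s)$, $L^2$-difference estimates for uniqueness and Lipschitz dependence in a weaker topology, frequency-envelope arguments for continuous dependence in $H^s$, and propagation of higher regularity by applying the same scheme with $s$ replaced by $s+k$ for $k \in \N$. Throughout, one must carefully track how all constants depend on the quantitative non-trapping parameters of the initial data, so that the local lifespan depends only on those parameters together with $\|u_0\|_{H^s}$.
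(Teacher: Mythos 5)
This statement is a \emph{conjecture} in the paper, not a theorem; the authors offer no proof, only the informal expectation (see Remark~\ref{uniformnondegremark} and the surrounding discussion in Section~\ref{s:im-cc}) that their new Morawetz weight family $a_r(x)=|x|+\tfrac{r}{2}e^{-r^{-1}|x|}$ might be robust enough to extend to large data. So there is no official argument for you to match; what you have written is a research plan, and the question is whether it correctly identifies the obstructions.

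Your plan mislocates the role of indefiniteness in the Morawetz step. You propose constructing a new weight ``as a geometrically adapted sum of directional convex/concave pieces along the positive and negative eigendistributions of $g$'' on the grounds that the ``standard convexity of the weight fails outright when $g$ is indefinite.'' But the paper's central observation in Section~\ref{s:im-cc} is precisely that the radial weights $a_r$ \emph{already} yield a coercive lower bound \eqref{lowerboundultra} using only \emph{non-degeneracy} of $g$, not positivity: the key step is $\int e^{-r^{-2}|x-y|^2}|G|^2\gtrsim\int e^{-r^{-2}|x-y|^2}|F|^2$ with $G_j=g^{jk}F_k$, which merely needs $g$ invertible. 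So indefiniteness of the metric is not the obstruction to the Morawetz positivity at large data, and an eigendistribution-adapted weight is likely both unnecessary and, as you half-suspect, hard to make rigorous (the eigendistributions of a variable $g(u)$ are generically non-integrable and rough in $u$). The genuine large-data obstruction in the Morawetz identity is elsewhere: the error terms $\bR^4_{r,1}$ and $\bR^4_{r,2}$ in the expansion \eqref{J4-expansion}, which arise because $g_{[<\lambda]}$ is $x$-dependent and because $g_{[<\lambda]}(x)\neq g_{[<\mu]}^{x_0}(y)$. In the paper's small-data argument these are absorbed with a factor of $C^2\epsilon^2$ via the unbalanced bilinear bounds; at large data they are $O(1)$ and must be controlled by some other mechanism — most plausibly the Doi/local-smoothing estimate supplied by non-trapping, which you mention but do not actually tie into the Morawetz error estimate. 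You also need the non-trapping hypothesis to be \emph{propagated} along the flow at large data, which is delicate (the paper's Lemma~\ref{l:nontrap} does this for small data by showing bicharacteristics are nearly straight, an argument that simply fails at large data — compare the nontrapping discussion in \cite{MMT3,pineau2024low}). Finally, even granting all of this, nothing in your outline explains how one reaches the sharp threshold $s>\frac{n+1}{2}$ rather than the $s>\frac{n+3}{2}$ of Theorem~\ref{t:regular1}; the extra derivative saved in the small-data proof comes precisely from replacing local-smoothing norms by interaction Morawetz bilinear bounds, and you would need to demonstrate that the two frameworks can be combined without reintroducing the loss.
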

Thanks to the well-posedness result in Theorem~\ref{t:regular1}, we will be able to show that for any regular data (say, $u_0\in H^{\sigma}$ for some $\sigma> \frac{n+5}{2}$) which satisfies the smallness condition in the weaker norm (here $s$ satisfies the conditions of Theorem~\ref{t:regularnew}).
\begin{equation*}
\|u_0\|_{H^s}\lesssim \epsilon
\end{equation*}
generates a solution $u$ in the more regular class $C([0,T];H^{\sigma})$. Here, the lifespan of $u$ apriori depends on the size of the $H^{\sigma}$ data of $u_0$, which could be large. Therefore, the main task in proving our results is to improve the a-priori bounds
on these solutions, so that the $H^{\sigma}$ lifespan depends only on the $H^{s}$ size of the data. We will also aim to carefully propagate the smallness of the $H^s$ data in order to ensure the metric $g$ remains uniformly non-trapping. Once we have constructed $H^{\sigma}$ solutions with uniform (in the size of the $H^s$ data) lifespan, we will obtain $H^{s}$ solutions as 
unique uniform limits of regular solutions. To this end, we will also need a suitable suite of estimates for the linearized 
flow as well as the associated paradifferential linear flow at the level of $L^2$.
\subsection{ Ultrahyperbolic Schr\"odinger flows:  global solutions}

Now, we turn our attention to the question of global well-posedness for the for small initial data in $H^s$, in the spirit of Conjecture~\ref{c:nld2}. We begin with a discussion of the 1D counterpart of this conjecture which was proved in \cite{IT-qnls}. In this setting, we require some natural assumptions on the nonlinearity in order to guarantee long time and  global in time results. We recall some of these conditions below which will be relevant (but in much weaker forms) for the higher dimensional results (specifically in 2D).
\begin{definition}
We say that the equation \eqref{qnls}/\eqref{dqnls}
has  \emph{phase rotation} symmetry if it is 
invariant with respect to the transformation 
$u \to u e^{i\theta}$ for $\theta \in \R$.
\end{definition}

This assumption ensures that the nonlinearity (after Taylor expanding) only has 
odd terms which contain multilinear expressions 
in $u$ and $\bar u$ (and also their derivatives) where there is exactly one more factor of $u$ than $\bar u$. In the 1D result, it suffices to impose this condition only for the cubic part of the nonlinearity. 

To heuristically describe the next structural assumptions, it is instructive to consider the following semilinear model with phase rotation symmetry. 
\begin{equation}\label{eq-cubic}
i u_t + \partial^2_x u = C(u,\bar u, u) + \text{higher order terms}.
\end{equation}
Here $C$ is a translation invariant trilinear form with symbol $c(\xi_1,\xi_2,\xi_3)$. See Section~\ref{s:notations} for a description of this notation.

\begin{definition}
We say that the 1D equation \eqref{qnls}/\eqref{dqnls}
is \emph{conservative} if  the cubic component of the nonlinearity satisfies 
\[
c(\xi,\xi,\xi), \ \partial_{\xi_j}
c(\xi,\xi,\xi) \in \R .
\]
\end{definition}
It is useful to note that such a condition is satisfied if the equation has a coercive conservation law, but the converse is not necessarily true. The final structural assumption used in one dimension (which will not be required in any form for our higher dimensional results) is that the problem is defocusing
\begin{definition}
We say that the equation \eqref{qnls}/\eqref{dqnls}
is \emph{defocusing} if  the cubic component of the nonlinearity satisfies the following bound on the diagonal
\[
c(\xi,\xi,\xi) \gtrsim 1+ \xi^2.
\]
\end{definition}
With the above, we can now recall the one-dimensional result

\begin{theorem}\label{t:global}
 Consider a 1D \eqref{qnls}/\eqref{dqnls} problem, which is cubic, phase rotation invariant and defocusing.
For initial data $u_0$ which is small in $H^s$,
(where $s > 1$ for \eqref{qnls} and $s > 2$ for \eqref{dqnls}), the corresponding solutions are global in time, and satisfy the global energy bound
\begin{equation}
\| u\|_{L^\infty_t H^s_x} \lesssim \| u_0\|_{H^s_x}.
\end{equation}
\end{theorem}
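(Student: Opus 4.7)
The proof plan, following the strategy developed in \cite{IT-qnls}, is a continuity/bootstrap argument built on top of the local well-posedness theory of Theorem~\ref{t:regular}. Fix $s > 1$ (or $s > 2$ for \eqref{dqnls}) and consider a regular solution on a maximal time interval $[0,T^*)$. The plan is to bootstrap the $L^\infty_t H^s_x$ control together with an auxiliary scale-invariant bilinear norm $\bL$ (morally a discrete summation over frequency-localized bilinear $L^2_{t,x}$ pieces of $u$), showing that if both are bounded by $C\epsilon$ on a subinterval $[0,T]$, one can improve the bounds to $2C\epsilon/3$, so that $T^* = \infty$ and the claimed energy inequality holds.

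The first ingredient is a quasilinear modified energy $E^s(u)$, equivalent to $\|u\|_{H^s}^2$ in the small-data regime, designed so that the non-perturbative cubic and quartic contributions to $\frac{d}{dt}\|u\|_{H^s}^2$ are absorbed into paradifferential/normal-form corrections. Phase rotation symmetry is what makes this construction possible: it forces the only cubic structure to be of $u\bar u u$ type, whose non-resonant part can be removed by a holomorphic (density-momentum) normal form, leaving only diagonal resonant contributions that are real and can be incorporated into the energy as a paradifferential reshaping of the metric. The outcome is an almost-conservation statement of the schematic form
\begin{equation}
\bigl| E^s(u)(t) - E^s(u_0) \bigr| \lesssim \|u\|_{L^\infty_t H^s_x}^2 \, \bL^2,
\end{equation}
where the right side is quartic and dominated in the bootstrap regime.

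The second, and key, ingredient is an interaction Morawetz functional, adapted to the 1D cubic Schr\"odinger structure, that produces the required bilinear $L^2$ bound for $\bL$ together with a global $L^6_{t,x}$ Strichartz-type bound on frequency-localized pieces of $u$. Here the defocusing assumption $c(\xi,\xi,\xi)\gtrsim 1+\xi^2$ provides the decisive positive sign for the leading quartic flux in the Morawetz identity, and the conservative condition ensures that the remaining non-sign-definite terms are either real or perfect derivatives after the normal form reduction. The hard part of the argument, and the main obstacle, is this Morawetz step: one has to carefully track how the variable-coefficient, quasilinear principal part perturbs the flat Morawetz computation, and verify that the paradifferential reshaping used in the energy step is compatible with the Morawetz positivity, so that the combined bootstrap closes at the scaling-critical regularity gap of one half derivative. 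Once both estimates are in hand, the bootstrap closes routinely, and standard propagation of regularity promotes the a priori bound to the full well-posedness assertion.
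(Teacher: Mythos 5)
Theorem~\ref{t:global} is not proved in this paper: the surrounding text says ``we can now recall the one-dimensional result,'' marking it as quoted background from the earlier 1D article \cite{IT-qnls}, while the present paper's own arguments start with the higher-dimensional ultrahyperbolic problem. So there is no in-paper proof to compare your attempt against. Read as a reconstruction of the \cite{IT-qnls} strategy, your outline is consistent with how the authors themselves summarize that work: a bootstrap pairing $L^\infty_t H^s_x$ control with a scale-invariant bilinear $L^2$-type norm, quasilinear modified energies that absorb the non-perturbative cubic terms via normal-form corrections available thanks to phase-rotation symmetry, and an interaction Morawetz step in which defocusing supplies the coercive sign needed for a global $L^6_{t,x}$ bound (the correct 1D Strichartz exponent, as the introduction points out explicitly). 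One mild inconsistency worth flagging: the theorem as stated lists only ``cubic, phase rotation invariant and defocusing,'' but your sketch---and the paper's own prose a few lines above the theorem---also invokes the conservative condition; your version matches the description of the cited source, and the theorem's hypothesis list appears to be slightly abbreviated. For the delicate compatibility you flag between the paradifferential energy correction and Morawetz positivity, none of the relevant computations are reproduced in the present article, so you would need to consult \cite{IT-qnls} directly to verify those details.
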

Now we turn our focus toward the higher dimensional problem. These differ from the 1D case in three key ways:

\begin{enumerate}
    \item The dispersive effects are stronger in higher dimensions which is one sense in which the problem becomes simpler. In particular, the defocusing assumption is no longer necessary. 

    \item There are far more cubic resonant interactions in nD; This is a sense in which the problem becomes harder, as it obstructs to an extent the use of normal form/modified energy type arguments.

    \item Another problem (which is at the heart of this paper) is due to the indefinite nature of the metric. This causes two additional difficulties. First, there are more resonant interactions at high frequency. This is due to the fact that the null set $\{\xi: g^{ij}\xi_i\xi_j=0\}$ can be unbounded in the indefinite case. See Section 2 for a discussion about this. The second difficulty is in establishing a strong enough $L^4_{t,x}$ Strichartz bound to control solutions globally in time. The interaction Morawetz functional used in the definite case \cite{IT-qnls2} does not yield a sufficiently coercive bound for this purpose. See Section 5 for a discussion in the linear setting.
\end{enumerate}

In light of the first feature above, we are able to remove both the phase rotation invariance, the conservative assumption and the defocusing condition. Without phase rotation the expansion in 
\eqref{eq-cubic} acquires extra terms,
\begin{equation}\label{eq-cubic2}
i u_t + \partial_jg^{jk}\partial_k u = C(u,\bar u, u)  
+ C^{np} + 
\text{higher order terms},
\end{equation}
where $C^{np}$ generically denotes the cubic part of the nonlinearity which is not phase-rotation invariant. The particular structure of the above terms does not play a significant role in dimension $n \geq 3$, but it does become important when $n = 2$. In the low regularity regime, we will need to introduce the following weak conservative condition to handle the worst case resonant interactions arising from $C(u,\bar u, u)$. As in \cite{IT-qnls2}, we have
\begin{definition}\label{d:conservative2}
We say that the 2D equation \eqref{qnls}/\eqref{dqnls}
is \emph{conservative} if  the phase-rotation invariant cubic component of the nonlinearity satisfies 
\[
c(\xi,\xi,\xi) \in \R, \qquad \xi \in \R^2.
\]
\end{definition}
Compared to the definite case in \cite{IT-qnls2}, we emphasize that the non-rotation invariant part $C^{np}$ has additional resonant interactions at high frequency due to the fact that the null set corresponding to $g$ can be unbounded (see Section 2). This adds an additional challenge in our setting. Nevertheless, remarkably, we will not need to impose any additional structural assumptions compared to the definite case to deal with these interactions (rather, we will be able to treat them with the novel family of Morawetz estimates that we prove later).
\medskip 

With the preliminaries out of the way, we are now ready to formulate our global results.
We will distinguish between the two and higher-dimensional cases. First, we state the higher dimensional result where we work at the same regularity level as in the local well-posedness result in Theorem~\ref{t:local3+}: 

\begin{theorem}\label{t:global3}
Let $n \geq 3$.
 a) Assume that the equation \eqref{qnls} is cubic, and that the initial data $u_0$ is small in $H^s$, with $s > \frac{n+1}2$,
 \begin{equation}
 \|u_0\|_{H^s} \leq \epsilon \ll 1.   
 \end{equation}
Then the solutions are global in time,  satisfy the uniform bound 
\begin{equation}
\| u\|_{L^\infty_t H^s_x} \lesssim \epsilon,
\end{equation}
and scatter at infinity, in the sense that
\begin{equation}
u_\infty = \lim_{t \to \infty} e^{-it \Delta_{g(0)}} u(t)     \qquad \text{in } H^{s}
\end{equation}
exists and has a continuous dependence on the initial data in the same topology.

b) The same result holds for  the equation \eqref{dqnls} for $s > \frac{n+3}2$. 
\end{theorem}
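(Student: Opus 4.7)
The overall plan is to run a bootstrap argument at high regularity (guaranteed by the local theory of \cite{pineau2024low}, i.e.\ Theorem~\ref{t:regular1}), and then pass to the sharp Sobolev threshold $s > \frac{n+1}{2}$ (respectively $s > \frac{n+3}{2}$) by a stability/limiting procedure using the local well-posedness result in Theorem~\ref{t:local3+}. For the bootstrap, I would carry along three control norms simultaneously on a time interval $[0,T]$: (i) an energy-type norm $\|u(t)\|_{H^s}$; (ii) a global Strichartz norm, namely an appropriate $L^p_t L^q_x$ norm interpolating between $L^\infty_t L^2_x$ and the sharp $L^4_{t,x}$-type admissible pair (the point being that, since $n\geq 3$, we are strictly above the critical Strichartz integrability, so there is room to spare); and (iii) a bilinear $L^2_{t,x}$-type bound produced by interaction Morawetz. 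The smallness in $H^s$ allows us to begin the bootstrap with constants of order $\epsilon$ on each of these three norms, and the goal is to upgrade the constants to $\epsilon$ from $2\epsilon$, say.

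For the energy estimate, I would first conjugate the equation to its paradifferential form, and then construct a quasilinear modified energy that is equivalent to $\|u\|_{H^s}^2$ modulo cubic corrections. The modifications must absorb the roughest (quasilinear) interactions coming from $g^{jk}(u,\nabla u)\partial_j\partial_k u$, so that what is left in $\frac{d}{dt} E_s$ is a cubic or higher multilinear form whose $L^1_t$ norm can be estimated by the Strichartz and bilinear $L^2$ norms. Thanks to $n\geq 3$ there is enough Strichartz room that no further structural (phase-rotation, conservative) assumption is required to close these estimates: every resonant or near-resonant quartic form that survives normal-form correction can be controlled by, e.g., $\|u\|_{L^p_tL^q_x}^2 \cdot \|u\|_{\text{bilinear}}\cdot \|u\|_{H^s}$.

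The central new ingredient — and the main obstacle — is the bilinear $L^2$ estimate in the ultrahyperbolic setting. The standard interaction Morawetz functional, as used in the elliptic case in \cite{IT-qnls2}, is not coercive here because the level sets $\{\xi:g^{ij}\xi_i\xi_j=0\}$ are unbounded, so the convexity one exploits in the positive-definite case is unavailable. The fix, as flagged in the introduction, is to design a new family of interaction Morawetz functionals adapted to the indefinite quadratic form $g$. I would use a weight function whose Hessian is tailored so that, after integrating by parts twice, the resulting quadratic form in gradients is controlled from below uniformly in frequency (in a frequency-localized or Littlewood--Paley sense), giving a bilinear $L^2$ bound for frequency-separated pieces of $u$. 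This would be combined with the usual positive commutator/interaction Morawetz tensorization between $u(t,x)$ and $u(t,y)$ and then specialized to the paradifferential flow obtained by freezing coefficients.

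Once the bootstrap closes at high regularity, the uniform $H^s$ bound and the global $L^p_t L^q_x$ Strichartz bound follow. To descend to the sharp regularity threshold, I would apply Theorem~\ref{t:local3+} to a family of regular approximants and use the Lipschitz dependence in $L^2$ together with interpolation with the uniform $H^{\sigma}$ bound to show that the approximants converge in $C_t H^s$, producing the global $H^s$ solution and its uniform bound. Finally, scattering in $H^s$ is extracted from the global Strichartz and bilinear $L^2$ bounds via a standard Cauchy-in-time argument: one writes $v(t) = e^{-it\Delta_{g(0)}}u(t)$ in Duhamel form and shows that the difference $v(t)-v(t')$ tends to $0$ in $H^s$ using the integrability of the nonlinearity in the above norms; continuous dependence of the scattering data on the initial data follows from the same estimates applied to differences of solutions.
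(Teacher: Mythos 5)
Your proposal follows essentially the same strategy as the paper: bootstrap at high regularity with simultaneous energy, Strichartz and bilinear $L^2$ control norms; a paradifferential reduction to isolate the quasilinear principal part; the central new ingredient being an interaction Morawetz functional with a redesigned weight to restore coercivity in the ultrahyperbolic setting; descent to the sharp threshold via the linearized $L^2$ theory; and scattering by a Duhamel/Cauchy-in-time argument against $e^{it\Delta_{g(0)}}$. You also correctly diagnose why the elliptic Morawetz fails (the null set $\{g^{ij}\xi_i\xi_j=0\}$ is unbounded so $\Delta_g|u|^2$ is not coercive) and that the fix is a weight with a tailored, frequency-localized convexity, which is exactly the $r$-parametrized family $a_r(x)=|x|+\tfrac{r}{2}e^{-r^{-1}|x|}$ giving a Besov-type balanced bilinear bound $\sup_j\|P_j|D|^{(3-n)/2}|u_\lambda|^2\|_{L^2}$.

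One small point of divergence: you frame the $H^s$ energy step as building a quasilinear modified energy with quartic normal-form corrections. In the paper's proof for $n\geq 3$, no such corrections are actually needed---the paradifferential freezing of the metric at frequencies $<\lambda$ already removes the derivative loss, and all the remaining source terms $N_\lambda(u)$ (including balanced cubic resonances) are estimated perturbatively in $L^1_{t,x}$ via the bilinear and $L^4$ bounds (Proposition~\ref{p:N-lambda}), precisely because there is Strichartz room when $n\geq 3$. The quartic density-flux corrections you are describing are only invoked in the $n=2$ low-regularity case (Theorem~\ref{t:global2c}, Section~10). Your plan is not wrong---it would also work---but it is heavier than necessary for $n\geq 3$, where a plain paradifferential energy estimate suffices.
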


Next, we state the analogous result in two space dimensions (with the same structural assumptions on the nonlinearity), but for a more restrictive range of Sobolev exponents, namely $1/2$ derivative above the local well-posedness threshold in Theorem~\ref{t:local3+}:

\begin{theorem}\label{t:global2}
Let $n = 2$.
 a) Assume that the equation \eqref{qnls} is cubic, and that the initial data $u_0$ is small in $H^s$, with $s \geq \frac{3}2+\frac12=2$,
 \begin{equation}
 \|u_0\|_{H^s} \leq \epsilon \ll 1.   
 \end{equation}
Then the solutions are global in time, and satisfy the uniform bound
\begin{equation}
\| u\|_{L^\infty_t H^s_x} \lesssim \epsilon,
\end{equation}
and scatter at infinity, in the sense that
\begin{equation}
u_\infty = \lim_{t \to \infty} e^{-it \Delta_{g(0)}} u(t)     \qquad \text{in } H^{s}
\end{equation}
exists and has a continuous dependence on the initial data in the same topology.

b) The same result holds for  the equation \eqref{dqnls} for $s \geq \frac{5}2 +\frac12=3$. 
\end{theorem}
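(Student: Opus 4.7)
The proof naturally proceeds via a bootstrap argument on regularized solutions, followed by passage to the rough limit. Given data $u_0 \in H^\sigma$ with $\sigma \gg s$ and $\|u_0\|_{H^s}\leq \epsilon$, Theorem~\ref{t:regular1} furnishes a local $H^\sigma$ solution. The strategy is to prove that along this solution two control norms remain uniformly small, namely the energy norm $\|u\|_{L^\infty_t H^s}$ and a Strichartz/bilinear norm $Y$ which at minimum controls $\|u\|_{L^4_{t,x}(\R\times\R^2)}$ and bilinear $L^2$ quantities of the form $\|P_k u\cdot P_j \bar u\|_{L^2_{t,x}}$ for frequency-separated $k,j$. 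Closing the bootstrap yields an $H^\sigma$ lifespan depending only on $\epsilon$, and the rough $H^s$ solution is then obtained as a uniform $L^2$-limit of such regularized solutions, using the $L^2$ Lipschitz dependence underlying Theorem~\ref{t:local3+}.

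For the energy estimate, a direct Gronwall argument on $\|u\|_{H^s}^2$ fails: after symmetrization, the cubic right-hand side produces terms of size $\|u\|_{H^s}^4$ that are not time-integrable. The remedy, as in \cite{IT-qnls,IT-qnls2}, is a modified energy $E_s(u) = \|u\|_{H^s}^2 + Q_4(u)$, in which the quartic correction $Q_4$ is built by a paradifferential normal form cancelling all non-resonant cubic interactions. In the ultrahyperbolic regime the null cone $\{g^{ij}(0)\xi_i\xi_j=0\}$ is unbounded, so the resonant set is strictly larger than in the elliptic case of \cite{IT-qnls2}. After normal form, the surviving contributions are genuine cubic resonances: for the phase-rotation invariant piece $C(u,\bar u,u)$ the worst, diagonal interactions at $(\xi,\xi,\xi)$ are eliminated by the conservative hypothesis of Definition~\ref{d:conservative2}, which forces $c(\xi,\xi,\xi)\in\R$ and hence zero net energy flow; the remaining, frequency-separated contributions are estimated bilinearly, yielding
\[
\frac{d}{dt} E_s(u) \lesssim \|u\|_Y^2\, \|u\|_{H^s}^2.
\]

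The heart of the argument, and the main obstacle, is controlling $\|u\|_Y$ itself, that is, producing an $L^4_{t,x}$ Strichartz bound and bilinear $L^2$ estimates at the scaling-subcritical level $H^{1/2}$, uniformly in time, when $g$ is indefinite. The classical interaction Morawetz functional used in \cite{IT-qnls2} yields coercive terms weighted by the elliptic Laplacian, which is not comparable to $g^{ij}\partial_i\partial_j$ when the metric is of indefinite signature. Here one invokes the new family of interaction Morawetz functionals introduced in this paper, designed so that their time derivatives dominate directional bilinear quantities transverse to the null cone of $g$. After paradifferential corrections to match the variable-coefficient flow, these produce bilinear $L^2$ bounds on frequency-separated wave packets; Littlewood--Paley orthogonality and a $TT^*$ argument then upgrade these to the desired $L^4_{t,x}$ Strichartz bound controlled by $\|u\|_{H^{1/2}}^{O(1)}$ plus an error cubic in $\|u\|_Y$. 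It is precisely the need for $H^{1/2}$-level bilinear control, globally in time, that forces the additional half-derivative in the threshold $s\geq 2$ beyond the local threshold $s>3/2$; in $n\geq 3$ the extra Strichartz room makes this loss unnecessary, accounting for the discrepancy with Theorem~\ref{t:global3}. The non-phase-rotation invariant piece $C^{np}$ is a further complication because its resonances may concentrate along the unbounded null cone, but these contributions can be absorbed by the same Morawetz machinery without any additional structural hypothesis.

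Once both control norms are uniformly bounded by $\epsilon$, global existence follows by continuation, and propagation of $H^\sigma$ regularity follows from a parallel bootstrap driven by the same $Y$ norm. Scattering in $H^s$ is then obtained from the Duhamel identity
\[
e^{-it\Delta_{g(0)}} u(t) = u_0 - i\int_0^t e^{-i\tau \Delta_{g(0)}} \bigl(N(u,\nabla u) + (\partial_j g^{jk}(u)\partial_k - \Delta_{g(0)})u\bigr)\,d\tau,
\]
whose integrand is effectively quartic in the bootstrap norms and therefore absolutely convergent in $H^s$ as $t\to\infty$; continuous dependence of the scattering limit on the initial data follows from the $L^2$ Lipschitz dependence combined with interpolation against the uniform $H^s$ bound. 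The proof for \eqref{dqnls} at $s\geq 3$ is identical after shifting all regularity indices up by one to account for the extra derivative in the nonlinearity.
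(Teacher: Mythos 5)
Your proposal invokes the conservative hypothesis of Definition~\ref{d:conservative2} to kill the diagonal phase-rotation-invariant resonances, but that hypothesis is \emph{not} among the assumptions of Theorem~\ref{t:global2}; it appears only in the lower-regularity Theorem~\ref{t:global2c} (for $s>\frac32$). Theorem~\ref{t:global2} is stated for a general cubic nonlinearity with $s\geq 2$, and the paper's proof must therefore not rely on any conservative structure. This is the central point you get wrong: for $s\geq 2$ the balanced cubic term $C_\lambda$ in the paradifferential expansion \eqref{full-lpara} is treated \emph{directly} as a perturbation, via the $L^4_{t,x}$ Strichartz bound with $\frac12$ derivative loss \eqref{uk-Str2-boot}, yielding
\[
\| C_\lambda(\bfu) \cdot \bu^{x_0}\|_{L^1} \lesssim \lambda^2 \|u_\lambda\|_{L^4}^4 \lesssim C^4\epsilon^4 c_\lambda^4 \lambda^{-4s+4} \lesssim \epsilon^2 c_\lambda^2 \lambda^{-2s},
\]
where the final inequality is exactly the constraint $s\geq 2$. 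No quartic normal-form correction to the energy, and no vanishing condition on $c(\xi,\xi,\xi)$, is needed at this regularity. Relatedly, you misidentify the source of the extra half derivative: it is not about obtaining ``$H^{1/2}$-level bilinear control globally in time''; rather, it compensates the $\frac12$ derivative loss in the global 2D $L^4$ Strichartz bound for the paradifferential flow (Section~\ref{s:Str}), so that the lossy $L^4$ estimate suffices to treat $C_\lambda$ perturbatively without any structural cancellation.

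A secondary issue is that your energy estimate narrative — modified energy $E_s = \|u\|_{H^s}^2 + Q_4(u)$ with a quartic correction cancelling all nonresonant cubic interactions — is the machinery the paper deploys for Theorem~\ref{t:global2c}, not for Theorem~\ref{t:global2}. Importing it here both overcomplicates the argument and, as above, quietly smuggles in the conservative hypothesis when you address the residual resonant diagonal. The remaining structure of your proposal (regular-solution bootstrap via Theorem~\ref{t:regular1}, continuation with $H^\sigma$ persistence, the new ultrahyperbolic interaction Morawetz functionals for bilinear $L^2$ control, and scattering from a Duhamel integral with absolutely convergent tail estimated in $DV^2_{UH}$-type norms) is aligned with what the paper does. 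But as written, the proof of the energy/bilinear bounds is routed through the wrong mechanism and through a hypothesis not available in this theorem, so the argument as stated does not prove Theorem~\ref{t:global2}.
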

\begin{remark}
In the larger range $s>\frac{3}{2}$, our local well-posedness result will show that solutions have a lifespan of at least $T\lesssim \epsilon^{-6}$. See Theorem~\ref{t:local-fe2}.
\end{remark}
In the low regularity setting in dimension $n=2$, we are able to remove the additional $\frac{1}{2}$ derivative restriction under the additional hypothesis that the nonlinearity is conservative. Precisely, we have the following:

\begin{theorem}\label{t:global2c}
Let $n = 2$.
 a) Assume that the equation \eqref{qnls} is cubic and conservative, and that the initial data $u_0$ is small in $H^s$, with $s > \frac{3}2$,
 \begin{equation}
 \|u_0\|_{H^s} \leq \epsilon \ll 1.   
 \end{equation}
Then the solutions are global in time, satisfy the uniform bound
\begin{equation}
\| u\|_{L^\infty_t H^s_x} \lesssim \epsilon,
\end{equation}
and scatter at infinity, in the sense that
\begin{equation}
u_\infty = \lim_{t\to \infty} e^{-it \Delta_{g(0)}} u(t)     \qquad \text{in } H^{s}
\end{equation}
exists and has a continuous dependence on the initial data in the same topology.

b) The same result holds for  the equation \eqref{dqnls} for $s > \frac{5}2 $. 
\end{theorem}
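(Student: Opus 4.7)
The plan is a bootstrap argument combining the local well-posedness of Theorem~\ref{t:local3+} with global-in-time a priori $H^s$ bounds. As in the elliptic setting of \cite{IT-qnls2}, I would first work at higher regularity $\sigma > \frac{5}{2}$, where local solutions are guaranteed to exist by Theorem~\ref{t:regular1}, and establish a uniform-in-time estimate $\|u(t)\|_{H^s}\lesssim \epsilon$ that depends only on the $H^s$ size of the data, not on $\|u_0\|_{H^\sigma}$. Passing to genuine $s>\frac{3}{2}$ data is then obtained by realizing rough solutions as unique uniform limits of regular ones, and the scattering statement will follow from the global bilinear $L^2_{t,x}$ bounds proved along the way.

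The core analytic task is to construct a modified energy $E^s(u)$ with $\|u\|_{H^s}^2\lesssim E^s(u)\lesssim \|u\|_{H^s}^2$, incorporating a cubic (and if necessary quartic) normal form correction, whose time derivative is controlled by a bilinear $L^2_{t,x}$ density--flux expression. A paradifferential normalization reduces matters to a linear Schr\"odinger equation with ultrahyperbolic principal part $\partial_j g^{jk}(u)\partial_k$ and small paradifferential source, which is then to be estimated in a paradifferential $L^2$ well-posedness framework. This is precisely where the conservative hypothesis enters: the resonant phase-rotation invariant cubic interaction $C(u,\bar u,u)$ contributes on the diagonal a term proportional to $c(\xi,\xi,\xi)|\hat u(\xi)|^2$, and without conservativity its imaginary part generates a derivative loss incompatible with a modified energy at the threshold $s>\frac{3}{2}$. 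When $c(\xi,\xi,\xi)\in \R$, this diagonal term becomes a real, frequency-localized quantity that can be absorbed into the cubic correction. For the remaining non-phase-rotation invariant cubic terms $C^{np}$, which in the ultrahyperbolic setting have additional unbounded resonant sets, one does not need an extra structural assumption beyond what was required in \cite{IT-qnls2}, because these interactions are estimated directly via the bilinear $L^2_{t,x}$ bounds below.

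The hardest and most novel step, in my view, is the interaction Morawetz analysis in the ultrahyperbolic setting. In the elliptic case one uses a functional of the form $\int |u(t,x)|^2 |u(t,y)|^2 a(x-y)\,dx\,dy$ with a convex weight $a$, whose positivity relies on $g\ge c\,I$. For indefinite $g$ this bilinear form fails to be coercive, and one has to replace $a$ by a weight adapted to the signature of $g$, so that the density--flux identity controls a genuine nonnegative bilinear $L^2$ quantity uniformly across the unbounded null cone $\{\xi : g^{ij}\xi_i\xi_j = 0\}$. Applied to dyadic frequency pieces this should yield bilinear bounds of the form
\begin{equation}
\|u_\lambda \bar u_\mu\|_{L^2_{t,x}} \lesssim \Bigl(\frac{\mu}{\lambda}\Bigr)^{\!1/2} \|u_\lambda\|_{L^\infty_t L^2_x}\|u_\mu\|_{L^\infty_t L^2_x}, \qquad \lambda \geq \mu,
\end{equation}
which upgrade via a Littlewood--Paley square function argument to the sharp 2D Strichartz estimate $\|u\|_{L^4_{t,x}}\lesssim \|u\|_{L^\infty_t \dot H^{1/2}}$ for the (para-)linearized flow.

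With those bilinear/Strichartz bounds in hand, the $E^s$-identity closes by H\"older: the trilinear contributions in $\partial_t E^s(u)$ are dominated by placing two factors in the bilinear $L^2_{t,x}$ norm and one factor in $L^\infty_t H^s$. The conservative assumption is exactly what eliminates the extra half-derivative loss seen in Theorem~\ref{t:global2}, bringing the threshold from $s\geq 2$ down to $s>\frac{3}{2}$ for \eqref{qnls} (and from $s\geq 3$ to $s>\frac{5}{2}$ for \eqref{dqnls}, by reading the same argument one derivative higher, since \eqref{dqnls} is a differentiated form of \eqref{qnls}). Scattering in $H^s$ then follows from a Cook-method computation: once $\|u\|_{L^4_{t,x}}+\|u\|_{L^\infty_t H^s}$ is globally finite, the cubic remainder and the paradifferential residual are integrable in the $H^s$ norm of the free evolution, yielding convergence of $e^{-it\Delta_{g(0)}}u(t)$ as $t\to\infty$ with continuous dependence on the data. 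The dominant obstacle throughout is the Morawetz step: constructing a weight that is coercive across the unbounded null cone of an indefinite metric is what obstructs a direct transplantation of the elliptic argument, and the associated family of new Morawetz functionals is the main technical novelty of the paper.
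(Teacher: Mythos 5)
Your overall architecture (bootstrap from smooth data, paradifferential reduction, modified mass/momentum densities with quartic corrections, interaction Morawetz bilinear $L^2$ bounds, conservativity killing the diagonal cubic symbol, and a Cook-method scattering argument at the end) matches the paper's plan. However, there are three concrete gaps, and the first two are load-bearing.

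First, your mechanism for repairing the Morawetz functional in the indefinite case is wrong. You propose to ``replace $a$ by a weight adapted to the signature of $g$,'' i.e.\ let the weight see the metric's indefinite directions. The paper does something quite different and entirely metric-independent: it keeps a rotationally symmetric weight but introduces a one-parameter family
\[
a_r(x) = |x| + \tfrac{r}{2}e^{-r^{-1}|x|},\qquad r>0,
\]
whose Hessian has uniform lower bound $D^2 a_r \gtrsim r^{-1}e^{-r^{-2}|x|^2}$, so that on each dyadic shell $|x|\approx r$ the weight is uniformly (including radially) convex. One then bounds the Morawetz production term $\iint a_{r,jk}\,F^j\bar F^k$ from below by $r^{-1}\iint e^{-r^{-2}|x-y|^2}|F|^2$, where the only input from $g$ is its uniform nondegeneracy (so $|g^{ij}F_j|\gtrsim|F|$); this controls $\|P_{r^{-1}}|D|^{(3-n)/2}|u_\lambda|^2\|_{L^2}$ uniformly in $r$, a Besov-type surrogate for the elliptic bound. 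The point is precisely that the weight is \emph{not} adapted to the signature; the coercivity is recovered by uniform convexity at a fixed dyadic scale, at the cost of a supremum over $r$ rather than an $\ell^2$-sum. If you try to build a weight mimicking the signature of $g$, the off-diagonal interaction with the bilinear density $F^j\bar F^k$ does not produce a sign.

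Second, your claim that the bilinear $L^2$ bounds upgrade, via a Littlewood--Paley square-function argument, to the \emph{sharp} 2D Strichartz estimate $\|u\|_{L^4_{t,x}}\lesssim\|u\|_{L^\infty_t\dot H^{1/2}}$ is not attainable here. In 2D, $L^4_{t,x}$ sits on the sharp Strichartz line, and the paper explicitly notes that the interaction Morawetz analysis cannot reach it; interpolating with the energy only yields a lossless $L^6_tL^4_x$ bound, not $L^4_{t,x}$. Instead the paper runs a separate global-in-time paradifferential Strichartz argument (based on $U^2/V^2$ spaces associated to the flat ultrahyperbolic flow) that produces
\[
\|u_\lambda\|_{L^4_{t,x}}\lesssim \epsilon c_\lambda \lambda^{-s+\frac12},
\]
i.e.\ with a full half-derivative loss. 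The conservative assumption, the quartic density/flux corrections, and the balanced bilinear bounds are exactly what make this \emph{lossy} $L^4$ bound sufficient at $s>\tfrac32$; with a lossless $L^4$ estimate, the conservative assumption would be unnecessary, which contradicts the theorem's own hypotheses.

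Third, you state that the non-phase-rotation-invariant cubic terms $C^{np}$ can be ``estimated directly via the bilinear $L^2_{t,x}$ bounds below'' without extra input, as in the elliptic case. But in the ultrahyperbolic setting the null set $\{g^{ij}\xi_i\xi_j=0\}$ is unbounded, so $C^{np}$ contains a new class of ``weakly transversal'' resonant interactions in which all three inputs are at the same frequency $\xi$ (with $|\xi|_g^2=0$) but the output at $\pm 3\xi$ or $-\xi$ is $O(\lambda)$-separated from them. These are balanced in the dyadic sense, so the unbalanced bilinear estimates from \cite{IT-qnls2} do not apply. The paper must strengthen the unbalanced bilinear $L^2$ bound to cover semi-balanced pairs (same dyadic annulus but with $O(\lambda)$ ball separation), via a new functional $\iint_{x_1<y_1} M(v_\lambda)(x)M(v_\mu^{x_0})(y)\,dx\,dy$ and a direction $e_1$ chosen so the group velocities separate. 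Without this strengthened estimate, the weakly transversal contribution to $F_\lambda$ cannot be closed, and the proof does not go through.

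Minor points: the claimed dyadic bilinear estimate $(\mu/\lambda)^{1/2}$ gain is consistent with the paper's unbalanced bound in $n=2$, and the observation that \eqref{dqnls} follows by reading the \eqref{qnls} argument one derivative higher is correct.
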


These results represent the first global well-posedness results  for general cubic quasilinear ultrahyperbolic Schr\"odinger equations with small initial data in Sobolev spaces in dimension $n \geq 2$.  We make the following two important remarks about the scattering result:

\begin{remark}
It is useful to compare the results on scattering with 
the 1D results of the first and third authors in \cite{IT-qnls}. Here, the nonlinear effects are too strong to allow for standard scattering. This is well known even for localized data, where the best one can obtain is a form of modified 
scattering. In the higher dimensional setting, we do however have classical scattering.
\end{remark}

\begin{remark}
One may also be inclined to study the regularity of the 
wave operator $u_0 \to u_\infty$. Our analysis will imply that it has continuity properties similar to what one gets in a Hadamard local well-posedness theory. Namely,
\begin{itemize}
    \item it is continuous in $H^s$
    \item it is Lipschitz continuous (when restricted to small $H^s$ data) in certain weaker topologies
    (for instance, $L^2$), and is essentially close to the identity in the Lipschitz topology.
\end{itemize}
    
\end{remark}

While the above results are stated in a more compact form, the proofs will imply the following collection of auxiliary bounds:

\begin{enumerate}[label=(\roman*)]
\item{Bilinear $L^2$ bounds}, which can be stated in a balanced form 
\begin{equation} \label{intro-bi-bal}
   \| |D|^{\frac{3-n}2+\delta} |\la D\ra^{s-\frac14-c\delta} u|^2\|_{L^2_{t,x}} \lesssim_{\delta} \epsilon^2 , 
\end{equation}  
for some fixed $c>1$ and any $0<\delta\ll 1$, and in an imbalanced form (using standard paraproduct notation),
\begin{equation}\label{intro-bi-unbal}
   \| T_{\partial u} \bu\|_{L^2_t H^{s+\frac12}_x} \lesssim \epsilon^2  .
  \end{equation} 
 \item A lossless $L^{4}$ Strichartz bounds for $n\geq 4$:
  \begin{equation}\label{intro-Str4}
   \| \la D\ra^{s-\frac{2-n}4} u\|_{L^4_{t,x}} \lesssim\epsilon  . 
  \end{equation}  
    \item A nearly lossless $L^{4}$ Strichartz bounds for $n = 3$:
  \begin{equation}\label{intro-Str3}
   \| \la D\ra^{s-\frac{2-n}4-\delta} u\|_{L^4_{t,x}} \lesssim_{\delta} \epsilon  . 
  \end{equation}  
    \item A nearly lossless $L^{4} L^8$ Strichartz bounds for $n = 2$
  \begin{equation}\label{intro-Str2}
   \| \la D\ra^{s-\frac14-\delta} u\|_{L^4_t L^8_x} \lesssim_{\delta} \epsilon .  
  \end{equation}  
   \item Full Strichartz bounds with derivative loss:
  \begin{equation}\label{intro-Str-full}
   \| \la D\ra^{s-\frac{2}{p}} u\|_{L^{p}_t L^{q}_x} \lesssim \epsilon
   \end{equation}  
for any pair $(p,q)$ of sharp Strichartz exponents.
\end{enumerate}

\medskip
The reader is referred to Theorem~\ref{t:local-fe3} and Theorem~\ref{t:local-fe2} for a more precise, frequency envelope-based description of these results, where the exponent $\delta$ above is also  replaced by a scale invariant Besov space structure.
One may contrast the $L^4_{t,x}$  bounds above with the $L^6_{t,x}$ bounds which play the primary role in the 1D case, where $6$ is the sharp Strichartz exponent.

\bigskip
It is also interesting to discuss these results  in relation to the more general question of obtaining long time solutions for one or two dimensional dispersive flows with quadratic/cubic nonlinearities, which has gathered much interest lately. One can distinguish two different but closely related types of results that have been obtained, as well as several successful approaches.

On one hand, \emph{normal form methods} have been exploited 
in order to extend the lifespan of solutions, beginning with 
\cite{Shatah} in the late '80's. On the other hand, the \emph{ I-method}, introduced in \cite{I-method} introduced the idea of constructing  better almost conserved quantities. These two methods have been successfully used in the study of semilinear flows, where it was later observed that they are somewhat connected ~\cite{Bourgain-nf}.

However, when it comes to quasilinear problems, neither of these approaches 
works well; in particular, standard normal form transformations are almost
always unbounded. In order to remedy this, it was discovered in the one dimensional work of the authors and collaborators \cite{BH}, \cite{IT-g} that the normal form method can be adapted to quasilinear problems by constructing energies which simultaneously capture both the quasilinear and the normal form structures. This idea was called the \emph{modified energy method}, and can also be viewed as a quasilinear adaptation of the I-method.  Other alternate quasilinear approaches are provided by the \emph{ flow method } of  Hunter-Ifrim~\cite{hi}, where a better, bounded  nonlinear normal form transformation is constructed using a well chosen auxiliary flow, and by the paradiagonalization method of Alazard and-Delort \cite{AD}, where a paradifferential symmetrization is applied instead, in combination with a partial normal form transformation. Overall, these
ideas lead to improved lifespan bounds for solutions with initial data in Sobolev spaces.

Turning our attention to the question of obtaining scattering, global in time solutions for one or two dimensional dispersive flows with quadratic/cubic nonlinearities, this has been almost exclusively studied  under the assumption that the initial data is both \emph{small} and \emph{localized};  for a few examples out of many, see for instance \cite{HN,HN1,LS,KP,IT-NLS} in one dimension, and \cite{GMS} in two dimensions. The localization 
plays a fundamental role here, as it enforces a pattern of waves emerging from a center and with uniform dispersive decay, with either classical 
or modified scattering at infinity. The nonlinearities in the one dimensional models are primarily cubic, though the analysis has also been extended via normal form and modified energy methods to problems which also have nonresonant quadratic interactions; several such examples are  \cite{AD,IT-g,D,IT-c,LLS}, see also further references therein, as well as the authors' expository paper \cite{IT-packet}.
On the other hand some quadratic nonlinearities can also be handled, see for instance \cite{GMS} and references therein, based on the idea of \emph{space-time} resonances.

Comparing the above class of results where the initial data
is both small and localized with the present results, which do not require any localization assumption, it is clear that in the latter case the problem becomes much more difficult, because the absence of localization allows for far stronger nonlinear interactions over long time-scales
and in particular prevents any uniform decay and any scattering, be it classical or modified.  

Another key point in the one dimensional setting was that, one also needs to distinguish between the focusing and defocusing case, as was made clear in the earlier  semilinear work in \cite{IT-global} and \cite{IT-focusing}, and then quasilinear work in \cite{IT-qnls}.  By contrast, in two dimensions our conjecture suggests that in the small data case 
this distinction is no longer necessary.

\subsection{Outline of the paper} \
Now, we aim to give a brief roadmap of the proof. The results described in the introduction
present a coarse picture of some of the required bounds, however to establish these results and to obtain a robust description of the global in time dispersive properties of the solutions it is of critical importance to have a suitable family of quantitative estimates. As in the positive-definite case in \cite{IT-qnls2}, these come in essentially two flavors:

\begin{description}
    \item[Strichartz estimates] These are entirely classical in the constant coefficient case, though generally highly nontrivial in the quasilinear setting.
    \item[Bilinear $L^2$ estimates] These will play the leading role in our analysis here (and in particular serve as a substitute for the local smoothing estimates used in previous works). These should be interpreted as transversality bounds, which capture the bilinear interaction of two transversal waves.  These are classical in the linear, constant coefficient setting, but again, highly nontrivial in the quasilinear context. One of the key novelties in this paper is in establishing these estimates when the metric is indefinite.
\end{description}

It is a common theme in studying quasilinear problems, that proving such estimates requires a considerable amount of apriori information on the solutions to begin with. For this reason, the entire proof of our results will be tied together by  well-designed bootstrap. This will be done at the level of linear and bilinear estimates, where in essence, one assumes that a weaker version of such estimates already holds.

This bootstrap will end up being carried through each section of the paper, and only finally closed at the very end. Nevertheless, we organize the proofs in such a way that each section is more or less modular. We briefly describe each of the requisite components below.

\medskip

\textbf{I. Littlewood-Paley theory and frequency envelopes.} 
The estimates \eqref{intro-bi-bal}-\eqref{intro-Str-full} give coarse description of the overall setup, however, for our purposes, it is important to have a finer accounting of these estimates relative to each frequency scale. This is naturally phrased using a combination of a Littlewood-Paley decomposition of the solutions and the language of frequency envelopes. This will allow us to precisely track the size of the dyadic pieces in both linear and bilinear estimates. We give a precise description of this in  Section~\ref{s:boot}, where we provide a more precise frequency localized formulation of the bounds \eqref{intro-bi-bal}-\eqref{intro-Str-full}, as well as the set-up of the global bootstrap argument. 

\medskip

\textbf{II. The paradifferential reduction.} Obviously, it is of critical importance to obtain good apriori estimates for the nonlinear equation, but it is similarly crucial in quasilinear problems to obtain estimates for the corresponding linearized evolution. An elegant way to carry out these steps (more or less) simultaneously is to instead prove estimates for the associated linear paradifferential flow. We will introduce this formalism precisely in  Section~\ref{s:lin} and demonstrate how to reformulate both the nonlinear and linearized equations as paradifferential flows with suitable source terms. In a sufficiently broad sense, these source terms will play a perturbative role. However, in the two-dimensional setting, which is the most difficult case, we will have to carefully analyze the structure of the cubic part of the nonlinearity. Of particular note will be the so-called non-transversal resonant interactions. See Section 2 for details.

\medskip

\textbf{III. Conservation laws in density-flux form.}
Efficient energy estimates naturally play an important role in our analysis, but for the proof of the bilinear estimates we need to consider these estimates in a frequency-localized setting,
and also as local conservation laws in density-flux form. We carry out this discussion at a broad level in Section~\ref{s:df}, but refine it in the last section for 2D case.

\medskip

\textbf{IV. Interaction Morawetz bounds.} Establishing a suitable family of interaction Morawetz estimates is of critical importance in the bilinear analysis mentioned above. This idea originated in work of the I-team in \cite{MR2415387}. In the first and third author's previous work \cite{IT-qnls2}, the starting point 
 was provided by Planchon-Vega~\cite{PV}, which considered the constant coefficient case. We remark however, that even in the case of small data, positive-definite metrics, obtaining suitable analogues in the quasilinear setting is not at all straightforward. This issue is further compounded in our setting by the fact that the metric is possibly indefinite. This issue manifests most critically in the balanced bilinear estimate, where carrying out the previous approach does not come close to yielding a coercive bound (see the discussion in Section 5). To remedy this, we introduce a novel family of interaction Morawetz functionals, which recover the bound one obtains in the elliptic case, up to an arbitrarily small dyadic loss. This new approach is quite simple, and also appears to be very robust, in the sense that it could apply in large data scenarios (where the metric is not a small perturbation of a Euclidean one). Therefore, this could serve to potentially improve the corresponding results even in the definite case, where the Morawetz estimate required small data.

\medskip

\textbf{V. Strichartz estimates for paradifferential flows.}
It is possible to establish Strichartz estimates for the paradifferential problem in 1D, and this was carried out in \cite{IT-qnls}, locally in time. It becomes somewhat intractable to carry this out higher dimension. We partially ameliorate this issue with the stronger bilinear bounds in the balanced case. In dimensions three and higher, this yields an almost scale-invariant (almost) $L^4$ 
bound. However, as in the positive-definite setting, this is no longer the case in 2D. Instead, in Section~\ref{s:Str} we employ a similar strategy to \cite{IT-qnls2} to establish Strichartz estimates for a full range of exponents, globally in time, but with a loss of derivatives.

\medskip

\textbf{VI. Rough solutions as limits of smooth solutions.}
In Section~\ref{s:rough}, we carry out the (by now) standard procedure for obtaining rough solutions for quasilinear flows. The analysis here is very similar to that carried out in the previous paper \cite{IT-qnls2}. See also the expository paper \cite{IT-primer} for an outline of the overarching scheme.

\medskip

\textbf{VII. The 2D global result at low regularity.}
It will be possible to view the cubic balanced terms (with and without phase rotation symmetry) as perturbative in dimension three and higher. However this is not the case in two space dimensions at low regularity. To deal with the cubic interactions with phase-rotation symmetry, we will need to make use of the ``conservative" assumption. This will be addressed in Section 10. We remark importantly that in the ultrahyperbolic setting, we also have to contend with a large set of balanced, high-frequency resonant interactions which appear in the cubic terms without phase rotation symmetry. Remarkably, we will not need to impose any further structural restrictions (in addition to the definite case) to deal with these terms. Roughly, this is because either 
the three input frequencies are not all equal, or 
the output frequency of these cubic interactions will be transverse (when suitably interpreted) to the three (equal) input frequencies.
 This is in sharp contrast to the cubic term with phase rotation symmetry. We will exploit this mild frequency separation between the output and input frequencies by establishing a new, refined bilinear Strichartz estimate which holds even for waves localized at the same dyadic scale (but supported at different frequency angles). This issue is described in Section 2 and Section 7.

\subsection{Acknowledgements} 
 The first author was supported by the NSF grant DMS-2348908, by a Miller Visiting Professorship at UC Berkeley for the Fall semester 2023, and by the Simons Foundation as a Simons Fellow. The second author was supported
by a fellowship in the Simons Society of Fellows. The third author was supported by the NSF grant DMS-2054975 as well as by a Simons Fellowship from the Simons Foundation.

 \section{Notations and preliminaries}
\label{s:notations}

 Here we start with the usual Littlewood-Paley decomposition. Then, for orientation purposes, we review some standard facts for constant coefficient linear Schr\"odinger flows, including the Strichartz and bilinear $L^2$ estimates.  At the nonlinear level, we recall  the notion of cubic resonant interactions, and provide a classification of such interactions adapted to the ultrahyperbolic case. Finally, we set up  our notations for multilinear forms and their symbols.

\subsection{ The Littlewood-Paley decomposition}
This is done in a standard fashion. Given  a bump function $\psi$ adapted to $[-2,2]$ and equal to $1$ on $[-1,1]$, we define the Littlewood-Paley operators $P_{k}$, as well as
and
$P_{\leq k} = P_{<k+1}$ for  $k \in \Z$ by 
\[
\widehat{P_{\leq k} f}(\xi) := \psi(2^{-k} \xi) \hat f(\xi)
\] 
 and $P_k := P_{\leq k} - P_{\leq k-1}$.  All the operators $P_k$,
$P_{\leq k}$ are bounded on all translation-invariant Banach spaces, thanks to Minkowski's inequality.  We also define $P_{>k} := P_{\geq k-1}
:= 1 - P_{\leq k}$.
Then the inhomogeneous Littlewood-Paley decomposition reads
\begin{equation*}
1=P_{\leq 1}+\sum_{k=2}^{\infty}P_{k},
\end{equation*}
where the multipliers $P_k$ have smooth symbols localized at frequency $2^k$. Correspondingly, our solution $u$ will be decomposed as
\[
u = \sum_{k \in  \N} u_k, 
\]
where we adopt the convention that $u_1:=P_{\leq 1}$ and $u_k=P_ku$ for $k\geq 2$. 
\begin{remark}
Most of the time, it will entirely suffice to work with projectors $P_k$ for $k\geq 0$, but some of our estimates will be naturally phrased in terms of suitable homogeneous Besov-type spaces, which require considering $P_k$ for negative $k$. See for instance \eqref{uab-bi-bal}.
\end{remark}
The main estimates we will establish for our solution $u$
will be linear and bilinear estimates for the functions $u_k$.

Alternatively, instead of the above discrete Littlewood-Paley decomposition one may also consider its continuous version, where,
with $k$ a real nonnegative parameter, we define 
\[
P_k u := \frac{d}{dk} P_{< k} u.
\]
Then our Littlewood-Paley decomposition for $u$
becomes
\[
u = u_0 +\int_0^\infty u_k \, dk.
\]
This will be useful in the proof of the local well-posedness result in Section~\ref{s:rough}, where rough solutions are constructed as a continuous limit of smooth solutions.

To shorten some calculations we will also use Greek indices
for Littlewood-Paley projectors, e.g., $P_\lambda$, $P_\mu$
where we take $\lambda,\mu \in 2^{\N}$. Correspondingly, our Littlewood-Paley decomposition of a function $u$ will read
\[
u = \sum_{\lambda \in  2^\N} u_\lambda, \qquad u_\lambda = P_\lambda u. 
\]

\subsection{Strichartz and bilinear $L^2$ bounds} 
Here we collect the requisite the  Strichartz inequalities, 
which apply to solutions to the inhomogeneous linear ultrahyperbolic Schr\"odinger equation:
\begin{equation}\label{bo-lin-inhom}
(\partial_t + \Delta_g)u = f, \qquad u(0) = u_0.
\end{equation}
Here $g$ is a constant, symmetric, non-degenerate metric (not necessarily positive-definite). These estimates are the $L^p_t L^q_x$ spacetime bounds which measure the dispersive effect of the linear Schr\"odinger flow. The admissible Strichartz exponents are exactly the same as in the elliptic case.

\begin{definition}
The pair $(p,q)$ is an admissible Strichartz exponent in $n\geq 1$ space dimensions if
\begin{equation}
\frac{2}{p}+\frac{n}q = \frac{2}{n}, \qquad 2 \leq p,q \leq \infty    \end{equation}
with the only exception of the forbidden endpoint $(2,\infty)$ 
in dimension $n=2$.
\end{definition}
We summarize the Strichartz estimates in the $L^2$ setting as follows.

\begin{lemma}
Assume that $u$ solves \eqref{bo-lin-inhom} in $[0,T] \times \R$. Then the following estimate holds for all sharp Strichartz pairs $(p,q), (p_1,q_1)$:
\begin{equation}
\label{strichartz}
\| u\|_{L^p_t L^q_x} \lesssim \|u_0 \|_{L^2} + \|f\|_{L^{p'_1}_t L^{q'_1}_x}.
\end{equation}
\end{lemma}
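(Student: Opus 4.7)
The plan is to reduce the statement to the standard Keel--Tao Strichartz theorem by diagonalizing the constant metric $g$ and checking that, despite its indefiniteness, the associated Schr\"odinger propagator enjoys exactly the same $|t|^{-n/2}$ $L^1\to L^\infty$ dispersive decay as in the elliptic case. The key observation is that \emph{non-degeneracy alone}, not positive-definiteness, is what produces optimal dispersive decay of the free propagator.

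First I would diagonalize the metric. Since $g$ is a real, symmetric, non-degenerate matrix, by the Sylvester law of inertia there exists an invertible linear map $L$ such that $L^{T}g L = \eta$, where $\eta = \mathrm{diag}(\underbrace{1,\ldots,1}_{p},\underbrace{-1,\ldots,-1}_{n-p})$ for some signature $(p,n-p)$. The change of variables $y = L^{-1}x$ transforms \eqref{bo-lin-inhom} into $i\partial_t \tilde u + \Delta_\eta \tilde u = \tilde f$ with $\Delta_\eta = \sum_{j=1}^{p}\partial_{y_j}^2 - \sum_{j=p+1}^{n}\partial_{y_j}^2$. This change of variables is a bounded linear bijection which preserves all mixed $L^p_t L^q_x$ norms up to constants depending only on $|\det L|$, so it suffices to prove the estimate for the flat indefinite propagator $e^{-it\Delta_\eta}$.

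Next I would establish the dispersive estimate. Because $\Delta_\eta$ separates into independent one-dimensional factors, each of them equal to $\pm\partial_{y_j}^2$, the propagator kernel factors as a tensor product of one-dimensional Schr\"odinger kernels, each of the form $(4\pi i t)^{-1/2} e^{\pm i y^2/(4t)}$. Consequently
\[
K_t(x) \;=\; \frac{C_\eta}{t^{n/2}}\,\exp\!\left(i\,\frac{\eta^{jk} x_j x_k}{4t}\right),
\]
so in particular $|K_t(x)| \lesssim |t|^{-n/2}$. At the level of Fourier symbols, $\widehat{K_t}(\xi)=e^{-it\,\eta(\xi,\xi)}$ is unimodular, so $e^{-it\Delta_\eta}$ is $L^2$-unitary; combined with the kernel decay this gives the pair of abstract hypotheses $\|e^{-it\Delta_\eta}\|_{L^2\to L^2}=1$ and $\|e^{-it\Delta_\eta}\|_{L^1\to L^\infty}\lesssim|t|^{-n/2}$.

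With these two ingredients, the abstract Keel--Tao machinery applies verbatim: the $TT^*$ argument combined with Hardy--Littlewood--Sobolev yields all non-endpoint homogeneous and inhomogeneous Strichartz estimates, and Christ--Kiselev promotes the homogeneous bounds to inhomogeneous bounds with distinct admissible pairs $(p,q)$ and $(p_1,q_1)$; the double-endpoint case (relevant only for $n\geq 3$) is covered by the refined bilinear interpolation in Keel--Tao, while the $(2,\infty)$ endpoint is excluded for $n=2$ as in the admissibility definition. I do not expect a serious obstacle here: the entire content is that diagonalizability of non-degenerate symmetric matrices reduces the ultrahyperbolic propagator to a tensor product of $\pm$ one-dimensional Schr\"odinger propagators, for which the optimal $|t|^{-n/2}$ dispersive bound is classical and insensitive to the choice of signs.
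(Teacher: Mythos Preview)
Your proposal is correct and matches the paper's own treatment: the paper does not give a detailed proof but simply remarks that the estimates follow in an entirely analogous fashion to the case $g=Id$, citing \cite{ghidaglia1993nonelliptic} for the nonelliptic discussion and \cite{Ke-Ta} for the endpoint. Your diagonalization via Sylvester's law, the resulting $|t|^{-n/2}$ dispersive bound from the tensor-product kernel, and the appeal to Keel--Tao are exactly the standard argument the paper is pointing to.
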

These estimates follow in an entirely analogous fashion to the case $g=Id$. See for instance, the discussion in \cite{ghidaglia1993nonelliptic}. We also direct the interested reader to \cite{Ke-Ta}, where the final endpoint $p=2$ was studied in the elliptic case. 
 \medskip
 
It is efficient to place these estimates under a single umbrella. In dimension three and higher we have access to both end-points, and it is most practical 
 to  define the Strichartz space $S$ associated to the $L^2$ flow by
\[
S = L^\infty_t L^2_x \cap L^2_t L^{\frac{2n}{n-2}}_x,
\]
as well as its dual 
\[
S' = L^1_t L^2_x + L^{2} _t L^{\frac{2n}{n+2}}_x .
\]

Then the Strichartz estimates can be summarized all as 
\begin{equation}
\label{strichartz3}
\| u\|_{S} \lesssim \|u_0 \|_{L^2} + \|f\|_{S'}.
\end{equation}

In two space dimensions we lose access to the $(2,\infty)$ endpoint
so the above definition of the Strichartz space $S$ no longer applies. Instead, it is convenient to use the $U^p_{\UH}$ and $V^p_{\UH}$ spaces associated to the linear ultrahyperbolic Schr\"odinger evolution
\begin{equation} \label{flat-uh-full}
(i \partial_t + \Delta_{g}) u = f, \qquad u(0) = u_0,
\end{equation}
where the operator $\Delta_{g}$ associated to the constant metric $g$ is nondegenerate but not elliptic.
These were introduced in unpublished work of the third author~\cite{T-unpublished}, as an improvement over the earlier 
Bourgain spaces which can be used in scale invariant contexts.
See also \cite{KT}, \cite{HTT} for some of the first applications of these spaces. In this setting the Strichartz estimates can be all placed under a single umbrella,
\begin{equation}
\label{strichartz2}
\| u\|_{V^2_\UH} \lesssim \|u_0 \|_{L^2} + \|f\|_{DV^2_\UH},
\end{equation}
where the transition to the Strichartz bounds is provided by the embeddings
\begin{equation}\label{UV-embed}
V^2_\UH \subset U^p_\UH \subset L^p_t L^q_x,
\qquad L^{p'}_t L^{q'}_x \subset DV^{p'}_\UH \subset DV^2_\UH.
\end{equation}

\medskip
The last property of the linear Schr\"odinger equation we will describe here is the bilinear $L^2$ estimate, which is as follows:

\begin{lemma}
\label{l:bi}
Let $u^1$, $u^2$ be two solutions to the inhomogeneous ultrahyperbolic Schr\"odinger equation \eqref{flat-uh-full} with data 
$u^1_0$, $u^2_0$ and inhomogeneous terms $f^1$ and $f^2$. Assume 
that $u^1$ and $u^2$ are frequency localized in balls $B_1$, $B_2$
with radius $\lambda_1 \lesssim  \lambda_2$ so that  
\[
d(B_1,B_2) \gtrsim \lambda_2.
\]
 Then we have 
\begin{equation}
\label{bi-di}
\| u^1 u^2\|_{L^2_{t,x}} \lesssim \lambda_1^{\frac{n-1}{2}} \lambda_2^{-\frac12} 
( \|u_0^1 \|_{L^2_x} + \|f^1\|_{S'}) ( \|u_0^2 \|_{L^2_x} + \|f^2\|_{S'}).
\end{equation}
\end{lemma}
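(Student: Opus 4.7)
The plan is to first establish the bound for free solutions ($f^j \equiv 0$) by a Fourier-analytic transversality argument, and then to transfer it to the inhomogeneous case using the standard $U^2_\UH/V^2_\UH$ atomic space machinery (or $S$, $S'$ in dimension $n \geq 3$).

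For free solutions $u^j(t,x)=(2\pi)^{-n}\int e^{i(x\cdot\xi - t\,g(\xi,\xi))}\widehat{u^j_0}(\xi)\,d\xi$, the space-time Fourier transform of the product takes the form
$$\mathcal{F}_{t,x}(u^1 u^2)(\tau,\eta) = c_n \int \widehat{u^1_0}(\xi_1)\,\widehat{u^2_0}(\eta-\xi_1)\, \delta\bigl(\tau+\Phi_\eta(\xi_1)\bigr)\,d\xi_1,$$
with phase $\Phi_\eta(\xi_1):=g(\xi_1,\xi_1)+g(\eta-\xi_1,\eta-\xi_1)$. The geometric core of the argument is the identity
$$\nabla_{\xi_1}\Phi_\eta(\xi_1) = 2G(\xi_1-\xi_2), \qquad \xi_2:=\eta-\xi_1,$$
with $G$ the matrix of $g$. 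Since $G$ is uniformly non-degenerate and $|\xi_1-\xi_2|\gtrsim\lambda_2$ by the separation hypothesis, this gradient has magnitude $\gtrsim\lambda_2$. I would then apply Cauchy-Schwarz with weight $1/|\nabla\Phi_\eta|$ on each level set $\Sigma_{\tau,\eta}=\{\Phi_\eta=-\tau\}\cap B_1$, controlling the coarea density by
$$\int_{\Sigma_{\tau,\eta}}\frac{d\sigma}{|\nabla\Phi_\eta|} \leq \frac{\mathcal{H}^{n-1}(\Sigma_{\tau,\eta})}{\inf_{B_1}|\nabla\Phi_\eta|} \lesssim \frac{\lambda_1^{n-1}}{\lambda_2}.$$
Integrating the resulting pointwise bound in $(\tau,\eta)$ and undoing the coarea formula back to Lebesgue measure in $\xi_1$ then produces
$$\|u^1 u^2\|_{L^2_{t,x}}^2 \lesssim \frac{\lambda_1^{n-1}}{\lambda_2}\,\|u^1_0\|_{L^2}^2\,\|u^2_0\|_{L^2}^2,$$
which is the free-solution version of the claimed inequality.

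To pass to general $u_0^j \in L^2$ and $f^j \in S'$, one observes that the free estimate applies verbatim to $U^2_\UH$-atoms in the ultrahyperbolic linear flow and, by bilinear interpolation on the atomic spaces, extends to all of $U^2_\UH$. The full bound then follows from the linear estimate $\|u\|_{V^2_\UH}\lesssim \|u_0\|_{L^2}+\|f\|_{DV^2_\UH}$ together with the embeddings \eqref{UV-embed}, with the analogous reasoning in $n\geq 3$ using $S$, $S'$ and \eqref{strichartz3}. The main obstacle here is essentially bookkeeping: because only non-degeneracy of $G$ enters the gradient lower bound, the transversality step is completely insensitive to whether $g$ is elliptic or ultrahyperbolic, and the analysis is formally identical in both regimes. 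The genuine novelties of the ultrahyperbolic setting do not arise at this free-flow bilinear level, but only later, in the variable-coefficient interaction Morawetz analysis previewed in the introduction.
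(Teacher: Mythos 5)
The paper offers no proof of this lemma, noting only that it is standard (citing Tao's bilinear restriction work for the homogeneous case and leaving the inhomogeneous transfer implicit). Your argument is the classical direct one, and your central observation is exactly the right point: since $\nabla_{\xi_1}\Phi_\eta = 2G(\xi_1-\xi_2)$ and $G$ is merely non-degenerate, the support hypothesis forces $|\nabla_{\xi_1}\Phi_\eta|\gtrsim|\xi_1-\xi_2|\gtrsim\lambda_2$, so the constant-coefficient transversality estimate is completely indifferent to the signature of $g$.

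Two steps should be tightened. First, $\mathcal{H}^{n-1}(\Sigma_{\tau,\eta})\lesssim\lambda_1^{n-1}$ is not automatic for an arbitrary hypersurface in a ball of radius $\lambda_1$; it holds here because $\Phi_\eta$ is quadratic with $O(1)$ Hessian and gradient $\gtrsim\lambda_2\gtrsim\lambda_1$ on the support, so the level set is (after a harmless finite subdivision of $B_1$ and $B_2$ to pin down the gradient direction) a bounded-slope graph over a hyperplane. A cleaner route avoids surface geometry entirely: choose a fixed direction $e$ with $|\partial_e\Phi_\eta|\gtrsim\lambda_2$ on the support and integrate the delta in the $e$-direction by Fubini, giving $\int\delta(\tau+\Phi_\eta)\,d\xi_1\lesssim\lambda_1^{n-1}/\lambda_2$ directly. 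Second, the last paragraph conflates $U^2_{\UH}$ and $V^2_{\UH}$. Passing from atoms to all of $U^2_{\UH}$ is just the triangle inequality, not an interpolation; but the linear bound $\|u\|_{V^2_{\UH}}\lesssim\|u_0\|_{L^2}+\|f\|_{DV^2_{\UH}}$ only places $u$ in $V^2_{\UH}$, and $V^2_{\UH}\not\subset U^2_{\UH}$. The genuine interpolation step is precisely the one needed to promote a $U^2\times U^2$ bilinear bound to $V^2\times V^2$ (the Hadac--Herr--Koch lemma, paired with the trivial Bernstein bound as the second endpoint). Alternatively, split $f\in S'=L^1_tL^2_x+L^2_tL^{2n/(n+2)}_x$ and treat the $L^1_tL^2_x$ part by Duhamel plus Minkowski and the dual-Strichartz part by a $TT^*$/Christ--Kiselev argument. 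These are standard, routine repairs; the skeleton of the argument is correct and matches what the paper implicitly relies on.
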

This is not directly used in the paper; instead it is meant to serve as a guide for similar estimates we prove later for the paradifferential flow, where the frequency localizations are assumed to be dyadic. The above statement is in effect equivalent to the corresponding dyadic setting via Galilean transformations.  
This is a standard result, which in particular can be seen as a corollary of more general estimates in \cite{Tao-bi}, at least in the homogeneous case. We also note that, as a corollary, a similar bound can be stated at the level of the $U^2$ spaces:
\begin{equation}
\label{bi-di-UV}
\| u^1 u^2\|_{L^2_{t,x}} \lesssim \lambda_1^{\frac{n-1}{2}} \lambda_2^{-\frac12} 
 \|u^1 \|_{U^2_{UH}} \|u^2 \|_{U^2_{UH}} .
\end{equation}

\subsection{ Resonant analysis}
\label{s:resonances}
In this subsection we aim to classify the trilinear interactions associated to the linear 
constant coefficient flow; for simplicity, the reader may think 
of the context of an equation of the form
\[
(i\partial_t + \Delta_g) u = C(\bfu,\bfu,\bfu),
\]
where $C$ is a translation invariant trilinear form and we have used the notation $\bfu = \{ u, \bu\}$ in order to allow for both $u$ and $\bu$ in each of the arguments of $C$. For clarity, we note that this includes expressions of the form
\[
C(u,\bu,u), \quad C(u,u,u), \quad C(u,\bu,\bu), \quad C(\bu,\bu,\bu).
\]
Within these choices we distinguish the first one,  namely $C(u,\bu,u)$, 
as the only one with \emph{phase rotation symmetry}.

Given three input frequencies $\xi^1, \xi^2,\xi^3$ for 
our cubic nonlinearity, the output will be at frequency 
\begin{equation}\label{freq}
\xi^4 = \pm \xi^1 \pm \xi^2 \pm \xi^3,
\end{equation}
where the signs are chosen depending on whether we use the input $u$ or $\bu$. We call such an interaction {balanced} if all $\xi^j$'s are of comparable size, and \emph{unbalanced} otherwise.
This is a \emph{resonant interaction} if and only if we have a similar relation for the associated time frequencies, namely 
\[
|\xi^4|_g^2 = \pm |\xi^1|_g^2 \pm |\xi^2|_g^2 \pm |\xi^3|_g^2.
\]
where we write (by slight abuse of notation)
\begin{equation*}
|v|_g^2:=v^jv_j=g^{ij}v_iv_j.
\end{equation*}
On the other hand from the perspective of bilinear estimates we 
seek to distinguish the \emph{transversal interactions}, which are 
defined as those for which no more than two of the frequencies $\xi^j$ coincide. We seek to classify interactions from this perspective:

\begin{description}
\item[(i) With phase rotation symmetry]
Here the relation \eqref{freq} can be expressed in a more symmetric fashion as 
\[
\Delta^4 \xi = 0, \qquad \Delta^4 \xi = \xi^1-\xi^2+\xi^3-\xi^4 .
\]
This is a resonant interaction if and only if we have a similar relation for the associated time frequencies, namely 
\[
\Delta^4 |\xi|_g^2 = 0, \qquad \Delta^4 |\xi|_g^2 = |\xi^1|_g^2-|\xi^2|_g^2+|\xi^3|_g^2-|\xi^4|_g^2 .
\]
Hence, we define the resonant set in a symmetric fashion as 
\[
\calR := \{ \Delta^4 \xi = 0, \ \Delta^4 |\xi|_g^2 = 0\}.
\]
In 1D it is easily seen that this set may be characterized as
\[
\calR = \{ \{\xi^1,\xi^3\} = \{\xi^2,\xi^4\}\}.
\]
However, in higher dimensions the situation is more complicated, and the resonant set 
$\calR$ consists of all quadruples $\xi^1$, $\xi^2$, $\xi^3$
and $\xi^4$ where $\xi^4=\xi^1-\xi^2+\xi^3$ and where $\xi^1-\xi^2$ is orthogonal to $\xi^2-\xi^3$, with respect to the metric $g$. 

Most resonant quadruples have the property that they can be split
into two pairs of separated frequencies; we call these
\emph{transversal resonant} quadruples. The set of non-transversal resonant interactions has a simple description and is given by the diagonal set 
\[
\calR_2 = \{\xi^1=\xi^2=\xi^3 = \xi^4\}.
\]
We will refer to these interactions as the \emph{doubly resonant interactions}.

\item[(ii) Without phase rotation symmetry] Here we concentrate  our attention
on describing the worst case scenario, which corresponds to interactions which are both resonant and non-transversal.
The first requirement corresponds to frequencies where
\[
 \xi^1 \pm \xi^2 \pm \xi^3 \pm \xi^4 = 0,
\]
and 
\[
|\xi^1|_g^2-|\xi^2|_g^2+|\xi^3|_g^2-|\xi^4|_g^2=0.
\]
The second requirement is that three of the frequencies are equal.
Without any loss in generality assume that $\xi^1 = \xi^2 = \xi^3:=\xi$.
Depending on the choice of signs and excluding the case of phase rotation symmetry, we obtain one of the following systems:
\[
\xi^4 = \pm 3 \xi, \qquad |\xi^4|_g^2 = \pm 3 |\xi|_g^2,
\]
with matched signs, or
\[
\xi^4 = - \xi, \qquad |\xi^4|_g^2 = - |\xi|_g^2.
\]
If $g$ is positive definite, then $\xi = 0$ is the only solution. Unfortunately, in the indefinite case, this is no longer true. However, when $\xi\neq 0$, we make the crucial observation that $\xi^4$ and $\xi$ are frequency separated. Put another way, this case corresponds to cubic interactions in which there is a gap of size $\approx \xi$ between the output frequency and the three (equal) input frequencies. This information turns out to be sufficient for our global result, even at low regularity (albeit, barely), as we are still partially able to take advantage of the new unbalanced bilinear estimates in our analysis that we will establish later. In the sequel, we will call such interactions \emph{weakly transversal}. We summarize our observations in the following lemma 
\begin{lemma}\label{l:C4-dec} Except for the case $(0,0,0,0)$, all cubic interactions 
without phase rotation symmetry are either nonresonant, transversal or weakly transversal.
\end{lemma}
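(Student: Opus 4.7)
The plan is to directly analyze the constraints imposed by resonance and non-transversality under the non-phase-rotation hypothesis. Both the frequency relation $\xi^4 = \epsilon_1\xi^1+\epsilon_2\xi^2+\epsilon_3\xi^3$ and the accompanying time-frequency relation $|\xi^4|_g^2 = \epsilon_1|\xi^1|_g^2+\epsilon_2|\xi^2|_g^2+\epsilon_3|\xi^3|_g^2$ are invariant under simultaneous permutation of all four slots (after an appropriate global sign flip corresponding to complex conjugation of the cubic form), so the non-transversality hypothesis that three of the four frequencies coincide may be reduced, without loss of generality, to the case $\xi^1=\xi^2=\xi^3=:\xi$, with $\xi^4$ playing the role of the possibly distinct frequency.

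With this reduction, setting $s:=\epsilon_1+\epsilon_2+\epsilon_3$, the frequency relation becomes $\xi^4=s\xi$ and the resonance relation collapses to $s^2|\xi|_g^2 = s|\xi|_g^2$, i.e.\ $s(s-1)|\xi|_g^2 = 0$. The four possible values of $s$ are $\{-3,-1,1,3\}$, and the phase rotation case corresponds exactly to $s=1$ (two $u$-slots and one $\bar u$-slot in the input), in which case the constraint is vacuous and $\xi^4=\xi$, recovering the diagonal doubly resonant set $\calR_2$. The non-phase-rotation patterns give $s\in\{-3,-1,3\}$, for which $s(s-1)\neq 0$, forcing $|\xi|_g^2=0$. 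Thus $\xi$ must lie on the null cone of $g$.

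If $\xi=0$ the full quadruple is the excluded case $(0,0,0,0)$. Otherwise $\xi\neq 0$ and $\xi^4\in\{3\xi,-\xi,-3\xi\}$, yielding a separation $|\xi^4-\xi|\geq 2|\xi|\gtrsim |\xi|$ between the output and the (triple) input frequency, which is precisely the weakly transversal condition. The main subtle point I anticipate is the initial symmetry reduction: the output slot $\xi^4$ is physically distinguished, so one must verify that the configurations where the three coincident frequencies include $\xi^4$ can be brought into the canonical form. This is handled either by a parallel direct computation (which again yields $|\cdot|_g^2=0$ on the null cone together with the required frequency gap), or by invoking the permutation-plus-conjugation symmetry of the resonance relations, which preserves the non-phase-rotation status; either way the conclusion is unchanged.
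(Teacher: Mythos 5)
Your proof is correct and reproduces the paper's argument essentially verbatim: reduce to $\xi^1=\xi^2=\xi^3=\xi$, deduce $|\xi|_g^2=0$ from the resonance relation, and conclude $O(|\xi|)$ separation between $\xi^4$ and $\xi$. Your $s(s-1)|\xi|_g^2=0$ factorization unifies the paper's three case-by-case checks in a tidier form, and your remark about the WLOG step (the output slot $\xi^4$ being distinguished) is a legitimate subtlety that the paper passes over silently.
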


\end{description}
We will use the above classification to separate the cubic interactions using smooth frequency cut-offs as 
\[
C(\bfu,\bfu,\bfu) = C^{res}(u,\bu,u) + C^{0}(\bfu,\bfu,\bfu) + 
C^{nr}(\bfu,\bfu,\bfu) + C^{tr}(\bfu,\bfu,\bfu)+C^{wt}(\bfu,\bfu,\bfu),
\]
where
\begin{itemize}
    \item $C^{res}(u,\bu,u)$ contains balanced interactions with phase rotation symmetry
    \item $C^{0}(\bfu,\bfu,\bfu)$ contains only low frequency interactions
    \item $C^{nr}(\bfu,\bfu,\bfu)$ contains only balanced nonresonant interactions
  \item $C^{tr}(\bfu,\bfu,\bfu)$ contains only transversal interactions, balanced or unbalanced
  \item $C^{wt}(\bfu,\bfu,\bfu)$ contains balanced, weakly transversal interactions.
\end{itemize}

\subsection{ Translation invariant multilinear forms} 
Since our problem is invariant with respect to translations, translation invariant multilinear forms play an important role in the analysis. Below we review some of our notations in this regard, following 
\cite{IT-global}, \cite{IT-qnls} and \cite{IT-qnls2}.

We begin with multipliers $m(D)$, which correspond to multiplication in the Fourier space by their symbol $m(\xi)$, and which in the physical space  are interpreted as convolution operators,
\[
m(D) u (x) = \int K(y) u(x-y) \,dy, \quad \hat K = m.
\]
More generally, we will denote by $L$  any convolution operator 
\[
Lu (x) = \int K(y) u^y(x)\, dy,
\]
where $K$ is an integrable kernel, or 
 a bounded measure, with a universal bound and where for translations we use the notation
\[
u^y(x) := u(x-y).
\]

\medskip

 Moving on to multilinear operators,  we denote by $L$ any multilinear form 
\[
L(u_1,\cdots, u_k) (x) = 
\int K(y_1, \cdots,y_k) u_1^{y_1}(x) \cdots u_k^{y_k}(x) \, dy,
\]
where, again, $K$ is assumed to have a kernel which is integrable, or more generally,  a bounded measure (in order to be able to include products here). Such multilinear forms satisfy the same bounds as 
corresponding products do, as long as we work in translation invariant Sobolev spaces. Such form may also be described by their symbol $a(\xi^1, \cdots \xi^k)$, as
\[
\widehat{L(u_1,\cdots, u_k)}(\xi) = (2\pi)^{-\frac{n(k-1)}2}\int_{\xi^1+ \cdots \xi^k = \xi} 
a(\xi^1, \cdots \xi^k) \hat u_1(\xi^1) \cdots \hat u_k(\xi^k) d \xi^1 \cdots d \xi^{k-1},
\]
where $a$ is nothing but the  inverse Fourier transform of the kernel $K$.

\medskip

In the study of the two dimensional problems we also need to separately consider cubic forms which exhibit phase rotation symmetry, where the arguments $u$ and $\bar u$ are alternating. For such forms we borrow the notations from our earlier paper \cite{IT-global}.

Precisely, for an integer $k \geq 2$, we will use translation invariant $k$-linear  forms 
\[
(\mathcal D(\R^n))^{k} \ni (u_1, \cdots, u_{k}) \to     Q(u_1,\bu_2,\cdots) \in \mathcal D'(\R^n),
\]
where the nonconjugated and conjugated entries are alternating.

Such a form is uniquely described by its symbol $q(\xi_1,\xi_2, \cdots,\xi_{k})$
via
\[
\begin{aligned}
Q(u_1,\bu_2,\cdots)(x) = (2\pi)^{-nk} & 
\int e^{i(x-x_1)\xi^1} e^{-i(x-x_2)\xi^2}
\cdots 
q(\xi^1,\cdots,\xi^{k})
\\ & \qquad 
u_1(x_1) \bu_2(x_2) \cdots  
dx_1 \cdots dx_{k}\, d\xi^1\cdots d\xi^k,
\end{aligned}
\]
or equivalently on the Fourier side
\[
\mathcal F Q(u_1,\bu_2,\cdots)(\xi)
= (2\pi)^{-\frac{n(k-1)}2} \int_{D}
q(\xi^1,\cdots,\xi^{k})
\hat u_1(\xi^1) \bar{\hat u}_2(\xi^2) \cdots  \,
d\xi^1 \cdots d\xi^{k-1},
\]
where, with alternating signs, 
\[
D := \{ \xi = \xi^1-\xi^2 + \cdots \}.
\]

They can also be described via their kernel
\[
Q(u_1,\bu_2,\cdots)(x) =  
\int K(x-x_1,\cdots,x-x_{k})
u_1(x_1) \bu_2(x_2) \cdots  
dx_1 \cdots dx_{k},
\]
where $K$ is defined in terms of the  
Fourier transform  of $q$
\[
K(x_1,x_2,\cdots,x_{k}) = 
(2\pi)^{-\frac{kn}2} \hat q(-x_1,x_2,\cdots,(-1)^k x_{k}).
\]

These notations are convenient but somewhat nonstandard because of the alternation of complex conjugates; our usage will be clear from the context.
We note that the cases of odd $k$, respectively even $k$ play different roles here, as follows:

\medskip

i) The $2k+1$ multilinear forms will be thought of as functions, e.g. those which appear in some of our evolution equations.

\medskip

ii) The $2k$ multilinear forms will be thought of as densities, e.g. which appear  in some of our density-flux pairs.

\medskip
In particular, 
to each $2k$-linear form $Q$ we will associate
a corresponding $2k$-linear functional $\mathbf{Q}$ defined by 
\[
\mathbf{Q}(u_1,\cdots,u_{2k}) := \int_\R Q(u_1,\cdots,\bu_{2k})(x)\, dx,
\]
which takes real or complex values.
This may be alternatively expressed 
on the Fourier side as 
\[
\mathbf{Q}(u_1,\cdots,u_{2k}) = (2\pi)^{n(1-k)} \int_{D}
q(\xi^1,\cdots,\xi^{2k})
\hat u_1(\xi^1) \bar{\hat u}_2(\xi^2) \cdots  
\bar{\hat u}_{2k}(\xi^{2k})\,d\xi^1 \cdots d\xi^{2k-1},
\]
where, with alternating signs, the diagonal $D_0$ is given by
\[
D_0 = \{ 0 = \xi^1-\xi^2 + \cdots \}.
\]
In order to define the multilinear functional $\mathbf{Q}$ it suffices to know the symbol $q$ on $D_0$.

\section{A frequency envelope formulation of the results}
\label{s:boot}
For ease of exposition, we stated Theorem~\ref{t:local3+}, Theorem~\ref{t:global3}, Theorem~\ref{t:global2} and Theorem~\ref{t:global2c}
in a simplified form in the introduction. However, in our analysis, these results will be naturally complemented with a suitable family of linear and bilinear $L^2$ bounds for the solutions. The types of bilinear $L^2$ bounds that we have will depend on the relative frequencies of the corresponding interacting waves. We will track these bounds using a somewhat complex bootstrap loop. To keep things efficient, it is convenient to phrase this bootstrap using the language of frequency envelopes. As in the definite case in \cite{IT-qnls2}, the primary goals of this section are  
\begin{enumerate}[label=(\roman*)]
\item to recall the standard frequency envelope formalism, 
\item to phrase our main estimates in terms of these frequency envelopes, 
\item  to set up the overarching bootstrap assumptions in the proofs of each of the theorems, 
\item to outline the continuity argument which will allow us to eventually close the bootstrap.
\end{enumerate}

\subsection{Frequency envelopes}
Here we recall the standard \emph{frequency envelope} formalism. This is an idea introduced by Tao in \cite{Tao-BO}. It will serve as an efficient bookkeeping device for tracking the evolution of the energy of the solutions between dyadic energy shells. We say that a sequence $c_{k}\in l^2$
is an $L^2$ frequency envelope for $\phi \in L^2$ if
\begin{itemize}
\item[i)] $\sum_{k=0}^{\infty}c_k^2 < \infty$;\\
\item[ii)] it is slowly varying, 
\[
c_j /c_k \leq 2^{\delta \vert j-k\vert}, \qquad j,k \in \N
\]
with a small universal constant $\delta$;
\\
\item[iii)] it bounds the dyadic norms of $\phi$, namely $\Vert P_{k}\phi \Vert_{L^2} \leq c_k$. 
\end{itemize}
Given a frequency envelope $c_k$ we define 
\[
 c_{\leq k} := (\sum_{j \leq k} c_j^2)^\frac12, \qquad  c_{\geq k} := (\sum_{j \geq k} c_j^2)^\frac12.
\]
 In  practice we may choose our envelopes in a minimal fashion so that we have the equivalence
\[
\sum_{k=0}^{\infty}c_k^2  \approx \|u\|_{L^2}^2
\]
and so that we also have 
\[
c_0 \approx \|u\|_{L^2}.
\]
The above definition applies also for defining $H^s$ frequency envelopes.

\begin{remark}\label{r:unbal-fe}
It is also useful to weaken the slowly varying assumption to
\[
2^{- \delta \vert j-k\vert} \leq    c_j /c_k \leq 2^{C \vert j-k\vert}, \qquad j < k,
\]
where $C$ is a fixed (possibly large) constant. We note that all the results in this paper are compatible with this choice.
This will allow for extra flexibility when proving higher regularity results by the same arguments.
\end{remark}

\subsection{The frequency envelope formulation of the results}

The goals of this subsection are twofold: (i) restate the bounds needed for our main results in Theorem~\ref{t:local3+}, Theorem~\ref{t:global3}, Theorem~\ref{t:global2} and Theorem~\ref{t:global2c} in the frequency envelope setting,  and (ii) to set up the bootstrap argument for the proof of these bounds. 

To do this, we begin by assuming that the initial data is small in $H^s$,
\[
\| u_0\|_{H^s} \lesssim \epsilon.
\]
We then let $\{c_{\lambda}\}$ be an admissible $H^s$ frequency envelope for $u_0$ with $\|c\|_{l^2}\approx 1$. This gives the following frequency envelope bound for each dyadic piece of $u_0$,
\[
\|u_{0,\lambda}\|_{H^s} \leq \epsilon c_\lambda, \qquad \lambda\in 2^{\mathbb{N}},
\]

In our analysis, our goal will be to show that the solution $u$ satisfies a similar frequency envelope bound at later times. 

We are now almost ready to state the frequency envelope bounds that will be the subject of the next theorems. Before doing this, we will need the following definition to treat the full range of interactions we will encounter in our analysis of the resonant cubic nonlinearity later on. This is a new feature in the ultrahyperbolic problem, where one has to contend with the resonant weakly transversal interactions outlined in the previous section.
\begin{definition}\label{transversalpair}
Let $u_{\mu}$ and $v_{\lambda}$ be functions localized at dyadic frequencies $\mu$ and $\lambda$ with $\mu\leq\lambda$. We say that $u_{\mu}$ and $v_{\lambda}$ are unbalanced if one of the following two properties hold:
\begin{itemize}
\item $u_{\mu}$ and $v_{\lambda}$ are frequency separated: $\mu\ll \lambda$,
\item $u_{\mu}$ and $v_{\lambda}$ are semi-balanced: $\mu\approx\lambda$ but $u_{\mu}$ and $v_{\lambda}$ are localized in disjoint balls $B_\mu$ and $B_\lambda$ with radii $\leq \delta\lambda$ and $\leq \delta\mu$, respectively, and
\begin{equation*}
d(B_\lambda,B_\mu)\gtrsim\lambda.
\end{equation*}
\end{itemize}
\end{definition}
It is crucial to incorporate the latter property in the above definition into our unbalanced bilinear estimate below. This is due to the weakly transversal resonant interactions (as defined in the previous section) which arise in the ultrahyperbolic problem, where all of the input frequencies (of size $\lambda$) are comparable in magnitude, but they have $O(\lambda)$ separation with the output frequency. Now, we are ready to state the frequency envelope bounds for a solution $u$. These are summarized as follows and will be the main subject of our next theorem:
\begin{enumerate}[label=(\roman*)]
\item Uniform frequency envelope bound:
\begin{equation}\label{uk-ee}
\| u_\lambda \|_{L^\infty_t L^2_x} \lesssim \epsilon c_\lambda \lambda^{-s}
\end{equation}
\item Unbalanced bilinear $L^2$ bounds: Let $u_{\mu}$ and $u_{\lambda}$ be unbalanced in the sense of Definition~\ref{transversalpair}. There holds
\begin{equation} \label{uab-bi-unbal}
\| u_\lambda \bu_\mu^{x_0}  \|_{L^2_{t,x}} \lesssim \epsilon^2 c_\lambda c_\mu \lambda^{-s-\frac12} \mu^{-s+\frac{n-1}2}
\end{equation}
\item Balanced bilinear $L^2$ bound: 
\begin{equation} \label{uab-bi-bal}
\sup_{j\in\mathbb{Z}}\|P_{j}|D|^{\frac{3-n}2} (u_\lambda \bu_\mu^{x_0})  \|_{L^2_{t,x}} \lesssim \epsilon^2 c_\lambda c_\mu \lambda^{-2s+\frac12}   (1+ \lambda |x_0|)
, \qquad \lambda \approx \mu 
\end{equation}
\item Strichartz bounds with a loss, for any sharp Strichartz exponents $(p,q)$:
\begin{equation}\label{uk-Str}
\| u_\lambda\|_{L^p_t L^q_x} \lesssim \epsilon c_\lambda \lambda^{-s+ \frac{2}p}  . 
\end{equation}
\end{enumerate}
We remark that as a simple consequence of the balanced 
bilinear bound \eqref{uab-bi-bal}, we also obtain
the linear bound  
\begin{equation}\label{uk-l4-4d}
\| u_\lambda\|_{L^4_{t,x}} \lesssim \epsilon c_\lambda \lambda^{-s+ \frac{n-2}4}, \qquad n \geq 4. 
\end{equation}
The corresponding $L^4$ bound when $n=3$ is borderline, and instead \eqref{uab-bi-bal} ensures the slightly weaker bound
\begin{equation}\label{3dlinbound}
\|u_{\lambda}\|_{L_t^4L_x^{4+\delta}}\lesssim_{\delta} \epsilon c_{\lambda}\lambda^{-s+\frac{1}{4}+c\delta},\qquad n=3,\qquad 0<\delta\ll 1
\end{equation}
for some universal constant $c>0$. However, by interpolating with the $L_t^4L_x^3$ Strichartz bound in \eqref{uk-Str}, we can recover a nearly lossless $L^4$ estimate,
\begin{equation}\label{uk-l4-3d}
\| u_\lambda\|_{L^4_{t,x}} \lesssim _{\delta}\epsilon c_\lambda \lambda^{-s+ \frac{n-2}4+\delta}, \qquad n=3  ,  
\end{equation}
which will be more than enough for our purposes. When $n=2$, we also observe that interpolating \eqref{uab-bi-bal} with the energy bound \eqref{uk-ee} yields the lossless $L_t^6L_x^{4}$ bound 
\begin{equation}\label{uk-l4-2d}
\| u_\lambda\|_{L^6_t L^{4}_x} \lesssim \epsilon c_\lambda \lambda^{-s+ \frac16}, \qquad n = 2  . 
\end{equation}
This estimate will be important for establishing the $\epsilon^{-6}$ lifespan bound for our solutions in the low regularity regime. We also remark on a special case of the Strichartz bound
\eqref{uk-Str} which has a $1/2$ derivative loss and will play a leading role in the sequel: 
\begin{equation}\label{uk-Str2}
\| u_\lambda\|_{L^4_{t,x}} \lesssim \epsilon c_\lambda \lambda^{-s+ \frac12}, \qquad n = 2.  
\end{equation}

Here we distinguish between the  2D  case and higher dimensions. The 2D bounds bear a closer resemblance to the 1D case studied in \cite{IT-qnls}; this reflects the fact that there is less dispersion in 2D, and in particular the $L^4$ Strichartz bound lies on the sharp Strichartz line and cannot be obtained  directly 
using interaction Morawetz based tools. By contrast, in three and higher dimension we are able to prove the $L^4$  bound directly from interaction Morawetz (and also a very mild application of $L_t^4L_x^3$ Strichartz estimate when $n=3$).

We also remark on the need to add translations to the bilinear 
$L^2$ estimates. This is because, unlike the linear bounds
\eqref{uk-ee} and \eqref{uk-Str} which are inherently invariant with respect to translations, bilinear estimates are not invariant 
with respect to separate translations for the two factors.
Hence, in particular, they cannot be directly transferred to 
nonlocal bilinear forms. However, if we allow translations, then
one immediate corollary of \eqref{uab-bi-bal} is that for any bilinear form $L$ with smooth and bounded symbol we have 
\begin{equation} \label{uab-bi-boot-trans}
\sup_{j\in\mathbb{Z}}\|P_j|D|^{\frac{3-n}2} L(u_{\lambda} \bu_{\lambda})  \|_{L^2_{t,x}} \lesssim  \epsilon^2 c_{\lambda}^2 \lambda^{-2s+\frac12}.
\end{equation}
This is essentially the only way we will use this translation invariance in our proofs. We also remark on the $(1+\lambda|x_0|)$ 
factor, which appears in the balanced bilinear bounds but not in the 
unbalanced ones. Heuristically this is because in the unbalanced
case transversality remains valid even when one of the metrics gets translated; on the other hand, in the balanced case we also estimate interactions of near parallel waves, and there transversality is very sensitive to changes of the metric, so there is a price to pay for translations. 
\medskip

We are now ready to state our local results, beginning  with the case of three and higher space dimensions:

\begin{theorem}\label{t:local-fe3}
a)  Let $n \geq 3$, $s > \frac{n+1}2$,  $\epsilon \ll 1$ and $T > 0$. Consider the equation \eqref{qnls} with cubic nonlinearity and let $u \in C[0,T;H^s]$ be a smooth solution with initial data $u_0$  which has $H^s$ size at most $\epsilon$.
 Let $\{ \epsilon  c_\lambda\}$ be a frequency envelope for the initial data  in $H^s$. Then the solution $u$ satisfies  the bounds \eqref{uk-ee}, \eqref{uab-bi-unbal}, \eqref{uab-bi-bal} and  \eqref{uk-Str}   uniformly with respect to $x_0 \in \R$.

b) The same result holds for \eqref{dqnls} but with $s > \frac{n+3}2$.
\end{theorem}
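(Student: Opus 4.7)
The plan is to establish the four frequency-envelope bounds \eqref{uk-ee}, \eqref{uab-bi-unbal}, \eqref{uab-bi-bal} and \eqref{uk-Str} by a single global bootstrap argument on $[0,T]$, starting from a smooth ($H^\sigma$, $\sigma \gg s$) solution whose existence is guaranteed by Theorem~\ref{t:regular1}. The bootstrap hypothesis is that all four families of bounds hold with a large constant $C\epsilon$ in place of $\epsilon$; our task is to upgrade these to $\epsilon$. Since the initial data is small and smooth, the improved bounds hold trivially near $t=0$, so by continuity it suffices to improve them on any time interval on which the bootstrap hypothesis is valid.

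The first step is the paradifferential reduction of Section~\ref{s:lin}, rewriting the equation as a paradifferential Schr\"odinger flow
$(i\partial_t + \partial_j T_{g^{jk}(u)} \partial_k) u = F(u, \nabla u)$
whose source $F$ splits, following the classification in Section~\ref{s:resonances}, into low-frequency, balanced resonant, balanced nonresonant, transversal and weakly transversal pieces. The balanced nonresonant interactions are removed by a partial normal form implemented as a correction to the dyadic $L^2$ energy, producing modified energies $E_\lambda[u_\lambda]$. Differentiating $E_\lambda$ in time leaves source contributions of three types: balanced resonant terms (whose diagonal symbols are real by the phase rotation symmetry in the elliptic sense and harmless in $n\geq 3$), transversal terms (controlled by the unbalanced bilinear bootstrap \eqref{uab-bi-unbal}), and weakly transversal terms (where the $O(\lambda)$ frequency gap between output and input, as in Lemma~\ref{l:C4-dec}, again activates \eqref{uab-bi-unbal}). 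This closes \eqref{uk-ee}. The Strichartz bound \eqref{uk-Str} is then obtained by a paradifferential parametrix construction as outlined in point \textbf{V} of the introduction, absorbing the source via the bootstrap; small data ensures the Hamilton flow of $g(u)$ is a smooth perturbation of straight lines, so nontrapping is automatic.

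The main obstacle is the bilinear $L^2$ analysis, in particular \eqref{uab-bi-bal}. For the unbalanced bound \eqref{uab-bi-unbal}, I would run an interaction Morawetz functional of Planchon-Vega type, but with the Morawetz weight $|x-y|$ adapted to $g$ rather than to the Euclidean structure. In the frequency-separated or angularly-separated regimes of Definition~\ref{transversalpair}, the Hamilton rays of the two factors are genuinely transverse, so the positive commutator with the principal symbol is coercive; cubic error terms in the identity are handled using the energy and Strichartz bootstrap. The hard part is the balanced case \eqref{uab-bi-bal}, where in the ultrahyperbolic setting the classical Morawetz functional fails to be coercive because two balanced waves can have Euclidean-close frequencies that are nevertheless transverse with respect to $g$. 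Here my plan is the novel construction hinted at in point \textbf{IV} of the introduction: for each dyadic output shell $P_j|D|^{(3-n)/2}(u_\lambda \bar u_\mu^{x_0})$, use a family of Morawetz weights whose commutator with $\Delta_g$ is coercive precisely on frequencies supporting $P_j$, recovering the elliptic-case bound with only an arbitrarily small dyadic loss (hence the $\sup_j$). The $(1+\lambda|x_0|)$ factor records the cost of translating one factor, which deteriorates the alignment of the weight to the metric. The cubic terms produced by the Morawetz identity are absorbed using \eqref{uk-ee}, \eqref{uk-Str} and the bootstrap forms of \eqref{uab-bi-unbal}, \eqref{uab-bi-bal}. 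A continuity argument in the bootstrap constant then delivers the four bounds with the sharp $\epsilon$ constant on $[0,T]$, concluding the proof for \eqref{qnls}; the argument for \eqref{dqnls} is identical after raising the regularity threshold by one derivative.
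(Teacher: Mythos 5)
Your overall scaffolding---bootstrap on the frequency-envelope bounds, paradifferential reduction, interaction Morawetz for the bilinear estimates, and a Strichartz black box---is the same as the paper's. However, several of the concrete steps deviate from what is actually done, and two of those deviations are genuine gaps.

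First, for the energy bound \eqref{uk-ee} in dimension $n\geq 3$ you propose a partial normal form implemented as a modified dyadic energy $E_\lambda$. The paper does not do this, and it is not needed here: the entire source $N_\lambda(u)$ in the paradifferential equation \eqref{para-full} is estimated directly by pairing with (translates of) $u_\lambda$ in $L^1_{t,x}$, which is exactly the content of Proposition~\ref{p:N-lambda}. For $n\geq 3$ all balanced cubic interactions---resonant or not---are already perturbative in that pairing, so no normal form correction is invoked. The modified-energy machinery (quartic density and flux corrections for resonant and nonresonant balanced terms) is reserved for the 2D low-regularity theorem and appears only in the last section.

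Second, and more seriously, the unbalanced bilinear estimate \eqref{uab-bi-unbal}. You propose to ``run an interaction Morawetz functional of Planchon--Vega type, but with the Morawetz weight $|x-y|$ adapted to $g$ rather than to the Euclidean structure.'' This is not what the paper does, and it is not clear how to make sense of a $g$-adapted $|x-y|$ weight when $g$ is indefinite: $|x|_g^2 = g_{jk}x^jx^k$ is sign-indefinite, not convex, and a convex weight built from the Euclidean $|x-y|$ alone would fall back on the problematic coercivity of $\Delta_g$. The paper's actual construction is different and much more elementary: one first localizes $v_\lambda, v_\mu$ to frequency balls $B_\lambda, B_\mu$ of radius $O(\delta\lambda), O(\delta\mu)$ with $O(\lambda)$ separation, observes that the corresponding group-velocity balls are also $O(\lambda)$-separated, picks a direction $e_1$ along which that separation is witnessed, and then uses the half-space indicator Morawetz functional
\[
\bI^{x_0}(v_\lambda, v_\mu) = \iint_{x_1 < y_1} M(v_\lambda)(x)\,M(v_\mu^{x_0})(y)\,dx\,dy,
\]
whose time derivative reduces to a flux across $\{x_1 = y_1\}$. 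The flux is then compared to its constant-coefficient counterpart and a Bernstein inequality in the transverse variables $(x_2,\dots,x_n)$ delivers the bilinear $L^2$ bound. This is also what makes the ``semi-balanced'' case of Definition~\ref{transversalpair} accessible (same dyadic scale, different balls), which you would need to handle separately since there is no genuine dyadic separation to exploit.

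Third, for the balanced bound \eqref{uab-bi-bal}: your diagnosis of the obstacle is not quite right---the difficulty is not that ``Euclidean-close frequencies are transverse with respect to $g$,'' but that the Planchon--Vega weight $|x-y|$ only yields control of $\||D|^{-(n+1)/2}\Delta_g|u|^2\|_{L^2}$, and $\Delta_g$ has a nontrivial null cone for indefinite $g$, so this is not coercive. Your remedy (``a family of Morawetz weights whose commutator with $\Delta_g$ is coercive precisely on frequencies supporting $P_j$'') names the desired conclusion but supplies no construction; the construction is the heart of the matter. The paper's key new object is the one-parameter family of weights $a_r(x) = |x| + \tfrac{r}{2} e^{-|x|/r}$, whose Hessian eigenvalues are bounded below by the Gaussian $b_r(x) = r^{-1}e^{-|x|^2/r^2}$, combined with the observation that $\widehat{b_r}(\xi) \gtrsim \widehat{P}_{r^{-1}}(\xi)\,|\xi|^{1-n}$. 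That is exactly what produces the shell-by-shell Besov bound $\sup_j \|P_j |D|^{(3-n)/2}|u_\lambda|^2\|_{L^2}$ with the stated dyadic structure, and it only uses the uniform nondegeneracy of $g$ (hence is robust even for non-small metrics). Without supplying this (or an equivalent) weight, your balanced step remains a gap.

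Your discussion of the $(1+\lambda|x_0|)$ factor, the low-frequency and transversal/weakly transversal classification, and the Strichartz black box are consistent with the paper.
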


We continue with the case of two space dimensions, where we provide several variants of the result:

\begin{theorem}\label{t:local-fe2}
a) Let $n = 2$, $s > \frac32$, $\epsilon \ll 1$ and $T > 0$. Consider the equation \eqref{qnls} with cubic nonlinearity, and for which  one of the following three conditions holds:
\begin{itemize}
\item[(i)] either $s \geq 2$ and $T$ is arbitrary  large,
\item[(ii)] or $s >\frac32$ and $T \lesssim \epsilon^{-6}$,
\item[(iii)] or $s > \frac32$, the problem is conservative and $T$ is arbitrarily large.
\end{itemize}
Let $u \in C[0,T; H^s]$ be a smooth solution with initial data $u_0$  which has $H^s$ size at most $\epsilon$.
 Let $ \{\epsilon c_\lambda\}$ be a frequency envelope for the initial data  in $H^s$. Then the solution $u$ satisfies  the bounds \eqref{uk-ee}, \eqref{uab-bi-unbal},  \eqref{uab-bi-bal} and \eqref{uk-Str}  uniformly with respect to $x_0 \in \R$.

b) The same result holds for \eqref{dqnls} but with $s$ increased by one.

\end{theorem}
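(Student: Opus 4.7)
The plan is to organize everything around a single master continuity argument that propagates the frequency envelope package \eqref{uk-ee}, \eqref{uab-bi-unbal}, \eqref{uab-bi-bal}, \eqref{uk-Str} uniformly in $x_0$. Starting from truncated $H^\sigma$ data with $\sigma$ large enough to invoke the previously established Theorem~\ref{t:regular1}, I produce smooth solutions on some short interval and then extend them by the following bootstrap: assume the four bounds hold with a doubled constant on $[0,T_0]\subset[0,T]$, improve each to the original constant on the same interval, and conclude by continuity in $T_0$. Rough $H^s$ solutions, continuous dependence, and the associated Lipschitz $L^2$ estimates for the linearized flow are then recovered by the regularization/limit scheme of Section~\ref{s:rough}.

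The heart of the bootstrap is the paradifferential reduction of Section~\ref{s:lin}: for each dyadic piece $u_\lambda$ one writes the equation as a paradifferential linear Schr\"odinger flow with frequency-truncated metric $g_{<\lambda}$, modulo source terms which, after the resonant decomposition of Section~\ref{s:resonances} and Lemma~\ref{l:C4-dec}, split into unbalanced interactions, the low-frequency piece $C^0$, the balanced nonresonant piece $C^{nr}$, the balanced transversal piece $C^{tr}$, and the two balanced resonant cubic pieces $C^{res}$ and $C^{wt}$. The first four kinds of sources are absorbed directly by the bootstrap bilinear bounds. For the paradifferential flow itself: \eqref{uk-ee} follows from frequency-localized energy estimates produced by the density-flux identities of Section~\ref{s:df}; \eqref{uab-bi-unbal} follows from standard interaction Morawetz functionals applied to well-separated dyadic pieces; \eqref{uab-bi-bal} is the main novelty of the paper and follows from the new family of indefinite-metric interaction Morawetz functionals of Section~5, which recover the positive-definite balanced bilinear bound up to an arbitrarily small dyadic loss and also produce the $(1+\lambda|x_0|)$ factor from translations; and \eqref{uk-Str} follows from the paradifferential Strichartz theory of Section~\ref{s:Str}. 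Interpolating \eqref{uab-bi-bal} against \eqref{uk-ee} yields the key lossless bound \eqref{uk-l4-2d}, whereas \eqref{uk-Str} at $(p,q)=(4,4)$ gives the lossy \eqref{uk-Str2}.

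The three cases of the theorem are then distinguished entirely by how the balanced resonant pieces $C^{res}$ and $C^{wt}$ feed back into the $H^s$ energy estimate for $u_\lambda$. In case (i), $s\geq 2$, the half-derivative cushion above the low threshold $s>\frac32$ is exactly enough to absorb the half-derivative loss in \eqref{uk-Str2}; hence both $C^{res}$ and $C^{wt}$ admit a direct perturbative cubic source bound and the bootstrap closes globally in time. In case (ii), $s>\frac32$ and $T\lesssim\epsilon^{-6}$, the lossy Strichartz is not affordable; instead one bounds the cubic source in $L^1_tH^s$ by using the lossless $L^6_tL^4_x$ bound \eqref{uk-l4-2d}, and the resulting positive power of $T$ is precisely what forces the $\epsilon^{-6}$ lifespan threshold. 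In case (iii), $s>\frac32$, conservative and $T$ arbitrary, the phase-rotation-invariant piece $C^{res}$ has a genuine doubly resonant component on $\calR_2$ that cannot be absorbed perturbatively; it is neutralized by a quasilinear modified energy whose coercivity is guaranteed precisely by the reality condition $c(\xi,\xi,\xi)\in\R$ of Definition~\ref{d:conservative2}, along the lines of the construction announced for Section~10. The companion piece $C^{wt}$ requires no further structural assumption: by Lemma~\ref{l:C4-dec} the output frequency sits at distance $\approx\lambda$ from the three (equal) input frequencies of size $\lambda$, placing the relevant pair squarely into the semi-balanced regime of Definition~\ref{transversalpair}, at which point the refined bilinear bound \eqref{uab-bi-unbal} applies directly.

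The principal obstacle is case (iii): the modified energy built to cancel $C^{res}$ must be compatible with the quasilinear paradifferential flow and with the indefinite-metric interaction Morawetz machinery underlying \eqref{uab-bi-bal}, without disturbing the delicate balance that makes the new Morawetz functional coercive, and it must be used simultaneously with the semi-balanced bilinear estimate for $C^{wt}$ — both phenomena being genuinely new relative to the positive-definite case of \cite{IT-qnls2}. Once this closes, one obtains $H^s$ bounds for smooth solutions with lifespan depending only on the $H^s$ size of the data, which feed into the limiting procedure of Section~\ref{s:rough} to produce the full enhanced Hadamard well-posedness in $H^s$ stated in Theorem~\ref{t:local-fe2}.
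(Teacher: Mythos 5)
Your high-level plan for cases (i) and (ii) matches the paper: bootstrap the frequency-envelope package, use the paradifferential reduction of Section~\ref{s:lin}, control the source term via Proposition~\ref{p:N-lambda} (which, for case (i), amounts to the direct $L^4$ estimate $\lambda^2\|u_\lambda\|_{L^4}^4\lesssim C^4\epsilon^4 c_\lambda^4\lambda^{4-4s}$ that closes only for $s\geq 2$, and for case (ii) uses the $T\lesssim\epsilon^{-6}$ restriction), and invoke Theorems~\ref{t:para} and \ref{t:para-se} for the bilinear and Strichartz bounds.

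Your description of case (iii), however, misidentifies the mechanism. You write that the phase-rotation-invariant resonant piece $C^{res}$ is ``neutralized by a quasilinear modified energy whose coercivity is guaranteed precisely by the reality condition $c(\xi,\xi,\xi)\in\R$.'' That is not what the paper does, and the paper in fact explicitly warns against this: a quartic density correction for the doubly resonant piece is the 1D strategy of \cite{IT-qnls} and is declared ``not tractable'' in two dimensions because there are too many resonant interactions. What the conservative hypothesis actually buys is the vanishing \eqref{c4-conserv} of $c^{4,res}_{\lambda,m}$ (and $c^{4,res}_{\lambda,p}$) on the full diagonal, which allows a Taylor factorization of the symbol and hence the decomposition of Proposition~\ref{p:bal-sources}: $C^{4,res}_{\lambda,m} = \partial_x Q^{4,res}_{\lambda,m} + F^{4,res}_{\lambda,m}$, i.e.\ a \emph{flux} correction plus a genuinely perturbative $L^1_{t,x}$ error. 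There is no coercivity argument; $Q^{4,res}$ is estimated in $L^{6/5}_tL^{4/3}_x$ against the density appearing under the Morawetz weight. The modified-energy (density) correction $B^{4,nr}_{\lambda,m}$ that you describe is actually reserved for the \emph{nonresonant} balanced term $C^{4,nr}$, where a normal-form correction is possible precisely because those interactions are off-resonance. You have the two mechanisms swapped, and the role of conservativity inverted.

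A smaller imprecision: for the weakly transversal quartic term you claim ``the refined bilinear bound \eqref{uab-bi-unbal} applies directly.'' In the proof of Proposition~\ref{p:N-lambda}(b) and in Section~10, the expression is $\lambda^2 L(\mathbf{u}_\lambda,\mathbf{u}_\lambda,\mathbf{u}_\lambda,\mathbf{u}_\lambda)$, which has four factors; only two of them form an unbalanced pair covered by Definition~\ref{transversalpair}. The remaining balanced pair must be estimated separately — the paper uses the $L^4$ Strichartz bound \eqref{uk-Str2} (or, in the unbalanced interaction Morawetz correction of Section~10, a Bernstein-plus-bilinear argument along the direction of group-velocity separation). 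Relying solely on the unbalanced bilinear bound leaves two factors unaccounted for.

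Finally, Theorem~\ref{t:local-fe2} is actually proved in two installments in the paper: Section~9 establishes (i) and (ii), while (iii) is deferred entirely to Section~10 and rests on the modified density-flux identities \eqref{modifiedmassflux}–\eqref{modifiedmomentumflux} and the corrected interaction Morawetz functional \eqref{IM} built from $M^\sharp_\lambda$ and $P^\sharp_\lambda$. Your proposal compresses this into a single sentence and gets the algebraic content of that sentence wrong, so as written the crucial third case would not close.
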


\medskip

The results in Theorem~\ref{t:local-fe3} and Theorem~\ref{t:local-fe2} [(i), (iii)] apply independently of the size of $T$, so they yield the global in time solutions in Theorems~\ref{t:global3}, \ref{t:global2},  \ref{t:global2c}, given an appropriate local well-posedness result.

 We continue with a theorem that applies to the linearized  equation, which will be essential in the proof of all of our  well-posedness results:
\begin{theorem}\label{t:linearize-fe}
Consider the cubic equation \eqref{qnls} for $n \geq 2$. 
Let $u$ be a solution as in Theorem~\ref{t:local-fe2} or Theorem~\ref{t:local-fe3}. Let $v$ be a solution 
to the linearized equation around $u$ with $L^2$ initial data 
$v_0$ and with frequency envelope $d_\lambda$. Then $v$ satisfies 
the following bounds:
\begin{enumerate}[label=(\roman*)]
\item Uniform frequency envelope bound:
\begin{equation}\label{uk-ee-lin}
\| v_\lambda\|_{L^\infty_t L^2_x} \lesssim d_\lambda 
\end{equation}
\item Balanced bilinear $(v,v)$-$L^2$ bound:
\begin{equation} \label{vvab-bi-bal-lin}
\sup_{j\in\mathbb{Z}}\|P_j|D|^{\frac{3-n}2} (v_\lambda \bv_\mu^{x_0})  \|_{L^2_{t,x}} \lesssim  d_{\lambda} d_\mu \lambda^{\frac12} (1+ \lambda |x_0|)
, \qquad \lambda\approx \mu 
\end{equation}
\item Unbalanced bilinear $(v,v)$-$L^2$ bound: If $v_{\lambda}$ and $v_{\mu}$ are unbalanced, then
\begin{equation} \label{vvab-bi-unbal-lin}
\| v_\lambda \bv_\mu^{x_0}  \|_{L^2_{t,x}} \lesssim  d_{\lambda} d_\mu\frac{\min\{\lambda,\mu\}^\frac{n-1}2}{(\lambda+\mu)^{\frac12}}.
\end{equation}
\item Balanced bilinear $(u,v)$-$L^2$ bound:
\begin{equation} \label{uvab-bi-bal-lin}
\sup_{j\in\mathbb{Z}}\|P_j|D|^{\frac{3-n}2} (v_\lambda \bu_\mu^{x_0})  \|_{L^2_{t,x}} \lesssim \epsilon d_{\lambda} c_\mu \lambda^{-s+\frac12} (1+ \lambda |x_0|),
\qquad \mu \approx \lambda 
\end{equation}
\item Unbalanced bilinear $(u,v)$-$L^2$ bound: If $v_{\lambda}$ and $u_{\mu}$ are unbalanced, then
\begin{equation} \label{uvab-bi-unbal-lin}
\| v_\lambda \bu_\mu^{x_0}  \|_{L^2_{t,x}} \lesssim \epsilon d_{\lambda} c_\mu \mu^{-s} \frac{\min\{\lambda,\mu\}^\frac{n-1}2}{(\lambda+\mu)^{\frac12}}  
\end{equation}
\end{enumerate}
\end{theorem}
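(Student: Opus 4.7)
}

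The plan is to run the linearized version of the bootstrap scheme already used for the nonlinear bounds in Theorem~\ref{t:local-fe3}/\ref{t:local-fe2}, using the nonlinear solution $u$ as a fixed background with the estimates \eqref{uk-ee}--\eqref{uab-bi-boot-trans} as input. The first step is the paradifferential reduction. Following the strategy outlined in Section~\ref{s:lin} of the paper, we rewrite the linearized equation around $u$ as a paradifferential Schr\"odinger flow
\[
(i \partial_t + \partial_j T_{g^{jk}(u)} \partial_k) v = \mathcal{R}(u)v + \text{l.o.t.},
\]
where the remainder $\mathcal{R}(u)v$ is cubic in $(u,v)$ and receives a perturbative treatment via the bilinear bounds for $u$ and for $v$ itself. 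Thus, modulo perturbative source terms, the main task is to prove the four families of estimates for the frequency-localized paradifferential flow $(i\partial_t + \partial_j T_{g^{jk}(u)}\partial_k)v_\lambda = f_\lambda$, with $f_\lambda$ controlled in the appropriate $l^2_\lambda$ sense by $d_\lambda$ and $\epsilon c_\lambda$.

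For the uniform frequency envelope bound \eqref{uk-ee-lin}, the plan is to construct a modified $L^2$ energy for each dyadic piece $v_\lambda$, with a paradifferential correction that cancels the self-interaction terms of order one coming from the commutator $[\partial_j T_{g^{jk}(u)}\partial_k, P_\lambda]$. The resulting Gronwall-type inequality closes using the Strichartz bound \eqref{uk-Str} (or its $L^4$ consequence \eqref{uk-l4-4d}--\eqref{uk-Str2}) on $u$, and the $d_\lambda$ envelope propagates because the source terms coming from $\mathcal{R}(u)v$ are $l^2$-summable against the background data by the bilinear bounds for $u$.

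The heart of the proof is the balanced bilinear bound \eqref{vvab-bi-bal-lin}, for which the plan is to adapt the novel interaction Morawetz functional introduced in Section~\ref{s:df}/Section 5 of the paper, now applied to the pair $(v_\lambda, v_\mu^{x_0})$ with $\lambda \approx \mu$. The functional is built in density-flux form for the paradifferential linear flow; differentiating in $t$ produces, as the leading coercive contribution, a spacetime integral of $|P_j|D|^{\frac{3-n}{2}}(v_\lambda \bv_\mu^{x_0})|^2$ (with the supremum over $j$ absorbed thanks to the Besov structure built into the functional). The subleading pieces are either controlled by $\epsilon$-factors coming from the variable metric (this is where the factor $(1+\lambda|x_0|)$ enters, exactly as in \eqref{uab-bi-bal}), or by the paradifferential source $f_\lambda$ treated via Cauchy--Schwarz against the corresponding energy piece. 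The unbalanced bound \eqref{vvab-bi-unbal-lin} is proved by the same Morawetz machinery but in the transversal regime: when $\mu \ll \lambda$, the gap in frequencies gives genuine transversality; in the semi-balanced case of Definition~\ref{transversalpair} the frequency support assumption still produces an $O(\lambda)$ separation in $g$-momentum, which yields coercivity without the $|D|^{\frac{3-n}{2}}$ weight and reproduces the scale $\min\{\lambda,\mu\}^{\frac{n-1}{2}}(\lambda+\mu)^{-\frac12}$ familiar from Lemma~\ref{l:bi}.

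The mixed bounds \eqref{uvab-bi-bal-lin} and \eqref{uvab-bi-unbal-lin} are obtained from exactly the same interaction Morawetz functionals, but taken between $v_\lambda$ and $u_\mu^{x_0}$: here $u$ solves the full nonlinear equation, but its paradifferential form is identical to that of $v$, so the same density-flux computation applies. The prefactor $\epsilon c_\mu \mu^{-s}$ simply reflects the fact that $u_\mu$ is now bounded by the frequency envelope $\epsilon c_\mu$ from \eqref{uk-ee}, while the $d_\lambda$ and the Morawetz weights are identical to the $(v,v)$ case. The main obstacle I anticipate is in the semi-balanced subcase at the lowest admissible regularity in two space dimensions: here the Morawetz output barely dominates the error terms produced by the ultrahyperbolic metric, and one must use the fine frequency separation in Definition~\ref{transversalpair} together with Lemma~\ref{l:C4-dec} to absorb the contributions of weakly transversal resonant quadruples. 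All estimates are run simultaneously inside the bootstrap of Section~\ref{s:boot}, with the $d_\lambda$ envelope for $v$ treated as an independent parameter so that the resulting bounds are linear in the data $v_0$, as required for the well-posedness theory.
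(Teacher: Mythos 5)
Your proposal is mathematically consonant with the paper's argument, but it misses the key structural observation that makes the paper's proof short: the interaction Morawetz machinery you are proposing to re-run is already packaged once and for all as Theorem~\ref{t:para}. That theorem is stated for a \emph{generic} family of solutions $v_\lambda$ to the frequency-localized paradifferential flow \eqref{para}, controlled not by the source term $f_\lambda$ directly but by the quantity $d_\lambda$ of \eqref{dl}, which pairs $f_\lambda$ against $v_\lambda$ and its translates in $L^1_{t,x}$. This design choice is exactly what makes Theorem~\ref{t:para} applicable verbatim to the linearized flow: write $v$ in the paradifferential form \eqref{para-lin} with $f_\lambda = N^{lin}_\lambda v$, and the five bounds \eqref{v-ee}--\eqref{vv-bi-unbal} become precisely the five conclusions (i)--(v) of Theorem~\ref{t:linearize-fe}. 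Consequently, the only genuinely new work is the linearized analogue of Proposition~\ref{p:N-lambda}, i.e. the source-term estimate $\| N^{lin}_\lambda v \cdot \bv_\lambda^{x_0}\|_{L^1_{t,x}} \lesssim \epsilon^2 d_\lambda^2$ (and its variant against $\bar w_\lambda$ for the Strichartz estimates), proved inside a bootstrap on $v_\lambda$ by the same bilinear/Strichartz trades used for $N_\lambda(u)$. Your plan would instead re-derive the density-flux identities, the $a_r$-weighted Morawetz functionals, and the $(v,v)$ and $(u,v)$ coercivity bounds from scratch; that is logically valid and would arrive at the same estimates, but it duplicates all of Sections~\ref{s:df}--\ref{s:para}, and your sketch of a ``modified $L^2$ energy with a paradifferential commutator correction'' for \eqref{uk-ee-lin} is unnecessary once one has Theorem~\ref{t:para}, where the energy bound is a direct consequence of \eqref{dl} and a plain energy estimate. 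I would also flag that you should be explicit, as the paper is, that the $(u,v)$ bilinear bounds do \emph{not} need a separate Morawetz computation beyond the one yielding the $(v,v)$ bounds: Theorem~\ref{t:para} treats them simultaneously by writing $u_\lambda$ in the paradifferential form \eqref{para-full} and using Proposition~\ref{p:N-lambda} to show the paradifferential source satisfies the same type of $d$-bound, so the two cases are ``absolutely identical'' in the proof.
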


This theorem in particular yields $L^2$ well-posedness for the linearized equation.

\subsection{The bootstrap hypotheses}

To prove the above theorems, we make a bootstrap assumption where we assume the same bounds but with a worse constant $C$, as follows:

\begin{enumerate}[label=(\roman*)]
\item Uniform frequency envelope bound:
\begin{equation}\label{uk-ee-boot}
\| u_\lambda \|_{L^\infty_t L^2_x} \leq C\epsilon c_\lambda \lambda^{-s}
\end{equation}
\item Unbalanced bilinear $L^2$ bounds: If $u_{\lambda}$ and $u_{\mu}$ are unbalanced and $\mu\leq\lambda$, then
\begin{equation} \label{uab-bi-unbal-boot}
\| u_\lambda \bu_\mu^{x_0}  \|_{L^2_{t,x}} \leq C^2 \epsilon^2 c_\lambda c_\mu \lambda^{-s-\frac12} \mu^{-s+\frac{n-1}2},
\end{equation}
\item Balanced bilinear $L^2$ bound: 
\begin{equation} \label{uab-bi-bal-boot}
\sup_{j\in\mathbb{Z}}\|P_j|D|^{\frac{3-n}2} (u_\lambda \bu_\mu^{x_0})  \|_{L^2_{t,x}} \leq C^2\epsilon^2 c_\lambda c_\mu \lambda^{-2s+\frac{1}{2}}(1+ \lambda |x_0|)
, \qquad \lambda \approx \mu .
\end{equation}
\end{enumerate}
As noted earlier, we observe that in dimension $n\geq 4$ the estimate \eqref{uab-bi-bal-boot} implies an $L^4$ bound for $u_\lambda$ (with an arbitrarily small dyadic loss). The same $L^4_{t,x}$ estimate is also true when $n=3$ by interpolating \eqref{uab-bi-bal-boot} with the $L_t^4L_x^3$ Strichartz bound. When $n = 2$, one obtains a $L_{t,x}^4$ bound with a $\frac{1}{2}$ derivative loss. To incorporate these low dimensional cases into our analysis, we will add to the 
list above, the following $L^4$ bootstrap bounds for $u_\lambda$, which corresponds to the estimate \eqref{uk-Str}:
\begin{enumerate}[resume]
    \item $L^4$ Strichartz bound for $n = 2$:
    \begin{equation}\label{uk-Str2-boot}
\| u_\lambda\|_{L^4_{t,x}} \lesssim C\epsilon c_\lambda \lambda^{-s+ \frac12}, \qquad n = 2. 
\end{equation}
\item $L^4$ bound for $n=3$
\begin{equation}\label{uk-Str3-boot}
\| u_\lambda\|_{L^4_{t,x}} \lesssim_{\delta} C\epsilon c_\lambda \lambda^{-s+ \frac14+\delta}, \qquad n = 3. 
\end{equation}
\end{enumerate}

Then we seek to improve the constant in these bounds. The gain will come from the fact that the $C$'s will always come paired with extra $\epsilon$'s. Precisely, a continuity argument shows that 

\begin{proposition}\label{p:boot}
It suffices to prove Theorems~\ref{t:local-fe3}, \ref{t:local-fe2} under the bootstrap assumptions \eqref{uk-ee-boot}, \eqref{uab-bi-unbal-boot} and
\eqref{uab-bi-bal-boot}, together with \eqref{uk-Str2-boot} in two dimensions, respectively \eqref{uk-Str3-boot} in three dimensions.
\end{proposition}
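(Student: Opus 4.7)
The plan is to run a standard bootstrap on the time interval. Starting from smooth initial data, the classical local well-posedness result of Theorem~\ref{t:regular1} provides a solution $u \in C([0,T_0]; H^\sigma)$ for some large $\sigma$ and some $T_0 > 0$ (potentially depending on the $H^\sigma$ size of the data). This higher regularity is used only qualitatively, to guarantee that the bootstrap quantities depend continuously on the time endpoint. For a large absolute constant $C$ to be chosen later, set
\[
T_* := \sup\{\, T' \in [0,T] : \text{ the bounds \eqref{uk-ee-boot}--\eqref{uab-bi-bal-boot} (and \eqref{uk-Str2-boot} or \eqref{uk-Str3-boot} when applicable) hold on } [0,T'] \,\};
\]
the goal is to show $T_* = T$ provided $\epsilon$ is sufficiently small.

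At $T' = 0$, the uniform bound \eqref{uk-ee-boot} is immediate from the assumption that $\{\epsilon c_\lambda\}$ is an $H^s$ envelope for $u_0$ as long as $C \ge 2$, while the bilinear and Strichartz-type quantities in \eqref{uab-bi-unbal-boot}--\eqref{uk-Str3-boot} vanish on a degenerate interval. The $H^\sigma$ regularity of $u$ together with Littlewood--Paley localization and dominated convergence ensures that all bootstrap quantities are continuous functions of $T'$, so the set defining $T_*$ is relatively open in $[0,T]$; in particular $T_* > 0$.

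The heart of the proof, which will occupy the remainder of the paper, is to show that under the bootstrap assumption on $[0,T']$ with constant $C$, the same bounds hold on $[0,T']$ with an improved constant of the form
\[
C' \le K + C_1\, C^{k}\, \epsilon^{j},
\]
where $K$ and $C_1$ are absolute constants and $k, j \ge 1$. The gain of a positive power of $\epsilon$ is structural: the nonlinearities in \eqref{qnls} and \eqref{dqnls} are at least cubic, so any $C$-dependent nonlinear contribution carries at least one additional factor of the small parameter $\epsilon$. Fixing first $C = 4K$ and then taking $\epsilon$ small enough that $C_1 C^{k} \epsilon^{j} \le C/4$, we arrive at $C' \le C/2$, which is a strict improvement over the bootstrap constant.

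Combining this strict improvement with the continuity of the bootstrap quantities in $T'$, a standard openness/closedness argument forces $T_* = T$. The continuity argument itself is entirely routine; the genuine difficulty lies in the improvement step above, which for the 2D low-regularity statement requires the modified energy method and the paradifferential reduction of Section~\ref{s:lin}, the density-flux formalism of Section~\ref{s:df}, the novel interaction Morawetz analysis for indefinite metrics, and the careful handling of both the phase-rotation-invariant resonant cubic interactions (via the conservative assumption) and the weakly transversal ones introduced at the end of Section~\ref{s:notations}.
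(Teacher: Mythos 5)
Your proposal is correct and captures exactly the standard continuity/bootstrap argument that the paper invokes implicitly — the paper simply writes ``a continuity argument shows that'' before the proposition statement and omits all details, so you are filling in the routine steps the authors chose not to write out. The only quibble is a terminological one: the set $\{T' : \text{bootstrap holds on } [0,T']\}$ is closed rather than open (the inequalities are non-strict), and what the improvement step actually buys is openness, so the conclusion follows from a standard ``nonempty, open and closed in a connected interval'' argument exactly as you say in your final paragraph; your earlier phrase ``the set defining $T_*$ is relatively open'' is imprecise but does not affect the argument. The essential points — continuity of the bootstrap quantities in $T'$ via the qualitative $H^\sigma$ regularity, triviality at $T'=0$, and the structural gain $C' \lesssim K + C_1 C^k\epsilon^j$ with the quantifier order ``fix $C$ large, then shrink $\epsilon$'' — are all present and correct.
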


We remark that the Strichartz bounds \eqref{uk-Str}
are in general not part of this bootstrap loop, with the exception
of \eqref{uk-Str2-boot} and \eqref{uk-Str3-boot} as noted above. Similar bootstrap assumptions are made for $v$ in the proof of Theorem~\ref{t:linearize-fe} in Section~\ref{s:lin}.

\section{A paradifferential and resonant  expansion of the equation}
\label{s:lin}

 In this section, we consider the evolution of each dyadic piece $v_{\lambda}$ 
of a solution to either the linearized or nonlinear equation. Here, we will isolate three key components of the resulting equation,

\begin{enumerate}
\item
The paradifferential part, which extracts the leading quasilinear portion of the equation. This is an essential component in establishing both the local and global results in this paper.

\item The doubly resonant, semilinear part of the nonlinearity. This is perturbative for our local results, but cannot be treated perturbatively for the global result in Theorem~\ref{t:global2c} in two dimensions. 

\item The weakly transversal part of the nonlinearity. This term is new in the ultrahyperbolic setting. We will just barely be able to treat this term perturbatively in the 2D global result. However, doing so is somewhat delicate and will rely on the new, unbalanced bilinear estimate which we prove in Section 7.
\end{enumerate}

A first approximation of the paradifferential equation can be obtained by simply truncating the coefficients in the principal part
to low frequencies,
\begin{equation}\label{paraT}
i \partial_t v + \partial_j T_{g^{jk}(u)} \partial_k v 
= f.
\end{equation}
Unfortunately, this form is not sufficiently precise for our long-time results. This is because the low-frequency part of the metric $g(u)$ can also contribute doubly resonant quadratic contributions. This will motivate us to make a better choice for the inhomogeneous term $f$ in the equation.  

Instead of working directly with the full paradifferential equation, it is easier to work with the evolution of each individual dyadic piece $v_{\lambda}$. The resulting equation takes the form
\begin{equation}\label{para}
i \partial_t v_\lambda + \partial_j g^{jk}_{[<\lambda]} \partial_k v_\lambda 
= f_\lambda, \qquad v_\lambda(0) = v_{0,\lambda},
\end{equation}
Here, we define the frequency truncated metric by the relation 
\begin{equation}
g_{[<\lambda]} := P_{<\lambda} g (u_{<\lambda}).
\end{equation}
Here we note that we have localized $u$ to lower frequencies, 
rather than directly localizing the metric $g(u)$; this will be important for later excluding the doubly resonant interactions from the leading paradifferential part of the equation. 
\bigskip

Using the above conventions, we can rephrase both the nonlinear and linearized evolutions as paradifferential equations, in the form
\begin{equation}\label{para-full}
i \partial_t u_\lambda + \partial_j g^{jk}_{[<\lambda]} \partial_ku_\lambda 
= N_\lambda(u), \qquad u_\lambda(0) = u_{0,\lambda},
\end{equation}
respectively
\begin{equation}\label{para-lin}
i \partial_t v_\lambda + \partial_j g^{jk}_{[<\lambda]} \partial_kv_\lambda 
= N^{lin}_\lambda v, \qquad v_\lambda(0) = v_{0,\lambda},
\end{equation}

The corresponding nonlinear and linearized source terms $N_\lambda(u)$ and $N^{lin}_\lambda$ will described more explicitly later. They will play a perturbative role in our local results, but we will need a more precise resonant expansion for the global 2D results, which we will describe next.

\bigskip

As alluded to above, for the 2D global results, there are two portions of the nonlinearity which we will need to analyze more carefully. The first is the doubly resonant cubic part, which plays a nonperturbative role. The second term contains the resonant, weakly transversal interactions, which will ultimately be perturbative but will require the use of the new unbalanced bilinear estimates that we establish in Section 7. We separate these terms from $N_\lambda$ by first rewriting
 the frequency localized evolution \eqref{para-full} in the form
\begin{equation}\label{full-lpara0}
i \partial_t u_\lambda + \partial_j g^{jk}_{[<\lambda]} \partial_ku_{\lambda} = C_\lambda(\bfu,\bfu, \bfu) + F_\lambda(u) ,
\end{equation}
where we are adopting the notation $\mathbf{u} =\left\{ u, \bar{u}\right\}$ from the discussion in Section~\ref{s:resonances}. Here, we write $F_{\lambda}$ to generically denote parts of the nonlinearity that we will be able to treat perturbatively (for both our local and global results). For high frequencies $\lambda \gg 1$, we can further decompose the cubic part into four sub-components
\begin{equation}\label{C-split}
C_\lambda(\bfu,\bfu, \bfu) = C_\lambda^{res}(u,\bu,u) + 
C_\lambda^{nr}(\bfu,\bfu,\bfu) + C_\lambda^{tr}(\bfu,\bfu,\bfu)+C_\lambda^{wt}(\bfu,\bfu,\bfu).
\end{equation}
Here, the last two terms correspond to the transversal and weakly transversal interactions. As mentioned above, we will ultimately be able to absorb these into the term $F_\lambda$ (although with some more care needed to address the weakly transversal part).
Correspondingly, we arrive at the final expansion 
\begin{equation}\label{full-lpara}
i \partial_t u_\lambda + \partial_x g_{[<\lambda]} \partial_x u_{\lambda} = C_\lambda^{res}(u,\bu,u) +  C^{nr}_\lambda(\bfu,\bfu, \bfu) + F_\lambda(u), 
\end{equation}
where the first two components on the right represent terms as follows:
\begin{itemize}
    \item $C_\lambda^{res}(u,\bu,u)$ contains balanced cubic terms
    with phase rotation symmetry, including the doubly resonant interactions
    \item  $C^{nr}_\lambda(\bfu,\bfu, \bfu)$ contains balanced nonresonant terms.
\end{itemize}
As in \cite{IT-qnls2}, to obtain good quantitative bounds, we make an explicit choice for $C_\lambda^{res}$ by introducing the symmetric symbol $c_{diag}$, which is smooth on the corresponding dyadic scale
so that
\[
c_{diag}(\xi_1,\xi_2,\xi_3) := \left\{
\begin{aligned}
1 & \qquad \sum |\xi_i - \xi_j| \ll \la \xi_1\ra + \la \xi_2 \ra 
+ \la \xi_3 \ra 
\\
0 & \qquad  \sum |\xi_i - \xi_j| \gtrsim \la \xi_1\ra + \la \xi_2 \ra 
+ \la \xi_3 \ra \ .
\end{aligned}
\right.
\]
Then, the cubic doubly resonant part of the nonlinearity can be described in terms of the symbol $c(\cdot \, , \, \cdot \, , \, \cdot)$ of the form in \eqref{eq-cubic} as 
\begin{equation}\label{defcl}
c^{res}_\lambda(\xi_1,\xi_2,\xi_3) = \widehat{P}_\lambda(\xi_1-\xi_2+\xi_3) \, c_{diag}(\xi_1,\xi_2,\xi_3) c(\xi_1,\xi_2,\xi_3).
\end{equation}
If the symbol $c$ is conservative, we further note that $c_{\lambda}^{res}$ is real-valued on the diagonal,
\[
\Im c_\lambda^{res}(\xi,\xi,\xi) = 0 , \qquad \xi \in \R^2.
\]

We end this section by briefly discussing the term $F_\lambda(u)$, which contains the remaining terms in the paradifferential equation for $u_\lambda$. This term contains interactions which can be essentially classified as follows: 

\begin{enumerate}[label=(\roman*)]
\item cubic $ high \times high$ terms of the form 
\[
\mu^2 L(\bfu_{\leq \mu}, \bu_\mu,u_\mu) \qquad \mu \gtrsim \lambda,
\]

\item cubic $low \times high$ commutator terms of the form
\[
\lambda L(\partial_x \bfu_{\ll \lambda}, \bfu_{\ll\lambda}, u_\lambda),
\]

\item cubic terms with only transversal or weakly transversal interactions of the form
\[
\lambda^2 L(u_\lambda,u_\lambda,u_\lambda),
\qquad 
\lambda^2 L(u_\lambda,\bu_\lambda,\bu_\lambda),
\qquad 
\lambda^2 L(\bu_\lambda,\bu_\lambda,\bu_\lambda),
\qquad 
\]

 \item all cubic terms at frequency $\lambda \leq 1$,
 
\item quartic and higher terms of the form 
\[
\mu^2 L(\bfu_{\leq \mu}, \bfu_{\leq \mu}, \bu_\mu,  u_\mu) \qquad \mu \gtrsim \lambda .
\]

\end{enumerate}
In our analysis, the $F_\lambda$ terms will play a perturbative role, though establishing that is not at all immediate, and is instead one of the main difficulties.

\section{Balanced bilinear estimates: the constant coefficient case}\label{s:im-cc}

This section is devoted to understanding the balanced bilinear estimates in the simpler case of the homogeneous, linear ultrahyperbolic Schr\"odinger flow
\begin{equation}\label{eq:flat1}
i u_t + \Delta_g u = 0, \qquad u(0) = u_0,
\end{equation}
with a constant non-degenerate metric $g$. This will provide the basic building blocks for establishing the analogous estimates for both the linear paradifferential and nonlinear equations in later sections.
\medskip 

The key step in obtaining our bilinear bounds will be to establish a suitable family of interaction Morawetz-type estimates. The choice of the associated interaction functional is largely motivated by the density-flux identities for the mass and momentum. In the elliptic case where $g=Id$, the approach used in the article \cite{IT-qnls2} is somewhat inspired by the choices in \cite{PV} (although with different choices for the fluxes which are better tailored to the nonlinear problem). However, when the metric $g$ is merely non-degenerate, the approach used in the above papers is not sufficient for our purposes. Indeed, using their approach, the corresponding estimate gives control of the quantity 
\begin{equation}\label{noncoercive}
\||D|^{-\frac{n+1}{2}}\Delta_g|u|^2\|_{L^{2}_{t,x}}
\end{equation}
which is not suitably coercive. This is in sharp contrast to when $g=Id$. Below, we propose a new method that will allow us to estimate the slightly weaker homogeneous Besov-type norm
\begin{equation}\label{Besovnorm}
\sup_{j\in\mathbb{Z}}\|P_{j}|D|^{-\frac{n-3}{2}}|u|^2\|_{L^2_{t,x}}.
\end{equation}
\begin{remark}
We note importantly that the supremum is taken over all $j\in\mathbb{Z}$ and not just $j\geq 0$
\end{remark}
Up to an arbitrarily small dyadic loss, this bound is nearly as strong as the one which one obtains in the elliptic case, and will entirely suffice for our purposes. We also remark that the approach used below appears to be robust enough to apply to variable metrics that arise as possibly large perturbations of a constant metric. Therefore, we suspect that this approach may be useful in large data problems.

\subsection{Conservation laws in the flat case}

For the linear ultrahyperbolic Schr\"odinger equation
\begin{equation}\label{eq:flat}
i u_t + \Delta_g u = 0, \qquad u(0) = u_0,
\end{equation}
one has the following formally conserved quantities:
\[
\bM(u) = \int |u|^2 \,dx,
\]
\[
\bP^j(u) = 2 \int \Im (\bar u \partial^j u) \,dx.
\]
which are the mass and momentum, respectively. Above, and in the sequel, indices are raised with respect to the metric $g$.

These conservation laws are naturally associated to the densities
\[
M(u) = |u|^2, \qquad P_j(u) = i ( \bar u \partial_j u - u \partial_j \bar u).
\]
which gives rise to the density-flux identities
\begin{align}
    &\label{df-lin}
\partial_t M(u) = \partial_j P^j(u),\\
\
\
& \partial_t P^j (u) = \partial_m E^{jm}(u),
\end{align}
where we choose
\[
E^{jm}(u) :=  \partial^m u \partial^j \bar u +  \partial^j u \partial^m \bar u
- v \partial^j  \partial^m \bar v - \bar v 
\partial^j \partial^m v.
\]
The symbols of these densities viewed as bilinear forms  are
\[
m(\xi,\eta) = 1, \qquad p^j(\xi,\eta) = -g^{jk}(\xi_k+\eta_k),
\]
respectively
\[
\qquad e^{jm}(\xi,\eta) = g^{jk}g^{lm}(\xi_k+\eta_k)(\xi_l +\eta_l).
\]


\subsection{Interaction Morawetz identities in the flat case}
Our starting point is to consider the general interaction Morawetz functional,
\begin{equation}\label{Ia-sharp-def-vv-flat}
\bI(u,v) :=   \iint a_j(x-y) (M(u)(x) P_j(v) (y) -  
P_j(u)(x) M(v) (y)) \, dx dy,
\end{equation}
where $a_j$ is a suitable weight to be chosen.
Assuming that $u,v$ solve the equation \eqref{eq:flat}, the time derivative of $\bI(u,v)$ is 
\begin{equation}\label{I-vv-flat}
\frac{d}{dt} \bI(u,v) =  \bJ^4(u,v) ,
\end{equation}
where
\[
\bJ^4(u,v) = \int  \partial_m a_j(x-y) \left(M(u) E^{jm}(v) - P^j(u) P^m(v) + M(v) E^{jm}(u)
-  P^j(v) P^m(u)\right)\, dxdy.
\]
Following the analogous computation in \cite{IT-qnls2}, one obtains the formula (noting carefully the raised indices),
\begin{equation}\label{J4-flat}
\bJ^4(u,v) = 2 \int a_{jm}(x-y) F^j \bar F^m\,  dx dy,
\end{equation}
where
\begin{equation}\label{Fj-flat}
F_{j}(x,y) = u(x) \partial_j \bar v(y) + 
\partial_j u(x) \bar v(y).
\end{equation}
Now we consider the choice for $a_j$. In order to simultaneously ensure that $\mathbf{J}^4(u,v)$ is sufficiently coercive and that $\mathbf{I}(u,v)$ is bounded uniformly in $t$, it is natural to take
\[
a_j (x) = \partial_j a, \qquad \partial_m a_j = a_{mj},
\]
where $a$ is a well-chosen bounded, (possibly degenerate) convex function. To motivate our forthcoming calculations, it is instructive first to outline what goes wrong when using the standard weight $a(x)=|x|$, which was the choice made in \cite{IT-qnls2}. Indeed, by diagonalizing and using the identity
\begin{equation}\label{algebraicid}
| u(x) \partial_j \bar v(y) + 
\partial_j u(x) \bar v(y)|^2 
= | u(x) \partial_j  v(y) -
\partial_j u(x) v(y)|^2 + 
\partial_j |u(x)|^2
 \partial_j |v(y)|^2,
\end{equation}
we would obtain in particular, the bound
\begin{equation}\label{J4-flat-u=v}
\bJ^4(u,u) \geq \int a_{jm}(x-y) \partial^j |u(x)|^2
 \partial^m |u(y)|^2 \, dx dy
\end{equation}
when $u=v$. The choice $a(x)=|x|$ then leads to the identity
\begin{equation}\label{IM-clean}
    \int a_{jm}(x-y) \partial^j |u(x)|^2
 \partial^m |u(y)|^2 \, dx dy = c_n \| |D|^{-\frac{n+1}2} \Delta_g |u|^2 \|_{L^2}^2,
\end{equation}
which can be verified by interpreting $a_{jm}$ in \eqref{J4-flat-u=v} as kernels for suitable multipliers as well as Plancherel's theorem. This finally  leads to 
\begin{equation}\label{J4-flat-u=v+}
\bJ^4(u,u) \gtrsim  \| |D|^{-\frac{n+1}2} \Delta_g |u|^2 \|_{L^2}^2.
\end{equation}
As alluded to earlier, this estimate is useful in the case where $g=Id$ because it immediately yields control of the quantity
\begin{equation*}
\||D|^{\frac{3-n}{2}}|u|^2\|_{L^2}^2.
\end{equation*}
Unfortunately, in the non-elliptic case, \eqref{J4-flat-u=v+} is not sufficient. 
\medskip

To rectify this issue, our strategy will be to correct the weight $a(x)$ so that it gains a certain degree of uniform convexity in the radial direction (at least in a fixed dyadic region). The benefit of this is that it will (in principle) allow us to directly exploit the convexity of the weight in the expression \eqref{J4-flat}. More precisely, for each dyadic number $r>0$, we will define a ($r$-dependent) weight $a_r$ such that in the dyadic spatial region
$|x| \approx r$ we have $D^2 a_r \gtrsim r^{-1}$. An elegant choice for this purpose is the family of weights
\begin{equation*}
a_{r}(x)=|x|+\frac{r}{2}e^{-r^{-1}|x|}
\end{equation*}
Clearly, we have the uniform in $r$ bound for the derivative of $a_r$,
\begin{equation*}
|Da_r|\lesssim 1.
\end{equation*}
Moreover, it is easy to compute the eigenvalues of the Hessian $D^2a_r$,
\begin{equation*}
\lambda_1=\cdot\cdot\cdot=\lambda_{n-1}=\frac{1}{|x|}-\frac{1}{2|x|}e^{-r^{-1}|x|},\hspace{5mm}\lambda_n=\frac{1}{2r}e^{-r^{-1}|x|}.
\end{equation*}
which by straightforward calculation yields the Gaussian lower bound
\begin{equation*}
\min_i\lambda_i\gtrsim \frac{1}{r}e^{-r^{-2}|x|^2}:=b_r(x).
\end{equation*}
Therefore, if we write $G_j:=F^j:=g^{ij}F_j$ (i.e. $G$ is the vector $F$ with raised indices), we obtain the powerful estimate
\begin{equation}\label{lowerboundultra}
\mathbf{J}^4(u,v)\gtrsim r^{-1}\int e^{-r^{-2}|x-y|^2}|G|^2\gtrsim r^{-1}\int e^{-r^{-2}|x-y|^2}|F|^2
\end{equation}
where the second estimate follows from (merely) the non-degeneracy of the (flat) metric $g$. We have the following simple lower bound for $\widehat{b}$
\begin{equation*}
\widehat{b}_r(\xi)=c_nr^{n-1}e^{-c_nr^2|\xi|^2}\gtrsim \widehat{P}_{r^{-1}}(\xi)|\xi|^{1-n}
\end{equation*}
Therefore, at this stage, we can invoke \eqref{algebraicid} to obtain the Besov type bound,
\begin{equation}\label{balancedflatbound}
\|P_{r^{-1}}|D|^{\frac{3-n}{2}}|u|^2\|_{L_{t,x}^2}^2\lesssim \int_{0}^{t}J(u,u)ds \lesssim \|u\|_{L_t^{\infty}L_x^2}^3\|u\|_{L_t^{\infty}H_x^1}.
\end{equation}
which is uniform in $r$.
\begin{remark}
If $u=u_{\lambda}$ is localized at frequency $\lambda\geq 1$, then by dyadic summation (separately over frequencies $\lesssim 1$ and $\gtrsim 1$) we obtain the following bound with small summability loss,
\begin{equation*}
\||D|^{\frac{3-n}{2}+\delta}|u_{\lambda}|^2\|_{L_{t,x}^2}\lesssim_{\delta} \lambda^{\frac{1}{2}+\delta}\|u_{\lambda}\|_{L_t^{\infty}L_x^2}^2
\end{equation*}
for every $0<\delta\ll 1$. This is only slightly weaker than the bound one obtains in the elliptic case, but applies in far greater generality.
\end{remark}
\begin{remark}\label{uniformnondegremark}
We note the remarkable property that the bound \eqref{lowerboundultra} actually holds for any uniformly non-degenerate metric $g$ (with implicit constant depending on $g$), which makes the above approach very robust. The price one pays is an arbitrarily small (logarithmic) dyadic loss, which will be harmless for our purposes.
\end{remark}
In addition to the above, we will also be interested in frequency localized versions of the mass and momentum densities. Given a dyadic frequency $\lambda$, we start with a symbol $a_\lambda(\xi,\eta)$ which is symmetric, in the sense that
\[
a_\lambda(\eta,\xi) = \ol{a_\lambda(\xi,\eta)},
\]
and localized at frequency $\lambda$, and then define an associated weighted mass density by
\[
M_\lambda(u) = A_\lambda(u,\bar u).
\]
We also define corresponding momentum symbols $p^j_{\lambda}$  by 
\[
p_\lambda^j(\xi,\eta) = -g^{jk}(\xi_k+\eta_k)\, a(\xi,\eta).
\]
Then a direct computation yields the density flux relations
\[
\frac{d}{dt} M_\lambda(u,\bar u) = \partial_j P_{\lambda}^j(u,\bar u), \qquad \frac{d}{dt} P_{\lambda}^j(u,\bar u) = \partial_m E^{mj}_{\lambda}(u,\bar u).
\]

\section{Density-flux relations for mass and momentum}
\label{s:df}

In this section, we establish the relevant density-flux identities 
for the paradifferential and the nonlinear flows. These will give us the starting point for 
interaction Morawetz analysis in the next section. The corresponding identities derived in the definite case in \cite{IT-qnls2} do not rely on the ellipticity of $\Delta_g$. Therefore, we will just state the requisite identities and refer the reader to \cite{IT-qnls2} for more detailed computations.

\subsection{ Density-flux identities for the paradifferential problem}
\
Using the flat case as a model, we now turn our attention to considering solutions $v_{\lambda}$ to the paradifferential equation \eqref{para}. Again, we will only recall the relevant identities below and refer to \cite{IT-qnls2} for the requisite calculations. First, we have the mass-momentum identity
\[
\begin{aligned}
\frac{d}{dt} M(v_\lambda) &= -2 \partial_j [g^{jl}_{[<\lambda]} \Im (\partial_l v_\lambda   \cdot \bar v_\lambda)] 
+ 2 \Im ( f_\lambda \bar v_\lambda).
\end{aligned}
\]
To write this in a cleaner form, we define the covariant momenta, 
\[
P^j (v_\lambda) = g^{jl}_{[<\lambda]} P_l(v_\lambda).
\]
This allows us to rewrite the mass-momentum identity in the form
\begin{equation}\label{dens-flux-param}
\frac{d}{dt} M(v_\lambda) =  \partial_j [ P^j(v_\lambda)] + F^{para}_{\lambda,m}, \qquad F^{para}_{\lambda,m} = 2 \Im ( f_\lambda \bar v_\lambda).
\end{equation}

Similarly, with $g:= g_{[<\lambda]}$ and $v:= v_\lambda$, one computes the momentum-energy identity,
\begin{equation*}
\begin{aligned}
\frac{d}{dt} P_j(v_\lambda) =  \partial_k [ E^k_j(v_\lambda)] + F^{para}_{j,\lambda,p},
\end{aligned}
\end{equation*}
where
\begin{equation}
 E^k_j(v_\lambda) :=    2  \Re (  g^{kl} \partial_l v \partial_j \bv -
v \partial_j  g^{kl} \partial_l   \bv),
\end{equation}
and
\begin{equation}
    F^{para}_{j,\lambda,p} :=  2\Re ( \partial_k  v (\partial_j g^{kl}) \partial_l \bar v)
- 2\Re (f \partial_j \bv)
+ 2 \Re ( v \partial_j \bar f),
\end{equation}
which we can write in the covariant form
\begin{equation}\label{dens-flux-parap}
\begin{aligned}
\frac{d}{dt} P^j(v_\lambda)
= & \ \partial_k [ E^{kj}(v_\lambda)] + F^{para,j}_{\lambda,p},
\end{aligned}
\end{equation}
where for $E^{kj}$ we have raised indices with respect to $g$,
\begin{equation} \label{Ekj-para}
 E^{kj}(v_\lambda) =    2  \Re (   \partial^k v_\lambda \partial^j \bv_\lambda -
v_\lambda \partial^j  \partial^k   \bv_\lambda),
\end{equation}
but for the source term we have additional commutator terms,
\begin{equation}
    F^{j,para}_{\lambda,p} =   G^{j,para}_{\lambda,p}
- 2\Re (f_\lambda \partial^j \bv_\lambda)
+ 2 \Re ( v_\lambda \partial^j \bar f_\lambda),
\end{equation}
where the quadratic term $G^{j,para}_{\lambda,p}$
is given by 
\begin{equation}\label{G4}
   G^{j,para}_{\lambda,p} =  \partial_k v_\lambda   (\partial^j g^{kl}_{[<\lambda]}) \partial_l \bar v_{\lambda} + (\partial_t g^{jk}_{[<\lambda]}) P_k(v_\lambda) - (\partial_k g^{kl}_{[<\lambda]})E^j_l(v_\lambda).
\end{equation}
The exact form of this term is not particularly important, as it will be perturbatively estimated in $L^1$ in the next section. Here we note that this term can be schematically interpreted in the form
\begin{equation}
   G^{j,para}_{\lambda,p}  = h(\bfu_{<\lambda}) \bfu_{<\lambda} \partial \bfu_{<\lambda} \partial v_{\lambda}    \partial \bv_{\lambda},
\end{equation}
which is all that will be needed for our analysis.
\bigskip

\subsection{Density flux identities for the frequency localized mass and momentum for the nonlinear problem} 

The density-flux identities for the paradifferential problem will be sufficient for our results in dimension $n \geq 3$. However, a finer analysis is needed in the most interesting case when $n=2$, especially for the proof of the global result in Theorem~\ref{t:global2c}. This will be the purpose of this subsection.
We remark that the analysis below is only important for treating large frequencies, where
the $L^4$ Strichartz estimate contributes a loss that is too big to allow for handling the resonant cubic terms perturbatively. As in the previous subsection, the algebraic identities we derive here do not rely on the ellipticity of the underlying metric. Therefore, we will only recall the necessary identities and refer to \cite{IT-qnls2} for more details.

 We begin by observing that for a localized solution $u_\lambda$ to the nonlinear problem, we can use the equation \eqref{full-lpara} to compute
\[
\begin{aligned}
\frac{d}{dt} M_\lambda(u) =  2 \partial_j [g_{[<\lambda]}^{jk} \Im (\partial_k u_\lambda   \cdot \bar u_\lambda)] 
+ 2 \Im ( (C_\lambda^{res}(u,\bu, u)+  C_\lambda^{nr}(\bfu,\bfu, \bfu) + F_\lambda(\bfu)) \bar u_\lambda),
\end{aligned}
\]
which can be rewritten as
\begin{equation}\label{dens-flux-m0}
\frac{d}{dt} M_\lambda(u) =  \partial_j [ P_\lambda^j(u)] 
+ C^{4,res}_{\lambda,m}(u,\bu,u,\bu) + C^{4,nr}_{\lambda,m}(\bfu,\bfu,\bfu,\bfu) + 2 F^4_{\lambda,m}(\bfu),
\end{equation}
where 
\[
C^{4,res}_{\lambda,m} (u,\bu,u,\bu) := 2 \Im ( (C_\lambda^{res}(u,\bu, u)  \bu_\lambda), 
\qquad 
C^{4,nr}_{\lambda,m}(\bfu,\bfu,\bfu,\bfu) := 2 \Im ( C_\lambda^{nr}(\bfu,\bfu, \bfu)\bu_\lambda)),
\]
\[
 F^4_{\lambda,m} (\bfu) := 2 \Im ( (F_\lambda(\bfu)  \bu_\lambda).
\]
The three source terms in \eqref{dens-flux-m0} will need to be treated more carefully in the last section.

The term $C^{4,res}_{\lambda,m}$ contains only interactions which are localized at frequency $\lambda$, but also includes the non-transversal interactions where all frequencies are equal. We remark importantly that the conservative assumption in Definition~\ref{d:conservative2}, yields the key symbol identity on the diagonal
\begin{equation}\label{c4-cancel}
   c^4_{\lambda,m}(\xi,\xi,\xi,\xi) = 0,\hspace{5mm}\xi\in\mathbb{R}^2 .
\end{equation}
This was also assumed in the definite case in \cite{IT-qnls2}. One can also compare this with the one-dimensional results in \cite{IT-global},
\cite{IT-qnls}, where a stronger vanishing condition on the diagonal was imposed. In the 1D case, this enabled the removal of this term via a quartic correction to the mass density and a quartic correction to the mass flux (up to perturbative errors). In two dimensions, this strategy is not tractable since there are many more resonant interactions to deal with. Nevertheless, as in the definite case, the above vanishing condition will roughly allow us to obtain a representation
\[
C^{4,res}_{\lambda,m} (u,\bu,u,\bu) \approx \lambda L(\partial |u|^2, u, \bu)
+ \lambda \partial R^4_{\lambda,m}(u,\bu,\bu,\bu),
\]
which in turn will be used in order to treat it as a perturbative term, by separating it into an $L^1_{t,x}$ component and a flux correction.

The term $C^{4,nr}_{\lambda,m}$ contains a larger range of possible interactions  which are also localized at frequency $\lambda$.
However, these interactions are nonresonant, which will enable us to remove them as in \cite{IT-qnls2} using both density and flux corrections
\[
C^{4,nr}_{\lambda,m} (\bfu,\bfu,\bfu,\bfu) \approx 
\partial_t L(\bfu,\bfu,\bfu,\bfu)
+ \lambda \partial_x L(\bfu,\bfu,\bfu,\bfu) + F^{4,nr}(\bfu)
\]
with a perturbative source term $F^{4,nr}(\bfu)$.

Finally, $F^4_{\lambda,m}$ is at least quartic and only involves either weakly transversal, double transversal interactions or one pair of transversal interactions and one balanced, All of these interactions will be directly estimated as perturbations in $L^1$. As mentioned earlier, weakly transversal interactions are a new feature for the ultrahyperbolic problem. Treating these perturbatively will be somewhat delicate (as each factor is roughly localized in the same dyadic annulus) and will require us to use the more refined unbalanced bilinear estimate that we will establish in the next section.

Similarly to the above, we also have a
 density-flux relation for the momentum
\begin{equation}\label{dens-flux-p}
\frac{d}{dt} P^j_{\lambda}(u) =  \partial_k  E^{jk}_{\lambda}(u) 
+ C^{j,4,res}_{\lambda,p}(u,\bu,u,\bu)  +  C^{4,nr}_{\lambda,p}(\bfu,\bfu,\bfu,\bfu)+
 F^{j,4}_{\lambda,p}(\bfu)
\end{equation}
with a cancellation relation similar to \eqref{c4-cancel} and an analogous set of density and flux corrections for the second and third terms on the right.


\section{Interaction Morawetz bounds for the linear paradifferential flow}
\label{s:para}

In this section and the next, our aim is to study the linear paradifferential equation
corresponding to a solution $u$ to the nonlinear equation \eqref{qnls}. The primary focus of this section will be to establish the requisite $L^2$ and bilinear $L^2$ bounds at the paradifferential level.
On one hand, this is a key step in the proof of both the local well-posedness results in Theorem~\ref{t:local3+} and also provides a key building block for a large portion of the analysis for the global results in Theorem~\ref{t:global2}, Theorem~\ref{t:global3}. We will use an enhanced version of these arguments later 
to establish estimates for the full nonlinear equation and to prove the more delicate, low regularity global result in Theorem~\ref{t:global2c}. The estimates established here will be supplemented by the next section where we provide a suitable set of Strichartz estimates for the paradifferential equation with derivative loss. These latter estimates will be essential in the two-dimensional problem to establish a global $L_{t,x}^4$ Strichartz bound with a $\frac{1}{2}$ derivative loss, but also will play a small role in the three-dimensional problem when establishing an almost lossless $L_{t,x}^4$ bound. This is because the analysis here will only give us access to a a slightly worse $L_t^4L_x^{4+\delta}$ bound in the 3-dimensional setting. By analogous reasoning, we also have a near lossless $L_t^4L_x^8$ estimate in two dimensions, which will be relevant for establishing the $\epsilon^{-6}$ lifespan bound in our local result.

To set the stage, let us consider a one parameter family of functions $v_\lambda$ which solve the equations \eqref{para}. At this point, we assume no connection between these functions. Later we will apply these bounds in the case when  $v_\lambda=P_\lambda u$ or $v_\lambda = P_\lambda v$ where $v$ solves the linearized equation.

For these functions we have the conservation laws
\begin{equation}\label{df-v-m}
\partial_t M(v_\lambda) = \partial_j  P^j(v_\lambda)+ F^{para}_{\lambda,m},
\end{equation}
respectively
\begin{equation}\label{df-v-p}
\partial_t P^j(v_\lambda) = \partial_k  E^{kj} (v_\lambda) + F^{j,para}_{p,\lambda},
\end{equation}
where the source terms are 
\begin{equation}\label{f-v-m}
F^{para}_{\lambda,m}
:=
2\Im ( f_\lambda \bv_\lambda) ,  
\end{equation}
respectively 
\begin{equation}\label{f-v-p}
F^{j,para}_{p,\lambda}:=  G^{j,para}_{p,\lambda}
- 2\Re (f_\lambda \partial^j \bv_\lambda)
+ 2 \Re ( v_\lambda \partial^j \bar f_\lambda)
\end{equation}
with $G^{j,para}_{p,\lambda}$ given by \eqref{G4}.

We will measure each $v_\lambda$ based on the size of the initial data
and of the source term $f_\lambda$. However, following the previous results \cite{IT-qnls}, \cite{IT-qnls2}, rather than measure $f_\lambda$ 
directly, it is useful to instead measure its interaction with $v_\lambda$ and as well as its translates. For this, we define
\begin{equation}\label{dl}
d_\lambda^2 := \sup_{\mu,\nu \approx \lambda} \| v_\mu(0)\|_{L^2}^2 +  \sup_{x_0 \in \R} \| v_\mu f_\nu^{x_0} \|_{L^1_{t,x}}.
\end{equation}
Using this slightly "weaker" parameter $d_\lambda$ is important for allowing us to more easily study the bilinear interactions between a solution and the source terms in the equation.

The main result in this section asserts that we can obtain energy and bilinear $L^2$ bounds for $v_\lambda$ (with a small loss in the balanced estimate). Precisely, we have the following:
\begin{theorem}\label{t:para}
Assume that $u$ solves \eqref{qnls} in a time interval $[0,T]$,
and satisfies the bounds \eqref{uk-ee},\eqref{uab-bi-unbal}, \eqref{uab-bi-bal} and \eqref{uk-Str} in the same time interval for some $s > \frac{n+1}2$. Moreover, if $n=2$, we also assume that $T \lesssim \epsilon^{-6}$. Assume $v_\lambda$ solve \eqref{para}, and let $d_\lambda$ be as in \eqref{dl}. Then the following bounds hold for the functions $v_\lambda$ uniformly in $x_0 \in \R$:

\begin{enumerate}
    \item Uniform energy bounds:
\begin{equation}\label{v-ee}
\| v_\lambda \|_{L^\infty_t L^2_x} \lesssim d_\lambda 
\end{equation}

\item Balanced bilinear $(u,v)$-$L^2$ bound: There holds
\begin{equation} \label{uv-bi-bal}
\sup_{j\in\mathbb{Z}}\|P_j|D|^{\frac{3-n}2} (v_\lambda \bu_\mu^{x_0})  \|_{L^2_{t,x}} \lesssim \epsilon d_{\lambda} c_\mu \lambda^{-s+\frac12} (1+ \lambda |x_0|),
\qquad \mu \approx \lambda
\end{equation}
 \item Unbalanced bilinear $(u,v)$-$L^2$ bounds: Let $v_{\lambda}$ and $u^{x_0}_{\mu}$ be unbalanced in the sense of Definition~\ref{transversalpair}. There holds
\begin{equation} \label{uv-bi-unbal}
\| v_\lambda \bu_\mu^{x_0}  \|_{L^2_{t,x}} \lesssim \epsilon d_{\lambda} c_\mu \mu^{-s} \frac{\min\{\lambda,\mu\}^\frac{n-1}2}{(\lambda+\mu)^{\frac12}} 
\end{equation}

\item Balanced bilinear $(v,v)$-$L^2$ bound: There holds
\begin{equation} \label{vv-bi-bal}
\sup_{j\in\mathbb{Z}}\|P_j|D|^{\frac{3-n}2} (v_\lambda \bv_\mu^{x_0})  \|_{L^2_{t,x}} \lesssim  d_{\lambda} d_\mu \lambda^{\frac12} (1+ \lambda |x_0|),
\qquad \mu \approx \lambda
\end{equation}

 \item Unbalanced bilinear $(v,v)$-$L^2$ bound: Let $v_{\lambda}$ and $v_{\mu}^{x_0}$ be unbalanced. There holds
\begin{equation} \label{vv-bi-unbal}
\| v_\lambda \bv_\mu^{x_0}  \|_{L^2_{t,x}} \lesssim  d_{\lambda} d_\mu\frac{\min\{\lambda,\mu\}^\frac{n-1}2}{(\lambda+\mu)^{\frac12}}.
\end{equation}
\end{enumerate}
\end{theorem}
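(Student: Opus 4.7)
The plan is to lift the interaction Morawetz framework of Section~\ref{s:im-cc} to the variable-coefficient paradifferential setting using the density-flux identities of Section~\ref{s:df}, with all source/commutator errors treated perturbatively via the bootstrap assumptions on $u$. The uniform energy bound \eqref{v-ee} follows at once by integrating the mass density-flux identity \eqref{df-v-m} in $(x,t)$, yielding
\[
\|v_\lambda(t)\|_{L^2}^2 = \|v_\lambda(0)\|_{L^2}^2 + 2 \int_0^t\!\!\int \Im(f_\lambda \bv_\lambda)\, dx\, ds,
\]
so the time integral is dominated by $d_\lambda^2$ by the definition \eqref{dl}.

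For the bilinear estimates, I introduce interaction functionals of the form
\[
\bI_a(v_\lambda, w_\mu^{x_0}) := \iint a_j(x-y) \bigl( M(v_\lambda)(x) P^j(w_\mu^{x_0})(y) - P^j(v_\lambda)(x) M(w_\mu^{x_0})(y) \bigr) \, dx\, dy,
\]
where $w \in \{u,v\}$ covers the $(u,v)$ and $(v,v)$ cases simultaneously, recalling that $u_\mu$ itself satisfies a paradifferential equation by \eqref{para-full}. The weight is chosen depending on the interaction type: in the balanced regime $\mu \approx \lambda$ I use the Gaussian-corrected weight $a_\lambda(x) = |x| + \tfrac{1}{2\lambda} e^{-\lambda|x|}$ from Section~\ref{s:im-cc}, while in the unbalanced regime I take $a(x) = |x|$. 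Differentiating $\bI_a$ in time via \eqref{df-v-m}, \eqref{df-v-p} produces a main quadrilinear term $\bJ^4$ analogous to \eqref{J4-flat}, plus error contributions from $F^{para}$ and the variable-metric commutators. Since $g_{[<\lambda]}$ is a uniformly non-degenerate perturbation of $g(0)$ under the bootstrap \eqref{uk-ee-boot}, Remark~\ref{uniformnondegremark} applies and delivers the Gaussian lower bound \eqref{lowerboundultra}, which via \eqref{balancedflatbound} yields the balanced estimates \eqref{uv-bi-bal}, \eqref{vv-bi-bal}. The $(1+\lambda|x_0|)$ factor reflects the linear degradation of coercivity as the second factor is translated off the scale $\lambda^{-1}$. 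In the unbalanced regime the frequency separation, or in the semi-balanced case the angular separation of Definition~\ref{transversalpair}, provides direct transversality in the standard Planchon-Vega form, giving \eqref{uv-bi-unbal}, \eqref{vv-bi-unbal}.

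The error contributions split into two classes. The source pieces involving $f_\lambda$, together with the analogous terms from the equation for $w_\mu^{x_0}$, reduce after integration by parts to $L^1_{t,x}$ pairings bounded by $\|f_\lambda v_\mu^{x_0}\|_{L^1_{t,x}} \leq d_\lambda^2$ and the analogous mixed $(u,v)$ pairings absorbed into $\epsilon d_\lambda c_\mu \mu^{-s}$. The commutator terms arising from $G^{j,para}_{p,\lambda}$ have the schematic form $h(\bfu_{<\lambda}) \bfu_{<\lambda} \partial \bfu_{<\lambda} \, \partial v_\lambda \partial \bv_\lambda$, i.e.\ cubic in $u$ and bilinear in $v_\lambda$. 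These are estimated by decomposing into balanced and unbalanced dyadic configurations and invoking the bootstrap bilinear bounds \eqref{uab-bi-unbal-boot}, \eqref{uab-bi-bal-boot} together with the Strichartz bounds \eqref{uk-Str2-boot}, \eqref{uk-Str3-boot}. The cubic $\epsilon^3$ gain makes them subleading and absorbable into the right-hand sides of \eqref{uv-bi-bal}--\eqref{vv-bi-unbal}. The $T \lesssim \epsilon^{-6}$ restriction in $n=2$ enters precisely here, since the $\tfrac12$-derivative-loss $L^4_{t,x}$ Strichartz bound for $u$ must be usable over the full time interval.

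The principal technical hurdle is the balanced $(v,v)$-bound \eqref{vv-bi-bal}, where no smallness is available from $v$ itself. Here the Gaussian-corrected weight is indispensable: the classical choice $a(x)=|x|$ would yield only the non-coercive quantity \eqref{noncoercive} once $g$ is indefinite, whereas the corrected weight restores the coercive Besov output \eqref{Besovnorm}--\eqref{balancedflatbound}. Propagating the $(1+\lambda|x_0|)$ factor through the weight estimates also requires care, since translations break the natural symmetry of the flat case. Once these ingredients are in place, a standard absorption argument against the bootstrap hypotheses on $u$ simultaneously closes all five bounds.
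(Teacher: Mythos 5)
Your blueprint — set up an interaction functional built from the mass/momentum density-flux identities, use the Gaussian-corrected weight for balanced interactions, and absorb sources and commutators via the bootstrap and Proposition~\ref{p:N-lambda} — matches the general philosophy of the paper's proof, and the treatments of \eqref{v-ee} and the balanced error terms are essentially on target. However there are two genuine gaps.

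First, and most seriously, your choice $a(x)=|x|$ for the unbalanced functional
\[
\bI_a(v_\lambda, w_\mu^{x_0}) = \iint a_j(x-y)\bigl( M(v_\lambda)(x)\, P^j(w_\mu^{x_0})(y) - P^j(v_\lambda)(x)\, M(w_\mu^{x_0})(y)\bigr)\, dx\, dy
\]
is not what the paper uses and would not deliver \eqref{uv-bi-unbal}, \eqref{vv-bi-unbal} in the semi-balanced case $\mu\approx\lambda$ with angular separation — precisely the new case needed for the weakly transversal resonant interactions, and the case the paper singles out as a novelty. The paper instead localizes both inputs to $\delta\lambda$-balls with $O(\lambda)$ separation, chooses a coordinate direction $e_1$ in which the corresponding \emph{group velocities} are $O(\lambda)$-separated, and uses the much simpler one-sided functional $\bI^{x_0}(v_\lambda,v_\mu) = \iint_{x_1<y_1} M(v_\lambda)(x)\, M(v_\mu^{x_0})(y)\, dx\,dy$, whose time derivative concentrates on the hyperplane $x_1=y_1$ and whose main term has a definite sign from the group-velocity gap (after comparing to the constant-coefficient $\bJ^{4,0}$ and applying Bernstein). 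With $a(x)=|x|$ and the mixed $M$-$P$ functional you have no mechanism to privilege the transversal direction, and in the ultrahyperbolic setting the resulting main term does not control the desired quantity. The remark after Theorem~\ref{t:para} explicitly states that the previous approach "required the solutions to be localized in different annuli," which is exactly the regime you would recover.

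Second, in the balanced case you fix the weight to $a_\lambda(x)=|x|+\tfrac{1}{2\lambda}e^{-\lambda|x|}$, i.e.\ $r=\lambda^{-1}$. But the theorem requires the full Besov statement $\sup_{j\in\mathbb{Z}}\|P_j|D|^{\frac{3-n}{2}}(v_\lambda\bv_\mu^{x_0})\|_{L^2_{t,x}}\lesssim\cdots$, and the proof must keep $r>0$ as a free parameter ranging over all dyadic scales, since $P_{r^{-1}}$ in \eqref{balancedflatbound} picks out only the single dyadic block $j\approx\log_2 r^{-1}$. Pinning $r=\lambda^{-1}$ gives control only of the top frequency $j\approx\log_2\lambda$ of the product $|v_\lambda|^2$, which is not the content of the estimate; the essential (and hard) regime is $j\ll\log_2\lambda$, requiring $r\gg\lambda^{-1}$. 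You need to run the Morawetz argument with the whole family $a_r$ and verify that the $\bI_r$, $\bK_r$, and commutator bounds in Lemma~\ref{l:J4-bal} are uniform in $r$, which is what the paper does.
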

We note crucially that the unbalanced estimates above allow for semi-balanced pairs of functions where the two solutions can lie in the same frequency annulus (see Definition~\ref{transversalpair}). This generalizes the corresponding unbalanced bound (even in the definite case) from the prior article \cite{IT-qnls2}, where the proof required the solutions to be localized in different annuli. The new proof we present below is more robust and also utilizes a simpler family of interaction Morawetz functionals. 
\\

We also remark that the unbalanced bounds
are more robust than the balanced bounds above, in the sense that they do not require any relation between the metric in the $v_\lambda$ equation and the one in the $v_\mu$ equation:

\begin{corollary}\label{c:para}
The unbalanced bounds \eqref{uv-bi-unbal} and \eqref{vv-bi-unbal}
are still valid if the equation \eqref{para} for $v_\lambda$ 
is replaced by the linear Schr\"odinger equation   
\[
(i \partial_t+\Delta_{g(0)}) v_\lambda = f_\lambda.
\]
\end{corollary}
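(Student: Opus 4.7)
The plan is to rerun the interaction Morawetz argument behind \eqref{uv-bi-unbal} and \eqref{vv-bi-unbal} in Theorem~\ref{t:para}, but with the density-flux relations for $v_\lambda$ replaced by their flat analogues from Section~\ref{s:im-cc}. Since $v_\lambda$ now solves $(i\partial_t+\Delta_{g(0)})v_\lambda=f_\lambda$ with a constant metric, the mass-momentum identity
\[
\partial_t M(v_\lambda)=\partial_j P^j(v_\lambda)+2\Im(f_\lambda\bar v_\lambda),
\qquad
\partial_t P^j(v_\lambda)=\partial_k E^{kj}(v_\lambda)+F^{j}_{p,\lambda},
\]
features $E^{kj}$ built from the frozen metric $g(0)$ (as in \eqref{J4-flat}) and, crucially, no commutator term of the form \eqref{G4}: the source $F^{j}_{p,\lambda}$ is only the two $f_\lambda$-terms of \eqref{f-v-p}. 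This strictly simplifies the analysis compared to the paradifferential case.

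The next step is to form the interaction Morawetz functional
\[
\bI_a(v_\lambda,w_\mu)=\iint a_j(x-y)\bigl(M(v_\lambda)(x)P^j(w_\mu)(y)-P^j(v_\lambda)(x)M(w_\mu)(y)\bigr)\,dxdy,
\]
where $w_\mu$ is either $u_\mu^{x_0}$ (solving the nonlinear equation) or $v_\mu^{x_0}$ (solving \eqref{para} with metric $g_{[<\mu]}$), and $a$ is the weight $a_r(x)=|x|+\tfrac{r}{2}e^{-r^{-1}|x|}$ from Section~\ref{s:im-cc} at the scale $r^{-1}\sim \lambda+\mu$. Differentiating in time and using the above flat identities for $v_\lambda$ together with the (para)differential identities for $w_\mu$ produces a principal positive quadratic form in the vector $F_j=v_\lambda\partial_j\bar w_\mu+\partial_j v_\lambda\bar w_\mu$, plus source terms that involve only $f_\lambda$, $f_\mu^{para}$ (for $w_\mu=v_\mu^{x_0}$), or $C_\mu^{res},C_\mu^{nr},F_\mu$ (for $w_\mu=u_\mu^{x_0}$). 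The main point, as noted in the discussion following Theorem~\ref{t:para}, is that the principal quadratic form only involves the metric $g(0)$ for $v_\lambda$ and the metric associated to $w_\mu$ separately — no compatibility between the two is needed.

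The coercivity step is the same as in Section~\ref{s:im-cc}: uniform non-degeneracy of $g(0)$ and of $g_{[<\mu]}$ (a small perturbation of $g(0)$ by the smallness bootstrap \eqref{uk-ee-boot}) combined with the Hessian lower bound $D^2 a_r\gtrsim r^{-1}e^{-r^{-2}|x-y|^2}$ on the scale $r^{-1}\sim\lambda+\mu$ give
\[
\bJ^4(v_\lambda,w_\mu)\gtrsim (\lambda+\mu)\int e^{-(\lambda+\mu)^2|x-y|^2}|F|^2\,dxdy.
\]
Under the unbalanced hypothesis of Definition~\ref{transversalpair} (frequency separation $\mu\ll\lambda$, or semi-balanced with disjoint frequency balls of radius $\ll\lambda$ separated by $\gtrsim\lambda$), the transversality of the two wave packets forces $|F|\gtrsim \min\{\lambda,\mu\}\,|v_\lambda||w_\mu|$ on the relevant frequency support, exactly as in the flat case model, which produces the desired bilinear $L^2$ bound with the factor $\min\{\lambda,\mu\}^{n-1}/(\lambda+\mu)$ after integrating in $y$ and translating.

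The remaining task is to bound $|\bI_a|\lesssim d_\lambda^2$ (or $\lesssim \epsilon d_\lambda c_\mu\mu^{-s}$) uniformly in time, and to absorb the source term contributions. Boundedness of $\bI_a$ follows from $|a_j|\lesssim 1$ together with the energy bound \eqref{v-ee} and its analogue for $w_\mu$. The $f_\lambda$-generated source terms are controlled via the definition \eqref{dl} of $d_\lambda$, while for $w_\mu=u_\mu$ the source terms from $C_\mu^{res}+C_\mu^{nr}+F_\mu$ are controlled exactly as in the proof of Theorem~\ref{t:para}, using the already-available bootstrap bounds \eqref{uk-ee-boot}-\eqref{uab-bi-bal-boot} and \eqref{uk-Str2-boot}-\eqref{uk-Str3-boot}. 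The main (very mild) obstacle is that several of the source terms in the $w_\mu$ equation are quasilinear and therefore involve $g_{[<\mu]}$, but since we never need cancellation between the two metrics, these contributions are handled term-by-term as perturbations, giving the stated estimates.
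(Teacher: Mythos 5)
Your proof uses the $a_r$-weighted mass--momentum Morawetz functional from the \emph{balanced} case and attempts to extract the unbalanced bilinear bound from the Gaussian Hessian lower bound of Section~\ref{s:im-cc}. This is not what the paper does, and it does not work. The paper's unbalanced estimate is proved with the one-sided functional
\[
\bI^{x_0}(v_\lambda,v_\mu)=\iint_{x_1<y_1}M(v_\lambda)(x)M(v_\mu^{x_0})(y)\,dx\,dy,
\]
where $e_1$ is a direction of $O(\lambda)$ group-velocity separation between the frequency balls $B_\lambda$ and $B_\mu$; the coercive output is the codimension-one trace $\lambda\int_{x_1=y_1}|v_\lambda(x)|^2|v_\mu^{x_0}(y)|^2\,dx\,dy$, and the transition to $\|v_\lambda\bv_\mu^{x_0}\|_{L^2}^2$ is achieved by Bernstein in the transverse variables $(x_2,\dots,x_n)$, which is what produces the $\mu^{n-1}$ factor and yields an estimate uniform in $x_0$. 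Your approach has no such mechanism: the pointwise claim $|F|\gtrsim\min\{\lambda,\mu\}|v_\lambda||w_\mu|$ is false (the vector $F_j=v_\lambda\partial_j\bar w_\mu+\partial_j v_\lambda\bar w_\mu$ can cancel pointwise; transversality is a frequency-side property of the quadrilinear symbol $j^{4,0}$ near the ball centers, not a lower bound on $|F|$), and even if it held, the resulting coefficient would miss the target $\min\{\lambda,\mu\}^{n-1}/(\lambda+\mu)$ by a positive power of $\lambda/\mu$ and would also carry the $(1+\lambda|x_0|)$ factor, which must be absent from \eqref{uv-bi-unbal} and \eqref{vv-bi-unbal}.

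The intended justification is essentially immediate and does not require rerunning any estimates. In the unbalanced part of the proof of Theorem~\ref{t:para}, the equation satisfied by $v_\lambda$ enters only through (i) the density-flux identity \eqref{df-v-m} feeding the source-term bounds via $d_\lambda$ as in \eqref{dl}, which already holds in the flat form of Section~\ref{s:im-cc} when the metric is $g(0)$; and (ii) the comparison $\bJ^4\to\bJ^{4,0}$, which on the $v_\lambda$ side is vacuous since $v_\lambda$ already solves the constant-coefficient equation, so $\bJ^4=\bJ^{4,0}$ there. The choice of $e_1$, the frequency-ball-center computation, the Bernstein step, and the bounds on $\bI^{x_0}$ and $\bK$ are all indifferent to which equation $v_\lambda$ solves. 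So you should rerun \emph{the unbalanced argument} of Section~\ref{s:para} verbatim, not transplant the balanced machinery. Your observation that no compatibility is needed between the metric for $v_\lambda$ and the metric for $w_\mu$ is the right intuition, but it has to be implemented through the paper's one-sided functional.
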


This is more  a corollary of the proof of the theorem, rather than the theorem itself. 
Indeed, the unbalanced case of the proof of the theorem applies in this case without any change.  The above corollary applies in particular if $v_\lambda$ is an $L^2$
solution to the homogeneous Schro\"dinger equation. That leads to a follow-up
corollary, as in \cite{IT-qnls2}:

\begin{corollary}\label{c:para1}
The unbalanced bounds \eqref{uv-bi-unbal} and \eqref{vv-bi-unbal}
are still valid if $v_\lambda \in U^2_{UH}$, with $d_\lambda = \|v_\lambda \|_{U^2_{UH}}$.
\end{corollary}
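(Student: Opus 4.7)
The plan is to reduce to Corollary~\ref{c:para} (for \eqref{uv-bi-unbal}) and to the linear bilinear estimate \eqref{bi-di-UV}/Lemma~\ref{l:bi} (for \eqref{vv-bi-unbal}) via the standard atomic structure of the $U^2_{UH}$ space. Recall that a $U^2_{UH}$-atom at frequency $\lambda$ is a step function
\[
a(t) = \sum_{j=1}^{N} \chi_{[t_{j-1},t_j)}(t)\, e^{-i(t-t_{j-1})\Delta_{g(0)}} \phi_{j-1},
\]
where $-\infty = t_0 < t_1 < \cdots < t_N = \infty$ and $\sum_j \|\phi_{j-1}\|_{L^2}^2 \leq 1$, each $\phi_{j-1}$ being frequency localized at $\lambda$. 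Every $v_\lambda \in U^2_{UH}$ admits a decomposition $v_\lambda = \sum_n c_n a_n$ with atoms $a_n$ and $\sum_n |c_n| \lesssim \|v_\lambda\|_{U^2_{UH}}$. Throughout, we exploit the obvious fact that both target bounds are linear in $v_\lambda$, so it suffices to verify them for atoms and then apply the triangle inequality in $L^2_{t,x}$.

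For the mixed bound \eqref{uv-bi-unbal}, fix an atom $a = a_\lambda$ as above. On each interval $I_j = [t_{j-1},t_j)$, $a$ coincides with the linear Schr\"odinger flow applied to $\phi_{j-1}$, so Corollary~\ref{c:para} yields
\[
\|a\, \bar u_\mu^{x_0}\|_{L^2(I_j \times \R^n)}
\lesssim \epsilon\, \|\phi_{j-1}\|_{L^2}\, c_\mu\, \mu^{-s}\, \frac{\min\{\lambda,\mu\}^{\frac{n-1}{2}}}{(\lambda+\mu)^{\frac12}}.
\]
Squaring, summing in $j$ using that the $I_j$ are pairwise disjoint and that the $L^2_{t,x}$ norm is orthogonal across time intervals, and invoking $\sum_j \|\phi_{j-1}\|_{L^2}^2 \leq 1$, we obtain the atomic bound $\|a\,\bar u_\mu^{x_0}\|_{L^2_{t,x}} \lesssim \epsilon c_\mu \mu^{-s} \min\{\lambda,\mu\}^{\frac{n-1}{2}}/(\lambda+\mu)^{\frac12}$. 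The triangle inequality then gives \eqref{uv-bi-unbal} with $d_\lambda = \|v_\lambda\|_{U^2_{UH}}$.

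For \eqref{vv-bi-unbal}, we decompose both $v_\lambda = \sum_n c_n a_n$ and $v_\mu = \sum_m d_m b_m$. For atoms $a = a_n$ and $b = b_m$ associated with partitions $\{I_j\}$ and $\{J_k\}$ respectively, we refine to the common partition $\{I_j \cap J_k\}$. On each such intersection, both $a$ and $b^{x_0}$ are linear Schr\"odinger solutions at frequencies $\lambda, \mu$ respectively. The translation $x_0$ preserves the frequency localization and $L^2$ norm, so the transversal hypothesis of Lemma~\ref{l:bi} is still valid, and \eqref{bi-di-UV} (or \eqref{bi-di} with zero forcing) yields
\[
\|a\, \bar b^{x_0}\|_{L^2(I_j\cap J_k \times \R^n)}^2
\lesssim \frac{\min\{\lambda,\mu\}^{n-1}}{\lambda+\mu}\, \|\phi_{j-1}\|_{L^2}^2\, \|\psi_{k-1}\|_{L^2}^2.
\]
Summing over $(j,k)$ and factoring the product of sums $\big(\sum_j \|\phi_{j-1}\|_{L^2}^2\big)\big(\sum_k \|\psi_{k-1}\|_{L^2}^2\big) \leq 1$ gives $\|a\,\bar b^{x_0}\|_{L^2_{t,x}} \lesssim \min\{\lambda,\mu\}^{\frac{n-1}{2}}/(\lambda+\mu)^{\frac12}$. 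A final application of the triangle inequality in $L^2_{t,x}$ and the bound $\sum|c_n| \sum|d_m| \lesssim \|v_\lambda\|_{U^2_{UH}} \|v_\mu\|_{U^2_{UH}}$ concludes the proof.

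The only delicate point is bookkeeping in the $(v,v)$ case, where the two atoms live on different partitions; this is handled cleanly by the refinement to $\{I_j \cap J_k\}$ together with the orthogonality of $L^2_{t,x}$ on disjoint time intervals. No new analytic input beyond Corollary~\ref{c:para}, Lemma~\ref{l:bi}, and the standard $U^2$ atomic decomposition is needed, which is why the author already notes this as a corollary of the proof rather than a separate result.
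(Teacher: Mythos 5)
Your proof is correct and follows the same route the paper intends: decompose the $U^2_{UH}$ function(s) into atoms, reduce on each subinterval of the atomic partition to a homogeneous constant-coefficient solution, apply Corollary~\ref{c:para} (for the $(u,v)$ pairing) or the linear bilinear bound (for the $(v,v)$ pairing) interval by interval, square-sum over the partition using the $\ell^2$ control of the atomic data, and finally sum over atoms by the triangle inequality. The paper does not write out this argument (it just cites the atomic structure of $U^2_{UH}$ and refers back to \cite{IT-qnls2}), so your write-up with the common-refinement bookkeeping for the $(v,v)$ case is a faithful fleshing-out of the intended proof.
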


We move to the proof of our theorem.

\begin{proof}[Proof of Theorem~\ref{t:para}]
 In order to treat the
dyadic bilinear $(u,v)$ and $(v,v)$ bounds at the same time,
we write the equation for $u_\lambda$ in the form 
\eqref{para-full} where we estimate favourably the source term $N_\lambda$, and prove 
the following estimate:

\begin{proposition}\label{p:N-lambda}
 a) Let $n \geq 3$ and $s > \frac{n+1}{2}$. Assume that the function $u$ satisfies   the bounds \eqref{uk-ee}, \eqref{uab-bi-unbal}, \eqref{uab-bi-bal} and \eqref{uk-Str}
 in a time interval $[0,T]$. Then for $\epsilon$ small enough, the functions $N_\lambda(u)$ in \eqref{para-full} satisfy 
 \begin{equation}\label{good-nl}
\| N_\lambda(\bfu) \bu_\lambda^{x_0}\|_{L^1_{t,x}} \lesssim \epsilon^4 c_\lambda^2 \lambda^{-2s}.
 \end{equation}

 b) Let $n = 2$ and $s > \frac{n+1}{2}=\frac{3}{2}$. Then \eqref{good-nl}
 holds under the additional assumption $T \lesssim\epsilon^{-6}$.
 Furthermore, for any $T$ we have the partial bound
 \begin{equation}\label{good-fl}
\| F_\lambda(\bfu) \bu_\lambda^{x_0}\|_{L^1_{t,x}} \lesssim \epsilon^4 c_\lambda^2 \lambda^{-2s}.
 \end{equation}
\end{proposition}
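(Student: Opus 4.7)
The plan is to split $N_\lambda(\mathbf{u})$ according to the five schematic classes of contributions listed at the end of Section~\ref{s:lin} --- cubic high$\times$high, low$\times$high commutator, transversal or weakly transversal at frequency $\lambda$, purely low-frequency cubic, and quartic-or-higher --- and, in part~(b), to further isolate within $N_\lambda$ the balanced cubic terms $C^{res}_\lambda$ and $C^{nr}_\lambda$ (which are deliberately \emph{not} part of $F_\lambda$) from the remainder $F_\lambda$. For each such piece, $\int N^{type}_\lambda\, \bar u_\lambda^{x_0}\, dx\, dt$ is a four-linear expression which will be estimated by H\"older in space-time after distributing the four factors into one of three patterns: (a)~two bilinear $L^2_{t,x}$ pairings, (b)~one bilinear $L^2_{t,x}$ pairing plus two Strichartz factors, or (c)~four Strichartz factors. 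The relevant dyadic bilinear and Strichartz bounds are all supplied by the bootstrap hypotheses \eqref{uk-ee-boot}--\eqref{uk-Str3-boot}.

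For part~(a), with $n\ge 3$, combining the balanced bilinear bound \eqref{uab-bi-bal-boot} with dyadic Bernstein produces the near-lossless $L^4_{t,x}$ estimates \eqref{uk-l4-4d}--\eqref{uk-l4-3d} on each dyadic piece, so patterns (b)--(c) are effectively lossless. In the high$\times$high contributions $\mu^2 L(\mathbf{u}_{\le\mu},\bar u_\mu,u_\mu)$ with $\mu\gtrsim\lambda$, the factor $u_\mu$ will be paired with $\bar u_\lambda^{x_0}$ using \eqref{uab-bi-bal-boot} when $\mu\approx\lambda$ and \eqref{uab-bi-unbal-boot} when $\mu\gg\lambda$; the resulting $\mu^{-2s+\frac12}$ gain, combined with two $L^4_{t,x}$ factors on the remaining inputs, easily absorbs the $\mu^2$ derivative weight since $s>\tfrac{n+1}{2}$ and permits dyadic summation in $\mu$. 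The commutator cubics generate a balanced $(u_\lambda,\bar u_\lambda^{x_0})$ pair plus two low-frequency factors controlled in $L^\infty_{t,x}$ by Bernstein; the cubic transversal or weakly transversal terms at frequency $\lambda$ fall precisely into the setting of Definition~\ref{transversalpair}, so the semi-balanced form of \eqref{uab-bi-unbal-boot} applies to two of the three $u_\lambda$ factors while the other two go into $L^4_{t,x}$; the low-frequency cubics and the quartic-or-higher pieces carry trivial additional $\epsilon$-gains. Summing all contributions yields \eqref{good-nl}.

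The main obstacle lies in part~(b), where the two-dimensional $L^4_{t,x}$ Strichartz bound \eqref{uk-Str2-boot} carries a half-derivative loss. For terms collected into $F_\lambda$ there is a genuine off-diagonal structure to exploit --- either a frequency gap $\mu\gg\lambda$ in the high$\times$high and commutator contributions, or the weakly-transversal $\lambda$-separation between the three equal input frequencies and the output in the third class --- and this off-diagonal smoothing exactly compensates the half-derivative loss. This is where the semi-balanced extension of the unbalanced bilinear estimate built into Definition~\ref{transversalpair} becomes indispensable, since the weakly-transversal cubic interactions are a genuinely new ultrahyperbolic feature with no analogue in the elliptic setting of~\cite{IT-qnls2}; with them in hand, $F_\lambda$ is handled perturbatively uniformly in $T$, which proves \eqref{good-fl}.

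The excluded contributions $C^{res}_\lambda$ and $C^{nr}_\lambda$ are triply balanced at frequency $\lambda$ with no off-diagonal gain available, and must therefore be estimated via pattern~(c). Paying the half-derivative Strichartz loss on all four factors and absorbing the $\lambda^2$ weight arising from the differential coefficient in these terms produces a bound that closes against the target $\epsilon^4 c_\lambda^2 \lambda^{-2s}$ only when the time window is short relative to the data size. A direct accounting at the low-regularity threshold $s\to\tfrac{3}{2}^+$ yields precisely the $T\lesssim\epsilon^{-6}$ restriction stated in part~(b), which is the heuristic lifespan predicted by the $L^6_tL^4_x$ Strichartz bound \eqref{uk-l4-2d} at the critical balance of data size and regularity margin.
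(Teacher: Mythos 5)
Your overall decomposition of $N_\lambda$ matches the paper's framework, and your treatment of the weakly transversal contribution to $F_\lambda$ is essentially what the paper does: estimate the semi-balanced pair via the extended unbalanced bilinear bound of Definition~\ref{transversalpair} and the other two factors via the lossy $L^4_{t,x}$ Strichartz bound, giving $\epsilon^4 c_\lambda^4\lambda^{3-4s}\lesssim\epsilon^4 c_\lambda^2\lambda^{-2s}$ for $s>\tfrac32$. (The paper is terse, deferring everything except this new term to [IT-qnls2], so you are usefully filling in more detail than is printed.) One small inaccuracy: in the weakly transversal configuration $\xi^1=\xi^2=\xi^3=\xi$, $\xi^4\in\{-\xi,\pm3\xi\}$, the separated pair must involve the output factor $\bar u_\lambda^{x_0}$ together with one of the three inputs --- not ``two of the three $u_\lambda$ factors,'' which all sit at $\xi$ and are mutually balanced.

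The genuine gap is in your account of the $T\lesssim\epsilon^{-6}$ restriction. Estimating $\lambda^2 L(\mathbf u_\lambda,\mathbf u_\lambda,\mathbf u_\lambda,\bar u_\lambda^{x_0})$ by ``paying the half-derivative Strichartz loss on all four factors'' means four applications of \eqref{uk-Str2}, which gives $\lambda^2(\epsilon c_\lambda\lambda^{-s+\frac12})^4=\epsilon^4 c_\lambda^4\lambda^{4-4s}$. This bound has \emph{no} $T$-dependence whatsoever --- four $L^4_t$ factors already land in $L^1_t$ --- and it closes against $\epsilon^4 c_\lambda^2\lambda^{-2s}$ precisely when $c_\lambda^2\lambda^{4-2s}\lesssim 1$, i.e.\ when $s\ge 2$; the failure for $\tfrac32<s<2$ is at high $\lambda$, not for long $T$. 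So the sentence ``produces a bound that closes against the target only when the time window is short relative to the data size'' is simply false for this pattern. The $T$-dependence enters only when you switch to the lossless $L^6_tL^4_x$ bound \eqref{uk-l4-2d}: four factors there land in $L^{3/2}_tL^1_x$, and passing to $L^1_{t,x}$ by H\"older in time costs $T^{1/3}$. Tracking the extra $\epsilon$ factors that the bootstrap constants must absorb then yields $T^{1/3}\epsilon^2\lesssim 1$, i.e.\ $T\lesssim\epsilon^{-6}$, which is the mechanism inherited from Proposition~7.2 of \cite{IT-qnls2}. You gesture toward $L^6_tL^4_x$ in your last sentence, but never actually run the H\"older-in-time accounting, and the immediately preceding reasoning attributes the time restriction to the wrong estimate. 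As written, this portion of your argument would not survive scrutiny and needs to be redone with the $L^6_tL^4_x$ estimate as the primary tool for the fully balanced $C^{res}_\lambda,C^{nr}_\lambda$ contributions.
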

\begin{proof}  The proof of \eqref{good-nl} (with the restriction on $T$ when $n=2$) is virtually identical to Proposition 7.2 in \cite{IT-qnls2}, except for some very minor adjustments. One simply applies the bounds \eqref{uk-ee}-\eqref{uk-l4-2d} in place of their analogues in \cite{IT-qnls2}. We remark that the $L^4_{t,x}$ bound now comes with a small dyadic loss when $n= 3$ (unlike in the definite case), but the strict inequality $s>\frac{n+1}{2}$ ensures that this loss is harmless. We leave the verification of the details to the interested reader. 
\medskip

When $n=2$, in the estimate \eqref{good-fl}, most terms are handled in an analogous way to the corresponding estimate in \cite{IT-qnls2}, except in the ultrahyperbolic setting, there is one additional term in $F_{\lambda}(u)\overline{u}_{\lambda}^{x_0}$ corresponding to so-called weakly transversal interactions (which is precisely why we need the new unbalanced bilinear bound in this section). Such terms can be expressed generically as 4-linear expressions of the form
\begin{equation*}
\lambda^2 L(\mathbf{u}_{\lambda},\mathbf{u}_{\lambda},\mathbf{u}_{\lambda},\mathbf{u}_{\lambda})
\end{equation*}
where two factors are balanced and two factors are unbalanced in the sense of Definition~\ref{transversalpair}. We can use the $L^4$ Strichartz bound \eqref{uk-Str2} (with a $\frac{1}{2}$ derivative loss) to estimate the balanced pair and the unbalanced $L^2$ bound \eqref{uab-bi-unbal} to estimate the remaining two factors. This yields
\begin{equation*}
\lambda^2 \|L(\mathbf{u}_{\lambda},\mathbf{u}_{\lambda},\mathbf{u}_{\lambda},\mathbf{u}_{\lambda})\|_{L_{t,x}^1}\lesssim \epsilon^4c_{\lambda}^4\lambda^{3-4s}\lesssim \epsilon^4c_{\lambda}^2\lambda^{-2s}
\end{equation*}
where we used the hypothesis $s>\frac{3}{2}$. 
\end{proof}
Given  the estimate \eqref{good-nl}, the $(u,v)$ bilinear bounds and the $(v,v)$ bilinear bounds are absolutely identical.
Hence in what follows we will just consider
the $(v,v)$ bounds. To prove the theorem it is convenient to make the following bootstrap assumptions: 

\begin{enumerate}
    \item Uniform energy bounds:
\begin{equation}\label{v-ee-boot}
\| v_\lambda \|_{L^\infty_t L^2_x} \leq C d_\lambda 
\end{equation}

 \item Unbalanced bilinear $(u,v)$-$L^2$ bound: Let $v_{\lambda}$ and $u_{\mu}^{x_0}$ be unbalanced. Then there holds
\begin{equation} \label{uv-bi-unbal-boot}
\| v_\lambda \bu_\mu^{x_0}  \|_{L^2_{t,x}} \leq C^2 \epsilon d_{\lambda} c_\mu \mu^{-s+\frac{n-1}2} \lambda^{-\frac12} 
 \qquad \mu \leq \lambda 
 \end{equation}

\item Balanced bilinear $(v,v)$-$L^2$ bound:
\begin{equation} \label{vv-bi-bal-boot}
\sup_{j\in\mathbb{Z}}\|P_j|D|^{\frac{3-n}2} (v_\lambda \bv_\mu^{x_0})  \|_{L^2_{t,x}} \leq   C^2 d_{\lambda} d_\mu  \lambda^{\frac12} (1+ \lambda |x_0|),
\qquad \mu \approx \lambda
\end{equation}

 \item Unbalanced bilinear $(v,v)$-$L^2$ bound: Let $v_{\lambda}$ and $v_{\mu}^{x_0}$ be unbalanced. Then there holds
\begin{equation} \label{vv-bi-unbal-boot}
\| v_\lambda \bv_\mu^{x_0}  \|_{L^2_{t,x}} \leq C^2 \epsilon d_{\lambda} d_\mu \mu^{\frac{n-1}2} \lambda^{-\frac12} 
 \qquad \mu \leq \lambda .
\end{equation}
\end{enumerate}
These are assumed to hold with a large universal constant $C$, and then will be proved to hold without it. The way $C$ will be handled is by always pairing it with $\epsilon$ factors in the estimates,
so that it can be eliminated simply by 
assuming that $\epsilon$ is sufficiently 
small.

Here we note that to prove \eqref{v-ee},
\eqref{uv-bi-bal} and \eqref{uv-bi-unbal} it suffices 
to work with a fixed $v_\lambda$, while for the bilinear

$v$ bounds we need to work with exactly two $v_\lambda$'s. Since the functions $v_\lambda$ are completely independent, without any restriction in generality we could assume that $d_\lambda = 1$ in what follows.

We first observe that the bound \eqref{v-ee} follows 
directly via a energy estimate. Next we use the bootstrap assumptions to estimate the sources $F^{para}$ in the density flux energy identities for $v_\lambda$ in $L^1$,
\begin{lemma}\label{l:Fmp}
Assume that \eqref{uv-bi-unbal-boot} holds. Then we have 
\begin{equation}\label{Fmp}
\| F^{para}_{\lambda,m}\|_{L^1_{t,x}} \lesssim  d_\lambda^2,
\qquad 
\|  F^{j,para}_{\lambda,p}\|_{L^1_{t,x}} \lesssim  \lambda  d_\lambda^2.
\end{equation}
\end{lemma}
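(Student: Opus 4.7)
The lemma is a book-keeping statement; the three source contributions in \eqref{f-v-m} and \eqref{f-v-p} will be handled separately, and all bounds will follow either directly from the translation-invariant definition \eqref{dl} of $d_\lambda$ or from the bootstrap assumptions combined with the structural smallness of the paradifferential coefficient.

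First, the mass source. Since $F^{para}_{\lambda,m}=2\Im(f_\lambda\bv_\lambda)$, the pointwise bound $|F^{para}_{\lambda,m}|\le 2|f_\lambda\,\bv_\lambda|$ combined with the case $\mu=\nu=\lambda$, $x_0=0$ of \eqref{dl} immediately gives $\|F^{para}_{\lambda,m}\|_{L^1_{t,x}}\lesssim d_\lambda^2$.

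Next, the two terms $f_\lambda\,\partial^j\bv_\lambda$ and $v_\lambda\,\partial^j\bar f_\lambda$ in \eqref{f-v-p}. We write $\partial^j=g^{jk}_{[<\lambda]}\partial_k$ and note that $\|g^{jk}_{[<\lambda]}\|_{L^\infty_{t,x}}\lesssim 1$, since $g(u)-g(0)=O(|u|^2)$ by the cubic vanishing condition \eqref{cubiccondition} and $u$ is small in $L^\infty$ by Sobolev embedding at the regularity threshold. Since $v_\lambda$ and $f_\lambda$ are frequency localized at scale $\lambda$, the derivative can be converted into a translation-averaged multiplier: $\partial_k v_\lambda=\lambda\,L_k v_\lambda$, where $L_k$ is the convolution operator with symbol $i\xi_k/\lambda$ times a fattened Littlewood--Paley cutoff, and its kernel $K_k$ satisfies $\|K_k\|_{L^1}\lesssim 1$. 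Averaging over the kernel variable and using the translation-invariant supremum in \eqref{dl},
\begin{equation*}
\| f_\lambda\,\partial^j\bv_\lambda\|_{L^1_{t,x}} \lesssim \lambda\int|K_k(y)|\,\|f_\lambda\,\bv_\lambda^{\,y}\|_{L^1_{t,x}}\,dy \lesssim \lambda\,d_\lambda^2,
\end{equation*}
and the term $v_\lambda\,\partial^j\bar f_\lambda$ is treated symmetrically, moving the derivative (and the kernel translation) onto $\bar f_\lambda$.

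Finally, the quasilinear commutator term $G^{j,para}_{p,\lambda}$ in \eqref{G4}, which has the schematic form $h(u_{<\lambda})\,u_{<\lambda}\,\partial u_{<\lambda}\,\partial v_\lambda\,\partial\bv_\lambda$. The essential gain here is that $\partial g^{kl}_{[<\lambda]}=O(|u|\,|\partial u|)$ by \eqref{cubiccondition}, so the low-frequency coefficient is quadratic in $u$ and thus of size $O(\epsilon^2)$ in every reasonable norm. I would estimate by H\"older's inequality, placing $h(u_{<\lambda})\,u_{<\lambda}\,\partial u_{<\lambda}$ into $L^q_tL^r_x$ using the frequency-envelope Strichartz bound \eqref{uk-Str} for each dyadic piece of $u$ (and \eqref{uab-bi-bal} when two factors of $u$ need to be paired), while the two high-frequency factors $\partial v_\lambda$, $\partial\bv_\lambda$ are placed in the dual space using the bootstrap energy bound \eqref{v-ee-boot} together with the balanced bilinear bootstrap \eqref{vv-bi-bal-boot}. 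The two derivatives on $v_\lambda,\bv_\lambda$ contribute $\lambda^2$, of which one factor of $\lambda$ is absorbed by the $\epsilon^c$ smallness coming from the quadratic coefficient (together with the envelope summability of $u$), producing a final bound of $\epsilon^c\,\lambda\,d_\lambda^2\lesssim\lambda\,d_\lambda^2$ as required.

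\textbf{Main obstacle.} Steps 1 and 2 are essentially formal. The only delicate point is the $G^{j,para}_{p,\lambda}$ analysis: one must distribute $L^p_tL^q_x$ norms among five factors at very different frequencies so that the coefficient smallness absorbs the extra derivative and, crucially, one never needs a full $L^\infty$ bound on $\partial u$ (which is unavailable at the low-regularity threshold $s>\frac{n+1}{2}$). This is arranged by exploiting the Littlewood--Paley frequency localization of $g^{[<\lambda]}$ together with the Strichartz, energy and balanced bilinear bootstrap bounds already in hand.
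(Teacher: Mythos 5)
Your Steps 1 and 2 are correct and match the expected argument: the mass source term is controlled immediately by the $\mu=\nu=\lambda$, $x_0=0$ case of \eqref{dl}, and the terms $f_\lambda\,\partial^j\bv_\lambda$, $v_\lambda\,\partial^j\bar f_\lambda$ are handled by converting the derivative $\partial_k$ on a frequency-$\lambda$ function into $\lambda$ times a translation-average with an $L^1$-normalized kernel and invoking the $\sup_{x_0}$ in \eqref{dl}.

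Step 3, however, has a genuine scaling gap. You propose to put $h(u_{<\lambda})u_{<\lambda}\partial u_{<\lambda}$ in one space and $\partial v_\lambda\,\partial\bv_\lambda$ in the dual, using the $(v,v)$ balanced bilinear bound \eqref{vv-bi-bal-boot} for the latter, and you claim the extra $\lambda$ is "absorbed by the $\epsilon^c$ smallness." This does not work dimensionally: $\epsilon^c$ is a constant and cannot absorb a power of $\lambda$. Concretely, summing the balanced $(v,v)$ Besov bound over output frequencies $\le\lambda$ yields $\|v_\lambda\bv_\lambda\|_{L^2_{t,x}}\lesssim d_\lambda^2\lambda^{\frac{n-2}{2}}$, hence $\|\partial v_\lambda\,\partial\bv_\lambda\|_{L^2_{t,x}}\lesssim d_\lambda^2\lambda^{\frac{n+2}{2}}$; meanwhile the $u$-factor, by the balanced $(u,u)$ bound, is only $O(\epsilon^2)$ in $L^2_{t,x}$, producing $O(\epsilon^2 d_\lambda^2\lambda^{\frac{n+2}{2}})$ — a $\lambda^{n/2}$ overshoot over the target $\lambda d_\lambda^2$. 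The notable hint in the lemma statement is that only the \emph{unbalanced $(u,v)$} bound \eqref{uv-bi-unbal-boot} is assumed; this signals that the argument should pair each low-frequency $u$ factor with a high-frequency $v$ factor by Cauchy--Schwarz, not group the $v$'s together. Doing so, $\|G^{j,para}_{p,\lambda}\|_{L^1_{t,x}}\lesssim\|u_{<\lambda}\,\partial v_\lambda\|_{L^2_{t,x}}\,\|\partial u_{<\lambda}\,\partial\bv_\lambda\|_{L^2_{t,x}}$, and by \eqref{uv-bi-unbal-boot} each factor is bounded by $C^2\epsilon\,d_\lambda\lambda^{\frac12}$ (the dyadic sums over $\mu<\lambda$ converging precisely because $s>\frac{n+1}{2}$), giving $C^4\epsilon^2\lambda d_\lambda^2$ as needed. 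This is the mechanism the paper relies on — the $\lambda^{-1/2}$ loss in the unbalanced bilinear bound, applied twice, is what absorbs one of the two derivatives hitting $v_\lambda,\bv_\lambda$, not the smallness of the coefficient.
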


\begin{proof} 
This is virtually identical to Lemma 7.3 in the previous work \cite{IT-qnls2}.
\end{proof}
\subsection{The balanced bilinear estimate}
We begin our analysis by establishing the balanced $(v,v)$ estimate \eqref{vv-bi-bal}. For this purpose, we take our cue from Section~\ref{s:im-cc} and define the one-parameter family of Morawetz functionals
\begin{equation}\label{Ia-sharp-def-vv}
\bI_r^{x_0}(v_\lambda,v_\mu) :=
 \iint a_{r,j}(x-y) (M(v_\lambda)(x) P^j(v^{x_0}_\mu) (y) -  
P^j(v_\lambda)(x) M(v^{x_0}_\mu) (y)) \, dx dy,\hspace{5mm}r>0
\end{equation}
where the weight $a_r$ is defined as in Section 5, by the formula
\begin{equation}\label{rweightdef}
a_{r}(x)=|x|+\frac{r}{2}e^{-r^{-1}|x|}
\end{equation}
Above and in the sequel we will use the convention
\begin{equation*}
a_{r,j}:=\partial_ja_r,\hspace{5mm}a_{r,jk}:=\partial_j\partial_ka_r.
\end{equation*}
The time derivative of $\bI_r^{x_0}(v_\lambda,v_\mu)$ is
\begin{equation}\label{I-vv}
\frac{d}{dt} \bI_r^{x_0}(v_\lambda,v_\mu) =  \bJ_r^4(v_\lambda,v_\mu^{x_0}) +  \bK_r(v_\lambda,v_\mu^{x_0}), 
\end{equation}
where
\begin{equation}\label{J4}
\begin{aligned}
\bJ_{r}^4(v_\lambda,v_\mu^{x_0}) := \iint & a_{r,jk}(x-y) 
\left(M(v_\lambda) E^{jk}(v_\mu^{x_0}) - P^j (v_\lambda)  P^k(v_\mu^{x_0}) \right.
\\ & \left. + 
M(v_\lambda) E^{jk}(v_\mu^{x_0})
-  P^j(v_\lambda)  P^k(v_\mu^{x_0})\right)\, dxdy,
\end{aligned}
\end{equation}
respectively
\begin{equation}\label{K8-def-ab-vv}
\begin{aligned}
\bK_{r} (v_\lambda,v_\mu^{x_0}):= \iint a_{r,j}(x-y) & \ M(v_\lambda)(x)   F^{j,para,x_0}_{\lambda,p}(y) + P_j(v_\lambda)(x)  F^{para,x_0}_{\lambda,m}(y) 
\\ & \!\!\!\!\!\! - 
M(v_\mu^{x_0})(y)  F^{j,para}_{\lambda,p}
(x)  - P_j(v_\mu^{x_0})(y)  F^{para}_{\lambda,m}(x)\,
dx dy.
\end{aligned}
\end{equation}

Below, we will use the (time-integrated) interaction Morawetz identity \eqref{I-vv} to prove the balanced bilinear bounds as follows:
\begin{itemize}
    \item The spacetime term $\bJ_{r}^4$ will control (at leading order), the quantity
    \begin{equation*}
\|P_{r^{-1}}|D|^{\frac{3-n}{2}}|v_{\lambda}|^2\|_{L_{t,x}^2},
    \end{equation*}
uniformly in $r>0$, which yields control of the relevant $L_{t,x}^2$ quantity in \eqref{vv-bi-bal}.
    \item The fixed time expression $\bI_{r}^{x_0}$ gives the primary upper bound for 
    $\bJ_r^4$ (after a time-integration).
    \item The space-time term $\bK_{r}$ will contribute a perturbative error. 
\end{itemize}
We summarize the straightforward estimates for both $\bI_r^{x_0}$ and $\bK_r$ below and then afterwards will turn our attention to $\bJ_r^4$.

\bigskip

\textbf{I. The bound for $\bI_{r}^{x_0}(v_\lambda,v_\mu)$.}
We obtain the (uniform in $r$) bound
\begin{equation}\label{I-bound}
 |    \bI_{r}^{x_0}(v_\lambda,v_\mu) | \lesssim (\lambda+\mu) \, d_\lambda^2 d_\mu^2,
\end{equation}
which is an immediate consequence of the uniform bound
\begin{equation}\label{unifarbound}
|Da_r|\lesssim 1,    
\end{equation}
and the fixed time density 
bounds
\begin{equation}\label{mpe-l1}
\| M(v_\lambda)\|_{L^1_{t,x}} \lesssim d_\lambda^2, \qquad 
\| P^j(v_\lambda)\|_{L^1_{t,x}}\lesssim \lambda d_\lambda^2, \qquad 
\| E^{jk}(v_\lambda)\|_{L^1_{t,x}}\lesssim \lambda^2 d_\lambda^2.
\end{equation}
These bounds simply follow from the uniform $L^2$ bound
from $v_\lambda$ and its frequency localization; the source terms $f_\lambda$
play no role.

\bigskip

\textbf{II. The bound for $\bK$.} We obtain
\begin{equation}\label{K-bound}
 |    \int_0^T \bK_r(v_\lambda,v_\mu)\, dt | \lesssim (\lambda+\mu) \,d_\lambda^2 d_\mu^2,
\end{equation}
which follows immediately from the bounds \eqref{Fmp} and \eqref{mpe-l1} and \eqref{unifarbound}.
\bigskip

\textbf{III. The contribution of $\bJ_{r}^4$.}
Now, we turn to the estimate for $\bJ_r^4$. We begin by observing the following simple algebraic computation 
\begin{equation}
\begin{aligned}    
\bJ_{r}^4(v_\lambda,v_\mu^{x_0}) = & \iint  a_{r,jk}(x-y) 
\left(|v_\lambda|^2  E^{jk}(v_\mu^{x_0}) 
+ |u_\mu^{x_0}|^2 E^{jk}(v_\lambda)
- 8 \Im(\bar v_\lambda  \partial^j v_\lambda) \Im(\bar v_\mu^{x_0}  \partial^j v_\mu^{x_0}) 
\right)\, dx dy,
\end{aligned}
\end{equation}
where
\[
\begin{aligned}
E^{jk}(v_\lambda) = & \   \partial^j v_\lambda 
\partial^k \bv_\lambda + \partial^k v_\lambda 
\partial^j \bv_\lambda -
v \partial^j   \partial^k   \bv -
\bv \partial^j   \partial^k   v 
\\
= & \   2\partial^j v_\lambda 
\partial^k \bv_\lambda + 2\partial^k v_\lambda 
\partial^j \bv_\lambda -
 \partial^j   (v \partial^k   \bv) -
 \partial^j   (\bv \partial^k   v) .
\end{aligned}
\]
Next, we integrate by parts the contributions of the last two terms in $E^{jk}$. As in \cite{IT-qnls2}, this generates error terms of two types, which arise from
\begin{itemize}
    \item the fact that $\partial^j$ is not skew-adjoint, and thus we have derivatives applied to the metric

    \item the fact that the metric $g$ in $\partial^j$ may be $g_{[<\lambda]}(x)$ or $g_{[<\mu]}^{x_0}$, depending on whether this operator is applied to $v_\lambda$ 
    or to $v_\mu^{x_0}$.
\end{itemize}
We therefore obtain the identity
\begin{equation}
\label{J4-expansion}
\begin{aligned}    
\bJ_r^4(v_\lambda,v_\mu^{x_0}) = & \ 4\iint  a_{r,jk}(x-y) 
\left(|v_\lambda|^2  \Re(\partial^j v_\mu^{x_0} 
\partial^k \bv_\mu^{x_0})
+ |u_\mu^{x_0}|^2  \Re(\partial^j v_\lambda 
\partial^k \bv_\lambda) \right.
\\
& \qquad \qquad \qquad \qquad \left.- 2 \Im(\bar v_\lambda  \partial^j v_\lambda) \Im(\bar v_\mu^{x_0}  \partial^j v_\mu^{x_0}) 
\right)\, dx dy
\\
& \ +  4\iint  a_{r,jk}(x-y) 
\left(\partial^j |v_\lambda|^2 
\partial^k |\bv_\mu^{x_0}|^2
+ \partial^k |u_\mu^{x_0}|^2  \partial^j |v_\lambda|^2 
\right)\, dx dy
\\
 & \ - \iint a_{r,jk}(x-y) |v_\lambda|^2 (g^{lj}_{[<\mu]} (y+x_0) - g^{lj}_{[<\lambda]}(x))
\partial_l \partial^j |v_\mu^{x_0}|^2 \, dx dy
\\
& \ + 2 \iint a_{r,jk}(x-y)(\partial_l g^{lj}_{[<\lambda]}(x)) \left(
|v_\lambda|^2 \partial^k |v_\mu^{x_0}|^2 
 + |u_\mu^{x_0}|^2  \partial^k |v_\lambda|^2  \right) \, dx dy
\\
:= & \ \bJ^4_{r,main} + \bR^4_{r,1} + \bR^4_{r,2}.
\end{aligned}
\end{equation}
Here the leading term $\bJ^4_{r,main}$ 
contains the contribution of the first two lines, and the last two lines represent 
the error terms $\bR^4_{b,1}$, respectively $\bR^4_{b,2}$. Exactly as in the constant coefficient case, $\bJ^4_{r,main}$ can be rewritten as
\begin{equation}\label{J4main}
\bJ^4_{r,main} = 2 \iint a_{r,jk}(x-y) F^j \bar F^k \, dxdy=2 \iint a_{r,jk}(x-y) G_j \bar G_k \, dxdy,
\end{equation}
where
\begin{equation}
G_j:=F^j = \partial^j v_\lambda \bar v_\mu^{x_0} + v_\lambda \partial^j \bar v_{\mu}^{x_0}    .
\end{equation}
That is, $G$ is defined by raising the indices of $F$ with respect to the metric $g$. Our next objective will be to extract the leading lower bound for $\bJ_{r}^4$ in the case when $\lambda=\mu$ and $x_0=0$.
\medskip

\emph{III A. The balanced symmetric case, 
$\lambda = \mu$, $x_0= 0$.} The key estimate in this setting is summarized by the following lemma.
\begin{lemma}\label{l:J4-bal}
Under our bootstrap assumptions on $u$ and the bilinear bootstrap assumptions \eqref{v-ee-boot}-\eqref{uv-bi-unbal-boot} on $v_\lambda$ we have the uniform in $r$ bound
\begin{equation}\label{eq:l2A}
\int_0^T \bJ_{r}^4(v_\lambda,v_\lambda)\, dt \gtrsim \| P_{r^{-1}}|D_x|^{\frac{3-n}2} |v_\lambda|^2\|_{L^2_{t,x}}^2+O(C^2\epsilon^2\lambda d_{\lambda}^4).
\end{equation}
\end{lemma}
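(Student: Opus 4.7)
The starting point is the decomposition \eqref{J4-expansion}, specialized to $\mu = \lambda$ and $x_0 = 0$. In this symmetric setting, $\bR_{r,1}^4$ encodes the mismatch between the metrics $g_{[<\lambda]}$ evaluated at $x$ and at $y$, while $\bR_{r,2}^4$ captures the non-skew-adjointness of $\partial^j$ through the divergence $\partial_l g^{lj}_{[<\lambda]}$. My plan is to show that $\bJ_{r,main}^4$ produces the advertised coercive lower bound, uniformly in $r$, while $\bR_{r,1}^4$ and $\bR_{r,2}^4$ together contribute at most $O(C^2\epsilon^2\lambda d_\lambda^4)$ after time integration.

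For the main term I will follow the constant-coefficient outline of Section~\ref{s:im-cc}. First, the Hessian lower bound $a_{r,jk} \gtrsim b_r(x-y)\,\delta_{jk}$, with $b_r(x) = r^{-1}e^{-r^{-2}|x|^2}$, combined with the uniform non-degeneracy of $g_{[<\lambda]}$, yields
\[
\bJ_{r,main}^4(v_\lambda,v_\lambda) \;\gtrsim\; \iint b_r(x-y)\,|F(x,y)|^2\, dx\, dy,
\]
where $F_j(x,y) = v_\lambda(x)\partial_j\bv_\lambda(y) + \partial_j v_\lambda(x)\bv_\lambda(y)$. Applying the pointwise identity \eqref{algebraicid} in each direction $j$ and summing, the right hand side dominates $\iint b_r(x-y)\,\nabla|v_\lambda|^2(x) \cdot \nabla|v_\lambda|^2(y)\, dx\,dy$. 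Plancherel converts this into $\int \widehat{b_r}(\xi)|\xi|^2 |\widehat{|v_\lambda|^2}(\xi)|^2\, d\xi$, and the lower bound $\widehat{b_r}(\xi) \gtrsim \widehat{P}_{r^{-1}}(\xi)|\xi|^{1-n}$ from Section~\ref{s:im-cc} gives precisely the Besov-type quantity on the right of \eqref{eq:l2A} after time integration.

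For the error terms I will exploit the cubic vanishing of $g$ at the origin, which implies the schematic bound $\nabla g_{[<\lambda]} \sim u_{<\lambda}\, \nabla u_{<\lambda}$ and similarly for $\partial_t g_{[<\lambda]}$ via the equation \eqref{para-full}. In $\bR_{r,1}^4$ a first-order Taylor expansion of $g^{lj}_{[<\lambda]}(y) - g^{lj}_{[<\lambda]}(x)$ produces a factor $|x-y|$; paired with the uniform bound $|x-y|\,|D^2 a_r(x-y)| \lesssim 1$, this makes the spatial kernel $L^\infty$-controlled independently of $r$. The remaining bilinear expression in $|v_\lambda|^2$ and $\partial^2|v_\lambda|^2$, multiplied by two low-frequency factors of $u$, is then estimated by combining the energy bound \eqref{v-ee-boot}, the $L^4$ Strichartz bound for $u$, and the unbalanced bilinear bound \eqref{uv-bi-unbal-boot}; since every factor of $u$ comes with a factor of $\epsilon$, the net contribution is $O(C^2\epsilon^2 \lambda d_\lambda^4)$. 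The term $\bR_{r,2}^4$ is handled identically using $\partial_k g^{kl}_{[<\lambda]} = O(u\, \nabla u)$.

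The principal obstacle, in contrast to the elliptic case treated in \cite{IT-qnls2}, is that $D^2 a_r$ has one degenerate direction whose Hessian is only of Gaussian size $r^{-1} e^{-r^{-1}|x|}$. Consequently the positivity of $\bJ_{r,main}^4$ can only be exploited through the minimum eigenvalue $b_r$, which is what ultimately forces the homogeneous Besov localization $P_{r^{-1}}$ in the conclusion rather than a clean $|D|^{\frac{3-n}{2}}$ bound. The second, more technical difficulty is that all of the above estimates on the error terms must be uniform in the dyadic parameter $r$: one has to carefully balance the non-integrable decay $|D^2 a_r| \sim 1/|x|$ for $|x| \gg r$ against the integrability coming from the $u$-dependent factors and the already-established bilinear bounds for $u$. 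Once this uniformity is secured, summing over $r \in 2^{\mathbb{Z}}$ is not required: the supremum in \eqref{vv-bi-bal} is recovered directly from the uniform-in-$r$ Lemma~\ref{l:J4-bal}.
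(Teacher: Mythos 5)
Your treatment of the main term $\bJ^4_{r,main}$ is correct and is exactly the paper's argument: diagonalize the Hessian and use the lower bound $a_{r,jk}\gtrsim b_r\delta_{jk}$ with $b_r(x)=r^{-1}e^{-r^{-2}|x|^2}$, pass from $F^j$ to $F_j$ via the uniform non-degeneracy of $g_{[<\lambda]}$, apply \eqref{algebraicid} and drop the nonnegative remainder, then compute by Plancherel using $\widehat{b_r}\gtrsim \widehat{P}_{r^{-1}}|\xi|^{1-n}$. This is precisely the Section~\ref{s:im-cc} computation which the paper's proof invokes ``verbatim.''

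The error-term analysis has a genuine gap. The Taylor expansion of $g^{lj}_{[<\lambda]}(y)-g^{lj}_{[<\lambda]}(x)$ against $|x-y|\,|D^2a_r|\lesssim 1$ produces a \emph{bounded but non-decaying} kernel, and an $L^\infty$ kernel does not restore time-integrability: bounding $\iint K(x,y)|v_\lambda(x)|^2 B(t,y)\,dxdy$ by $\|K\|_\infty\|\,|v_\lambda|^2\|_{L^1_x}\|B\|_{L^1_y}$ forces you into a $\|v_\lambda\|_{L^2_{t,x}}^2$-type quantity unless the remaining $y$-factor is already $L^1_{t,y}$, and the intermediate-point evaluation of $\nabla g_{[<\lambda]}$ breaks the factoring you would need in order to invoke the unbalanced bilinear bound there. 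The paper's own route keeps the singular kernel $|D^2a_r|\lesssim |x-y|^{-1}$ and applies Young/HLS together with \eqref{uv-bi-unbal-boot}, precisely because the $|x-y|^{-1}$ concentration near the diagonal is what makes the bilinear $L^2$ bounds effective and globally in time. Relatedly, $\bR^4_{r,2}$ has no metric difference at all — the factor is $\partial_l g^{lj}_{[<\lambda]}(x)$ evaluated at a single point — so there is nothing to Taylor-expand, and ``handled identically'' does not apply; this term is estimated directly from the $|x-y|^{-1}$ kernel, Young's inequality, and the unbalanced bilinear bounds. Your closing paragraph, which correctly flags the non-integrable decay of $|D^2a_r|$ as the quantity to balance against the bilinear gain, is in fact a better description of the actual mechanism than your $\bR^4_{r,1}$ paragraph; I would discard the Taylor step and run both error terms through the $|x-y|^{-1}$ kernel argument.
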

Combined with \eqref{I-bound} and \eqref{K-bound}, this implies 
the bound \eqref{vv-bi-bal} (when $\lambda=\mu$ and $x_0=0$).

\begin{proof}
We begin by studying the leading term $\bJ^4_{r,main}$. By taking into account the uniform non-degeneracy of the metric $g_{<\lambda}$ (when $\lambda$ is large enough) as well as  Remark~\ref{uniformnondegremark}, the estimates from the case of the flat metric given by \eqref{balancedflatbound} and \eqref{lowerboundultra} apply verbatim and we obtain
\begin{equation}\label{J4-cov}
\bJ^4_{r,main}(v_\lambda,v_\lambda) \gtrsim \| P_{r^{-1}}|D_x|^{\frac{3-n}2} |v_\lambda|^2\|_{L^2_x}^2.
\end{equation}
\begin{remark}
We remark once again that the bound \eqref{J4-cov} appears to be quite robust in that it essentially only requires uniform non-degeneracy of the metric $g$. We note that this bound does not even require that $g$ be a small perturbation of a constant metric, which could be of interest in a large data regime. In the present setting, this observation can also be used to simplify the corresponding proof in \cite{IT-qnls2} (where the authors used the small data assumption). Of course, here, one pays the added price of a small dyadic loss.
\end{remark}
It remains to estimate the 
expressions $\bR^4_{r,1}$ and $\bR^4_{r,2}$. The analysis for such terms was done in the case of a positive-definite metric $g$ and with the weight $a(x)=|x|$ in \cite{IT-qnls2}. By observing the $r$-uniform bound on the Hessian of $a_r$, 
\begin{equation*}
|D^2a_r|\lesssim |x|^{-1}
\end{equation*}
and using Young's inequality and making use of the unbalanced bilinear $(u,v)$ bounds, we also obtain the bound
\begin{equation*}
\int_0^T \bR^4_{r,i}\, dt=O(C^2\epsilon^2\lambda d_{\lambda}^4),\hspace{5mm}i=1,2.
\end{equation*}
We omit the proof as it is entirely analogous to the analysis in \cite{IT-qnls2}. This concludes the proof of the lemma
\end{proof}

\bigskip

\emph{III B. The balanced non-symmetric case, 
$\mu \approx \lambda$, $x_0 \in \R$.} The non-symmetric bound follows exactly as in \cite{IT-qnls2} by applying the symmetric bound (established above) to the functions 
\[
w_\lambda = a v_\lambda + b v_\mu^{x_0}, \qquad |a|, |b| \leq 1
\]
for a suitable choice of complex numbers $a$ and $b$. We refer to \cite{IT-qnls2} for the details.

\bigskip

\subsection{ The unbalanced bilinear estimate}

Here we consider two scenarios:
\begin{itemize}
    \item the unbalanced case where we estimate the interaction of $v_\lambda$ and $v_\mu$, with $\mu \ll \lambda$.

    \item the semi-balanced case where we estimate the interaction of $v_\lambda$ and $v_\mu$, with $\mu \approx \lambda$, but with $O(\lambda)$ frequency separation. 
\end{itemize}

One common step in both cases is to further localize 
the two functions in frequency, so that 
\begin{itemize}
    \item $v_{\mu}$ is frequency localized in a ball $B_\mu$
    of radius $O(\delta \mu)$.
\item $v_{\lambda}$ is frequency localized in a ball $B_\lambda$
    of radius $O(\delta \lambda)$.
   \item The two balls $B_\lambda$ and $B_\mu$ have $O(\lambda)$ separation. 
\end{itemize}
Here $\delta \ll 1$ is a fixed small universal parameter.
This localization is achieved exactly as in \cite{2024arXiv240206278J}, using the following lemma which we recall for convenience:

\begin{lemma} 
Let $A(D)$ be a smooth, bounded multiplier at frequency $\lambda$.
If $v_\lambda$ satisfies \eqref{dl}, then so does $A(D) v_\lambda$.
\end{lemma}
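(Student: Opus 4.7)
The plan is to view $\tilde v_\lambda := A(D) v_\lambda$ as the solution of the same paradifferential equation \eqref{para} with modified initial data $A(D) v_\lambda(0)$ and modified source
\begin{equation*}
\tilde f_\lambda := A(D) f_\lambda + \partial_j [g^{jk}_{[<\lambda]}, A(D)] \partial_k v_\lambda,
\end{equation*}
where I have used that $A(D)$ commutes with constant-coefficient derivatives. One then directly verifies the two defining conditions of \eqref{dl} for the family $\{\tilde v_\mu\}_{\mu \approx \lambda}$ against the same envelope $d_\lambda$. The $L^2$ initial data bound $\|A(D) v_\mu(0)\|_{L^2} \lesssim \|v_\mu(0)\|_{L^2} \lesssim d_\lambda$ is immediate from the boundedness of $A(D)$ on $L^2$, so the real work is the bilinear $L^1$ bound $\|A(D) v_\mu \cdot \tilde f_\nu^{x_0}\|_{L^1_{t,x}} \lesssim d_\lambda^2$, uniformly in $\mu,\nu \approx \lambda$ and $x_0 \in \R$.

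For the main piece coming from $A(D) f_\nu$, I would represent $A(D)$ as convolution with its kernel $K_A$. Since $A$ is smooth and supported at frequency $\sim \lambda$, $K_A$ is Schwartz on scale $\lambda^{-1}$ with $\|K_A\|_{L^1_x}$ bounded uniformly in $\lambda$. Expanding both convolutions and using the translation invariance of the $L^1_{t,x}$ norm together with Minkowski yields
\begin{equation*}
\|A(D) v_\mu \cdot A(D) f_\nu^{x_0}\|_{L^1_{t,x}} \leq \iint |K_A(y_1)|\, |K_A(y_2)|\, \|v_\mu f_\nu^{x_0 + y_2 - y_1}\|_{L^1_{t,x}}\, dy_1\, dy_2 \lesssim \|K_A\|_{L^1}^2\, d_\lambda^2,
\end{equation*}
where the supremum over all spatial translations built into \eqref{dl} is essential to absorb the shift $y_2 - y_1$ produced by the two kernel variables.

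For the commutator contribution, I would exploit the kernel representation $[g^{jk}_{[<\lambda]}, A(D)] \sim $ convolution against $(g^{jk}_{[<\lambda]}(x) - g^{jk}_{[<\lambda]}(y)) K_A(x-y)$, then combine the mean value theorem with the moment bound $\int |z|\, |K_A(z)|\, dz \lesssim \lambda^{-1}$. Using that $g^{jk}_{[<\lambda]}$ is band-limited below frequency $\lambda$ and that the outer bootstrap on $u$ provides $\|\nabla g^{jk}_{[<\lambda]}\|_{L^\infty} \lesssim \epsilon$, the full commutator $\partial_j [g^{jk}_{[<\lambda]}, A(D)] \partial_k$ acts on $v_\nu$ as an $O(\epsilon)$ frequency-$\lambda$ operator with a uniformly $L^1$-normalized convolution kernel. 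The same kernel-based reduction as above then bounds its pairing with $A(D) v_\mu$ by $\epsilon \sup_{x_0'} \|v_\mu v_\nu^{x_0'}\|_{L^1_{t,x}}$, a quantity which is controlled via the bilinear $(v,v)$ bootstrap bound \eqref{vv-bi-bal-boot} and its unbalanced companion \eqref{vv-bi-unbal-boot}, thereby closing the argument with total bound $\lesssim \epsilon\, d_\lambda^2$.

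The main obstacle is precisely this commutator step: the definition \eqref{dl} measures $v$ only through its bilinear pairing with the source $f$, so the extraneous $v \cdot v$ pairing produced by $[g^{jk}_{[<\lambda]}, A(D)]$ does not fit directly into \eqref{dl}. Absorbing it requires either the independently-assumed $(v,v)$ bilinear bootstrap or the $\epsilon$-smallness gained from $\nabla g$, which ties the lemma into the overarching bootstrap loop of Theorem~\ref{t:para} rather than allowing a fully standalone proof.
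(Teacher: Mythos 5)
Your setup and the main-term calculation are correct: the formula for the modified source $\tilde f_\lambda = A(D) f_\lambda + \partial_j[g^{jk}_{[<\lambda]},A(D)]\partial_k v_\lambda$ is right, and the kernel/Minkowski argument that $\|A(D)v_\mu\cdot(A(D)f_\nu)^{x_0}\|_{L^1_{t,x}} \lesssim \|K_A\|_{L^1}^2 \sup_{x_0'}\|v_\mu f_\nu^{x_0'}\|_{L^1_{t,x}}$, relying on the translation supremum built into \eqref{dl}, is exactly the right mechanism for that piece.

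The commutator step, however, has a genuine gap. You reduce the commutator's pairing with $A(D)v_\mu$ to $\epsilon\sup_{x_0'}\|v_\mu v_\nu^{x_0'}\|_{L^1_{t,x}}$ by pulling $\nabla g_{[<\lambda]}$ out in $L^\infty$, and then assert that this $L^1$ quantity is controlled by the bilinear $(v,v)$ bootstrap bounds. This does not close: $\sup_{x_0'}\|v_\mu v_\nu^{x_0'}\|_{L^1_{t,x}}$ is bounded at best by $T\,d_\lambda^2$ (Cauchy--Schwarz in space, then integrate in time), which is unbounded on the global time intervals the theorem needs to address; and \eqref{vv-bi-bal-boot}, \eqref{vv-bi-unbal-boot} are $L^2_{t,x}$ bounds, not $L^1_{t,x}$ bounds, so they give no control over this quantity. (As a smaller issue, $\partial_j[g^{jk}_{[<\lambda]},A(D)]\partial_k$ has variable coefficients and is not a convolution operator, and $\|\nabla g_{[<\lambda]}\|_{L^\infty}$ is not uniformly $O(\epsilon)$ at the regularity threshold $s>\tfrac{n+1}{2}$; it may grow like $\epsilon^2\lambda^{1/2}$.)

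The correct way to close this is to \emph{not} pull $\nabla g$ out in $L^\infty$, but to expand $\nabla g_{[<\lambda]}\approx h(u_{<\lambda})\,u_{<\lambda}\,\nabla u_{<\lambda}$ (using the cubic structure \eqref{cubiccondition}) so that the commutator contribution becomes a quadrilinear expression of the schematic form $\lambda\, L(u_{<\lambda},\nabla u_{<\lambda}, v_\nu, v_\mu^{x_0})$, with translations of the $u$ factors produced by the kernel of $A(D)$. One then applies Cauchy--Schwarz and two instances of the translated \emph{unbalanced} $(u,v)$ bilinear $L^2$ bootstrap bound \eqref{uv-bi-unbal-boot}, summing over the dyadic pieces of $u_{<\lambda}$ and $\nabla u_{<\lambda}$; the regularity $s>\tfrac{n+1}{2}$ makes both sums convergent, and the two factors of $\epsilon$ absorb the bootstrap constants, giving $\lesssim C^4\epsilon^2 d_\lambda^2\lesssim d_\lambda^2$. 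This is precisely the mechanism used in Lemma~\ref{l:Fmp} to bound the analogous commutator-type source $G^{j,para}_{\lambda,p}$ in $L^1$, so the lemma is indeed tied into the bootstrap for Theorem~\ref{t:para} — but through the $(u,v)$ unbalanced bounds, not the $(v,v)$ ones, and never through an $L^1$ product of two $v$'s.
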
 

The reason we assume the two functions are frequency localized in separated balls is to achieve a similar separation of their corresponding group velocities.
Precisely, the group velocities of waves in associated 
to frequencies in $B_\mu$, respectively $B_\lambda$
are also localized in two balls $V_\mu$, respectively $V_\lambda$ with $O(\lambda)$ separation. Then we can choose 
a direction $e$ so that the two group velocity balls are 
$O(\lambda)$ separated in the $e$ direction. Without any restriction in generality suppose $e = e_1$.

Then we define our interaction Morawetz functional as
\begin{equation}\label{Ia-sharp-def-vv-tr}
\bI^{x_0}(v_\lambda,v_\mu) :=
 \iint_{x_1< y_1}  M(v_\lambda)(x) M(v^{x_0}_\mu) (y) \, dx dy.
\end{equation}
The time derivative of $\bI^{x_0}(v_\lambda,v_\mu)$ is
\begin{equation}\label{I-vv-tr}
\frac{d}{dt} \bI^{x_0}(v_\lambda,v_\mu) =  \bJ^4(v_\lambda,v_\mu^{x_0}) +  \bK(v_\lambda,v_\mu^{x_0}), 
\end{equation}
where
\begin{equation}\label{J4-tr}
\begin{aligned}
\bJ^4(v_\lambda,v_\mu^{x_0}) := \iint_{x_1 = y_1} & 
M(v_\lambda) P^1(v_\mu^{x_0}) - P^1 (v_\lambda)  M(v_\mu^{x_0}) \, dxdy,
\end{aligned}
\end{equation}
respectively
\begin{equation}\label{K8-def-ab-vv-tr}
\begin{aligned}
\bK (v_\lambda,v_\mu^{x_0}):= \iint_{x_1 < y_1} & \ M(v_\lambda)(x)   F^{para,x_0}_{\mu,m}(y) +
M(v_\mu^{x_0})(y)  F^{para}_{\lambda,m}
(x)\,
dx dy.
\end{aligned}
\end{equation}

The bound for $\bI^{x_0}(v_\lambda,v_\mu)$ is straightforward,
\begin{equation}\label{hl-I}
| \bI^{x_0}(v_\lambda,v_\mu)| \lesssim  \|v_\lambda\|_{L^2}^2 \|v_\mu\|_{L^2}^2 \lesssim d_\lambda^2  d_\mu^2  
\end{equation}
Using \eqref{Fmp} the bound for $\bK(v_\lambda,v_\mu^{x_0})$ is also good,
\begin{equation}\label{hl-K}
\left| \int_0^T  \bK(v_\lambda,v_\mu^{x_0}) dt \right|
\lesssim d_\lambda^2  d_\mu^2  
\end{equation}

It remains to consider $\bJ^4$. 
A first observation is that we can use the uniform bound $|g(u_{\lambda}) - g(0)| \lesssim C^2 \epsilon^2$ to  
compare $\bJ^4$ with the expression $\bJ^{4,0}$ obtained 
by replacing $g$ with $g(0)$.
\begin{equation}\label{hl-dJ4}
\left| \int_0^T  (\bJ^4 - \bJ^{4,0})(v_\lambda,v_\mu^{x_0}) dt \right|
\lesssim C^2 \epsilon^2 \lambda \sup_{x_0} \int_0^T \int_{x_1=y_1}  |u_\lambda(x)|^2 |v^{x_0}_\mu(y)|^2 dx dy
\end{equation}

Finally we examine  $\bJ^{4,0}$, which corresponds to the 
constant coefficient problem. Here it no longer suffices 
to show that $\bJ^{4,0}$ controls the $L^2$ norm of $v_\lambda v_\mu^{x_0}$, we also need to be able to control 
the error in the last estimate above. We will accomplish this in two steps:
\begin{equation}\label{good-J4}
\lambda \sup_{x_0} \int_0^T \int_{x_1=y_1}  |u_\lambda(x)|^2 |v^{x_0}_\mu(y)|^2 dx dy
\lesssim \sup_{x_0}
\int_0^T   \bJ^{4,0}(v_\lambda,v_\mu^{x_0}) dt 
\end{equation}
respectively 
\begin{equation}\label{J4-Bernstein}
\|   v_\lambda v_\mu^{x_0} \|_{L^2}^2 \lesssim 
\mu^{n-1}  \int_0^T \int_{x_1=y_1}  |u_\lambda(x)|^2 |v^{x_0}_\mu(y)|^2 dx dy
\end{equation}
Once we have these two estimates, combining them with 
\eqref{hl-I}, \eqref{hl-K} and \eqref{hl-dJ4} yields the desired bilinear bound.

It remains to prove \eqref{good-J4} and \eqref{J4-Bernstein}. We begin with the proof of \eqref{good-J4}.
Here we begin by observing that $J^{4,0}$ is a quadrilinear
form with symbol 
\[
j^{4,0}(\xi^1,\xi^2,\xi^3,\xi^4) = g^{1j}(0)
(\xi^1_j+\xi^2_j -\xi^3_j - \xi^4_j)
\]
where each entry represents the $x_1$ component of the group velocity associated to the corresponding frequency.
Denoting by $\xi^\mu$ and $\xi^\lambda$ the centers of the 
frequency balls $B_\lambda$ and $B_\mu$, within the supports we have
\[
|\xi^1 - \xi^\lambda| + |\xi^2 - \xi^\lambda| \lesssim \delta
\lambda,
\qquad 
|\xi^3 - \xi^\mu| + |\xi^4 - \xi^\mu| \lesssim \delta
\mu.
\]
Denoting the symbol at the ball centers by 
\[
j^{4,0}_{center} =  g^{1j}(0)
(\xi^\lambda_j-\xi^\mu_j), 
\]
we can thus estimate the contribution of the difference by
\[
\left|\int_0^T   (\bJ^{4,0}- \bJ^{4,0}_{center})(v_\lambda,v_\mu^{x_0}) dt \right| \lesssim 
\delta \lambda \sup_{x_0} \int_0^T \int_{x_1=y_1}  |v_\lambda(x)|^2 |v^{x_0}_\mu(y)|^2 dx dy
\]
which is an acceptable error if $\delta \ll 1$. 
On the other hand the center contribution is positive, of size $\approx \lambda$, due to the ball group velocity 
separation and our choice of the $e_1$ direction. Hence we have 
\[
\left|\int_0^T   \bJ^{4,0}_{center}(v_\lambda,v_\mu^{x_0}) dt\right| 
\approx \lambda\int_0^T \int_{x_1=y_1}  |v_\lambda(x)|^2 |v^{x_0}_\mu(y)|^2 dx dy.
\]
Combining the last two bounds we obtain \eqref{good-J4}.

For \eqref{J4-Bernstein}. we apply Bernstein's inequality in $(x_2,...,x_n)$ to obtain
\begin{equation}\label{J4-Bernstein+}
\int_{0}^{T}\int |v_\lambda(x)|^2 |v_\mu^{x_0}(x)|^2 \, dxdt \lesssim 
\mu^{n-1}  \int_0^T \int  |v_\lambda(x)|^2 |v^{x_0}_\mu(x_1,y')|^2 \, dx dy'dt
\end{equation}
which yields the required bound.

\end{proof}


\section{Low regularity Strichartz bounds for the paradifferential flow}
\label{s:Str}

Broadly speaking, the nonlinear interactions are controlled in this paper via bilinear estimates, which we divide into balanced and unbalanced. The bilinear $L^2$ bounds in the previous section
suffice in order to estimate unbalanced interactions. However,
the balanced bilinear $L^2$ bounds exhibit a loss at low frequency output in low dimension. For this reason, we will also need to rely on Strichartz estimates, of which the $L^4$ bound is the critical one in the case of cubic nonlinearities. Our approach for the $L^4$ bounds depends on the dimension, where we distinguish three cases:

\begin{enumerate}
    \item high dimension, $n \geq 4$. There, on one hand, $L^4$  is not a sharp Strichartz norm, and on the other hand the $L^4$ bound is obtained directly from the interaction Morawetz analysis, so this section is not playing any role in obtaining it.
    
    \item intermediate dimension, $n = 3$. This is a borderline case, where we nearly obtain the $L^4$ bound, except for a dyadic logarithmic summation loss. To rectify this, we will
    need to interpolate with the Strichartz estimates in this section. These have a substantial loss, but after interpolation 
    we obtain an $L^4$ bound with an arbitrarily small loss.

 \item low dimension, $n = 2$. Here $L^4$  is a sharp Strichartz norm, which cannot be obtained directly from the interaction Morawetz analysis. This is the most essential reason for the analysis in this section.
\end{enumerate}

In the case $n= 2$ we split further the analysis in two cases, corresponding to local or global in time estimates: 

\begin{enumerate}[label=(3\roman*)]
    \item $n=2$, local in time bounds (needed for local well-posedness). Here interpolating the interaction Morawetz bilinear bound with the $L^\infty_t L^2_x$ energy estimate yields a lossless $L^6_t L^4_x$ bound which suffices for the local result.

    \item $n=2$, global in time bounds (generically needed for global well-posedness). This is the one case where the analysis here is of the essence, providing an $L^4_{t,x}$ bound with a $1/2$ derivative loss.
\end{enumerate}

To a large extent the above discussion is similar with the prior work of the first and the last authors \cite{IT-qnls2}, with the notable differences generated by the logarithmic loss in the 
balanced bilinear $L^2$ bounds.  Again no defocusing assumption is necessary, unlike in the 1D case studied in  \cite{IT-qnls}. The conservative assumption only plays a role in 2D, in reducing the Sobolev regularity at which the global result holds;
this would have otherwise required an $L^4_{t,x}$ bound with a smaller $1/4$ derivative loss.

The result in this section provides a full range of Strichartz estimates in all dimensions, globally in time, but with a loss of derivatives: precisely one full derivative at the Pecher endpoint.
For the proof of our global results we need it only in two and three space dimensions, but the result is dimension independent.

For context, we note that Strichartz estimates for linear Schr\"odinger evolutions with variable coefficients have been the focus of research for over two decades. Initially, local in time full Strichartz estimates for compactly supported perturbations of the flat metric in $\R^n$ were considered in \cite{ST}, followed by \cite{RZ, TW, BT} for asymptotically flat metrics. Subsequent to these pioneering works, global in time results were established in \cite{T:Str} and \cite{MMT-lin}, also within the framework of asymptotically flat metrics. In the alternative setting of compact manifolds, \cite{BGT} proved local in time Strichartz estimates with derivative losses, obtained by adding up sharp Strichartz estimates on shorter, semiclassical time scales. For the sake of comparison, it should be noted that all of the metrics considered in these works possess at least $C^2$ regularity, while the metrics in the present article are just above $C^{\frac{1}{2}}$. In contrast, the estimates proven in this section exhibit losses of derivatives but are global in time, precluding their derivation from lossless bounds in shorter time scales.

\medskip
To review the set-up here, we seek $L^p_t L^q_x$ Strichartz bounds for the paradifferential problem
\begin{equation}\label{para-re}
i \partial_t v_\lambda + \partial_j g^{jk}_{[<\lambda]} \partial_k v_\lambda 
= f_\lambda, \qquad v_\lambda(0) = v_{0,\lambda},
\end{equation}
where the pair of indices $(p,q)$ lies on the sharp Strichartz line
\begin{equation}
\frac{2}{p} + \frac{n}q = \frac{n}2, \qquad p \geq 2.    
\end{equation}
We will refer to the case $p=2$ as the Pecher endpoint, which is forbidden in dimension $n = 2$.

From a classical perspective it is natural to place the source term $f_\lambda$ in dual Strichartz spaces.
While we certainly want to allow such sources, in our analysis 
we would like to allow for a larger class of functions which is
consistent with our needs for the quasilinear problem. One such class will be the space $DV^2_{\UH}$ associated to the flat ultrahyperbolic Schr\"odinger flow
\begin{equation} \label{flat-uh}
(i \partial_t + \Delta_{g(0)}) u_\lambda = f_\lambda, \qquad u_\lambda(0) = u_{\lambda 0},
\end{equation}
where the operator $\Delta_{g(0)}$ associated to the constant metric $g(0)$ is nondegenerate but not elliptic.

Following \cite{IT-qnls2}, a second class $N^\sharp$ is defined  by testing against solutions to the adjoint constant coefficient problem 
\begin{equation} \label{adjoint}
(i \partial_t + \Delta_{g(0)}) w_\lambda = h_\lambda, \qquad w(T) = w_T,
\end{equation}
defining a norm as follows:
\begin{equation}
\| f_\lambda \|_{\Ns} := \sup_{\| h_\lambda \|_{S'} + \|w_T\|_{L^2_{x}} \leq 1} \left|\int f_\lambda w_\lambda \,dx dt\right| .
\end{equation}
For technical reasons we will use both $DV^2_{\UH}$ and $N^\sharp$;
the first choice turns out to be better in the low dimension $n = 2$, and the second is better in the higher dimension $n \geq 3$. Their common property is provided by the embeddings 
\begin{equation}\label{N-embed}
 N \subset \Ns, \qquad N \subset DV_{\UH}^2,
\end{equation}
which hold regardless of the dimension, where $N$ represents any of the admissible sharp dual Strichartz norms.

We will also need to measure 
$f_\lambda$ relative to $v_\lambda$, via the quantity 
\[
I(v_\lambda, f_\lambda) = \sup_{x_0} \| v_\lambda f_\lambda^{x_0}\|_{L^1_{t,x}} .
\]
One consequence of allowing such terms is that we can commute
the $x$ derivatives as we like in the equation,
placing errors in the perturbative 
source term $f_\lambda$.

\begin{theorem}\label{t:para-se}
Assume that $u$ solves \eqref{qnls} in a time interval $[0,T]$, and satisfies the bootstrap bounds \eqref{uk-ee-boot},\eqref{uab-bi-unbal-boot},  \eqref{uab-bi-bal-boot}, \eqref{uk-Str2-boot} and \eqref{uk-Str3-boot}  for some $s > \frac{n+1}2$. Then the following Strichartz estimates hold for the linear paradifferential equation \eqref{para-lin} for any pair of sharp Strichartz exponents $(p,q)$ and $(p_1,q_1)$:
\begin{equation}\label{Str-2}
\|v_\lambda\|_{L^\infty_t L^2_x} + \lambda^{-1} \| v_\lambda \|_{V^2_{UH}} \lesssim  \| v_{\lambda,0}\|_{L^2_x} 
 + \lambda^{-1} \|f_\lambda\|_{DV^2_{UH}}
 + I(v_{\lambda},f_\lambda)^\frac12, \qquad n = 2,
\end{equation}
\begin{equation}\label{Str-3}
\|v_\lambda\|_{L^\infty_t L^2_x} + \lambda^{-1} \| v_\lambda \|_{L^2 L^\frac{2n}{n-2}} \lesssim  \| v_{\lambda,0}\|_{L^2_{x}} + 
\lambda^{-1} \|f_\lambda\|_{\Ns}
 + I(v_{\lambda},f_\lambda)^\frac12, \qquad n \geq 3.
\end{equation}
In particular we have
\begin{equation}\label{Str-all}
\lambda^{-\frac{2}p} \| v_\lambda \|_{L^p_t L^q_x} \lesssim  \| v_{\lambda,0}\|_{L^2} 
 + \lambda^{\frac{2}{p_1}} \|f_\lambda\|_{L^{p'_1}_tL^{q'_1}_x}.
\end{equation}
\end{theorem}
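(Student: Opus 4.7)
The plan is to split \eqref{Str-2}--\eqref{Str-3} into two independent pieces: the $L^\infty_t L^2_x$ bound, proved by a direct energy identity, and the $V^2_{\UH}$/Strichartz bound, obtained by treating the paradifferential flow as a perturbation of the flat ultrahyperbolic flow \eqref{flat-uh}. For the first piece, since $\partial_j g^{jk}_{[<\lambda]}\partial_k$ is symmetric with real principal symbol, testing \eqref{para-re} against $\bar v_\lambda$ and taking imaginary parts gives
\[
\tfrac{d}{dt}\|v_\lambda\|_{L^2_x}^2 = 2\,\Im\!\int f_\lambda\,\bar v_\lambda\,dx,
\]
which after time integration yields $\|v_\lambda\|_{L^\infty_t L^2_x}^2 \lesssim \|v_{\lambda,0}\|_{L^2}^2 + I(v_\lambda,f_\lambda)$, accounting for the $I(v_\lambda,f_\lambda)^{1/2}$ contribution on the right of \eqref{Str-2} and \eqref{Str-3}.

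\textbf{Perturbation of the flat flow.} For the Strichartz portion I would rewrite \eqref{para-re} as
\[
(i\partial_t + \Delta_{g(0)})\,v_\lambda = f_\lambda - R_\lambda v_\lambda, \qquad R_\lambda := \partial_j(g^{jk}_{[<\lambda]}-g^{jk}(0))\partial_k,
\]
and apply the classical Strichartz/$V^2_{\UH}$ theory for the constant-coefficient flat flow to the right hand side. This reduces the full bound to a quasilinear source estimate of the form
\[
\lambda^{-1}\|R_\lambda v_\lambda\|_{Y} \lesssim \epsilon^2 C^2\,\lambda^{-1}\|v_\lambda\|_{X} + \|v_{\lambda,0}\|_{L^2} + \lambda^{-1}\|f_\lambda\|_{Y} + I(v_\lambda,f_\lambda)^{1/2},
\]
where $(X,Y) = (V^2_{\UH}, DV^2_{\UH})$ when $n=2$ and $(X,Y) = (L^2_t L^{2n/(n-2)}_x, \Ns)$ when $n\geq 3$. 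Smallness of $\epsilon$ then closes a standard bootstrap, and the full range \eqref{Str-all} follows by combining with the $L^\infty_t L^2_x$ bound via the embeddings \eqref{UV-embed}, \eqref{N-embed} and standard Strichartz interpolation.

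\textbf{Main step and main obstacle.} The heart of the argument is this source estimate for $R_\lambda v_\lambda$. I would dyadically decompose the metric coefficient,
\[
g^{jk}_{[<\lambda]}-g^{jk}(0) = \sum_{\mu\leq\lambda} P_\mu\bigl(g^{jk}(u_{<\lambda})\bigr),
\]
where, thanks to the cubic vanishing \eqref{cubiccondition}, each piece is at least quadratic in $u_{<\lambda}$. Since $DV^2_{\UH}$ and $\Ns$ are defined by duality against $L^2$ solutions $w$ of the flat adjoint equation, testing $R_\lambda v_\lambda$ against such a $w$ (which paraproduct considerations localize at frequency comparable to $\lambda$) yields a quadrilinear expression in $u_{<\lambda}, u_{<\lambda}, v_\lambda, w$. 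I would then pair two of the factors using the unbalanced and balanced bilinear $L^2$ bounds of Theorem~\ref{t:para}, together with Corollary~\ref{c:para1} applied to the flat solution $w$, and estimate the remaining two via the bootstrap energy bounds on $u$ and $v_\lambda$. The $\mu\cdot\lambda$ cost from the two derivatives in $R_\lambda$ is absorbed by the bilinear gain upon dyadic summation in $\mu$. The main obstacle is the balanced regime $\mu\approx\lambda$, where the ultrahyperbolic bilinear estimate \eqref{uab-bi-bal-boot} carries an extra dyadic/logarithmic loss compared with the elliptic case; the full $\lambda^{-1}$ derivative loss we allow in \eqref{Str-2}--\eqref{Str-3} is calibrated precisely to absorb this loss, and this also explains why, in dimension $n=2$, one is forced to work in the $V^2_{\UH}/DV^2_{\UH}$ framework rather than in mixed Lebesgue spaces at the forbidden Pecher endpoint.
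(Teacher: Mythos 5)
Your overall scheme is sound and matches what the paper does: split off the $L^\infty_t L^2_x$ bound by a direct energy identity, then treat the paradifferential flow as a perturbation of the flat constant-coefficient ultrahyperbolic flow and close by duality, testing $R_\lambda v_\lambda$ against a flat adjoint solution $w$ and pairing factors with the bilinear $L^2$ bounds of Theorem~\ref{t:para} and Corollary~\ref{c:para1}. The paper itself gives no argument here beyond remarking that the proof is identical to the elliptic case of \cite{IT-qnls2}, precisely because ``only the unbalanced bilinear bounds are used.''

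Where your explanation goes wrong is in the ``main obstacle'' paragraph. You attribute the full $\lambda^{-1}$ derivative loss in \eqref{Str-2}--\eqref{Str-3} to the extra dyadic/logarithmic loss in the \emph{balanced} ultrahyperbolic bilinear estimate \eqref{uab-bi-bal-boot}, and also attribute the need for the $V^2_{\UH}/DV^2_{\UH}$ framework in $n=2$ to the same cause. Neither is correct. Since $g_{[<\lambda]} = P_{<\lambda}\,g(u_{<\lambda})$ is frequency-truncated strictly below $\lambda$ while $v_\lambda$ and the dual function $w$ are both localized at frequency $\lambda$, every pairing in the quadrilinear expression is low-high, i.e.\ unbalanced in the sense of Definition~\ref{transversalpair}; the balanced bilinear bound, and hence its ultrahyperbolic dyadic loss, never enter. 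The $\lambda^{-1}$ loss (one full derivative at the Pecher endpoint) is already present in the elliptic version of this theorem in \cite{IT-qnls2} and reflects the low metric regularity (roughly $C^{1/2}$), not the indefiniteness of $g$. Likewise, the use of $V^2_{\UH}$ in $n=2$ is forced by the forbidden $(2,\infty)$ endpoint, which is a feature of all two-dimensional Schr\"odinger flows, elliptic or not. Your proof would still go through once these considerations are corrected, because the unbalanced bilinear bounds you actually need are unchanged from the elliptic case; but as written the reader is misled about which estimate is doing the work and why the loss is what it is.
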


The proof of the theorem is identical to the elliptic case, as  only the unbalanced bilinear bounds are used. Hence we omit it and instead refer the reader to \cite{IT-qnls2}.

\section{The local/global well-posedness result}
\label{s:rough}

In this section our main goals are as follows: First, we aim to establish our primary local well-posedness result in Theorem~\ref{t:local3+}, as well as the corresponding global well-posedness result in Theorem~\ref{t:global3} for dimensions $n\geq 3$. Moreover, with minor adjustments to the proof, we will also obtain the unconditional global well-posedness result in Theorem~\ref{t:global2} for the two-dimensional problem (which corresponds to Sobolev regularities $s\geq 2$). For simplicity of exposition, we will restrict our attention to the problem \eqref{qnls}, as the proof for \eqref{dqnls} is virtually identical.

The starting point in our analysis is to use the high regularity local well-posedness result in Theorem~\ref{t:regular1}, which allows us to construct (from smooth data), a regular solution to \eqref{qnls}.  For these solutions, our goal will be to establish the frequency envelope bounds in Theorems~\ref{t:local-fe3}, \ref{t:local-fe2}[(i)-(ii)], which will allow us to continue the regular solutions as follows:

\begin{enumerate}
    \item globally in time if $n \geq 3$,
    \item globally in time if $n=2$ for restricted $s$  i.e.,  $s \geq 2$.
    \item up to time $\epsilon^{-6}$ if $n=2$ at low regularity $s > \frac32$.
\end{enumerate}

We will then conclude the proofs of Theorems~\ref{t:local3+}, \ref{t:global3}, and \ref{t:global2}
by obtaining rough solutions as unique limits of these regular solutions. Given the bounds established in the previous section, much of the proof proceeds almost exactly as in \cite{IT-qnls2}. Therefore, we only outline the important changes and refer to the mentioned article for the more detailed calculations.

\bigskip

\subsection{ A-priori bounds for smooth solutions: Proof of Theorems~\ref{t:local-fe3},~\ref{t:local-fe2}[(i)-(ii)].}
Our starting point is to first impose the bootstrap assumption that 
the bounds \eqref{uk-ee-boot} and \eqref{uab-bi-unbal-boot} hold for some large universal constant $C$. To this, we add the estimate \eqref{uk-Str2-boot} in dimension two, respectively \eqref{uk-Str3-boot} in dimension three. We now consider the estimates in the theorems:

\medskip
\emph{ A. The energy bound \eqref{uk-ee} and the bilinear estimates \eqref{uab-bi-unbal}, \eqref{uab-bi-bal}.} These are obtained by applying Theorem~\ref{t:para} with $v_\lambda=u_\lambda$. Toward this end, one can simply write the equation for $u_\lambda$ in the paradifferential form \eqref{para-full}, and use Proposition~\ref{p:N-lambda} for controlling the source term. This suffices for Theorem~\ref{t:local-fe3} and Theorem~\ref{t:local-fe2}[(ii)]. In the case of Theorem~\ref{t:local-fe2}(i), Proposition~\ref{p:N-lambda} provides the desired bound for the $F_\lambda$ part of $N_\lambda$, but it remains to estimate directly the contribution of the balanced term $C_\lambda$. This uses the $L^4$ bootstrap bound \eqref{uk-Str2-boot} as follows:
\[
\| C_\lambda(\bfu) \cdot \bu^{x_0}\|_{L^1} \lesssim 
\lambda^2 \|u_\lambda\|_{L^4}^4 \lesssim 
C^4 \epsilon^4 c_\lambda^4 \lambda^{-4s+4} \lesssim \epsilon^2 c_\lambda^2 \lambda^{-2s},
\]
where the last step requires $s \geq 2$.

\medskip

\emph{ B. The Strichartz estimates \eqref{uk-Str}.} These are obtained by applying Theorem~\ref{t:para-se} to $u_\lambda$. For this we need to estimate the contribution of the source terms on the right, with $f_\lambda = N_\lambda(u)$. The bound for the expression $I(v_\lambda,N_\lambda)$  
comes from Proposition~\ref{p:N-lambda}, as discussed above.
It remains to bound the middle term on the right in \eqref{Str-2}, respectively \eqref{Str-3}. 

\medskip

In dimension $n\geq 3$ this requires 
the estimate
\[
\| N_\lambda(u) \bar w_\lambda\|_{L^1} \lesssim \epsilon c_\lambda \lambda^{-s} ( \|w_{\lambda,T}\|_{L^2}+ \|h_\lambda\|_{S'})
\]
for $w_\lambda$ solving \eqref{adjoint}. This is the same as Proposition~\ref{p:N-lambda} but with $u_\lambda^{x_0}$ replaced by 
$w_\lambda$. This repeats the proof of Proposition~\ref{p:N-lambda}, but with one instance of \eqref{uab-bi-unbal-boot} replaced by Corollary~\ref{c:para}.

\medskip

In dimension $n = 2$, by duality  this requires 
the estimate
\[
\| N_\lambda(u) \bar w_\lambda\|_{L^1} \lesssim \epsilon c_\lambda \lambda^{-s}  \|w_{\lambda}\|_{U^2_{UH}}.
\]
This is again the same as Proposition~\ref{p:N-lambda} with $u_\lambda^{x_0}$ replaced by 
$w_\lambda$, which requires the corresponding counterpart of Corollary~\ref{c:para} for such a $w_\lambda$. It suffices to verify this for a $U^2_{UH}$ atom, which is an $\ell^2$ concatenation of solutions to the homogeneous Schr\"odinger equation. But then 
the problem reduces to the case of solutions to the homogeneous Schr\"odinger equation,
which is directly included in  Corollary~\ref{c:para}.

This concludes the proof of the above Theorems. For later use, we also observe that in particular, the estimates for the paradifferential source terms in Proposition~\ref{p:N-lambda} are also valid for these solutions. 

\subsection{Higher regularity}
Here we consider regular solutions, which are generated from regular initial data $u_0\in H^{\sigma}\subset H^s$, but with smallness only in the rough topology $H^s$. Precisely, we assume for some $\sigma>s$,
\[
\|u_0\|_{H^s} \leq \epsilon, \qquad \| u_0\|_{H^{\sigma}} \leq M,
\]
where $M$ can potentially be large relative to $\epsilon$. We then claim that $u$ retains its quantitative regularity in the stronger topology $H^{\sigma}$ (on the relevant time-scales for which $u$ remains small in the rougher topology). That is, we aim to show,
\begin{equation}
\| u\|_{L^\infty_t H^{\sigma}_x} \lesssim M.    
\end{equation}

To establish this,  for initial data $u_0$  as above, we let $\epsilon c_\lambda$ be a minimal $H^s$ frequency envelope for $u_0$ which satisfies the unbalanced slowly varying condition as in Remark~\ref{r:unbal-fe}. Then by definition, we have
\[
\sum_\lambda (\epsilon c_\lambda \lambda^{\sigma-s})^2 \lesssim M^2.
\]
 Theorems~\ref{t:local-fe3}, \ref{t:local-fe2} ensure that the frequency envelope bound is propagated, and we obtain (on the relevant timescale)
\[
\| u(t) \|_{H^{\sigma}}^2 \lesssim \sum_\lambda (\epsilon c_\lambda \lambda^{\sigma-s})^2 \lesssim M^2, \qquad t \in [0,T].
\]
In addition to the $L^2$ based bound above, the corresponding bilinear $L^2$ and Strichartz bounds are also propagated.

\subsection{ Continuation of regular solutions}  
Fix any number $s>\frac{n}{2}+\frac{1}{2}$ and let $\sigma\geq s+1$. Next, we show that the regular solution $u$ generated from data $u_0 \in H^{\sigma}$ can be continued as long as $u$ remains sufficiently small in the rougher topology $L_t^{\infty}H_x^s$. More precisely, we have the following proposition.
\begin{proposition} \label{p:continuation}
For every initial data  $u_0 \in H^{\sigma}$ satisfying the smallness condition
\begin{equation}\label{small-data}
\| u_0 \|_{H^s} \leq \epsilon \ll 1,   
\end{equation}
there exists a local solution $u \in C_t H^{\sigma}_x$. Moreover, this local solution can be continued for as long as 
\[
\| u \|_{L^\infty_t H^s_x} \lesssim \epsilon.
\]
\end{proposition}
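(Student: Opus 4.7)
The argument splits into two steps, matching the two assertions of the proposition.

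\emph{Step 1: short-time existence in $H^\sigma$.} Since $\sigma \geq s+1 > \frac{n+3}{2}$, the hypothesis lies within the range of the Pineau-Taylor result recorded as Theorem~\ref{t:regular1}. That theorem supplies a local solution $u \in C([0,T_0];H^\sigma)$ whose lifespan $T_0$ depends only on $M := \|u_0\|_{H^\sigma}$, along with a blow-up alternative: if $u$ is extended maximally to $[0,T^*)$ in this class, then either $T^* = \infty$ or $\|u(t)\|_{H^\sigma} \to \infty$ as $t \uparrow T^*$. This step is independent of the estimates developed in the present paper.

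\emph{Step 2: continuation driven by the rough norm.} The plan is to decouple smallness (measured in $H^s$) from regularity (measured in $H^\sigma$) via frequency envelopes, exactly as outlined in the higher regularity subsection above. Fix a minimal $H^s$ frequency envelope $\{\epsilon c_\lambda\}$ for $u_0$ satisfying the weakened slowly varying condition of Remark~\ref{r:unbal-fe}. Since $u_0 \in H^\sigma$, this envelope satisfies
\[
\sum_\lambda \bigl(\epsilon c_\lambda \lambda^{\sigma-s}\bigr)^2 \lesssim M^2.
\]
On any subinterval $[0,T] \subset [0,T^*)$ on which the rough smallness $\|u\|_{L^\infty_t H^s_x} \lesssim \epsilon$ persists, the a priori bounds of Theorem~\ref{t:local-fe3} (for $n \geq 3$) and Theorem~\ref{t:local-fe2} (for $n = 2$) apply, and the frequency-envelope bound \eqref{uk-ee} for each dyadic piece $u_\lambda$ is propagated. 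Summing over $\lambda$ with the weights $\lambda^{\sigma-s}$ yields
\[
\|u(t)\|_{H^\sigma}^2 \lesssim \sum_\lambda \bigl(\epsilon c_\lambda \lambda^{\sigma-s}\bigr)^2 \lesssim M^2, \qquad t \in [0,T].
\]
The crucial point is that the implicit constant here depends only on the universal parameters of Theorems~\ref{t:local-fe3} and \ref{t:local-fe2}, not on $M$.

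\emph{Closing the argument and main subtlety.} The uniform $H^\sigma$ bound above contradicts the $H^\sigma$ blow-up alternative from Step 1, so the maximal time of existence in $H^\sigma$ exhausts the interval on which the $H^s$ smallness holds. The principal technical subtlety, which constitutes the main obstacle, occurs in the low-regularity two-dimensional case (ii) of Theorem~\ref{t:local-fe2}, where the a priori bound is only valid on time scales $T \lesssim \epsilon^{-6}$. In that case one iterates the argument on successive time intervals of this length, using at each step the fact that the restarted data $u(T_j) \in H^\sigma$ remains small in $H^s$ by the standing hypothesis of the continuation criterion. A new minimal $H^s$ frequency envelope is then constructed and the propagation restarted without deterioration of constants, so the iteration terminates only when the rough smallness assumption itself fails, as claimed.
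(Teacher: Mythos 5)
Your Step 2 matches the paper's argument in spirit, but Step 1 has a genuine gap: you invoke Theorem~\ref{t:regular1} to produce an $H^\sigma$ solution, but that theorem requires the data to be \emph{nontrapping}, and this is not automatic here. The $H^s$ norm of $u_0$ is small, but the $H^\sigma$ norm can be arbitrarily large, so you are firmly in the large-data regime where nontrapping must be verified rather than assumed. The paper devotes a dedicated lemma (Lemma~\ref{l:nontrap}) to precisely this point: it shows that smallness of $u_0$ in $H^s$ for $s > \frac{n+1}{2}$ forces the metric $g(u_0)$ to be uniformly nontrapping, via a bootstrap on the Hamilton flow using the trace theorem along (approximately straight, unit-speed) bicharacteristics. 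Without this step, the application of the Pineau--Taylor large-data well-posedness result is unjustified. Note also that since nontrapping is quantified through the $H^s$ smallness, the continuation criterion itself keeps the nontrapping under control as the solution evolves; this is implicit in the paper and needs to be acknowledged for the iteration in your final paragraph to close.

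A secondary imprecision: the ``blow-up alternative'' you quote is not literally part of Theorem~\ref{t:regular1} as stated, and even granting a lifespan lower bound depending only on $\|u_0\|_{H^\sigma}$, one still has to show that $u(t)$ converges in $H^\sigma$ as $t \uparrow T^*$ before restarting. The paper argues this carefully: the equation directly gives convergence in the weak topology $H^{\sigma-2}$, and combining that with the uniform frequency-envelope bound in $H^\sigma$ upgrades convergence to $H^s$ (in fact to $H^{\sigma'}$ for any $\sigma' < \sigma$), which suffices to restart and contradict maximality. Your version compresses this into an assumed alternative, which hides where the work happens.

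Your observation about iterating on intervals of length $\epsilon^{-6}$ in the low-regularity 2D case is correct and aligned with what the paper records in Corollary~\ref{c:continuation}(ii); the key point you identify — that the restarted envelope is built from data still small in $H^s$ — is the right one.
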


Combining this with  Theorems~\ref{t:local-fe3},~\ref{t:local-fe2}[(i)(ii)], we have the following estimates on the lifespan of regular solutions:

\begin{corollary}\label{c:continuation}
Given initial data as in Proposition~\ref{p:continuation}, 
there exists a regular solution $u\in C_tH_x^{\sigma}$ which persists 

(i) globally in time in dimension $n \geq 3$,

(ii) up to time $O(\epsilon^{-6})$ in dimension $n =2$.    
\end{corollary}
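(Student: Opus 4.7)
[Proof proposal for Proposition~\ref{p:continuation} and Corollary~\ref{c:continuation}]
The plan is to combine the higher-regularity local well-posedness of Theorem~\ref{t:regular1} with the frequency envelope a-priori bounds obtained in the preceding subsection, then close a standard continuation argument keyed on the rough $H^s$ norm.

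First, since $\sigma \geq s+1 > \tfrac{n+3}{2}$, Theorem~\ref{t:regular1} supplies a local-in-time solution $u \in C([0,T_*);H^\sigma_x)$ with $T_*>0$, together with the blow-up alternative that either $T_* = \infty$ or $\limsup_{t\uparrow T_*} \|u(t)\|_{H^\sigma} = \infty$. In particular, as long as the $H^\sigma$ norm stays bounded on $[0,T_*)$, the solution extends past $T_*$. Hence it suffices to show the following: whenever $u$ is a regular solution on $[0,T]$ with $\|u\|_{L^\infty_t H^s_x([0,T])} \leq C_0 \epsilon$ for a fixed absolute constant $C_0$, one has $\|u\|_{L^\infty_t H^\sigma_x([0,T])} \lesssim M$, with $M := \|u_0\|_{H^\sigma}$.

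To achieve this, I would apply the frequency envelope propagation established in Theorems~\ref{t:local-fe3} and~\ref{t:local-fe2}. Choose a minimal $H^s$ frequency envelope $\{\epsilon c_\lambda\}$ for $u_0$ satisfying the weakened slowly varying condition of Remark~\ref{r:unbal-fe}. Then by definition
\[
\sum_\lambda (\epsilon c_\lambda)^2 \lesssim \epsilon^2,
\qquad
\sum_\lambda (\epsilon c_\lambda \lambda^{\sigma-s})^2 \lesssim M^2 .
\]
The hypothesis $\|u\|_{L^\infty H^s} \lesssim \epsilon$ lets me close the bootstrap of Proposition~\ref{p:boot} on $[0,T]$, and the proof in subsection~8.1 therefore yields the frequency envelope bound $\|u_\lambda\|_{L^\infty_t L^2_x} \lesssim \epsilon c_\lambda \lambda^{-s}$ for every $\lambda$. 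Squaring, multiplying by $\lambda^{2\sigma}$ and summing produces
\[
\|u(t)\|_{H^\sigma}^2 \lesssim \sum_\lambda (\epsilon c_\lambda \lambda^{\sigma-s})^2 \lesssim M^2, \qquad t\in[0,T],
\]
exactly the required $H^\sigma$ control.

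The main technical point -- and the one to handle carefully -- is that the frequency envelope bootstrap of Section~3 was run under the assumption that $u$ itself is already a smooth solution on the given interval, with the bootstrap constants eliminated by smallness in $H^s$. I would therefore combine the two continuation mechanisms into a single device: define $T_{\max}$ to be the supremum of times $T \leq T_*$ for which $\|u\|_{L^\infty_t H^s_x([0,T])} \leq 2 C_{\mathrm{abs}} \epsilon$, with $C_{\mathrm{abs}}$ the implicit constant in \eqref{uk-ee}. By continuity of $t\mapsto\|u(t)\|_{H^s}$ on $[0,T_*)$ and the bound $\|u(0)\|_{H^s}\leq \epsilon$, one has $T_{\max}>0$. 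On $[0,T_{\max})$ the hypotheses of Theorems~\ref{t:local-fe3}/\ref{t:local-fe2} hold, so the envelope bound with $\sigma=s$ in fact yields $\|u(t)\|_{H^s} \leq C_{\mathrm{abs}}\epsilon$, strictly improving the threshold; a standard continuity argument then shows $T_{\max} = T_*$. In parallel, the envelope bound with exponent $\sigma$ gives $\|u(t)\|_{H^\sigma} \lesssim M$ on $[0,T_*)$, contradicting the blow-up alternative and forcing $T_* = \infty$ in the regime where Theorems~\ref{t:local-fe3},~\ref{t:local-fe2} apply globally, or at least extending $T_*$ up to the natural lifespan of those theorems. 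This yields Proposition~\ref{p:continuation} and, specializing to the dimensional cases treated in Theorems~\ref{t:local-fe3} (case $n\geq 3$) and~\ref{t:local-fe2}(i)(ii) (case $n=2$ with $s\geq 2$, respectively $s>\tfrac32$ up to time $\epsilon^{-6}$), Corollary~\ref{c:continuation}.
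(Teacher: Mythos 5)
Your high-level strategy is the same as the paper's: invoke Theorem~\ref{t:regular1} for a local $H^\sigma$ solution, use the frequency-envelope a-priori bounds from Theorems~\ref{t:local-fe3}/\ref{t:local-fe2} to propagate the $H^\sigma$ control in terms of the small $H^s$ norm, and run a continuity argument on the $H^s$ smallness to keep extending. Both the paper and you arrive at Corollary~\ref{c:continuation} by feeding the lifespan bounds of Theorems~\ref{t:local-fe3},~\ref{t:local-fe2}(i)(ii) into Proposition~\ref{p:continuation}.

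However, there is a genuine gap in your treatment of Proposition~\ref{p:continuation}. You write that ``since $\sigma \geq s+1 > \frac{n+3}{2}$, Theorem~\ref{t:regular1} supplies a local-in-time solution'' and then assert a blow-up alternative. But Theorem~\ref{t:regular1} is stated for \emph{nontrapping} data in $H^\sigma$, and the nontrapping hypothesis is automatic only when the data is small in $H^\sigma$. Here the data is small only in $H^s$; the $H^\sigma$ norm $M$ can be arbitrarily large, so you are implicitly using the large-data version of that local well-posedness theorem, which genuinely requires the initial metric to be nontrapping. Your proposal never verifies this, and in fact it is the central technical content of the paper's proof of Proposition~\ref{p:continuation}: the paper proves a separate lemma (Lemma~\ref{l:nontrap}) showing that $\|u_0\|_{H^s} \leq \epsilon$ forces the metric $g(u_0)$ to be uniformly nontrapping, via a bootstrap on the Hamilton flow combined with a trace inequality along approximately straight bicharacteristics, using $s > \frac{n+1}{2}$. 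Without this step, you cannot invoke Theorem~\ref{t:regular1} at all, and the continuation argument does not get off the ground. Moreover, once you extend the solution, the nontrapping condition must be re-verified at each new initial time; the paper's Lemma~\ref{l:nontrap} supplies exactly this uniformity because $\|u(t)\|_{H^s}$ remains $\lesssim\epsilon$.

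A secondary, milder point: the paper does not simply cite a ``blow-up alternative'' but rather argues explicitly that the uniform $H^\sigma$ bound on $[0,T)$, while not immediately giving a limit at $T$, can be combined with $H^{\sigma-2}$ convergence (read off directly from the equation) and the propagated frequency envelope to produce the endpoint limit. Your bootstrap on $T_{\max}$ would also work, but it still presupposes a lower bound on the local lifespan depending only on $\|u(t_n)\|_{H^\sigma}$ and the nontrapping constant, which again loops back to Lemma~\ref{l:nontrap}.
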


\begin{proof}[Proof of Proposition~\ref{p:continuation}]
To construct local solutions we would like to apply Theorem~\ref{t:regular1}. However, as in the definite case \cite{IT-qnls2}, we run into the following technical issue: even if we have small data in $H^s$ as in \eqref{small-data}, the $H^\sigma$ norm is potentially large. Consequently, we will need to use the large data version 
of the local well-posedness from Theorem 1.3 and Remark 1.5 in \cite{pineau2024low}. To apply this result, we will need to verify the  nontrapping assumption for the initial data metric. For this we have the following lemma.
\begin{lemma}\label{l:nontrap}
Let $0<\epsilon \ll 1$. Then the initial data $u_0$ satisfying the smallness condition \eqref{small-data} are uniformly nontrapping.
\end{lemma}

Here, uniformly nontrapping means that any bicharacteristic with initially (approximately) unit speed for the metric $g$ intersects the ball $B(x_0,r)$ for time at most $\lesssim r$ (for large $r$),
with a uniform constant. 
\begin{remark}
One might wonder why the unit speed condition is only imposed at the initial time for the bicharacteristic. This is because unlike in the elliptic case, it does not make sense to normalize the speed by $|\xi|_g$ (which is a conserved quantity of the Hamilton flow). Nevertheless, under suitable regularity and decay conditions on a non-degenerate metric (which will be satisfied in our setting), one can show that the resulting bicharacteristics exist globally and approximately retain their unit speed. See \cite{pineau2024low} for more discussion on this matter.
\end{remark}

Assuming that the above lemma holds, the large data local well-posedness theorem in \cite{pineau2024low} yields a local $H^\sigma$ solution. We then let $[0,T)$ denote the maximal interval on which the $H^\sigma$ solution exists.
The apriori bounds above ensure that the $H^\sigma$ size of the solution remains bounded in terms of the $H^{\sigma}$ data, 
\[
\|u(t) \|_{H^\sigma} \lesssim \|u_0 \|_{H^\sigma}, \qquad t \in [0,T),
\]
and moreover, that we have a  corresponding $H^\sigma$ frequency envelope bound, which holds uniformly in time. This information does not immediately ensure that the limit 
\[
u(T) = \lim_{t \to T} u(T)
\]
exists in $H^\sigma$.
However, by directly examining the equation \eqref{qnls}, one easily establishes convergence in $H^{\sigma -2}$ (say). Combining this with the frequency envelope bound shows that the limit exists in $H^s$.
This in turn contradicts the maximality of $T$, which concludes the proof of the proposition.
\medskip

It remains to prove the  nontrapping lemma:
\begin{proof}[Proof of Lemma~\ref{l:nontrap}]
The strategy here will be similar to that of the definite case \cite{IT-qnls2}, although with some small adjustments. For convenience, we outline most of the details below. Out starting point is the equations for the Hamilton flow, which have the form
\[
\dot x^j  = 2 g^{jk} \xi_k, \qquad \dot \xi_j = \partial_{x_j} g^{kl}
\xi_k \xi_l,
\]
where $g = g(u_0)$. We have to prove that for $\xi(0)\neq 0$, that $t\to x(t)$ escapes to $\infty$ as $t\to\pm\infty$. From the scaling symmetry of the Hamilton flow, we may assume without loss of generality that $|\xi(0)|=1$. From the equation for $x(t)$, this ensures (by non-degeneracy of $g$) that $x(t)$ is initially approximately unit speed. As in \cite{IT-qnls2}, our strategy will be to show that the curve parameterized by $t\mapsto x(t)$ is close to a straight line. For this purpose, it suffices to show 
\begin{equation}\label{H-flow}
|\xi(t) - \xi(0)| \lesssim \epsilon, \qquad |\dot x(t) - \dot x(0)|
\lesssim \epsilon.
\end{equation}
Given the assumption on the data, there exists a constant non-degenerate matrix $g_{\infty}$ such that $\|g-g_\infty\|_{H^s}\lesssim \epsilon$ (and thus $\|g-g_{\infty}\|_{L^{\infty}}\lesssim\epsilon$). Therefore, the first bound in \eqref{H-flow} implies the second. To establish the first bound, we make a bootstrap assumption
\begin{equation}\label{H-flow-boot}
 |\xi(t) - \xi(0)|
\leq C\epsilon,
\end{equation}
for some large universal constant $C$. Hence, we also have 
\[
 |\dot x(t) - \dot x(0)| \lesssim C\epsilon \ll 1,
\]
which in turn indicates that the   bicharacteristic curve $\gamma$ paramaterized by $t\mapsto x(t)$, is approximately straight and that $x(t)$ is approximately unit speed. Since $g-g_\infty$ is at least quadratic in $u$ (by assumption), the equation for $\xi$ and the bootstrap assumption on $\xi$ yield
\[
|\xi(t) - \xi(0)| \lesssim \int_{0}^t |\nabla g(x(s))|\, ds
\lesssim \int_{0}^t |u_0(x(s))| |\nabla u_0(x(s))|\, ds.
\]
Since $\gamma$ is approximately straight and $x$ is approximately unit speed, and $s > \frac{n+1}{2}$, the trace theorem and Cauchy-Schwarz yields
\[
|\xi(t) - \xi(0)|\lesssim \| u\|_{L^2(\gamma)}\|\nabla u\|_{L^2(\gamma)} \lesssim \| u_0\|_{H^s}^2 \lesssim \epsilon^2.
\]
which improves the constant in the bootstrap if $\epsilon$ is small enough. This completes the proof
\end{proof}

This in turn completes the proof of the continuation proposition.
\end{proof}

\subsection{$L^2$ bounds for the linearized equation and Proof of Theorem~\ref{t:linearize-fe}.}

Similarly to above, we begin by applying Theorem~\ref{t:para} to the linearized equation by writing the it first in the paradifferential form \eqref{para-lin}. For the linearized evolution, we take $d_\lambda$ to be an $L^2$ frequency envelope for the initial data.

As in the definite case in \cite{IT-qnls2}, it suffices to make a bootstrap assumption for $v_\lambda$, as in the proof of Theorem~\ref{t:para},
and to establish the following estimate for the linearized source term $N_\lambda^{lin} v$. This serves as the linearized counterpart of Proposition~\ref{p:N-lambda}:

\begin{proposition}
 Let $s > \frac{n+1}2$. Assume that the function $u$ satisfies  the bounds \eqref{uk-ee},\eqref{uab-bi-unbal} and \eqref{uab-bi-bal} in a time interval $[0,T]$, where $T \lesssim \epsilon^{-6}$
 in dimension $n=2$. Assume also that $v$ satisfies the bootstrap bounds \eqref{v-ee-boot}-\eqref{vv-bi-unbal-boot}. Then for $0<\epsilon\ll 1$, the source term $N^{lin}_\lambda v$ satisfies the frequency envelope bound
 \begin{equation}\label{good-nl-lin}
\| N^{lin}_\lambda v  \cdot \bv_\lambda^{x_0}\|_{L^1_{t,x}} \lesssim \epsilon^2 d_\lambda^2 .
 \end{equation}
 One obtains also a similar bound if we replace $\bv_\lambda^{x_0}$ by $\bar w_\lambda$, where $w_\lambda$ is a solution for the constant coefficient Schr\"odinger equation \eqref{adjoint}.
 
\end{proposition}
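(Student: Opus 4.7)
The plan is to mirror the proof of Proposition~\ref{p:N-lambda}, recognizing that $N^{lin}_\lambda v$ has the same schematic structure as $N_\lambda(u)$ but with exactly one factor of $u$ replaced by $v$ (in any position). Accordingly, I would first expand $N^{lin}_\lambda v$ into the same list of contributions described at the end of Section~\ref{s:lin}: high$\times$high terms of the form $\mu^2 L(\bfu_{\leq\mu},\bfu_\mu,\bfv_\mu)$ (and permutations of which argument is the $v$), low$\times$high commutator terms $\lambda L(\partial \bfu_{\ll\lambda},\bfu_{\ll\lambda},v_\lambda)$ and variants, balanced cubic terms localized at frequency $\lambda$ which are transversal or weakly transversal, low-frequency cubic terms, and quartic-and-higher terms. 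In each case we then pair with $\bv_\lambda^{x_0}$ and bound in $L^1_{t,x}$.

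The strategy for each piece is to group the resulting four factors into two bilinear pairs: one pair of two $u$-factors, estimated in $L^2_{t,x}$ via the $u$-bootstrap bilinear bounds \eqref{uab-bi-unbal-boot}, \eqref{uab-bi-bal-boot}; and one pair containing both $v$-factors or mixing $u$ and $v$, estimated in $L^2_{t,x}$ via the linearized bootstrap bounds \eqref{uv-bi-unbal-boot}, \eqref{vv-bi-bal-boot}, \eqref{vv-bi-unbal-boot}. Where the natural pairing is forced by frequency considerations, one proceeds exactly as in Proposition~\ref{p:N-lambda}, but simply replacing one invocation of a $u$-bilinear bound by the corresponding $(u,v)$- or $(v,v)$-bilinear bound. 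The power counting then produces $\epsilon^2 d_\lambda^2$ in place of $\epsilon^4 c_\lambda^2 \lambda^{-2s}$: the two $u$-factors contribute $\epsilon^2$ (through the $c_\mu$-weights, which sum on the frequency envelope $c$), while the two $v$-factors contribute $d_\lambda^2$ (with any envelope-slow-variation loss absorbed exactly as in the nonlinear case).

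The delicate point, as in the nonlinear problem, is the balanced cubic contribution in dimension $n=2$ at low regularity $s>\tfrac{3}{2}$. Here the purely transversal terms and doubly resonant terms are controlled in a standard way, but the new weakly transversal interactions specific to the ultrahyperbolic setting require the refinement built into Definition~\ref{transversalpair}: the relevant pair is semi-balanced, so the unbalanced bilinear bound \eqref{uv-bi-unbal-boot} still applies, and the remaining balanced pair is controlled either by the $L^4_{t,x}$ Strichartz estimate \eqref{uk-Str2-boot} combined with a balanced $(u,v)$ estimate, or by pairing the two $v$-factors together using \eqref{vv-bi-bal-boot}. Using the restriction $T\lesssim \epsilon^{-6}$ in the lossy $L^4$ case ensures the resulting power of $\epsilon$ is at least two.

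Finally, for the variant in which $\bv_\lambda^{x_0}$ is replaced by $\bar w_\lambda$ with $w_\lambda$ a solution of the adjoint flat Schr\"odinger equation \eqref{adjoint}, the same argument goes through verbatim once the $(u,v)$- and $(v,v)$-bilinear bounds used on the $w_\lambda$ side are provided by Corollary~\ref{c:para} (and its $U^2_{UH}$ reformulation Corollary~\ref{c:para1}). I expect the main obstacle to be bookkeeping rather than genuinely new analysis: one must verify that, for every schematic term in $N^{lin}_\lambda v \cdot \bv_\lambda^{x_0}$, there exists a grouping into two bilinear pairs for which both constituent estimates are available, paying particular attention to the weakly transversal interactions which are the one substantively new feature beyond the definite-metric analysis of \cite{IT-qnls2}.
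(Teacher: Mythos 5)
Your proposal matches the paper's approach, which is itself stated only as ``almost identical to the proof of Proposition~\ref{p:N-lambda}'' with details omitted. You correctly identify the core mechanism: expand $N^{lin}_\lambda v$ into the same schematic pieces as $N_\lambda(u)$ (with one $u$ replaced by $v$ in each slot), pair the four factors into two bilinears, and replace the $u$-only bilinear bounds with the corresponding $(u,v)$ or $(v,v)$ versions from \eqref{uv-bi-unbal-boot}--\eqref{vv-bi-unbal-boot}, which exchanges each $\epsilon c_\lambda \lambda^{-s}$ factor for a $d_\lambda$ and yields $\epsilon^2 d_\lambda^2$. The treatment of the adjoint-flow variant via Corollaries~\ref{c:para}, \ref{c:para1} and the $U^2_{UH}$ atomic structure is also consistent with how the paper uses those results.

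One inaccuracy in your discussion of the $n=2$, $s>\frac{3}{2}$ regime: you attribute the $T\lesssim\epsilon^{-6}$ restriction to the weakly transversal interactions. As the paper's split of Proposition~\ref{p:N-lambda} into \eqref{good-nl} and \eqref{good-fl} shows, the weakly transversal terms sit inside $F_\lambda$ and are estimated with no time restriction at all, using the unbalanced bilinear bound for the separated pair and the lossy $L^4$ for the remaining balanced pair; the power count closes for $s>\frac{3}{2}$ without touching $T$. The time restriction is instead triggered by the balanced doubly-resonant part $C_\lambda$, which is controlled via the lossless $L^6_tL^4_x$ bound \eqref{uk-l4-2d} combined with H\"older in time, and this is where $T\lesssim\epsilon^{-6}$ is consumed. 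This is a misattribution in your bookkeeping rather than a gap in the argument, since the $T$-restriction is in the hypothesis and harms nothing if invoked elsewhere, but it is worth straightening out so the proof structure matches the split the paper actually uses.
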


\begin{proof}
This proof is almost identical to the proof of Proposition~\ref{p:N-lambda} (which is similar to the corresponding proof in the definite case in \cite{IT-qnls2}). We omit the details.

\end{proof}

\bigskip

\subsection{Rough solutions as limits of regular solutions}

Here we combine the $H^s$ energy estimates for the nonlinear equation with the $L^2$ estimates for the linearized flow to obtain the local well-posedness results (including existence, uniqueness, continuous dependence) in  
Theorem~\ref{t:local3+}, and also the global results in Theorems \ref{t:global3} and \ref{t:global2}. Given these estimates, an entirely standard argument applies here. In fact, the proof is  virtually identical to the corresponding proof in the definite case. Therefore, we refer to the corresponding section of \cite{IT-qnls2} which outlines the main steps. See also the argument in the prior 1D paper \cite{IT-qnls}, and also the expository paper~\cite{IT-primer} for a detailed overview of this type of scheme; a more abstract approach  is provided in \cite{ABITZ}.
\subsection{Scattering }
In this subsection, our goal is to establish the scattering part Theorem ~\ref{t:global3} and Theorem ~\ref{t:global2}. The analysis will differ slightly depending on if we consider the case $n\geq 3$ or $n=2$. We will focus on the higher dimensional case first, and then outline the minor differences in two dimensions.

Our starting point is to write the equation for each $u_\lambda$
as as constant coefficient equation, directly the metric as a perturbation of the flat metric $g(0)$,
\begin{equation}\label{constcoeffscattering}
(i\partial_t + \Delta_{g(0)}) u_\lambda = f_{\lambda}:= N_\lambda(u) - \partial_j(g^{jk}_{[<\lambda]}- g^{jk}(0)) \partial_k u_\lambda.
\end{equation}
By following exactly the reasoning in the corresponding section of \cite{IT-qnls2}, we obtain the following bound on the source term $f_{\lambda}$ with a one derivative loss,
\begin{equation}\label{Flambda}
\| f_{\lambda} \|_{\Ns} \lesssim \epsilon^3 c_\lambda^3 \lambda^{-s+1}.
\end{equation}
This bound holds globally in time, but arguing as in \cite{IT-qnls2}, if we can restrict the time interval then we also obtain the decay
\begin{equation}\label{fdecay}
\lim_{T \to \infty}    \| f_{\lambda} \|_{\Ns[T,\infty)} = 0.
\end{equation}
The bounds \eqref{Flambda}-\eqref{fdecay} give the estimate
\begin{equation}
\| u_\lambda (t) - e^{i(t-\tau) \Delta_{g(0)}} u_\lambda(\tau)\|_{L^2_x} \lesssim   \epsilon^3 c_\lambda^3 \lambda^{-s+1},
\end{equation}
and also the continuity property
\begin{equation}
   \lim_{t,\tau \to \infty}  \| u_\lambda(t) - e^{i(t-\tau) \Delta_{g(0)}} u_{\lambda}(\tau)\|_{L^2_x} = 0.
\end{equation}
It follows that the limit
\[
u_{\lambda}^\infty = \lim_{t \to \infty} e^{-it\Delta_{g(0)}} u_\lambda(t) 
\]
exists in $L^2$, and moreover, that we have the bound 
\begin{equation}
  \|  u_{\lambda}^\infty - u_{0\lambda}\|_{L^2}  \lesssim   \epsilon^3 c_\lambda^3 \lambda^{-s+1}.
\end{equation}
In addition, by passing to the limit in the $L_x^2$ energy bounds for $u_\lambda$ we obtain
\begin{equation}
   \|  u_{\lambda}^\infty\|_{L^2_x}  \lesssim   \epsilon c_\lambda \lambda^{-s}  .
\end{equation}
Therefore, if we define
\[
u^\infty = \sum_\lambda u_\lambda^\infty \in H^s,
\]
then by combining the above properties, we obtain the convergence
\begin{equation}
\lim_{t \to \infty}     e^{-it\Delta_{g(0)}} u(t) = u^\infty \qquad \text{ in } H^s, 
\end{equation}
in the strong topology. This handles the proof of scattering in three and higher dimensions.

The scattering result in two dimensions follows mostly the same line of reasoning, but with the main change being that we use the space $DV^2_{UH}$ in place of $\Ns$, as in the Strichartz estimates in Section~\ref{s:Str}. We also remark that the analysis for obtaining the requisite bound $\eqref{Flambda}$ is a bit more involved and makes use of the $L_{t,x}^4$ Strichartz bound with a $\frac{1}{2}$ derivative loss. This is the reason for the restriction $s\geq2$ for the result in Theorem ~\ref{t:global2}. For more details, see the corresponding discussion in the definite case in \cite{IT-qnls2}, which is entirely analogous at this point (after replacing $DV_{\Delta}^2$ with $DV_{UH}^2$ in the analysis).

\section{
The 2D global result at low regularity}
Our main goal in this last section will be to establish the 2D global well-posedness result in Theorem~\ref{t:global2c}. Thanks to the local result in Theorem~\ref{t:local3+} and the corresponding frequency envelope  estimates in Theorem~\ref{t:local-fe2}, it remains only to establish that the frequency envelope bounds in Theorem~\ref{t:local-fe2} hold globally. Moreover, it is sufficient to do this under suitable bootstrap assumptions as in Proposition~\ref{p:boot}.

For the local result in Theorem~\ref{t:local3+}, (and global result when $s\geq 2$),
it was sufficient to rewrite \eqref{qnls} in a paradifferential form and treat the resulting source terms as perturbations by using a suitable array of linear and bilinear Strichartz estimates. In the low regularity regime (i.e. ,$s>\frac{3}{2}$), we have to further isolate the balanced cubic term as a non perturbative error. This is in contrast to the higher regularity regime $s \geq 2$, where $s$ is large enough to allow us to use $L^4$ Strichartz estimates \eqref{uk-Str2}
with a half derivative loss to control these terms. In the low regularity regime, our strategy for dealing with this term is mostly similar to the definite case in \cite{IT-qnls2}. We summarize it as follows:  

\begin{enumerate}[label=(\roman*)]
    \item Low frequency terms: We estimate these directly using the $L^4$ Strichartz bound (the derivative loss being negligible in this case).
    \item High frequency terms without phase rotation symmetry. These terms can be divided into two components
\begin{itemize}
 \item transversal or weakly transversal interactions, where we can split the three input frequencies and output frequency into two pairs, where at least one is unbalanced (in the sense of Definition~\ref{transversalpair}). These terms are then treated perturbatively using the balanced and unbalanced bilinear $L^2$ bounds.

\item nonresonant, which morally can be treated by a normal form analysis. Precisely, in our setting, this will be implemented via an appropriate quartic density and flux correction for the mass and the momentum. 
\end{itemize}    
\item High frequency terms with phase rotation symmetry: Here, the conservative assumption comes into play, and enables us to construct a flux correction modulo a perturbative error.
\end{enumerate}

The main goal of this section is now to establish the bounds \eqref{uk-ee}, \eqref{uab-bi-unbal}, \eqref{uab-bi-bal} and \eqref{uk-Str2}. As usual, we may harmless impose the bootstrap hypotheses \eqref{uk-ee-boot}, \eqref{uab-bi-unbal-boot}, \eqref{uab-bi-bal-boot}
and \eqref{uk-Str2-boot}.

Our starting point is to observe that the mass and momentum densities
\[
M_\lambda = M_\lambda(u,\bar u), 
\]
\[
P_{\lambda}^j = P_{\lambda}^j(u,\bar u) 
\]
satisfy the local conservation laws
\[
\partial_t M_\lambda(u) = \partial_j  P^j_{\lambda}(u)
+ C^{4,res}_{\lambda,m}(u) + C^{4,nr}_{\lambda,m}(u) +F^4_{\lambda,m}(u),
\]
respectively
\[
\partial_t P^j_{\lambda}(u) = \partial_k E^{jk}_{\lambda}(u)
+ C^{4,res}_{\lambda,p}(u) + C^{4,nr}_{\lambda,p}(u)+F^4_{\lambda, p}(u).
\]
In the above, the term $C^{4,res}$ denotes the contribution of the balanced resonant cubic terms satisfying phase rotation symmetry. On the other hand, the term $C^{4,nr}$ contains the  
contribution of the balanced cubic nonresonant terms. As usual, the $F^4_{\lambda}$ terms represent the remaining terms in the above expansions, which we will directly treat as perturbations. We remark that this term contains (among other things) the weakly transversal cubic part of the nonlinearity, which did not exist in the elliptic case. To simplify the analysis, we further remark that this refined decomposition is only necessary at high frequency $\lambda \gg 1$, as in the low-frequency regime $\lambda \lesssim 1$, one can directly treat the balanced quartic terms directly using the $L^4$ bootstrap bound \eqref{uk-Str2-boot}.

The contributions of the $F^4_{\lambda}$ terms can be estimated directly from Lemma~\ref{l:Fmp}, where we have the perturbative bounds
\begin{equation}\label{pert-flux-source}
 \|F^4_{\lambda,m}(u) \|_{L^1_{t,x}} \lesssim \epsilon^4 C^4 c_\lambda^4 \lambda^{-2s},
 \qquad
 \|F^4_{\lambda,p}(u) \|_{L^1_{t,x}} \lesssim \epsilon^4 C^4 c_\lambda^4
\lambda^{-2s+1}.
\end{equation}

In sharp contrast, we cannot directly treat the balanced terms $C^{4,res}_{\lambda}$ and $C^{4,nr}_{\lambda}$ in $L^1$. Instead, we will need the more refined analysis below.
\subsection{The resonant balanced term} In this subsection, we analyze the finer structure of the term $C^{4, res}$. For simplicity, we will mainly study $C^{4,res}_{\lambda,m}$, as $C^{4,res}_{\lambda,p}$ will be treated by similar analysis. Here, we will crucially rely on the conservative assumption, which enforces the vanishing condition on the diagonal for the corresponding symbols
\begin{equation}\label{c4-conserv}
c^{4,res}_{\lambda,m}(\xi,\xi,\xi,\xi) = 0, 
\qquad 
c^{4,res}_{\lambda,p}(\xi,\xi,\xi,\xi) = 0, \quad \xi \in \R^2.
\end{equation}
This yields the following symbol expansion for $c_{\lambda,m}^{4,res}$,
\[
\lambda^{-1} c^{4,res}_{\lambda,m}(\xi_1,\xi_2,\xi_3,\xi_4)
= i(\xi_{odd} -\xi_{even}) r^{4,res}_{\lambda,m}(\xi_1,\xi_2,\xi_3,\xi_4) + i\Delta^4 \xi q^{4,res}_{\lambda,m}
(\xi_1,\xi_2,\xi_3,\xi_4),
\]
where all symbols on the right-hand side above are smooth and bounded.
Separating variables in $r^{4,res}_{\lambda, m}$, we arrive at the following decomposition 

\begin{lemma}
The quartic form $C^{4, res}_{\lambda,m}$ admits a representation
of the form  
\begin{equation}
\lambda^{-1} C^{4,res}_{\lambda,m}(u,\bu,u,\bu) = \partial_x
 Q^{4,res}_m(u,\bu,u,\bu) + \sum \partial_x R^2_{j,a}(u,\bu)
 R^2_{j,b} (u,\bu),
\end{equation}
where the sum is rapidly convergent in $j$.
\end{lemma}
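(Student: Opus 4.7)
The plan is to start from the symbol-level decomposition stated just above the lemma, namely
\[
\lambda^{-1} c^{4,res}_{\lambda,m}(\xi_1,\xi_2,\xi_3,\xi_4) = i(\xi_{odd}-\xi_{even})\, r^{4,res}_{\lambda,m}(\xi_1,\xi_2,\xi_3,\xi_4) + i\,\Delta^4\xi\, q^{4,res}_{\lambda,m}(\xi_1,\xi_2,\xi_3,\xi_4),
\]
with $r^{4,res}_{\lambda,m}$ and $q^{4,res}_{\lambda,m}$ smooth, bounded, and dyadically localized at frequency $\lambda$. The second piece is instantly a total derivative: since $i\,\Delta^4\xi = i(\xi_1-\xi_2+\xi_3-\xi_4)$ is precisely the Fourier symbol of $\partial_x$ acting on a $4$-linear form with alternating conjugation, it produces $\partial_x Q^{4,res}_m(u,\bu,u,\bu)$ with $Q^{4,res}_m$ the quartic form defined by the symbol $q^{4,res}_{\lambda,m}$.

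The core of the argument is to rewrite the first piece in the stated bilinear-times-bilinear form. The key algebraic observation is the identity
\[
\xi_{odd} - \xi_{even} = (\xi_1-\xi_2) + (\xi_3-\xi_4),
\]
which splits the frequency difference into a sum of two pieces depending on disjoint pairs of variables. To exploit this, I will perform a separation of variables on $r^{4,res}_{\lambda,m}$. After the natural rescaling $\xi_i \mapsto \lambda\xi_i$, the symbol $r^{4,res}_{\lambda,m}$ becomes a smooth function compactly supported at unit scale, uniformly in $\lambda$. Periodizing on a sufficiently large cube and expanding in a Fourier series yields a representation
\[
r^{4,res}_{\lambda,m}(\xi_1,\xi_2,\xi_3,\xi_4) = \sum_j r_{j,a}(\xi_1,\xi_2)\, r_{j,b}(\xi_3,\xi_4)
\]
whose coefficients decay faster than any polynomial in $j$, with bounds uniform in $\lambda$, by the smoothness of $r^{4,res}_{\lambda,m}$.

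Combining these two ingredients gives
\[
i(\xi_{odd}-\xi_{even})\, r^{4,res}_{\lambda,m} = \sum_j [i(\xi_1-\xi_2)\, r_{j,a}(\xi_1,\xi_2)]\, r_{j,b}(\xi_3,\xi_4) + \sum_j r_{j,a}(\xi_1,\xi_2)\, [i(\xi_3-\xi_4)\, r_{j,b}(\xi_3,\xi_4)].
\]
Each factor of the form $i(\xi_1-\xi_2)\, r_{j,a}$ is the Fourier symbol of $\partial_x$ applied to the bilinear form in $(u,\bu)$ with symbol $r_{j,a}$, and likewise for $r_{j,b}$. Passing to physical space and relabeling the two sums as a single index $j$ then yields the stated representation, with $R^2_{j,a}$ and $R^2_{j,b}$ the bilinear forms in $(u,\bu)$ associated to the symbols produced by the separation of variables.

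The main obstacle I anticipate is the rapid convergence of this separation-of-variables expansion with constants uniform in $\lambda$; this is however a standard fact once the rescaling reduces matters to a universal smooth symbol on a fixed bounded set, where the Fourier coefficients on a large enough periodization decay faster than any polynomial. A minor bookkeeping point, already present in the statement, is that in two space dimensions $\Delta^4\xi$, $(\xi_j-\xi_k)$, and the output derivative $\partial_x$ are all vector-valued, so both $Q^{4,res}_m$ and the sum in $j$ implicitly carry an extra spatial component index; this is absorbed into $j$ and does not affect the rapid decay.
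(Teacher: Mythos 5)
Your proof follows the same route as the paper: read off the symbol expansion stated just before the lemma, observe that the $\Delta^4\xi$ factor is the Fourier symbol of $\partial_x$ on the quartic output and hence yields $\partial_x Q^{4,res}_m$, and then separate variables in the smooth coefficient $r^{4,res}_{\lambda,m}$ (rescaling to unit scale, periodizing, and expanding in Fourier series) to produce a rapidly convergent sum of bilinear-times-bilinear pieces. The separation-of-variables mechanism, including the uniform-in-$\lambda$ rapid decay, is exactly the step the paper compresses into the phrase ``separating variables in $r^{4,res}_{\lambda,m}$''.

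There is one point that should be fixed. Your ``key algebraic observation'' reads $\xi_{odd}-\xi_{even}=(\xi_1-\xi_2)+(\xi_3-\xi_4)$, which, with the natural reading $\xi_{odd}=\xi_1+\xi_3$ and $\xi_{even}=\xi_2+\xi_4$, is exactly the identity $\xi_{odd}-\xi_{even}=\Delta^4\xi$. If that were what the symbol expansion meant, the two terms on its right-hand side would coalesce into $i\Delta^4\xi\,(r^{4,res}_{\lambda,m}+q^{4,res}_{\lambda,m})$, i.e.\ the symbol $c^{4,res}_{\lambda,m}$ would be divisible by $\Delta^4\xi$. That is a strictly stronger statement than what the conservative assumption gives: the assumption only forces vanishing on the diagonal $\xi_1=\xi_2=\xi_3=\xi_4$, not on the much larger set $\{\Delta^4\xi=0\}$ where the quartic functional $\int C^{4,res}_{\lambda,m}\,dx$ lives. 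And indeed, applying your splitting symbol-side gives
\[
\sum_j \partial_x R^2_{j,a}\,R^2_{j,b} + \sum_j R^2_{j,a}\,\partial_x R^2_{j,b} = \sum_j \partial_x\bigl(R^2_{j,a}R^2_{j,b}\bigr),
\]
a pure spatial divergence, so your entire right-hand side could be absorbed into $\partial_x Q^{4,res}_m$; that would render the subsequent estimate~\eqref{en-bal}, which is designed to bound $\bigl|\int C^{4,res}_{\lambda,m}\,dx\,dt\bigr|$ nontrivially, vacuous. The intended meaning of $\xi_{odd}-\xi_{even}$ is an \emph{internal} difference, e.g.\ $\xi_1-\xi_2$ (or a schematic for such differences, vector-valued across components), whose zero set contains the diagonal but is transverse to $\{\Delta^4\xi=0\}$; this is what a Hadamard expansion of a symbol vanishing only on the diagonal actually yields. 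With that reading no splitting of the prefactor is needed: one simply separates variables in $r^{4,res}_{\lambda,m}$ and absorbs the single factor $i(\xi_1-\xi_2)$ into $\partial_x R^2_{j,a}(u,\bu)$, and the resulting sum genuinely fails to be a total derivative, as required by the later analysis.
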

A simple consequence of this bound is the estimate 
\begin{equation} \label{en-bal}
\left| \int  C^{4,res}_{\lambda,m}(u,\bu,u,\bu)\, dx dt \right|
\lesssim \sum_j \lambda \| |D|^\frac12 R^2_{j,a}(u,\bu)\|_{L^2_{t,x}}
\| |D|^\frac12 R^2_{j,b}(u,\bu)\|_{L^2_{t,x}},
\end{equation}
To estimate each summand on the right-hand side above, we can split $R_{j,a}^2$ (and identically $R_{j,b}^2$) into low and high-frequency parts. Precisely, for the first term, we have
\begin{equation*}
R_{j,a}^2(u,u)=P_{<1}R^2_{j,a}(u,u)+P_{\geq 1}R^2_{j,a}(u,u).
\end{equation*}
For the low-frequency part, we can interpolate between the bilinear balanced bound \eqref{uab-bi-bal-boot} and an arbitrarily small factor of the $L^4_{t,x}$ Strichartz bound \eqref{uk-Str2} to obtain for instance,
\begin{equation*}
\|P_{<1}R^2_{j,a}(u,u)\|_{L^2_{t,x}}\lesssim_{\delta} \sup_{j\in\mathbb{Z}}\|P_j|D|^{\frac{1}{2}}R^2_{j,a}(u,u)\|_{L_{t,x}^2}^{1-\delta}\|u_{\lambda}\|^{2\delta}_{L_{t,x}^4}\leq C^2\epsilon^2c_{\lambda}^2\lambda^{-2s+\frac{1+\delta}{2}}
\end{equation*}
For the high-frequency part, using $\eqref{uab-bi-bal-boot}$ we have instead
\begin{equation*}
\|P_{\geq 1}|D|^{\frac{1}{2}}R_{j,a}^2(u,u)\|_{L_{t,x}^2}\lesssim_{\delta} \lambda^{\frac{\delta}{2}}\sup_{j\in\mathbb{Z}}\|P_j|D|^{\frac{1}{2}}R^2_{j,a}(u,u)\|_{L_{t,x}^2}\leq C^2\epsilon^2c_{\lambda}^2\lambda^{-2s+\frac{1+\delta}{2}}
\end{equation*}
which (coupled with the identical estimates for $R_{j,b}^2(u,u)$) is more than good enough (in view of the restriction on $s$) to control the right-hand side of \eqref{en-bal}. This entirely suffices for energy estimates, but not for interaction Morawetz estimates. To deal with this, we will instead expand the terms in the above series using a standard paraproduct decomposition 
\[
\begin{aligned}
\partial_x R^2_{j,a}(u,\bu) R^2_{j,b} (u,\bu)
= & \ 
T_{\partial_x R^2_{j,a}(u,\bu)} R^2_{j,b} (u,\bu)
+ \Pi(\partial_x R^2_{j,a}(u,\bu), R^2_{j,b} (u,\bu))
\\ &\ - T_{\partial_x R^2_{j,b}(u,\bu)} R^2_{j,a} (u,\bu)
 + \partial_x (T_{ R^2_{j,b} (u,\bu)} R^2_{j,a}(u,\bu)).
\end{aligned}
\]
The first three terms can be treated directly in $L^1$ and then further estimated as with the right-hand side of \eqref{en-bal} above. On the other hand, the last term in the decomposition, along with $Q_m^{4, res}$ will go into a suitable flux correction. A similar analysis yields an analogous decomposition for $C_{\lambda,p}^{4,res}$. We summarize our findings below,
\begin{proposition}\label{p:bal-sources}
Assume that $c^{4,res}_{\lambda, m}$ satisfies \eqref{c4-conserv}. 
Let $u$ satisfy \eqref{uk-ee-boot} and \eqref{uab-bi-bal-boot}.
Then there exists a decomposition
\begin{equation}
 C^{4,res}_{\lambda,m}(u) = \partial_x Q^{4,res}_{\lambda,m}(u) +  F^{4,res}_{\lambda,m}(u) ,\hspace{5mm}  C^{4,res}_{\lambda,p}(u)=\partial_x Q_{\lambda,p}^{4,res}(u)+F_{\lambda,p}^{4,res}
\end{equation}
so that 
\begin{equation}\label{flux-bal}
\|Q^{4,res}_{\lambda,m}(u)\|_{L^\frac65_t L^\frac43_x} \lesssim C^4 \epsilon^4 c_\lambda^4 \lambda^{1+\frac56-4s},\hspace{5mm}\|Q^{4,res}_{\lambda,p}(u)\|_{L^\frac65_t L^\frac43_x}\lesssim C^4 \epsilon^4 c_\lambda^4 \lambda^{2+\frac56-4s}.
\end{equation}
while
\begin{equation}\label{error-cor-res}
\|  F^{4,res}_{\lambda,m}(u) \|_{L^1_{t,x}} \lesssim C^4 \epsilon^4 c_\lambda^4 \lambda^{-2s},\hspace{5mm} \|  F^{4,res}_{\lambda,p}(u) \|_{L^1_{t,x}} \lesssim C^4 \epsilon^4 c_\lambda^4 \lambda^{-2s+1}.  
\end{equation} 
\end{proposition}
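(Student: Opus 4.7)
The plan is to start from the representation provided by the preceding lemma and to further process each summand $\partial_x R^2_{j,a}(u,\bu) \cdot R^2_{j,b}(u,\bu)$ via a standard Bony paraproduct decomposition. Writing
\[
\partial_x R^2_{j,a} \cdot R^2_{j,b} \;=\; T_{\partial_x R^2_{j,a}} R^2_{j,b} \;-\; T_{\partial_x R^2_{j,b}} R^2_{j,a} \;+\; \Pi(\partial_x R^2_{j,a}, R^2_{j,b}) \;+\; \partial_x\bigl(T_{R^2_{j,b}} R^2_{j,a}\bigr),
\]
where the last term is extracted by transferring the derivative out of the ``wrong" low-high paraproduct $T_{R^2_{j,b}} \partial_x R^2_{j,a}$, I would identify the first three pieces as the perturbative error $F^{4,res}_{\lambda,m}$ and absorb the $\partial_x$-exact remainder into the flux correction $Q^{4,res}_{\lambda,m}$, together with the intrinsic $\partial_x Q^{4,res}_m$ inherited from the lemma. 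The sum over $j$ converges rapidly by the separation-of-variables structure built into the lemma.

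For the $L^1_{t,x}$ bound on $F^{4,res}_{\lambda,m}$, I would write $\partial_x \sim |D|^{1/2} \cdot |D|^{1/2}$ and pair a factor of $|D|^{1/2}$ with each bilinear expression in every paraproduct piece, and then invoke the frequency-localized balanced bilinear bound \eqref{uab-bi-bal-boot}, which in two dimensions takes the form
\[
\sup_{j\in\Z} \|P_j|D|^{1/2}(u_\lambda \bar u_\lambda)\|_{L^2_{t,x}} \lesssim C^2\epsilon^2 c_\lambda^2 \lambda^{-2s+1/2}.
\]
A Cauchy--Schwarz pairing of the two $|D|^{1/2}$-weighted bilinear factors, with matched dyadic output scales in the $\Pi$ piece and the customary low-high matching in the $T$ pieces, then yields the claimed $C^4\epsilon^4 c_\lambda^4 \lambda^{-2s}$ bound modulo a dyadic summation over the output scales of the two quadratic forms. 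For the bound on $Q^{4,res}_{\lambda,m}$ in the stated norm $L^{6/5}_t L^{4/3}_x$, I would use the lossless $L^6_t L^4_x$ Strichartz bound \eqref{uk-l4-2d} on $u_\lambda$, combined with Hölder on the two bilinear factors and Bernstein at the dyadic output frequency, in order to recover the scaling $\lambda^{1+5/6-4s}$. The analysis for the momentum counterpart $C^{4,res}_{\lambda,p}$ is entirely parallel, with one additional factor of $\lambda$ throughout reflecting the extra derivative carried by the momentum density.

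The main obstacle will be controlling the dyadic summation in the $L^1_{t,x}$ estimate for the paraproduct errors: the balanced bilinear bound \eqref{uab-bi-bal-boot} is only of Besov type (a supremum in $j$), rather than a square-summed bound, so summing its contributions over all possible dyadic output frequencies of $R^2_{j,a}$ and $R^2_{j,b}$ incurs a logarithmic loss in $\lambda$. This will be absorbed using the strict inequality $s > 3/2$ together with a small $\delta$-slack in the balanced estimate, in exactly the same manner as the summation loss handled via the $L^4_t L^{4+\delta}_x$ analysis in dimension three; indeed, this is precisely the mechanism that consumes the $\delta$ margin in our Sobolev threshold. A secondary technical point is verifying that the commutator generated when moving $\partial_x$ through the paraproduct $T_{R^2_{j,b}}$ has the same low-high structure as the three explicit error pieces and therefore admits the same $L^1_{t,x}$ treatment.
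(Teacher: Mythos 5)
Your proposal reproduces the paper's argument almost exactly: the same Bony paraproduct decomposition of each summand $\partial_x R^2_{j,a}\cdot R^2_{j,b}$ from the preceding lemma, the same assignment of the three paraproduct pieces to the perturbative error $F^{4,res}$ and of the $\partial_x$-exact piece (together with the intrinsic $Q^{4,res}_m$) to the flux correction, and the same $L^1$ estimate by splitting $\partial_x=|D|^{1/2}|D|^{1/2}$, pairing the two $|D|^{1/2}$-weighted quadratic factors via Cauchy--Schwarz, and invoking the Besov-type balanced bilinear bound with the $\delta$-slack absorbed by $s>3/2$. The only place you are slightly looser than the paper is the flux bound in $L^{6/5}_tL^{4/3}_x$: four copies of $L^6_tL^4_x$ plus Bernstein land you in $L^{3/2}_tL^{4/3}_x$, which does not embed into $L^{6/5}_t$ globally in time, so you still need an interpolation of the quadratic form $R^2$ between the $L^\infty_tL^1_x$ mass bound and the (frequency-localized) balanced bilinear $L^2$ bound --- which is exactly what the paper does, and since $L^6_tL^4_x$ is itself derived from those two ingredients this is a bookkeeping rearrangement rather than a genuinely different route.
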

The first two bounds above in \eqref{flux-bal} follow from interpolating between the energy bound \eqref{uk-ee-boot} and the balanced bilinear $L^2$ bound \eqref{uab-bi-bal-boot}. The last two bounds in \eqref{error-cor-res}, on the other hand,  are obtained using an analysis similar to the right-hand side of \eqref{en-bal}, as well as the restriction $s>\frac{3}{2}$ (in fact, $s>1$ is enough here).
\subsection{The balanced cubic nonresonant term} To treat $C^{4,nr}$, our strategy will be to implement a normal form type correction to both the densities and the fluxes:

\begin{proposition}\label{p:nr-sources}
 
Let $u$ satisfy \eqref{uk-ee-boot} and \eqref{uab-bi-bal-boot}.
Then there exist decompositions
\begin{equation}
 C^{4,nr}_{\lambda,m}(u) = - \partial_t B^{4,nr}_{\lambda,m}(u)
 + \partial_x Q^{4,nr}_{\lambda,m}(u) +  F^{6,nr}_{\lambda,m}(u)  ,
\end{equation}
\begin{equation}
C^{4,nr}_{\lambda,p}(u) = - \partial_t B^{4,nr}_{\lambda,p}(u)
 + \partial_x Q^{4,nr}_{\lambda,p}(u) +  F^{6,nr}_{\lambda,p}(u)
\end{equation}
so that we have the fixed time bounds
\begin{equation}\label{energy-cor-nr}
\|  B^{4,nr}_{\lambda,m}(u) \|_{L^1_x} \lesssim C^4 \epsilon^4 c_\lambda^4 \lambda^{2-4s},\hspace{5mm}\|  B^{4,nr}_{\lambda,p}(u) \|_{L^1_x} \lesssim C^4 \epsilon^4 c_\lambda^4 \lambda^{3-4s}
\end{equation}
and the space-time bounds
\begin{equation}\label{flux-nr}
\|  Q^{4,nr}_{\lambda,m}(u) \|_{L^\frac65_t L^\frac43_x} \lesssim C^4 \epsilon^4 c_\lambda^4 \lambda^{1+\frac56-4s},\hspace{5mm} \|  Q^{4,nr}_{\lambda,p}(u) \|_{L^\frac65_t L^\frac43_x} \lesssim C^4 \epsilon^4 c_\lambda^4 \lambda^{2+\frac56-4s} ,
\end{equation}
respectively
\begin{equation}\label{error-cor-nr}
\|  F^{6,nr}_{\lambda,m}(u) \|_{L^1_{t,x}} \lesssim C^4 \epsilon^4 c_\lambda^4 \lambda^{-2s},\hspace{5mm}\|  F^{6,nr}_{\lambda,p}(u) \|_{L^1_{t,x}} \lesssim C^4 \epsilon^4 c_\lambda^4 \lambda^{1-2s}    
\end{equation}
\end{proposition}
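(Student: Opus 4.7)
The strategy is a standard quartic normal form correction exploiting the nonresonance. Since $C^{4,nr}_\lambda$ contains only balanced nonresonant interactions, the associated resonance function
\[
\Omega(\xi^1,\xi^2,\xi^3,\xi^4) := |\xi^1|_g^2-|\xi^2|_g^2+|\xi^3|_g^2-|\xi^4|_g^2
\]
satisfies $|\Omega|\gtrsim \lambda^2$ on the support of the symbols $c^{4,nr}_{\lambda,m}$ and $c^{4,nr}_{\lambda,p}$ restricted to the diagonal $D_0 = \{\xi^1-\xi^2+\xi^3-\xi^4=0\}$. I would therefore define the quartic density corrections $B^{4,nr}_{\lambda,m/p}$ as the 4-linear forms with smoothly extended symbols
\[
b^{4,nr}_{\lambda,m} := c^{4,nr}_{\lambda,m}/\Omega, \qquad b^{4,nr}_{\lambda,p} := c^{4,nr}_{\lambda,p}/\Omega.
\]
The $\lambda^{-2}$ gain from nonresonance, together with the bootstrap energy bound \eqref{uk-ee-boot} and Bernstein's inequality, directly yields the fixed-time bounds \eqref{energy-cor-nr}.

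Next, I would compute $\partial_t B^{4,nr}_{\lambda,m}$ by applying the product rule and replacing each $\partial_t u_\mu$ via the paradifferential equation \eqref{full-lpara}. The leading piece, coming from the principal symbol $-i|\xi|_g^2$ of the constant-metric Schr\"odinger operator evaluated at $g=g(0)$, contributes a symbol equal to $b^{4,nr}_{\lambda,m}\cdot\Omega = c^{4,nr}_{\lambda,m}$, yielding the desired identity $\partial_t B^{4,nr}_{\lambda,m} = -C^{4,nr}_{\lambda,m}$ up to three types of error:
\begin{enumerate}[label=(\roman*)]
\item metric-correction terms generated by the difference $g^{jk}_{[<\mu]}-g^{jk}(0)$ in each of the four slots; the divergence-structured parts feed into $Q^{4,nr}_{\lambda,m}$, while the remainder is absorbed into $F^{6,nr}_{\lambda,m}$;
\item flux terms arising from the spatial derivatives in $\partial_j g^{jk}(0)\partial_k$ which, after integrating by parts on the Fourier side, assemble into $\partial_j Q^{4,nr}_{\lambda,m}$;
\item nonlinear substitution errors coming from replacing $\partial_t u_\mu$ by the source terms $C^{res}_\mu+C^{nr}_\mu+F_\mu$ in \eqref{full-lpara}; these are $6$-linear or higher and are routed into $F^{6,nr}_{\lambda,m}$.
\end{enumerate}

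For the flux bound \eqref{flux-nr}, I would interpolate the energy bound \eqref{uk-ee-boot} with the $L^6_tL^4_x$ Strichartz estimate \eqref{uk-l4-2d} to place four factors of $u_\lambda$ in matching $L^{6/5}_tL^{4/3}_x$-compatible norms, with the $\lambda^{-2}$ normal-form gain absorbing the extra derivative. The perturbative 6-linear bound \eqref{error-cor-nr} is obtained by extracting one pair of factors, controlling them via the balanced bilinear $L^2$ estimate \eqref{uab-bi-bal-boot}, and placing the remaining factors in $L^4_{t,x}$ via \eqref{uk-Str2-boot}; the restriction $s>\frac32$ is precisely what allows the estimate to close. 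The momentum decomposition for $C^{4,nr}_{\lambda,p}$ proceeds along identical lines, with the single extra spatial derivative accounting for the additional factor of $\lambda$ in each of the claimed bounds.

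The main obstacle will be the careful bookkeeping for the metric-correction terms in (i). Since $g^{jk}_{[<\mu]}-g^{jk}(0)$ is at least quadratic in $u_{[<\mu]}$ by \eqref{cubiccondition}, these corrections generate 6-linear expressions involving both $u_\lambda$ and low-frequency factors $u_{<\lambda}$; the challenge is to exhibit enough divergence structure to route the non-integrable portions into $Q^{4,nr}_{\lambda,m}$ while leaving a remainder that is genuinely perturbative in $L^1_{t,x}$. As in the elliptic case, this can be implemented through a paraproduct decomposition combined with symbol calculus, and requires no essentially new idea beyond the analogous construction in \cite{IT-qnls2}; in particular, the ultrahyperbolic complication of weakly transversal interactions does not arise here, as such interactions are already excluded from $C^{4,nr}$ by construction and handled separately in $F_\lambda$.
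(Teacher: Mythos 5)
Your overall strategy is the right one and matches the paper's intent: the paper's proof of this proposition is given in one sentence as ``completely analogous to the computation in the definite case in [IT-qnls2],'' and the surrounding discussion (as well as the parallel Proposition~\ref{p:bal-sources}) makes clear that the mechanism is exactly the quartic normal form correction you describe, dividing the symbol of $C^{4,nr}_\lambda$ by the resonance function to build $B^{4,nr}_\lambda$, substituting the paradifferential equation for $\partial_t u_\mu$, and routing the errors into a flux correction $Q^{4,nr}_\lambda$ and a perturbative $F^{6,nr}_\lambda$. Your classification of the three error types, the use of the balanced bilinear bound for $F^{6,nr}_\lambda$, and the observation that $s>\tfrac32$ closes the $L^1_{t,x}$ bound are all consistent with the paper's framework. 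Two minor imprecisions: the resonance function you write is only the phase-rotation-symmetric one, whereas $C^{4,nr}$ contains all balanced nonresonant sign combinations (but the $|\Omega|\gtrsim\lambda^2$ lower bound holds for each), and the constant-coefficient normal form is taken against $g(0)$ (as you acknowledge) while the paradifferential operator carries $g_{[<\lambda]}$, which is a legitimate choice but requires the careful bookkeeping of the metric-correction terms you flag.

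The concrete gap is in your argument for the flux bound \eqref{flux-nr}. You propose to ``interpolate the energy bound \eqref{uk-ee-boot} with the $L^6_tL^4_x$ Strichartz estimate \eqref{uk-l4-2d} to place four factors of $u_\lambda$ in matching $L^{6/5}_tL^{4/3}_x$-compatible norms.'' This cannot work as stated: each factor would have to sit in a space with $(1/p,1/q)=(5/24,3/16)$, and the interval from $(1/p,1/q)=(0,1/2)$ (energy) to $(1/6,1/4)$ ($L^6_tL^4_x$) does not contain that point (the required interpolation parameter is $\theta=5/4>1$). Four factors in $L^6_tL^4_x$ give only $L^{3/2}_tL^1_x$, whose spatial exponent is too weak, and mixing in $L^\infty_tL^2_x$ factors only makes $1/q$ larger. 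The ``$\lambda^{-2}$ normal-form gain'' cannot repair this, since the issue is about Lebesgue exponents in $x$, not derivative count. The paper's route, made explicit in the proof of Proposition~\ref{p:bal-sources} for the resonant case, is different in kind: one splits the quartic into two \emph{bilinear} pieces, places one in $L^2_{t,x}$ via the balanced bilinear bound \eqref{uab-bi-bal-boot} (which, crucially, supplies the extra $|D|^{1/2}$ spatial decay that a product of linear Strichartz norms cannot), and places the other in a complementary norm; the low/high frequency decomposition and the $\delta$-interpolation with $L^4_{t,x}$ Strichartz are then needed to sum the Besov-type bilinear bound over dyadic output frequencies. Since \eqref{uab-bi-bal-boot} is in the hypotheses of the proposition, this is the tool you should be reaching for; as written, your flux estimate does not close.
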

\begin{proof} The proof is completely analogous to the computation in the definite case in \cite{IT-qnls2}. Therefore, we omit the details.
\end{proof}

To summarize our findings in the previous two subsections, we observe that the modified densities 
\begin{equation}\label{ms}
\ms_\lambda(u) = M_\lambda(u) + B^{4,nr}_{\lambda,m}(u),    
\end{equation}
respectively
\begin{equation}\label{ps}
\ps_\lambda(u) = P_\lambda(u) + B^{4,nr}_{\lambda,p}(u),    
\end{equation}
satisfy the associated density-flux identities
\begin{equation}\label{modifiedmassflux}
\partial_t \ms_\lambda(u) = \partial_j  (P^j_{\lambda}(u)+ Q^{4,j}_{\lambda,m}+Q^{4,nr,j}_{\lambda,m}) +F^{4,res}_{\lambda,m}(u)+F^{6,nr}_{\lambda,m}(u)
+F^4_{\lambda,m}(u),
\end{equation}
respectively
\begin{equation}\label{modifiedmomentumflux}
\partial_t {\ps}^j_{\lambda}(u) = \partial_k (E^{jk}_{\lambda}(u)
+ Q^{4,res,jk}_{\lambda,p}+Q^{4,nr,jk}_{\lambda,p}) +F^{4,res,j}_{\lambda,m}(u)
+F^{6,nr}_{\lambda,p}(u) +F^{4,j}_{\lambda,p}(u),
\end{equation}
where the flux corrections $Q^{4, res}$,  $Q^{4,nr}$  are as in \eqref{flux-bal}, respectively \eqref{flux-nr},
and all the $F^4$ and $F^6$ terms are estimated in $L^1$ as in \eqref{pert-flux-source}, \eqref{error-cor-res}, \eqref{energy-cor-nr} and \eqref{error-cor-nr}.

\subsection{The uniform energy bounds}
Here we integrate the density-flux identities \eqref{modifiedmassflux} and \eqref{modifiedmomentumflux} using the bounds in Proposition~\ref{p:bal-sources}  and Proposition~\ref{p:nr-sources}
in order to prove the dyadic energy estimates \eqref{uk-ee} in our 
main result in Theorem~\ref{t:global2c}, under the appropriate bootstrap assumptions as stated in Proposition~\ref{p:boot}. As in \cite{IT-qnls2} (and observed above), the mass and momentum corrections 
play a perturbative role at fixed time in this computation,
\begin{equation}\label{B4-pert}
\begin{aligned}
\|   B^{4,nr}_{\lambda,m}(u) \|_{L^1} \lesssim C^4 \epsilon^4 c_\lambda^4 \lambda^{-4s+2}
\lesssim \epsilon^2 c_\lambda^2 \lambda^{-2s},
\\
\|   B^{4,nr}_{\lambda,p}(u) \|_{L^1} \lesssim C^4 \epsilon^4 c_\lambda^4 \lambda^{-4s+3}
\lesssim \epsilon^2 c_\lambda^2 \lambda^{-2s+1}.
\end{aligned}
\end{equation}

\subsection{The interaction Morawetz identities}

Here we prove the balanced and unbalanced bilinear $L^2$ bounds \eqref{uab-bi-bal}
and \eqref{uab-bi-unbal}, respectively. In doing so, we will finally conclude the proof of Theorem~\ref{t:global2c}.

As in \cite{IT-qnls2}, we will aim to pair a frequency $\lambda$ 
portion of one solution $u$ with a frequency $\mu$ portion 
of another solution $v$, which will ultimately be taken to be a translate of $u$. That is, 
$v = u^{x_0}$. The leading computation here is very similar to the analysis in Section~\ref{s:para}, with the difference now being that we have to treat the additional correction terms above.

In the sequel, we will write the requisite identities in a general form, and then later, to establish Theorem~\ref{t:global2c} we will specialize to three cases:
\begin{enumerate}
    \item The diagonal case $\lambda = \mu$, $u=v$ (and thus $x_0=0$).
    \item The balanced shifted case $\lambda \approx \mu$, 
    with  $x_0$ arbitrary.
\item The unbalanced case $ \mu \leq \lambda$, 
    with  $x_0$ arbitrary.
\end{enumerate}
As in Section~\ref{s:para}, the choice of Morawetz functional in our analysis will differ depending on whether we consider balanced interactions (corresponding to the first two cases above) or unbalanced interactions (which correspond to the third case). Below, we carry out the analysis for the balanced cases, and later outline the minor changes needed to deal with the unbalanced case.
\subsection{The balanced case}
Following the analysis of Section~\ref{s:para}, we define the one-parameter family of interaction Morawetz functionals associated with the functions
$(u,v)$ and frequencies $(\lambda,\mu)$ as
\begin{equation}\label{IM}
\bI_{r,\lambda\mu}(u,v) :=   \iint a_{r,j}(x-y) (\ms_\lambda(u)(x) {\ps}^j_{\mu}(v) (y) -  
{\ps}^j_{\lambda}(u)(x) \ms_{\mu}(v) (y) \, dx dy,\hspace{5mm}r>0,
\end{equation}
where $a_r$ is defined by \eqref{rweightdef}. To compute the time derivative of $\bI_{r,\lambda\mu}$, we use the modified density-flux identities \eqref{modifiedmassflux} and \eqref{modifiedmomentumflux},
\begin{equation}\label{interaction-xilm}
\frac{d}{dt} \bI_{r,\lambda\mu} =  \bJ^4_{r,\lambda\mu} + \bJ^{6}_{r,\lambda\mu} + \bK_{r,\lambda\mu}. 
\end{equation}
Most of the resulting terms are analyzed similarly to the corresponding expression in Section~\ref{s:para} for the case of the paradifferential equation. However, as in \cite{IT-qnls2}, there are two exceptional terms that require further explanation:
\medskip

(i) the mass and momentum corrections in $\bI_{r, \lambda\mu}$, whose contributions are easily handled by the fixed-time estimates \eqref{B4-pert}.

\medskip
(ii) the expression  $\bJ^{6}_{r,\lambda \mu}$,  which contains the contributions 
of the density and flux corrections,
{\small
\begin{equation}\label{J6cor}
\begin{split}
\bJ^{6,bal}_{r,\lambda \mu} := & \ \iint a_{r,jk}(x-y) \left((Q^{4,k}_{\lambda,m}(u))(x){\ps}^j_{\mu}(v) (y) -  
(Q^{4,jk}_{\lambda,p}(u))(x) \ms_{\mu}(v) (y) \, \right. 
\\ & \left. - \ms_\lambda(u)(x) (Q^{4,jk}_{\mu,p}(v)) (y) +  
{\ps}^j_{\lambda}(u)(x) (Q^{4,k}_{\mu,m}(v))(y)\right) \, dx dy.
\end{split}
\end{equation}
}
where we have written for simplicity of notation, $Q^{4}:=Q^{4,res}+Q^{4,nr}$. To obtain the same estimate as in the paradifferential case, it suffices to show that we have the (uniform in $r$) bound
\begin{equation}\label{J6-bal-want}
 \left| \int_0^T    \bJ^{6,bal}_{r,\lambda \mu} \, dt \right|
 \lesssim \epsilon^4 c_\lambda^2 c_\mu^2 \lambda^{1-2s}\mu^{-2s}.
\end{equation}
At this point, the analysis is similar to \cite{IT-qnls2}. As a model example, we consider the second term in the above expression. Here, using the uniform in $r$ bound $|D^2a_r(x-y)| \lesssim \dfrac{1}{|x-y|} $, we can apply Young's inequality to control
\[
 \left| \int_0^T  \iint a_{r,jk}(x-y)Q^{4,jk}_{\lambda,p}(u)(x) \ms_{\mu}(v) (y)  \, dxdydt \right| \lesssim \| Q^{4,jk}_{\lambda,p}(u)\|_{L^\frac65_t L^\frac43_x} \| M_{\mu}(v)\|_{L^6_t L^\frac43_x},
\]
To estimate $\ms_\mu(v)$, we can interpolate the energy bound in $L^\infty_t L^2_x$ bound provided by \eqref{uk-ee-boot}
and the $L^6_t L^4_x$ bound \eqref{uk-l4-2d} to 
obtain
\[
\| \ms_{\mu}(v)\|_{L^6_t L^\frac43_x} \lesssim C^2 \epsilon^2 c_\mu^2 \mu^{\frac16-2s},
\]
where the quartic correction has a better estimate than the leading quadratic term.

Combining this with \eqref{flux-bal} or \eqref{flux-nr} we then obtain
\begin{equation}
\lesssim C^6 \epsilon^6 c_\lambda^4 c_\mu^2 \lambda^{2+\frac56-4s}
\mu^{\frac16-2s}
\end{equation}
which is better than the required bound \eqref{J6-bal-want} provided that $s > \frac32$ (in fact $s\geq 1$ suffices). Similar estimates hold for the other components of $\bJ_{r,\lambda\mu}^{6,bal}$. Combining the estimates above establishes the required balanced bilinear estimates in Theorem~\ref{t:global2c}.
\subsection{The unbalanced case} For the unbalanced case, we again take our cue from Section ~\ref{s:para}, where we instead define our interaction Morawetz functional as
\begin{equation}\label{unbIMcor}
\bI_{\lambda\mu}(u,v) :=
 \iint_{x_1< y_1}  \ms_{\lambda}(u)(x)\ms_{\mu}(v) (y) \, dx dy
\end{equation}
Here, as in Section~\ref{s:para}, we have distinguished (without loss of generality) $e_1$ as the direction in which we have $O(\lambda)$ frequency separation for the corresponding group velocities. This in turn yields an analogous decomposition of the time derivative,
\begin{equation}\label{tdunbIMcor}
\frac{d}{dt} \bI_{\lambda\mu}(u,v) =  \bJ_{\lambda\mu}^4(u,v) +\bJ_{\lambda\mu}^6(u,v)  +\bK_{\lambda\mu}(u,v), 
\end{equation}
As in the balanced case, the leading part in the above decomposition can be estimated exactly as in Section ~\ref{s:para}. Moreover, we can again estimate the contributions of the density-flux corrections in $\bI_{\lambda\mu}$ at a fixed time using \eqref{B4-pert}. The analogue of \eqref{J6cor} which corresponds to the part of $\bJ_{\lambda\mu}^6(u,v)$ containing the contributions of the density-flux corrections has the form
\begin{equation}\label{J6bal2}
\bJ_{\lambda\mu}^{6,bal}(u,v):=\iint_{x_1=y_1}(Q^{4,1}_{\lambda,m}(u))(x)\ms_{\mu}(v) (y)-\ms_{\lambda}(u)(x)(Q^{4,1}_{\mu,m}(v))(y)dxdy
\end{equation}
The required analogue of the bound \eqref{J6-bal-want} that we want in this case is
\begin{equation}\label{J6-bal-want2}
 \left| \int_0^T    \bJ^{6,bal}_{r,\lambda \mu} \, dt \right|
 \lesssim \epsilon^4 c_\lambda^2 c_\mu^2 \lambda^{-2s}\mu^{-2s}.
\end{equation}
To estimate the first term in \eqref{J6bal2} by the right-hand side of \eqref{J6-bal-want2}, we first estimate
\begin{equation*}
\begin{split}
\int_{0}^{T}\iint_{x_1=y_1}|(Q_{\lambda,m}^{4,1}(u))(x)\ms_{\mu}(v) (y)|dxdydt&\lesssim \|Q_{\lambda,m}^4\|_{L_t^1L_{x_2}^1L_{x_1}^{\infty}}\|\ms_{\mu}(v)\|_{L_t^{\infty}L_x^1}
\\
&\lesssim C\epsilon^2c_{\mu}^2\mu^{-2s}\|Q_{\lambda,m}^4\|_{L_t^1L_{x_2}^1L_{x_1}^{\infty}}.
\end{split}
\end{equation*}
To deal with the quartic term $Q_{\lambda,m}^4$, we use the representation $Q_{\lambda,m}^4=\lambda L(u_{\lambda},u_{\lambda},u_{\lambda},u_{\lambda})$, which by Bernstein in $x_1$ and the analysis following \eqref{en-bal} enables us to estimate
\begin{equation*}
\|Q_{\lambda,m}^4\|_{L_t^1L_{x_2}^1L_{x_1}^{\infty}}\lesssim \lambda\||u_{\lambda}|^2\|_{L_t^2L_{x_2}^2L_{x_1}^{\infty}}^2\lesssim \lambda\||D|^{\frac{1}{2}}|u_{\lambda}|^2\|_{L_t^2L_x^2}^2\lesssim C^4\epsilon^4c_{\lambda}^4\lambda^{-4s+2+\delta},
\end{equation*}
which is more than sufficient. The other term in \eqref{J6bal2} is handled similarly (by simply swapping the roles of $\mu$ and $\lambda$ in the above estimates).
\subsection{ The $L^4$ Strichartz bounds}
Here we prove \eqref{uk-Str2}, thereby concluding the proof of Theorem~\ref{t:global2c}.
We follow the proof of Theorem~\ref{t:para-se} with the difference  that here we have already established the uniform energy bounds for $u_\lambda$. By interpolation, it remains to show the $V^2_{UH}$ bound with a derivative loss, 
\begin{equation}\label{ul-V2}
\|u_\lambda \|_{V^2_{UH}} \lesssim \epsilon c_\lambda \lambda^{-s+1}    
\end{equation}
For this we would like to apply \eqref{Str-2}, except that we do not have a good estimate for the last term on the right.  However, repeating the argument there we can 
write
\begin{equation}\label{para-re-cc+}
(i \partial_t + \Delta_{g(0)}) u_\lambda 
= N_\lambda + f_\lambda^1, \qquad u_\lambda(0) = u_{0,\lambda},
\end{equation}
where
\begin{equation}
   f_\lambda^1 :=  \partial_j (g^{jk}_{[<\lambda]}-g^{jk}(0)) \partial_k u_\lambda,
\end{equation}
and estimate directly 
\[
\| u_\lambda \|_{V^2_{UH}} \lesssim \| v_\lambda(0)\|_{L^2_x}
+ \| N_\lambda\|_{DV^2_{UH}} 
+ \| f_\lambda^1\|_{DV^2_{UH}},
\]
Here the second term on the right is estimated in $L^\frac43 \subset DV^2_{UH}$, separating $F_\lambda$ and $C_\lambda$. For $C_\lambda$ we use 
three $L^4$ bounds \eqref{uk-Str2-boot},
\[
\| C_\lambda(u)\|_{L^\frac43} \lesssim C^3 \epsilon^3 c_\lambda^3 \lambda^{-3s+\frac72}
\]
which suffices. For $F_\lambda$ we have the two highest frequencies comparable and at least $\lambda$, while a third one is smaller. Hence we use an unbalanced bilinear $L^2$ bound \eqref{uab-bi-unbal-boot} and an $L^4$ bound for the second high frequency, to obtain
\[
\|F_\lambda(u)\|_{L^\frac43} \lesssim C^3 \epsilon^3 c_\lambda^2 \lambda^{-2s+2}
\]
which again suffices.

Finally the last term on the right is estimated by duality, testing against a function $w \in U^2_{UH}$,
\[
\left| \int f_\lambda^1 \bar w_\lambda dx dt \right|
\lesssim \| f_\lambda^1 \bar w_\lambda \|_{L^1}  \lesssim \lambda^2 \| L(\bfu_{\ll\lambda}, u_\lambda)\|_{L^2} \| L(\bfu_{\ll \lambda}, w_\lambda)\|_{L^1}
\lesssim C^3 \epsilon^3 \lambda c_\lambda \lambda^{-s+1},
\]
where at the last step we have used two bilinear unbalanced $L^2$ bounds, namely 
one instance of \eqref{uab-bi-unbal-boot} and one application of Corollary~\ref{c:para}
after using the atomic structure of $U^2_{UH}$ to reduce to the case when $w$ is a solution. This has to be done in the context of this section, using the modified density flux identities for $u_\lambda$.

This concludes the proof of \eqref{ul-V2} and thus the proof of the theorem.

\bibliography{1d-global}

\bibliographystyle{plain}

\end{document}